\newcommand{\brown}{B}
\numberwithin{equation}{section}
\newtheorem{thm}{Theorem}[section]
\newtheorem{lemma}[thm]{Lemma}
\newtheorem{proposition}[thm]{Proposition}
\newtheorem{corollary}[thm]{Corollary}
\newtheorem{remark}[thm]{Remark}
\newcommand{\wstop}{B^{\textnormal{stop}}}
\newcommand{\wkill}{B^{\textnormal{kill}}}
\newcommand{\wrefl}{\brown^{\textnormal{ref}}}
\newcommand{\refl}{^{\textnormal{ref}}}
\newcommand{\wmin}{B^{\textnormal{min}}}
\newcommand{\eps}{\varepsilon}
\DeclareMathAlphabet{\mathpzc}{OT1}{pzc}{m}{it}
\newcommand{\ka}{{\mathpzc k}}
\newcommand{\kad}{{\mathpzc K}}
\newcommand{\kal}{\mathpzc L}
\newcommand{\pol}{\tfrac 12}
\newcommand{\jcgi}[1]{\left ( #1 \right )_{i\ge 0}}
\newcommand{\mquad}[1]{\qquad \text{#1} \qquad}
\newcommand{\mqquad}[1]{\quad \text{#1} \quad}
\newcommand{\mbR}{{\mathbb R}}
\newcommand{\elam}{\mathpzc l_\lam}
\newcommand{\mudelta}{\hspace{-0.1cm}\vartriangle}
\newcommand{\mdelta}{\vartriangle}
\newcommand{\mdu}{\vartriangle\hspace{-0.1cm}}
\newcommand{\zero}{\mathfrak 0}
\newcommand{\wt}{\widetilde}
\newcommand{\wrla}{\wt R_\lam}
\newcommand{\cadlag}{c\`adl\`ag}
\newcommand{\e}{\mathrm e}
\newcommand{\ud }{\, \mathrm d}
\newcommand{\mud}{\mathrm d}
\newcommand{\sem}[1]{\left ( \e^{t#1} \right)_{t \ge 0}}
\newcommand{\dom}[1]{\mathcal D(#1)}
\newcommand{\grae}{\lim_{\eps \to 0+}}
\newcommand{\lam}{\lambda}
\newcommand{\rla}{R_\lam} 
\newcommand{\jcg}[1]{\left (#1 \right )_{n\ge 1}} 
\newcommand{\gra}{\lim_{n\to \infty}}
\newcommand{\grat}{\lim_{t\to \infty}}
\newcommand{\gras}{\lim_{s\to \infty}}
\newcommand{\grato}{\lim_{t\to 0+}}
\DeclareMathOperator\sgn{sgn}
\newcommand{\rez}[1]{\left (\lam - #1\right )^{-1}}
\newcommand{\mc}{\mathcal}
\newcommand{\coss}{\mbox{$\mathfrak C(S)$}}
\newcommand{\cosso}{\mbox{$\mathfrak C_0(S)$}}
\newcommand{\cossf}{\mbox{$\mathfrak C_{\mathpzc F}(S)$}}
\newcommand{\ie}{i.e., }
\newcommand{\czn}{\int_0^\infty}
\newcommand{\sui}{\sum_{i\in \kad}}
\newcommand{\gen}{\mathfrak G}
\newcommand{\seq}[1]{(#1)_{i\in \kad}}
\newcommand{\EE}{\mathsf E\,}
\newcommand{\rod}[1]{\mbox{$\left (#1 \right )_{t\ge 0}$}} 
\newcommand{\slam}{\sqrt{2\lam}}
\definecolor{ngreen}{HTML}{006400}
\newcommand{\mum}{\mathpzc m}
\newcommand{\1}{1\!\!\,{\rm I}}
\newcommand{\ap}{\textcolor{blue}}
\newcommand{\Pb}{\mathsf{P}}
\newcommand{\E}{\mathsf{E}}
\newcommand{\R}{\mathbb R}
\newcommand{\cerp}{\mathfrak C[0,\infty]}
\newcommand{\cer}{\mathfrak  C[-\infty,\infty]}
\title[Brownian motions on star graphs]{Analytic and stochastic description of\\ Brownian motions on star graphs}
\author[A. Bobrowski]{Adam Bobrowski}
\address{Lublin University of Technology\\Department of Mathematics\\Nadbystrzycka 38A\\20-618 Lublin, Poland}
\author[A. Pilipenko]{Andrey Pilipenko}
\address{University of Geneva, 
Section de math\'ematiques\\
UNI DUFOUR\\
24, rue du G\`en\`eral Dufour\\
Case postale 64\\
1211 Geneva 4, Switzerland\\ \\
Institute of Mathematics of the Ukrainian National Academy of Sciences\\
 Tereschenkivska st. 3, Kiev-4, 01601, Ukraine}
\begin{document}

\hfill Version of \today

\begin{abstract}{We provide a detailed description of all possible Feller processes on infinite} star graphs {with finite number of edges, processes that while away from the graph's center behave like a one-dimensional Brownian motion. The description can be seen as a continuation of the seminal paper \cite{itop} by It\^o and McKean (devoted to Brownian motions on the half-line), recast from the perspective of the theory of multi-armed bandits.} \end{abstract}

\subjclass{47D06,47D07, 60J60, 60J50, 60J35 }

\keywords{Feller--Wentzell boundary conditions, diffusions on  graphs, Brownian motions, generators and resolvents of Markov processes}

\maketitle

\section{Introduction}

\subsection{Boundary conditions of Feller and Wentzell}\label{sec:bco}
Analytic description of boundary conditions for diffusions is owed to the independent work of W. Feller \cites{fellera3,fellerek} and A.D. Wentzell (sometimes also spelled Ventzel') \cite{wentzellboundary}. In the case of processes with values in the positive half-line the gist of their findings is as follows. 

 Let $[0,\infty]$ be the compactified positive half-line, and let $\cerp$ be the space of continuous functions on 
this half-line; $\cerp$ can be equivalently characterized as the space of continuous functions  $f\colon [0,\infty)\to \R$ such that a~finite limit  $\lim_{x\to \infty} f(x)$ exists.  Suppose that $\gen$ is a Feller generator in $\cerp$ such that its domain is contained in the set of functions that are  differentiable at $x=0$. Then 
there are non-negative constants $\alpha, \beta,$ and $\gamma$, and a (possibly infinite) positive Borel measure $\mum$ on $(0,\infty]$ such that 
$\int_{(0,\infty]} (1\wedge x)\, \mum (\mud x)<\infty$ and
\begin{equation*}\alpha \gen f(0) - \beta f'(0) + \gamma f(0) = \int
[f(x) - f(0)]\, \mum (\mud x), \qquad f \in \dom{\gen }, \end{equation*}	
where $\alpha + \beta >0$ or $\mum (0,\infty] =\infty$.

The case of the one-di\-men\-sional Laplace operator 
defined by  \[ \Delta f = \tfrac 12 f'', \qquad   f \in \dom{\Delta}\coloneqq \mathfrak C^2[0,\infty] \]
is by far the most interesting and important.
Here, $\mathfrak C^2[0,\infty]$ is the subspace of $\cerp$ composed of twice continuously differentiable $f\colon [0,\infty) \to \R$ 
(at $x=0$ we consider right-hand derivatives; alternatively, $f'(0)$ and $f''(0)$ are 
defined as $\lim_{x\to 0+}f'(x)$ and  $\lim_{x\to 0+}f''(x)$, respectively)
such that $f''$ belongs to $\cerp$, so that in particular the limit $\lim_{x\to \infty} f''(x)$ exists and is finite. In this case, Feller and Wentzell's work can be summarized by stating that if a Feller generator $\gen$ is a restriction of  $\Delta$, then  there are non-negative constants $\alpha, \beta,$ and $\gamma$, and a (possibly infinite) positive Borel measure $\mum$ on $(0,\infty)$ such that 
$\czn (1\wedge x)\, \mum (\mud x)<\infty$ and
\begin{equation}\label{int:1} \tfrac \alpha 2 f''(0) - \beta f'(0) + \gamma f(0) = \int
[f(x) - f(0)]\, \mum (\mud x), \qquad f \in \dom{\gen }; \end{equation}	
again, at least one of conditions $\alpha +\beta >0$ and $\mum (0,\infty] =\infty$ holds. In fact, since the point at infinity is added merely for convenience, so that we can work in a compact space, and a Brownian motion cannot reach this point in  a finite time anyway, we can exclude the possibility of $\mum$ having a mass there and can thus assume that $\mum$ is a Borel measure on $(0,\infty)$. Conversely, any restriction of $\Delta$ to the domain of the form visible in \eqref{int:1} (with $ \mum$   having no mass at $\infty$) is a Feller generator (see e.g. \cite{blum}).

A number of remarks are here in order. First of all, it has been amply clear from the very beginning that, whereas the operator $\Delta$ describes the behavior of the related process \emph{inside} the half-line, condition \eqref{int:1} 
determines its behavior at the boundary, that is, at $x=0$. Secondly, as stressed by It\^o and McKean \cite{ito}*{p. 187},  before Feller and Wentzell, it had not been noticed that conditions involving $\gen f(0)$ were possible. Feller himself, alluding to this new term, acknowledges that his inspirations were in part coming from genetics and writes  in \cite{fellera3}*{p. 474}: ``Various probability considerations and a long outstanding problem in genetics made it increasingly clear that in addition to the classical types of boundary conditions there exist some of a new type. Physically speaking, there exist diffusion processes where a particle can be absorbed, and stay for a finite time, after which it penetrates slowly back into the interior.''

\subsection{Stochastic interpretation}\label{sec:si}
As already mentioned, the boundary condition \eqref{int:1} characterizes behavior of the related process at the boundary $x=0$. Several examples, studied throughout the literature, can be singled out here.

\begin{itemize} 

\item [1. ] Brownian motion \emph{stopped} at $0$ (or: \emph{absorbed} at $0$, see e.g. \cite{knight}*{p. 41 and p. 61}). In other words, if $\brown  =\{\brown (t), t\ge 0\}$ is a standard Brownian motion starting at $\brown (0)=x\ge 0$, and    $\sigma\coloneqq \inf\{ t\geq 0\colon \brown (t)=0\}$ is the moment when $\brown $ reaches $0$ for the first time, then $\wstop (t) \coloneqq \brown (t\wedge \sigma), t \ge 0$. This process is characterized by the boundary condition 
\[ f''(0)=0 \]
corresponding to $\beta =\gamma=0$, $\mum =0$ and $\alpha >0$ in \eqref{int:1}.

\item [2. ] The \emph{minimal} Brownian motion. This is the process which is undefined after it reaches $0$ for the first time: $\wmin (t) = \brown (t)$ whenever $t < \sigma$ and undefined for $t \ge \sigma$.  This corresponds to the boundary condition 
\[ f(0)=0 \]
a particular case of \eqref{int:1} with $\alpha=\beta =0$, $\mum =0$ and $\gamma >0$. We stress that, strictly speaking, this is not a process in $[0,\infty]$ but rather in $(0,\infty]$, as $0$ does not belong to the state-space. This is reflected in the fact that the boundary condition presented above does not describe a dense subset of $\cerp$, and thus $\Delta$ with this domain is not a Feller generator in $\cerp$. However, it is a Feller generator in $\mathfrak C_0(0,\infty]$, the subspace of $f\in \cerp$ such that $f(0)=0$. 

\item [3. ] Brownian motion \emph{killed} at $0$ (see e.g. \cite{knight}*{p. 71}). In this process the Brownian traveller is killed upon reaching $0$ for the first time and put at a graveyard, where it stays forever. It will be convenient to think that the graveyard is $\infty$; then $\wkill (t) = \brown (t)$ for $t< \sigma$ and $\wkill (t)=\infty$ for $t\ge \sigma$. This is an honest process, but, somewhat similarly as in Point 2., $0$ is not in its state-space, for the process leaves this point immediately to jump to $\infty$. This forces us to identify $0$ and $\infty$, and so the corresponding semigroup acts in the subspace of $f\in \cerp$ such that $f(0)=f(\infty)$. However, other authors, including Blumenthal \cite{blum}*{p. 56}, use the adjective \emph{killed} to describe the not honest process that is undefined after hitting $0$ for the first time, and thus do not distinguish between the minimal and the killed process. We will follow their example. 


\item [4. ] \emph{Reflected}, or  \emph{reflecting}, Brownian motion. This, by definition, is the process $\{|\brown (t)|, t \ge 0\}$. It is characterized by the boundary condition \eqref{int:1} with $\alpha=\gamma=0, \mum =0$ and $\beta >0$, that is, 
\[ f'(0)=0 .\]

\item [5. ] Brownian motion with \emph{slowly reflecting boundary} \cite{revuz}*{p. 421}, known also as \emph{sticky} Brownian motion \cite{liggett}*{p. 127} (in \cite{Engelbert} it is called \emph{sticky reflecting}) can be seen as a process lying between the reflected and stopped Brownian motions. Its boundary condition has the following form: 
\[ \alpha f''(0) = \beta f'(0);\]
for $\alpha =0$ and non-zero $\beta$ this reduces to the reflected Brownian motion; for $\beta =0$ and non-zero $\alpha $ this becomes the stopped Brownian motion.  Intuitively, this Brownian motion is partly reflected and partly stopped at $0$.  

\item [6. ] The \emph{elastic} Brownian motion (see e.g. \cite{ito}*{p. 45}), in turn, lies between the reflected and the minimal Brownian motions, and  is characterized by the following boundary condition:
\[ \beta f'(0) = \gamma f(0).\]
At the extreme case $\beta =0$ this is the minimal Brownian motion. For $\beta >0$ it is best described in terms of the Brownian motion's local time at $0$ (see also the next section). To wit, when the local time exceeds an independent, exponentially distributed  random variable with parameter $\frac\gamma\beta $, the process is left undefined, but until this happens it is indistinguishable from the reflected Brownian motion.  

\item [7. ] \emph{Elementary return} Brownian motion. In this process, discussed perhaps for the first time in \cite{fellera4}*{p. 3}, a particle reaching $x=0$ stays there for an exponential time with parameter $\alpha$ and then with probability $\delta \in [0,1]$ jumps back to a random point of $(0,\infty)$ distributed according to a Borel probability measure $\mu$ on $(0,\infty)$; with probability $1-\delta$ the process is no longer defined. The boundary condition in this case reads
\[ \tfrac \alpha 2 f''(0)+(1-\delta)f(0) = \delta \int [f -f(0)]\ud \mu. \]
\end{itemize}

As a side remark, we note here an important, if obvious, feature of exemplary processes generated by the Laplace operator $\Delta$ with boundary conditions \eqref{int:1}: all of them are shown above to be in a more or less direct way constructed from the standard Brownian motion (this thought will be made even more precise in the next section). It is engaging that a similar effect can be observed from the operator theory perspective. To explain this, recall first that the standard Brownian motion semigroup, say, $T$, is given by 
\begin{equation}\label{brownek} T(t)f(x) =\mathsf E_x f(\brown (t)), \qquad x \in \R, t \ge 0 \end{equation}
where $\mathsf E_x$ denotes expectation conditional on $\brown $ starting at $x$, and $f$ is, say, a~member of the space $\cer$, where $[-\infty,\infty]$ is a two-point compactification of the real line. This semigroup turns out to have a number of invariant subspaces that are isomorphic to $\cerp$. This allows considering 
images in $\cerp$ of restrictions of $T$ to these subspaces, and some of them turn out to describe the processes characterized by the boundary condition \eqref{int:1}; in particular, all processes corresponding to \eqref{int:1} with $\mum=0$ are of this form. See \cite{kosinusy,newfromold} for details; see also Section \ref{sec:cf}.

\subsection{Stochastic construction}\label{sec:sc}
Points 1. through 6. of Section \ref{sec:si} make it intuitively clear that coefficients $\alpha,\beta$ and $\gamma$ of \eqref{int:1} measure the intensities with which, in the related process, a particle touching $x=0$ is  
 stopped, reflected and/or removed, respectively. Moreover, point 7. suggests that the measure $\mum$ tells of the way in which the particles jump from the boundary to the interior. It is relations between these parameters that dictate in detail the behavior of the process.

However, such a description, though pleasing, is far from being  
satisfactory; there is a need for a more intrinsic, more intricate and more thorough, perhaps pathwise,  picture of the processes involved. Such a picture was provided in the beautiful and insightful paper \cite{itop} by It\^o and McKean in 1963.  To paint this picture here again, however, we first need to discuss two important notions: that of a symmetric L\'evy local time for a  Brownian motion (see Section \ref{sec:llt})
and that of a~subordinator (see Section \ref{sec:suby}). The theorem of It\^o and McKean will be presented in Section \ref{sec:ttoi}.

\subsubsection{L\'evy local time and L\'evy theorem}\label{sec:llt}
We recall that the symmetric L\'evy local time $L_0$ of a Brownian motion $\brown $, a stochastic process with values in $[0,\infty)$ in itself, is defined in one of a number of equivalent ways (see e.g. \cite{ito}*{pp. 42--45}, \cite{karatzas}*{pp. 201--217 and Ch. 6},  \cite{levybook} or  \cite{rosen}*{pp. 31--39}) as the following almost sure limit
\begin{equation}
    \label{eq:Levythm2}
L_0(t)\coloneqq \lim_{\varepsilon\to0+}\tfrac{1}{2\varepsilon}\int_0^t\1_{\{|\brown (s)|\leq \varepsilon\}} \ud s, \qquad  t\ge 0, 
\end{equation}
and it is clear from this definition that $L_0$ is at the same time the local time of $|\brown |$. A surprising yet fundamental theorem of L\'evy states that the distributions of $(\R^+)^2$-valued processes  $(|\brown (t)|, L_0(t))_{t\ge 0}$ and $(\wrefl (t), L(t))_{t\ge 0}$ coincide:  
\begin{equation}
    \label{eq:Levythm1}
(|\brown (t)|, L_0 (t))_{t\geq 0}\overset{d}{=} ( \brown \refl(t) , L(t))_{t\geq 0},
\end{equation}
where (cf. Section \ref{sec:inverses}) 
\begin{equation} L(t)\coloneqq \max_{s\in[0,t]}(-\brown (s) \vee 0) \mqquad{and}\wrefl (t)\coloneqq \brown (t) + L(t), \qquad t \ge 0. \label{deflocal} \end{equation}
In particular, $\wrefl$ is a (more manageable) version  of $|\brown |$, and $L$ is the local time for $\wrefl$. Furthermore,  
each path of $\wrefl$ is the solution to the Skorokhod problem discussed in Section \ref{sec:inverses}.

\subsubsection{Subordinators}\label{sec:suby} Subordinators are L\'evy processes that start from $0$ and have nondecreasing paths in $[0,\infty)$ \cite{bertoin}.  As noted in  \cite{bertoin}*{p. 71} `The terminology comes from the fact that when one time-changes a Markov process $M$ by an independent subordinator $U$, the resulting process $M\circ U$ is again Markov', see \cite{feller}*{Sec. X.7} for details. 

These special properties of subordinators are reflected in the fact that their characteristic exponents (the exponents in the L\'evy--Khintchine formula) have simpler form. First of all, the Gaussian coefficient of a subordinator is $0$, its drift coefficients is nonnegative, and its L\'evy measure is more regular close to $0$. The last statement means that,  as in \eqref{int:1}, $\czn (1\wedge x)\, \mum (\mud x)<\infty$. 
(As we shall see soon this affinity to \eqref{int:1} is not accidental at all.) Importantly, to describe subordinators one can work with Laplace transform instead of the Fourier transform: a subordinator $U=\{U(t), t\ge 0\}$ is characterized by 
\begin{equation}\label{int:2} \EE \e^{-\lam U(t)} = \e^{-t\Phi (\lam)}, \qquad \lam \ge 0, t \ge 0\end{equation}
where $\Phi$, termed the \emph{Laplace exponent} or \emph{cumulant}, is of the form 
\[ \Phi (\lam) = \beta + \int_0^\infty (1-\e^{-\lam x}) \, \mum (\mud x), \qquad \lam \ge 0, \]
with nonnegative drift coefficient $\beta $ and the L\'evy measure $\mum$ described earlier.

\subsubsection{The theorem of It\^o and McKean}\label{sec:ttoi}

Let $|\brown |$ be a reflected Brownian motion and let $L_0$ be its symmetric local time and $0$. Moreover, let $U$ be a subordinator, independent of $|\brown |$, satisfying at least one of the conditions \[ \beta >0 \mquad{or} \mum (0,\infty) =0.\] Such a subordinator has strictly increasing paths and thus it is possible to define these paths' generalized inverses $U^{-1}$, as presented in Section \ref{sec:inverses}. As the following result of \cite{itop} shows, the process $U \circ U^{-1}\circ L_0$ is of fundamental importance in describing  jumps of the Brownian motion related to \eqref{int:1}.

\begin{thm} Let 
    \begin{align}\label{itomckean}
        X(t)&= \begin{cases} ( U\circ U^{-1}\circ L_0 -L_0 + |\brown |) \circ A^{-1} (t), & \text{ if } t<\sigma_{\infty},\\
        \text{undefined}, & \text{ if } t\geq \sigma_{\infty},\end{cases} 
    \end{align}
    where 
    \begin{align}\nonumber
 A (t)& \coloneqq t+\alpha \theta(t), \quad t \ge 0,\nonumber\\
    \theta&\coloneqq U^{-1}\circ L_0,\nonumber \\
    \intertext{and the distribution of $\sigma_\infty$ is specified by}
   \Pb(\sigma_{\infty}>t | X)&=\e^{-\gamma \theta (t)}, \qquad t \ge 0.\label{new:cond}
    \end{align}
Then $X=\{X(t), t\ge 0\}$ is a Markov process, and its  generator is the Laplace operator $\Delta$ restricted to the domain where 
 the boundary condition \eqref{int:1} is satisfied.   \end{thm}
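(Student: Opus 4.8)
The plan is to follow the classical trinity of \emph{construction}, \emph{identification of the resolvent}, and \emph{verification of the Feller property}, exploiting the fact that away from the vertex the process is literally a Brownian motion, so the only work is at $x=0$. First I would set up the building blocks carefully: take $|\brown|$ a reflected Brownian motion with symmetric local time $L_0$ at $0$, the independent subordinator $U$ with Laplace exponent $\Phi(\lam)=\beta+\int_0^\infty(1-\e^{-\lam x})\,\mum(\mud x)$, and (on a possibly enlarged probability space) an independent exponential clock $\e$ governing the killing in \eqref{new:cond}. Because $U$ has strictly increasing paths under the stated hypothesis, $U^{-1}$ is a genuine (continuous, nondecreasing) inverse; the process $\theta=U^{-1}\circ L_0$ is then a continuous nondecreasing functional that increases only on the zero set of $|\brown|$, and $A(t)=t+\alpha\theta(t)$ is a strictly increasing continuous time change with continuous inverse $A^{-1}$. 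The jump part $J\coloneqq U\circ U^{-1}\circ L_0 - L_0$ is a pure-jump nondecreasing process, again supported on the zero set of $|\brown|$, and one checks that $J+|\brown|\ge 0$, so that $X$ defined by \eqref{itomckean} indeed takes values in $[0,\infty)$ (with the cemetery point $\infty$ attached when $t\ge\sigma_\infty$).

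The heart of the argument is to compute the resolvent of $X$ and show it equals $(\lam-\Delta)^{-1}$ with $\Delta$ restricted by \eqref{int:1}. I would do this by the standard excursion/strong-Markov decomposition: starting from $x>0$, the process behaves as free Brownian motion until the hitting time $\sigma$ of $0$, contributing the killed-at-$0$ resolvent $\rlag$ whose kernel is explicit in terms of $\e^{-\slam|x-y|}$ and $\e^{-\slam(x+y)}$; from then on everything is driven by the local-time clock at $0$. Writing $g(x)=\EE_x\rla f(X)$ and conditioning on the first passage to $0$ gives a renewal-type equation $\rla f(x)=\rlag f(x)+\e^{-\slam x}\,\rla f(0)$, the familiar harmonic-extension structure, so it only remains to determine the single constant $\rla f(0)$. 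For that I would apply the strong Markov property at $0$ together with the time change $A$ and the killing rule: the contribution of the sojourn at the origin is a geometric-type series whose ratio involves $\EE\e^{-\lam A(\cdot)}$ evaluated against the local-time clock, and here the Laplace exponent $\Phi$ enters precisely because $U$ sits under $U^{-1}\circ L_0$. A short computation — using $\EE\e^{-\rho\theta(\text{excursion endpoints})}$ and the Itô excursion measure for Brownian excursions below $0$, or equivalently the known formula for the local-time clock — yields
\begin{equation*}
\rla f(0)=\frac{\tfrac\alpha2\,u(\lam)\,f(0)+\int[\rlag f(y)-\text{(harmonic part)}]\,\mum(\mud y)+\cdots}{\tfrac\alpha2\slam^2 u(\lam)+\beta\slam u(\lam)+\gamma u(\lam)+\int(1-\e^{-\slam y})\,\mum(\mud y)},
\end{equation*}
which, once simplified, is exactly the value forced by requiring $h=\rla f$ to satisfy $(\lam-\Delta)h=f$ on $(0,\infty)$ together with \eqref{int:1}; I would present this cleanly by simply verifying that the candidate $h\coloneqq \rlag f+c\,\e^{-\slam\,\cdot}$ with the above $c$ satisfies \eqref{int:1} and conversely that \eqref{int:1} has a unique such solution.

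Once the resolvent identity $R_\lam^X=(\lam-\Delta|_{\eqref{int:1}})^{-1}$ is in hand, the remaining points are comparatively soft. Markovianity of $X$ follows from the strong Markov property of $\brown$, the independence and stationary-independent-increments structure of $U$, and the general principle (invoked in Section \ref{sec:suby}) that time-changing a Markov process by an independent subordinator — here the inverse local-time machinery packaged in $\theta$ and $A$ — preserves the Markov property; the additional killing via \eqref{new:cond} is a standard multiplicative-functional kill. For the Feller property one checks that $R_\lam^X$ maps $\cerp$ into $\cerp$ and is strongly continuous as $\lam\to\infty$, which is immediate from the explicit kernel: the Brownian part $\rlag$ is manifestly Feller, $\e^{-\slam\,\cdot}\in\cerp$, and the coefficient $c=c(\lam,f)$ is a bounded linear functional of $f$ depending continuously on $\lam$ with the correct $\lam\to\infty$ asymptotics (here one uses $u(\lam)\to0$ appropriately and $\int(1\wedge x)\,\mum(\mud x)<\infty$). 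By the Hille--Yosida--Ray theorem this resolvent generates a Feller semigroup whose generator is, by construction, $\Delta$ restricted to \eqref{int:1}. The main obstacle I anticipate is the bookkeeping in the excursion computation that pins down the constant $c$: one must correctly account for the \emph{simultaneous} effects of stickiness (the $\alpha\theta$ drag in $A$), of the subordinator jumps (which move the particle from $0$ into $(0,\infty)$ according to $\mum$), and of killing (the $\gamma\theta$ exponential), all of which are intertwined through the single local-time clock; getting the normalizations and the interplay with the Itô excursion measure exactly right, rather than off by a factor involving $\slam$, is where care is needed, and it is also where the hypothesis ``$\beta>0$ or $\mum(0,\infty)=0$'' and the condition ``$\alpha+\beta>0$ or $\mum(0,\infty]=\infty$'' must be used to rule out degenerate cases.
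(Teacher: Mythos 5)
Your outline has the right overall shape, and it is in fact the route of It\^o and McKean themselves, which the paper only cites (Section \ref{sec:ttoi}) rather than reproves: decompose at the first hitting time of $0$ to get the renewal identity $\rla f(x)=\rla^0 f(x)+\e^{-\slam x}\,\rla f(0)$, pin down the single constant $\rla f(0)$ by an excursion/local-time computation, check that the resulting function satisfies \eqref{int:1}, and invoke Hille--Yosida together with positive-maximum-principle uniqueness to identify the generator. The paper's own machinery proves the analogous statements (on star graphs, subsuming the half-line) by a different, analytic route: explicit resolvents when $\mum$ is finite, approximation by truncated measures with Trotter--Kato, identification of the domain, and a separate computation of the $\lam$-potential of the local time. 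So your plan is a genuinely different, and historically the original, approach --- but as written it has two real gaps.

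First, the decisive step, the value of $\rla f(0)$ for the specific process \eqref{itomckean}, is never actually computed: your displayed formula contains an undefined quantity $u(\lam)$, a placeholder ``(harmonic part)'' and an ellipsis, and the subsequent verification that the candidate $h=\rla^0 f+c\,\e^{-\slam\,\cdot}$ satisfies \eqref{int:1} only shows that \emph{if} the probabilistic constant agrees with the analytically forced one then everything matches. The entire content of the theorem is the probabilistic evaluation of that constant --- equivalently of the $\lam$-potential of the clock $\theta=U^{-1}\circ L_0$ with the stickiness $\alpha\theta$, the jumps encoded by $U$, and the killing $\gamma\theta$ all read off a single local time --- and this is exactly the part you defer, acknowledging it as ``the main obstacle.'' Second, Markovianity is justified by the wrong principle: the fact you quote concerns $M\circ U$ for an independent subordinator $U$, whereas here the time change is $A^{-1}$, the inverse of a strictly increasing continuous additive functional (so that each $A^{-1}(t)$ is a stopping time), and, more seriously, the strong Markov and Feller properties of the pre-time-change process $|\brown|+U\circ U^{-1}\circ L_0-L_0$ are themselves the delicate point when $\mum$ is infinite, since the jumps from the boundary then accumulate; one needs either the excursion-theoretic construction in the spirit of Blumenthal, or a concatenation argument for finite $\mum$ followed by a limiting procedure (this is what the paper does in Sections \ref{sec:finite}--\ref{sec:stocha}), or It\^o--McKean's original argument. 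Without these two ingredients the proposal is a plausible plan rather than a proof.
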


\subsection{The goal of the paper}

\newcommand{\gwiazda}[2]{\begin{tikzpicture}
      \node[circle,fill=blue,inner sep=0pt,minimum size=4pt] at (360:0mm) (center) {};
    \foreach \n in {1,...,#1}{
        \node [circle,fill=blue,inner sep=0pt,minimum size=0pt] at ({\n*360/#1}:2cm) (n\n) {};
        \draw (center)--(n\n);} 
    \foreach \n in {1,...,#1}{
        \node [circle,fill=blue,inner sep=0pt,minimum size=0pt] at ({\n*360/#1}:2.5cm) (n\n) {};
        \draw[dashed] (center)--(n\n);}   
        \node[below] at (-0.05,-0.15) {$\zero$};   
\end{tikzpicture}}
\newcommand{\stargraphn}[2]{\begin{tikzpicture}
  \node[circle,fill=white,inner sep=0pt,minimum size=3pt] at (360:0mm) (center) {};
    \foreach \n in {1,...,#1}{
        \node [circle,fill=black,inner sep=0pt,minimum size=2pt] at ({\n*360/#1}:#2cm) (n\n) {};
        \draw (center)--(n\n);} 
\end{tikzpicture}}

\begin{figure}
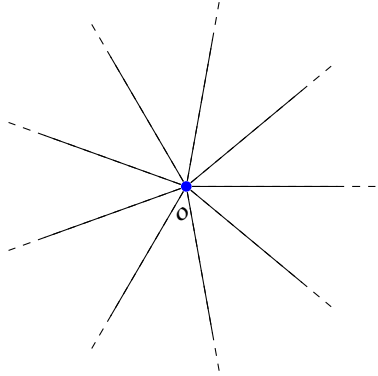

\gwiazda{9}{2} 
\caption{Infinite star graph $K_{1,\ka}$ with $\ka=9$ edges and graph center $\zero$.}\label{slg}
\end{figure} 
The goal of our paper is to provide both \emph{analytic} and \emph{stochastic} description of all possible Brownian motions on star graphs, a description that would be analogous to that we owe to Feller, Wentzell, It\^o and McKean, as detailed in Sections \ref{sec:bco}--\ref{sec:sc}.  In our use here, the adjective \emph{analytic} refers to description of generators and/or resolvents of Feller semigroups
associated with the processes of interest, and the \emph{stochastic} --- to description of their distributions and paths.  Put otherwise: the paper presents a blend of analytic and stochastic reasonings that give a thorough picture of Laplace-operator generated processes on star graphs. The main idea is that these processes can to much extend be described by similar means as those generated by the Laplace operator on the half-line. 

\emph{The analytic approach} can be summarized in the following steps. 
\begin{itemize} 
\item [1. ] In the case of $\alpha + \beta>0$, $\gamma =0$ and $\mum=0$, the boundary condition \eqref{int:1} corresponds to a mixture of reflected and stopped Brownian motions, that is, to the slowly reflecting Brownian motion a.k.a. sticky Brownian motion. A counterpart of this boundary condition for processes on star graph
corresponds to the classical Walsh's spider process and its version with sticky graph's center (see Section \ref{sec:wbm}). In both scenarios, the resolvent can be calculated by elementary means. 

\item [2. ] Introducing $\gamma >0$ makes the process of Point~1. not honest:  it is no longer defined when its local time at $x=0$ exceeds an independent exponential random variable with parameter $\gamma $. Up to that random time, the process does not differ from the one described in Point~1 (see Section~\ref{sec:tcom=0}). As in the previous case, the related resolvent can be calculated by elementary means. 

\item [3. ] If, in the set-up of Point 1., $\mum $ is no longer zero but finite, the related process, weather on the half-line or on the graph, can be seen as a~continuation of that of Point 2., and 
behaves as follows. When the local time at $x=0$ exceeds an independent exponential random variable with parameter $\delta\coloneqq \|\mum\| $ the process restarts at a random point distributed according to the probability measure $\delta^{-1}\mum$. If, additionally, $\gamma >0$, the process restarts with probability $\frac \delta{\gamma +\delta}$ and is left undefined with probability $\frac{\gamma}{\gamma + \delta}$ (see Section \ref{sec:finite}). Its resolvent can be expressed in terms of $\mum$ and the resolvent of the process from Point~2. with $\gamma $ replaced by $\delta$. 

\item [4. ] The resolvent and the semigroup related to \eqref{int:1} with $\alpha +\beta >0$ and infinite $ \mum$, can be obtained as a limit of resolvents and semigroups of the type of Point~3. as follows. We let $\delta (\eps) \coloneqq \mum (\eps, \infty)$ and $\mum_\eps $ to be $\mum $ restricted to $(\eps, \infty)$ and normalized by $\delta (\eps)$. Then, as $\eps \to 0$, the resolvents and semigroups corresponding to such $\mum_\eps$ and $\delta_\eps$ converge to the resolvent corresponding to the original $\mum$. The same is true for the counterparts of these objects on the graphs. See Section \ref{sec:tcoim}.   Notably, in this approximation, $\delta(\eps)$, denoting intensity of jumps, increases to infinity, and the probability measures $\mum_\eps$ converge weakly to the Dirac measure at $0$.  

\item [5. ] The resolvent and the semigroup related to \eqref{int:1} with $\alpha +\beta =0$ and infinite $ \mum$, can be approximated similarly as in Point~4., but here the intensity of jumps grows to infinity yet faster. Again,  the same is true for the counterparts of these objects on the graphs. \end{itemize}

\emph{The stochastic approach} proceeds along an analogous, if unique, line. It also starts with simpler processes corresponding to the case where $\mum$ is zero or at least finite, and is later able to describe those more complex by means of approximation. However, in our estimation, it reaches deeper than the analytic one, as it provides  
a more detailed picture of the processes of interest. For example, the analytic approach, as detailed in Points 4. and 5. above, suggest that in the case of truly infinite $\mum$, jumps from the boundary may be infinitely small and occur infinitely often in finite intervals. The analytic approach not only confirms and elucidates this intuition, but provides explicit formulae for these processes.

An obstacle to overcome in the stochastic analysis is the fact that, unfortunately, limits involving formulae akin to \eqref{itomckean} are not easy to handle. This is because the local time featured in it is a discontinuous functional of the paths. Therefore, rather than \eqref{itomckean} we will use counterparts of  
its alternative version. It reads
   \begin{equation}
       \label{eq:blum}
      X(t) =\begin{cases}
       (\brown  +U\circ U^{-1}\circ L)\circ A_\alpha^{-1}(t), &   \text{ if } U^{-1}\circ{L}\circ A^{-1}_{\alpha}(t) <\zeta,\\
         \text{undefined},  & \text{ if }  U^{-1}\circ{L}\circ A^{-1}_{\alpha}(t)\geq\zeta,
    \end{cases}
   \end{equation}
where \begin{itemize}\item [(a) ] $\zeta$ is an exponentially distributed  random variable with parameter $\gamma$, independent of the already introduced $\brown $ and $U$ and $L$ is the running maximum of $-\brown $ (see \eqref{deflocal}),  
\item [(b) ] $A_\alpha(t)\coloneqq t+\alpha L(t), t \ge 0$ is strictly increasing and continuous, and 
\item [(c) ] $A_\alpha^{-1}$ is its inverse. \end{itemize}

Using \eqref{eq:Levythm1}, one can see that the processes defined in \eqref{itomckean} and \eqref{eq:blum} have the same distribution. The advantage of the latter is that 
it represents $X$ as a function of $\brown , U, L$ and $\zeta$, the processes and the random variable that are easier to handle. In particular, instead of involved $L_0$, we can work with the running maximum $L$, that depends on paths in a continuous way.  Moreover, a somewhat blurred picture of \eqref{new:cond} is now replaced by the role played by $\zeta$. 

We hasten to add that \eqref{eq:blum} was known to It\^o and McKean, see \cite[\S 12]{itop}. 
However, they tended to focus on \eqref{itomckean} instead because this formula allowed them to use excursion theory   to study the processes of interest  and  calculate their resolvents. Our final remark here is that $L_0^X$ featured in \eqref{eq:blum} and defined by  $L_0^X(t)\coloneqq U^{-1}\circ{L}\circ A^{-1}_{\alpha}(t)$ is a local time of $X$ at 0 --- see \cite[p. 69]{blum}, cf. \cite[\S 14]{itop}.


\subsection{Organization of the paper} 

The main body of the paper, that is, Sections \ref{sec:looa}--\ref{sec:stocha}, are preceded by preliminaries in Section \ref{sec:prelim}, where we recall information on Feller semigroups, cosine families,  and Skorokhod reflection problems. 
In Section \ref{sec:looa} the general Feller--Wentzel boundary conditions for the Laplace operator on star graph are discussed (see relation \eqref{loo:0}).  
The fundamental Section~\ref{sec:wbm} is devoted to Walsh's spider process.  Section \ref{sec:tcom=0} in turn describes more general processes with continuous paths, that is, the processes that can be slowed down at the graph center and/or killed when their local time at the graph center reaches the level of an independent exponentially distributed random variable --- they are characterized by the fact that the measure $\mum$ in \eqref{loo:0} is zero.  
Next, in Section \ref{sec:finite}, we commence the analysis of  processes with discontinuous paths: after `a sufficient number of reflections' such processes can restart in the interior of the state-space by a jump --- in the related boundary condition the measure $\mum$ is nonzero but finite.  Sections \ref{sec:tcoim} and \ref{sec:stocha} are devoted to the case where these jumps are infinitesimally small but occur infinitely often in finite intervals; this is the case of infinite $\mum$. 
The most important results of the paper are summarized in Section \ref{sec:synopsis}. This synopsis is preceded by Sections \ref{sec:ltot} and \ref{sec:calculation} where in particular the $\lambda$-potential of the local time is calculated, and an explicit formula for the time-change processes is given.

\vspace{-0.5cm}
\section{Preliminaries}\label{sec:prelim} 

\subsection{Feller semigroups}\label{pofs}

Let $S$ be a compact metric space, and $\coss$ be the space of continuous functions on $S$.  A Feller semigroup  (see e.g. \cites{bass,kniga,kallenbergnew}) 
is a strongly continuous family of positive contraction operators $\rod{T(t)}$  in $\coss$ such that $T(0)=I_{\coss}$ (the identity operator) and 
\[ T(t)T(s) = T(t+s) \qquad s,t \ge 0.\]
Because of the Riesz representation theorem, for each $x\in S $ and $t\ge 0$ there is a Borel measure $\textsf m_{x,t}$ on $S$ such that $\textsf m_{x,t} (S)\le 1$ and 
\begin{equation}T(t)f (x) = \int_S f \ud \textsf m_{x,t}, \qquad f \in \coss; \label{intro:1} \end{equation}
if $\textsf m_{x,t}(S) = 1$ for all $x$ and $t$, the process is said to be conservative or honest.

It is well-known that with each Feller semigroup one can associate a~Markov  process \rod{X(t)} with c\`adl\`ag paths \cite{ethier,kallenbergnew,rosen,rogers} (\ie paths that are right-continuous and possess left limits) in such a way that 
\begin{equation} \label{intro:2} T(t)f(x) = \mathsf E_x  f(X(t))\mathsf 1_{\{t< \tau\}} , \qquad  f\in \coss, t\ge 0, x\in S\end{equation}
where $\mathsf E_x $ denotes expectation conditional on $X(0)=x$ and $\tau$ is the lifetime of the process, that is, $\tau=\tau (\omega)$ is the random time up to which the path $X(t,\omega)$ is defined. Comparing Eqs. \eqref{intro:1} and \eqref{intro:2} we see that $\textsf m_{x,t}(\Gamma),$ {for a Borel set} $ \Gamma \subset S$ should be interpreted as the probability that  \rod{X(t)}  starting at $x$ at time $0$ will be in $\Gamma$ at a time $t\ge 0$. The measures $\textsf m_{x,t}$, together with the Markov property (which translates into Chapman--Kolmogorov equations), determine finite-dimensional distributions of \rod{X(t)}. If the distribution of $X(0)$ is specified, these finite-dimensional distributions, in turn, when combined with the requirement that the paths are \cadlag ,  
determine the distribution of \rod{X(t)} in the Skorokhod space $D([0,\infty), S)$.

This statement on uniqueness of the process, however, comes with a warning. The fact that for a certain process \rod{Y(t)} and a Feller semigroup we have 
\[  T(t)f(x) = \mathsf E_x  f(Y(t))\mathsf 1_{\{t< \tau\}} , \qquad  f\in \coss, t\ge 0, x\in S\]
neither implies that \rod{Y(t)} coincides with \rod{X(t)} nor that \rod{Y(t)} has Markov property. 

To elaborate on this succinct and a bit ambiguous statement, let $S\coloneqq [-\infty,\infty]$ be the two-point compactification of the real line, and $Y$ with values in $S$ be defined as follows. Let $\xi$ be a standard normal variable, and let $Y(t) \coloneqq x + \sqrt t \xi, t \ge 0$ provided that we start from $x\in \R$, and $Y(t) = \pm \infty, t \ge 0$ provided that we start at $\pm \infty$.  This process is manifestly not Markov because to determine its future behavior it does not suffice to know its present state, but rather its entire past. Nevertheless, with a good amount of good will, one can think of 
$\E_x f(Y(t)) \coloneqq \E f(x + \sqrt t \xi)$ for $x\in \R$, and $\E_{\pm \infty}f(Y(t)) = f(\pm \infty), t \ge 0, f\in \coss$, to see that $\E_x f(Y(t))$ coincides with $\E f(x +B(t))$, where $\rod{B(t)}$ is the standard one-dimensional Brownian motion. Our point is that, despite this fact, \rod{Y(t)} and \rod{B(t)} are clearly different.

To proceed, Feller semigroups are conveniently described by their generators. The generator $\gen$ of a Feller semigroup is defined by 
\[\gen f = \lim_{t\to 0} t^{-1} (T(t)f - f) \]
on the domain $\dom{\gen }$ composed of $f$ such that the above strong limit exists; that is, the right-hand side converges to $\gen f$ uniformly on $S$. It is well-known that the generator $\gen$ characterizes the semigroup uniquely; in particular, different semigroups have different generators.  Moreover, an operator $\gen$ is the generator of a Feller semigroup (shortly: a Feller generator) in $\coss$ iff the following three conditions are met 
\begin{itemize}
\item [1. ] $\gen$ is densely defined, 
\item [2. ] $\gen$ satisfies the positive-maximum principle,
\item [3. ] $\gen$ satisfies the range condition: for any $g \in \coss$ and $\lam > 0$ there is an $f \in \dom{\gen }$ such that $\lam f - \gen f = g.$ \end{itemize} 
The semigroup generated by $\gen$ will in what follows be denoted $\sem{\gen}.$ 

In defining Feller semigroups it is customary to require also that the related process is honest or conservative (which comes down to the requirement that $\mathsf 1_S \in \dom{\gen }$ and $\gen \mathsf 1_S= 0$) but in this paper we will allow the process to be not honest. It is easy to see that the process is honest iff $T(t)\mathsf 1_S = \mathsf 1_S$ for all $t\ge 0$ and this holds iff $\tau$ of equation \eqref{intro:1}, that is, the lifetime of the process, is a.s. equal to $\infty$. A common way of making a process honest comes down to adjoining an additional, isolated point, called graveyard, coffin state or cemetery, and commanding the process to stay there for all $t\ge \tau$. We note that this is done at the cost of changing the original Banach space to the space of continuous functions on the enlarged state-space.

\newcommand{\rmi}{R_\nu} 
\newcommand{\lil}{\lim_{\lam \to \infty}}
An alternative description of a Feller semigroup is provided via Feller resolvents. A family $\rla, \lam >0 $ of non-negative operators in $\coss$ is said to be a Feller resolvent if the following conditions are met:  
\begin{itemize}
\item [1. ] the Hilbert equation holds:
\begin{equation}\label{owc:3} (\lam- \nu) \rmi \rla = \rmi - \rla, \qquad \lam,\nu >0, \end{equation}
\item [2. ] for each $f \in \coss$, $\lil \lam \rla f=f,$ 
\item [3. ] $\lam \rla \mathsf 1_S \le \mathsf 1_S$ for all $\lam >0$.
\end{itemize} 

It may be argued (see the already cited references) that for each Feller resolvent there is a Feller generator $\gen$ such that $\rla = \rez{\gen }$, and thus also the related Feller semigroup. Moreover, the related process is conservative iff $\lam \rla \mathsf 1_S = \mathsf 1_S$ for all $\lam >0$.

A word about terminology: a family $\rla , \lam >0$ of bounded operators is said to be a pseudoresolvent, if it satisfies the Hilbert equation. This family is said to be regular, if additionally condition 2. given above holds. 

We note also the following crucial relation between the generator, the resolvent and the semigroup: 
\begin{equation*}
\rez{\gen} f (x) = \rla f (x) = \E_x \czn \e^{-\lam t} g(X(t)) \ud t, \quad x\in S, \lam >0, f\in \coss . \end{equation*}

\subsection{Cosine families} \label{sec:cf} 
A strongly continuous family $\{C(t), t \in \R\}$ of operators in a Banach space $\mathsf F$ is said to be a cosine family iff $C(0)$ is the identity operator and 
\[ 2 C(t) \coss = C(s+t) + C(t-s), \qquad t,s\in \R.\]
The generator of such a family is defined by 
\[\gen f = \lim_{t\to 0} 2t^{-2}{(C(t)f- f)}\]
for all $f \in \mathsf F$ such the limit on the right-hand side exists. 
For example, in $\cer$, the space of continuous functions on the two-point compactification of the real line,  there is the \emph{basic cosine family} given by  
\begin{equation}\label{intro:0} C(t) f(x) = \pol [ f(x+ct) + f(x-ct) ], \qquad x \in \R, t \in \R,\end{equation}
where, for simplicity of the formula, $c\coloneqq \frac {\sqrt 2}2 $. 
Its generator is the operator $f\mapsto \gen f= \frac 12 f''$ with domain composed of twice continuously differentiable functions on $\R$ such that $f'' \in \cer$. 

Each cosine family generator is automatically the generator of a strongly continuous semigroup (but not vice versa). The semigroup  such operator  generates is given by the \emph{Weierstrass formula} (see e.g. \cite{abhn}*{p. 219})
\begin{equation}\label{cosf:1}T(0)f =f    \mqquad {and} T(t) f  = {\textstyle \frac 1{2\sqrt{\pi t}}} \int_{-\infty}^\infty \e^{-\frac {s^2}{4t}} \coss f  \ud s, \qquad t >0, f \in \mathsf F.\end{equation} 
This formula expresses the fact that the cosine family is then, in a sense,  a more fundamental object than the semigroup, and properties of the semigroup can be hidden in those of the cosine family. 

Let us take a closer look at three important, simple examples. First of all, 
we recall that the semigroup of \emph{unrestricted} Brownian motion \eqref{brownek} is generated by the particular case of $\gen$ considered above with $c=\frac {\sqrt 2} 2$, and so the semigroup has the form \eqref{cosf:1}. Next, we note 
the two subspaces of $\cer$ formed by even and odd functions are invariant under $\{C(t), t \in \R\}$. To this end, it suffices to note that 
\begin{equation} (C(t)f)^o = C(t)f^o \mqquad{and} (C(t)f)^e=C(t)f^e, \quad f \in \cer, t \in \R,\label{symetrie}\end{equation}
where $f^e$ and $f^o$ denote the even and the odd parts of $f$, respectively, that is, \[ f^e(x)\coloneqq \tfrac 12 (f(x) +f(-x)) \mqquad{and} f^o(x) \coloneqq \tfrac 12 (f(x) - f(-x)), \qquad x \in \R.\] Also, the subspace of even functions is isometrically isomorphic to $\cerp$ with isomorphism $\mc I_e$ mapping a function in $\cerp$ to its even extension in $\cer$. It follows that 
\[ C_{\mathsf {ref}} (t)\coloneqq \mc I^{-1}_e C(t) \mc I_e, \qquad t \in \R \]
defines a strongly continuous cosine family $\{C_{\mathsf {ref}} (t), t \in \R\}$ in $\cerp$.  Since the even extension of a member $f$ of $\mathfrak C^2[0,\infty]$ belongs to $\mathfrak C^2[-\infty, \infty]$ iff $f'(0)=0$, the domain of the generator, $\gen_{\mathsf {min}}$, of this family is composed precisely of $f \in \mathfrak C^2[0,\infty]$ with this property, and we have $\gen_{\mathsf {min}} f = \frac 12 f''$.  Hence, also the semigroup of reflected Brownian motion (see Sections \ref{sec:si} or \ref{sec:srac}) is  of the form \eqref{cosf:1}. For future reference we note that 
\begin{equation} \label{odbity}   C_{\mathsf {ref}} (t)f(x) = \tfrac 12 \left (f(x+ct) + f(|x-ct|)\right ), \qquad t\in \R, x \in \R.\end{equation} 
A similar analysis of the subspace of odd functions reveals that 
\begin{equation} \label{minimalny}   C_{\mathsf {min}} (t)f(x) = \tfrac 12 \left (f(x+ct) + \sgn (x-ct) \cdot f(|x-ct|)\right ), \quad t\in \R, x \in \R\end{equation} 
defines a strongly continuous cosine family in $\mathfrak C_0(0,\infty]$, and that its generator coincides with that of the semigroup describing minimal Brownian motion.

As we shall see in the main body of the article, also a couple of semigroups describing Brownian motions on star graphs are given by the Weierstrass formula.

\subsection{Skorokhod's problems, generalized inverses}\label{sec:inverses} 

Let $\omega\colon \R^+\to \R$ be a c\`{a}dl\`{a}g function such that  $\omega(0)\geq 0$. A pair $(\omega\refl,\ell)$ of c\`{a}dl\`{a}g functions $\omega\refl,\ell \colon \R^+\to \R$ is called a solution to the Skorokhod reflection problem for $\omega$ if 
\begin{itemize}
\item $\omega\refl$ is nonnegative,
\item $\omega\refl =\omega +\ell$,  whereas 
\item $\ell$ is nondecreasing with $\ell(0)=0$, and increases only when $\omega\refl$ is at $0$, that is,
\begin{equation}\label{eq:S4}
\int_{[0,\,\infty)}\1_{\{\omega\refl(t)>0\}}{\rm d}\ell(t)=0.
\end{equation} 
\end{itemize}
It is well known (see e.g. \cite{karatzas}*{p. 210}, where, however, only continuous $\omega$s are considered; the case of  c\`{a}dl\`{a}g functions is discussed in  \cite{asmussen}*{p. 250}, \cite{refle} and \cite{bookandrey}) that a unique  pair that solves  the Skorokhod's reflection problem is given by the formulae
\begin{equation}\label{srp:sol}
\omega\refl (t) \coloneqq  \omega (t) +\ell (t)  \mqquad{and}  \ell(t)\coloneqq \sup_{s\in[0,t]}(-\omega(s)\vee 0)
\qquad t\geq 0. \end{equation}
The  $\omega\refl $ is termed Skorokhod's reflection of $\omega$ at 0.

 In the generalized Skorokhod reflection problem, besides   $\omega$ described above, we are also given a  strictly increasing   c\`adl\`ag function $\psi\colon \R^+ \to \R$ such that $\psi(0)=0$, and search for a pair of functions $\omega\refl,\ell\colon \R^+\to \R$ with the following properties: 
    \begin{itemize}
        \item $\omega\refl$ is nonnegative,
        \item $\omega\refl =\omega + \psi \circ \ell,$ whereas 
        \item $\ell$ is a non-decreasing continuous function with $\ell(0)=0$ and
        \[
        \int_0^\infty \1_{\{\omega(t)>0\}} \ud \ell (t)=0.
        \]
    \end{itemize}
Such a pair $(\omega\refl,\ell)$ is said to be a solution to the generalized Skorokhod problem with noise $\omega$ and  reflection governed by $\psi$.

As expounded in \cite{Pilipenko+Sarantsev:2024} (see also \cite{bookandrey}) the analysis of this problem involves generalized inverses. To recall, if $\psi$ is an increasing c\`adl\`ag function such that $\lim_{t\to\infty} \psi(t)=+\infty$, then its generalized inverse is defined by 
\[ \psi^{-1}(t)\coloneqq \inf\{s\geq 0\colon \psi(s)>t\}, \qquad  t\geq0. \]
Figure \ref{inverses} shows an example of a function $\psi$, its generalized inverse $\psi^{-1}$, and  composition $\psi \circ \psi^{-1}$.


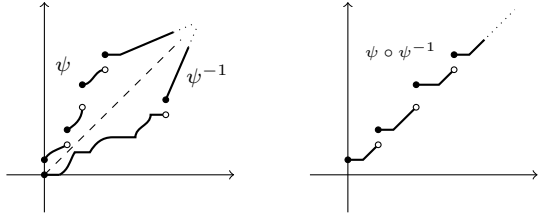
\begin{figure}
\begin{tikzpicture}
\begin{scope}
\draw [->] (-0.5,0)--(2.5,0);
\draw [->] (0,-0.5)--(0,2.3);
\draw[black,fill=black] (0,0.2) circle (.25ex);
\draw [thick] (0,0.2) to [out=60,in=210] (0.3,0.4);
\draw[black,fill=white] (0.3,0.4) circle (.25ex);
\draw[black,fill=black] (0.3,0.6) circle (.25ex);
\draw[thick] (0.3,0.6) to  [out=30,in=270] (0.5,0.9);
\draw[black,fill=white] (0.5,0.9) circle (.25ex);
\draw [thick] (0.5,1.2) to [out=0,in=180] (0.8,1.4);
\draw[black,fill=white] (0.8,1.4) circle (.25ex);
\draw[black,fill=black] (0.5,1.2) circle (.25ex);
\draw[black,fill=black] (0.8,1.6) circle (.25ex);
\draw[thick] (0.8,1.6) to (1,1.6) to (1.7,1.9);
\draw[dotted] (1,1.6) to (2,2.025);
\node [above] at (0.25,1.2) {\Small{$\psi$}};
\draw [dashed] (0,0)--(1.8,1.8);
\node [above] at (2.15,1.05) {\Small{$\psi^{-1}$}};
\draw[black,fill=black] (0,0) circle (.25ex);
\draw [thick] (0,0) to (0.2,0);
\draw [thick] (0.2,0) to [out=30,in=240] (0.4,0.3) to (0.6,0.3) to  [out=60,in=180] (0.9,0.5) to (1.2,0.5) to [out=90,in=270] (1.4,0.8) to (1.6,0.8); 
\draw [thick] (1.6,1) to (1.9,1.7);
\draw[dotted] (1.6,1) to (2.025,2);
\draw[black,fill=white] (1.6,0.8) circle (.25ex);
\draw[black,fill=black] (1.6,1) circle (.25ex);
\end{scope}
\begin{scope}[shift={(4,0)}]
\draw [->] (-0.5,0)--(2.5,0);
\draw [->] (0,-0.5)--(0,2.3);
\node [above] at (0.7,1.4) {\tiny{$\psi\circ\psi^{-1}$}};
\draw[black,fill=black] (0,0.2) circle (.25ex);
\draw [thick] (0,0.2)--(0.2,0.2)--(0.4,0.4);
\draw[black,fill=white] (0.4,0.4) circle (.25ex);
\draw [thick] (0.4,0.6)--(0.6,0.6)--(0.9,0.9);
\draw[black,fill=black] (0.4,0.6) circle (.25ex);
\draw[black,fill=white] (0.9,0.9) circle (.25ex);
\draw [thick] (0.9,1.2)--(1.2,1.2)--(1.4,1.4);
\draw[black,fill=black] (0.9,1.2) circle (.25ex);
\draw[black,fill=white] (1.4,1.4) circle (.25ex);
\draw [thick] (1.4,1.6)--(1.6,1.6)--(1.8,1.8);
\draw [dotted] (1.6,1.6)--(2.2,2.2);
\draw[black,fill=black] (1.4,1.6) circle (.25ex);
\end{scope}
\end{tikzpicture}
\caption{\Small{Generalized inverses. 
If a $u\ge 0$  is a point of continuity of $\psi$ and $\psi (u) =t$ then $\psi^{-1} (t) =u$, and so $\psi \circ \psi^{-1} (t)=t$. If, however, at a  $u\ge 0$ there is a jump of $\psi$, that is, $\psi(u-) < \psi (u)$ (by default $\psi(0-)=0$), then  $\psi^{-1} (t) = u $ for all $t\in [\psi(u-), \psi(u)]$, and so, for such $t$, we have $\psi \circ \psi^{-1}(t)=\psi (u)$.} 
}\label{inverses}
\end{figure}

The theorem alluded to above states that there exists a unique solution to the generalized Skorokhod's problem provided that (a) $\omega$ does not have negative jumps, and (b) $\psi$ is strictly increasing with $\psi (0)=0$ and $\lim_{t\to\infty} \psi(t)=+\infty$. 
The solution is given explicitly: 
\begin{align*} 
\omega\refl &= \omega +\psi \circ \psi^{-1} \circ k\mquad { and }
\ell=\psi^{-1}\circ k 
\intertext{
where}k(t)&\coloneqq \sup_{s\in[0,\,t] }(-\omega(s)\vee 0), \qquad t\geq 0.\end{align*}

\section{Laplace operators on a star graph as generators}\label{sec:looa}
For a given natural number $\ka \ge 2$,  let $S$ be the union of $\ka$ copies of the compactified right half-line $[0,\infty]$, with all left ends identified, and let $\coss$ be the space of continuous functions on $S$. In other words, $S$ is the star graph $K_{1,\ka}$ depicted at Figure \ref{slg}, and continuous functions on $S$ can be identified with continuous functions $f$ on \begin{equation}\label{tildees} \widetilde S\coloneqq \bigcup_{i\in \kad} \left (\{i\} \times [0,\infty]\right ), \qquad \text{where } \kad \coloneqq \{1,\dots, \ka\}, \end{equation}
that satisfy $f(i,0)=f(j,0), i,j\in \kad $; their common value at $0$ will be denoted $f(\zero)$, and the graph's center will be denoted $\zero$.  
Alternatively, members of $\coss$ can be viewed as sequences $(f_i)_{i\in \kad}$ of elements of $\cerp$, where $f_i$ is defined by  $f_i(x)=f(i,x), x \ge 0$, such that $f_i(0)=f_j(0), i,j\in \kad$.

\subsection{Boundary conditions for the Laplace operator in $\coss$}\label{sec:bcft}

Let $\mathfrak C^2(S)$ be the subspace of $\coss$ composed of $f$ such that each $f_i, i\in \kad$ defined above belongs to $\mathfrak C^2[0,\infty]$  (see Introduction) 
and we have $f''_i(0)=f''_j(0)$ for all $i,j\in \kad$. For such $f$, we define \[ \Delta f = \pol f''\] where $f''(i,x) = f_i''(x), i\in \kad, x \ge 0$. 

Following the arguments of Feller \cite{fellera3,fellerek} (see also \cite[{p. 186}]{ito}, \cite[{pp.~39--40}]{mandl} or our Appendix \ref{bckon}, where they are repeated with necessary changes), one can show that if a restriction, say, $\gen$, of $\Delta$ is a Feller generator, then there are non-negative constants $\alpha, \beta_i,i\in \kad $, and $\gamma$, and a (possibly infinite) positive Borel measure $\mum$ on $S_{\zero} \coloneqq S\setminus \{\zero\}$ such that 
$\int_{S_{\zero}} (1\wedge x)\, \mum (\mud x)<\infty$, at least one of the conditions $\alpha + \sui \beta_i >0$ and $\mum (S) =\infty$ holds,  and
\begin{equation}\label{loo:0} {\tfrac \alpha 2} f''(\zero) - \sum_{i\in \kad} \beta_i f'_i(0) + \gamma f(\zero) = \int_{S_{\zero}} [f(x) - f(\zero)]\, \mum (\mud x), \qquad f \in \dom{\gen }. \end{equation}	
We stress that, as in the case of \eqref{int:1}, from the outset we exclude the, theoretically possible but uninteresting, possibility of $\mum$ having a mass at any of the points $(i,\infty), i\in \kad$. Moreover, condition $\int_{S_{\zero}} (1\wedge x)\, \mum (\mud x)<\infty$ means that the function $\{i\}\times [0,\infty)\ni (i,x) \mapsto 1\wedge x$ is integrable with respect to $\mum\vert_{\{i\}\times [0,\infty)}$ for each $i\in \kad$.

It is one of the main goals of our paper to show that the converse is also true: the restriction of $\Delta$ to the domain where \eqref{loo:0} is satisfied, is a Feller generator. Moreover, we would like to construct the related process using natural building blocks. The case where $\mum$ is zero has been thoroughly described in \cites{kostrykin2012}--\cite{kostrykin2}, and it turns out that this is precisely the case in which the paths of the process are continuous; these processes are discussed again in some detail in Sections \ref{sec:wbm}--\ref{sec:tcom=0}. In Sections \ref{sec:finite}--\ref{sec:stocha} the measure $\mum$ is shown to be responsible for jumps from the boundary. Roughly, in Section \ref{sec:finite}, devoted to the case of finite $\mum$,  jumps occur between regular excursions from zero and are comparatively rare. In Sections \ref{sec:tcoim} and \ref{sec:stocha}, where infinite $\mum$ is discussed, they become very frequent and infinitesimally small at the same time; in the extreme case of $\alpha + \sui \beta_i =0$ they turn out to be the only means of exiting from the boundary. 

\newcommand{\cosik}{
\begin{figure}
\begin{tikzpicture}
\begin{scope}
\draw [->] (-0.5,0)--(2.5,0);
\draw [->] (0,-0.5)--(0,2.3);
\draw [dashed] (0,0)--(1.8,1.8);
\draw [thick] (0,0) to (0.2,0);
\draw [thick] (0.2,0) to [out=30,in=240] (0.4,0.3) to (0.6,0.3) to  [out=60,in=180] (0.9,0.5) to (1.2,0.5) to [out=90,in=270] (1.4,0.8) to (1.6,0.8) to (1.9,1.7);
\draw[dotted] (1.6,0.8) to (2,2);
\end{scope}
\begin{scope}[shift={(4,0)}]
\draw [->] (-0.5,0)--(2.5,0);
\draw [->] (0,-0.5)--(0,2.3);
\end{scope}
\end{tikzpicture}
\caption{\Small{Excursions. To be drawn.}}\label{exc}
\end{figure} }

We end this section with the following remark. For convenience, throughout the paper we work with the compactified graph, that is, we adjoin special points $(i,\infty), i \in \kad$ to the locally compact space $\bigcup_{i\in \kad} \left (\{i\} \times [0,\infty)\right )$ --- see \eqref{tildees}. These points are isolated in the sense that in the processes featured in the paper none of them can be reached in finite time from `regular' points of the state-space. Indeed, reaching them is clearly impossible by means of an underlying Brownian motion, and by assuming that $\mum$ has no mass at $\bigcup_{i\in \kad}\{(i,\infty)\}$ we exclude the possibility of reaching them by a jump. These points are furthermore traps: when started at any of them the processes considered in this paper will stay there for ever. This is a natural consequence of the definition of $\Delta$: for $f\in \mathfrak C^2(S)$, each of the limits $\lim_{x\to \infty} f''(i,x), i \in \kad$ exists and equals zero, and so $\Delta f(i,\infty) =0$.

\subsection{Operators $\gen_{\alpha,\beta,\gamma}$ as generators}\label{sec:3.2}

Since the resolvents and semigroups corresponding to the case where $\mum=0$ will be used as building blocks of the more complex resolvents and semigroups involving nonzero $\mum$, we complete this section by proving a generation result pertaining to the former objects.  

Let  non-negative parameters $\alpha, \beta_i, i\in \kad$ and  $\gamma$ be such that  
\begin{equation}\label{mainas}\alpha +\sum_{i\in \kad} \beta_i > 0,\end{equation}  and let $F:\mathfrak C^2(S)\to \mathbb R$ be the functional given by
\begin{equation}\label{loo:1} Ff = {\alpha}\Delta f(\zero) - \sum_{i\in \kad} \beta_i f_i'(0) + \gamma f(\zero). \end{equation}
Then, the operator \begin{equation}\label{gengen} \gen_ {\alpha,\beta,\gamma}\end{equation} defined as the restriction of $\Delta$ to \[ \dom{\gen _{\alpha,\beta,\gamma}} \coloneqq C^2(S)\cap \ker F\] is a Feller generator. Indeed, it is obviously densely defined and an easy argument shows that it satisfies the positive maximum principle --- comp. e.g. \cite{knigazcup}*{p. 17}, or see our Appendix \ref{maks}. Furthermore, the solution $f$ to the resolvent equation $\lam f - \gen_ {\alpha,\beta,\gamma} f =g$ can be given explicitly for any $\lam >0$ and $g\in \coss$: 
\begin{equation}\label{loo:2} f_i(x) =  C_i(g)\e^{\slam x} + D_i(g)\e^{-\slam x}- \sqrt{\tfrac 2\lambda} \int_0^x \sinh\slam (x-y) g_i(y)\ud y\end{equation}
where 
\begin{align}\label{cee} C_i &=C_i(g)  \coloneqq \frac 1{\slam} \czn \e^{-\slam y} g_i(y)\ud y, \qquad i \in \kad  \end{align}
$D_i =D_i(g) \coloneqq E - C_i, i\in \kad $ and 
\begin{equation}
E=E(g)\coloneqq \frac {2\slam \sui \beta_i C_i(g) + \alpha g(\zero)}{\lam \alpha + \slam \sui \beta_i + \gamma }\label{loo:3} \end{equation}
is the common value of $f_i(0),i\in \kad$. 

To prove the last statement we search for solutions of the resolvent equation in the form \eqref{loo:2} where $C_i$s and $D_i$s are constants to be found, and the third term is a particular solution of the related ODE. The requirement that $f_i$ is to belong to $\cerp$ then forces $C_i$ to be defined as above (see \cite{kniga}*{Section 8.2.18} or \cite{knigazcup}*{p. 18} for details, if necessary).  Since $f_i(0)$ cannot depend on $i\in \kad$, and $F\seq{f_i}$ is to be zero, we obtain a system of linear equations on $E$ and $D_i$s, which yields \eqref{loo:3}. Uniqueness of the solution to the resolvent equation is guaranteed by the fact that  $ \gen_ {\alpha,\beta,\gamma}$ satisfies the positive-maximum principle.

For future reference we note that \eqref{loo:2} can be written as 
\begin{equation}\label{loo:4} \rez{\gen _{\alpha,\beta,\gamma}} g = \rla^0 g + E(g) \elam^0 \end{equation}  
where 
\begin{align}\nonumber 
(\rla^0 g)_i(x) &\coloneqq \tfrac 1{\slam} \czn (\e^{-\slam |x-y|} - \e^{-\slam (x+y)}) g_i(y)\ud y \\
&= 2C_i(g) \sinh \slam x - \sqrt {\tfrac 2{\lambda}}\int_0^x \sinh \slam (x-y) g_i(y)\ud y, \label{loo:5} \end{align}
  is the resolvent of the minimal Brownian motion on $S$, and 
\begin{equation}\label{lifetime} (\elam^0 )_i (x) \coloneqq \e^{-\slam x}, \qquad x \ge 0, i \in \kad. \end{equation}

The fact that in the case of $\gamma >0$ the process is not honest is expressed in $\lam \rez{\gen _{\alpha,\beta,\gamma}}\mathsf 1_S \not = \mathsf 1_S$.
The function $\elam \coloneqq \mathsf 1_S- \lam \rez{\gen _{\alpha,\beta,\gamma}}\mathsf 1_S $ is the Laplace transform of the lifetime $\tau$ of the process (see e.g. \cite{konkusSIMA}*{Proposition 3.5}), and a short calculation reveals that 
\begin{equation}\label{loo:6} 
(\elam)_i (x) = \frac \gamma{\lam \alpha + \slam {\sui} \beta_i + \gamma} \e^{-\slam x}, \qquad x \ge 0, i\in \kad.\end{equation}

\subsection{Lumping edges together}\label{sec:lump0}

To record an interesting property of the resolvent of $\gen_ {\alpha,\beta,\gamma}$, given by \eqref{loo:4}, for a given $\mathpzc n\le \ka$, let $\mathfrak C(S_\mathpzc n)$ be the space of continuous functions on the star graph with $\mathpzc n$ edges, to be distinguished from  the space $\mathfrak C(S_\ka)$  of continuous functions on the star graph with $\ka$ edges. Also, let $\Psi$ map $\kad $ onto $\mathpzc N \coloneqq \{1,\dots,\mathpzc n\}$, and $\mathfrak C_\Psi (S_\ka)$ be the related subspace  of $g=\seq{g_i} \in \mathfrak C(S_\ka)$ such that $g_i =g_j$ as long as $\Psi(i)=\Psi(j)$. This subspace is clearly isometrically isomorphic to $\mathfrak C(S_\mathpzc n)$, the isomorphism, say, $\mathcal I: \mathfrak C(S_\mathpzc n)\to \mathfrak C_\Psi (S_\ka)$, being given by the requirement that the $i$th coordinate of $\mc Ig$ equals 
\( g_{\Psi(i)}, g \in  \mathfrak C(S_\mathpzc n), i\in \kad \). Moreover, the operators $\rez{\gen_ {\alpha,\beta,\gamma}}, \lam>0$ leave $\mathfrak C_\Psi (S_\ka)$ invariant (because so do $\rla^0, \lam >0$ and $\elam^0$ belongs to $\mathfrak C_\Psi (S_\ka)$)
and we have 
\begin{equation}\label{let:1} \mc I^{-1} \rez{\gen_ {\alpha,\beta,\gamma}} \mc I = \rez{\gen_{\alpha,\widetilde \beta, \gamma}}, \qquad \lam >0, \end{equation}
where $\gen_{\alpha,\widetilde \beta, \gamma}$ is the generator in $\mathfrak C(S_\mathpzc n)$ with $\widetilde \beta = (\widetilde \beta_j)_{j\in \mathpzc N}$ given by  $\widetilde \beta_j 
 \coloneqq \sum_{\{i\colon \Psi(i)=j\}} \beta_i, j \in \mathpzc N$. 
Formula \eqref{let:1} is an early indication of the fact that by lumping edges of the underlying graph, from the process generated by $\gen_ {\alpha,\beta,\gamma}$ one obtains the process generated by $\gen_ {\alpha,\widetilde \beta,\gamma}$ (see the warning in Section \ref{pofs}). 
We will be  coming back repeatedly to this subject throughout these notes.


\vspace{-0.5cm}
\section{Walsh's process (the case of $\alpha=\gamma=0$ and $\mum=0$)}\label{sec:wbm}

Let $\beta=\seq{\beta_i}$ be a vector \emph{with positive coordinates} such that $\sum_{i\in \kad} \beta_i=1$; the second, unlike the first, of these assumptions is made without loss of generality because multiplying $F$ of \eqref{loo:1} by a positive constant does not change $F$'s kernel. 

Somewhat informally,  the Walsh's motion, a.k.a., Walsh's spider  process or Walsh's spider, is a continuous time-homogeneous strong Markov process on $S$ that behaves like a standard one-dimensional Brownian motion on each ray \[ \mathpzc r_i \coloneqq \{i\}\times [0,\infty], \qquad i \in \kad\] until hitting the graph center; upon hitting the center 
Brownian excursions `select' ray $\mathpzc r_j$ with probability $\beta_j, j \in \kad$. In what follows the process will be denoted $W_\beta$. 

\subsection{Semigroups, resolvents, and cosine families}\label{sec:srac}
As revealed in \cite{barlow}, Walsh's spider is a Feller process on $S$ and its semigroup can be written explicitly in terms of the minimal and reflected semigroups.

To elaborate, we recall first that the (one-dimensional) minimal Brownian motion semigroup \rod{T_{\mathsf {min}}(t)} is a strongly continuous semigroup of operators in $\mathfrak C_0(0,\infty]$, the subspace of $f \in \cerp$ such that $f(0)=0$. The semigroup is given by $T_{\mathsf {min}}(0)f=f$ and 
\[  [T_{\mathsf {min}}(t)f](x) =\int_{0}^\infty \left ( \varphi_t (x-y)- \varphi_t (x+y)\right )f(y) \ud y,  \]
for $t>0,x\ge 0$ and $f \in \mathfrak C_0(0,\infty]$; 
here and throughout the paper, 
\[ \varphi_t (z) =  \tfrac 1{\sqrt{2\pi t}}  \e^{-\frac{z^2}{2t}}, \qquad z\in \R, t >0 .\]
The domain of its generator $\gen_ {\mathsf {min}}$ is composed of $f \in \mathfrak C_0(0,\infty]$ 
that are twice continuously differentiable with $f''\in \mathfrak C_0(0,\infty]$. For such $f$ we have 
$\gen_ {\mathsf {min}} f = \frac 12 f''$.

\newcommand{\starzec}{
To elaborate, the minimal Brownian motion on $S$ is a process which, while on one of the edges, away from the graph center, behaves like a standard one-dimensional Brownian motion. However, at the first moment it touches the center, it is killed and removed from the state-space. Strictly speaking, thus, its state-space is not $S$, but rather $S^0$, defined as $S$ with center removed. The related semigroup can be constructed by means of $\ka$ copies of the semigroup of minimal Brownian motion on the half-line, and has the following form: $T(0)f=f$ and, for $t>0$, 
\[  [T_{\mathsf {min}}(t)f](x,i) =\int_{0}^\infty \left ( \varphi_t (x-y)- \varphi_t (x+y)\right )f(i,y) \ud y, \qquad x\ge 0,i \in \kad;\] 
here and throughout the paper, 
\[ \varphi_t (z) =  \tfrac 1{\sqrt{2\pi t}}  \e^{-\frac{z^2}{2t}}, \qquad z\in \R, t >0 .\]
The fact that the graph's center does not belong to the state-space of the minimal Brownian motion is expressed  in the lack of strong continuity of \rod{T_{\mathsf{min}}(t)}. This semigroup is strongly continuous only on the subspace $\mathfrak \cosso$ of functions that vanish at the graph center, and its generator there is the operator, say, $\gen_ {\mathsf {min}}$ defined as follows. Its domain is composed $(f_i)_{i\in \kad}\in \mathfrak \cosso$ such that each $f_i$ is  twice continuously differentiable  and $f_i''$ belongs to $\mathfrak \cosso$, and then we put $\gen_ {\mathsf {min}} (f_i)_{i\in \kad} = \frac 12 (f_i'')_{i\in \kad}$.}  

We recall also that the reflecting Brownian motion semigroup in $\cerp$  is given by $T_{\mathsf {ref}}(0)f=f$ and 
\[ T_{\text{ref}} (t) f (x) =\EE  f (|x + \brown (t)|) = \int_{0}^\infty \left ( \varphi_t (x-y)+ \varphi_t (x+y)\right )f(y) \ud y, \]
for $x\ge 0,t>0$ and $f\in \cerp$. 
The domain of the generator $\gen_ {\text{ref}}$ of $\rod{ T_{\text{ref}} (t) }$ is composed of $f$ that are twice continuously differentiable with $f'' \in \cerp$, and satisfy $f'(0)=0$; for such $f$ we have  $ \gen_{\mathsf {ref}} f=\frac 12 f''$.

In terms of the semigroups described above, the Walsh's process semigroup $\rod{T_{W_\beta}(t)}$ has the following form  (see \cite{barlow}, eq. (2.2)):  
\begin{equation}\label{wsp:1}  (T_{W_\beta} (t) f)_i = T_{\mathsf {min}} (t) (f_i - \overline f ) + T_{\mathsf {ref}} (t) \overline f, \qquad f\in \coss, t \ge 0, i \in \kad, \end{equation}
where \[ \overline f \coloneqq \sum_{j\in \kad} \beta_j f_j.\] 

There are three immediate and important consequences of \eqref{wsp:1}. First of all,  the generator 
is the restriction of $\Delta$ to $f\in \dom{\Delta}$ such that, as a special case of \eqref{loo:0},  
\begin{equation*}
(\overline f)' (0)= \sum_{i\in \mc K} \beta_i f'_i(0)=0;\end{equation*}
in other words, the generator coincides with $\gen_{0,\beta,0}$ of Section \ref{sec:looa}. 
 Indeed, on one hand, for $f$ described above, $f_i - \overline f $ belongs to the domain of $\gen_ {\mathsf {min}} $ for all $i\in \kad $, and  $\overline f$ belongs to the domain of $\gen_ {\mathsf{ref}}$. Therefore, 
\[ \lim_{t\to 0} t^{-1} ( T_{W_\beta} (t) f - f)_i = \gen_ {\mathsf {min}} (f_i - \overline f) 
+ \gen_ {\mathsf{ref}}\overline f=\tfrac 12 f_i'',  \]
showing that the generator of $\rod{T_{W_\beta}(t)}$ extends $\gen_ {0,\beta,0}$. On the other hand, 
we know from Section \ref{sec:3.2} that  given a $\lam >0$ and a $g\in \coss$ there is precisely one $f \in \dom{\gen_{0,\beta,0}}$ such that $\lam f - \gen_{0,\beta,0} f = g$. A standard argument shows thus that the searched for generator cannot be a proper extension of $\gen_{0,\beta,0}$ (see e.g. \cite{kallenbergnew}*{p. 377} or \cite{rogers}*{p. 242}). A~different derivation of the boundary condition for the Walsh's process can be found in \cite{kostrykin2012}.

Secondly, \eqref{wsp:1} implies that $W_\beta$ has the following transition density function:
\begin{equation}
p_{t}((i,x),(j,y))
=
\begin{cases}
\varphi_t (x-y)
 + (2\beta_i-1)\varphi_t (x+y),& \
i=j, \\
2\beta_j\varphi_t (x+y),& \
i\neq j,
\end{cases}\label{gestosci}
\end{equation}
where  $t>0$ and $x, y\geq 0$. These formulae, in turn, show that if the initial distribution of Walsh's process is absolutely continuous with respect to the natural Lebesgue measure on $S$, then so is its distribution at any $t\ge 0$. One can thus think of the semigroup of Markov operators in $L^1(S)$   that describes this dynamics (for the notion of Markov operators see e.g. \cite{lasota} or \cite{rudnickityran}). As shown in \cite{skosny2} (comp. \cite{abtk}) the domain of the generator of this semigroup is characterized by the `dual' boundary conditions
\[ \sum_{i\in \kad}\phi'_i(0)=0 \mquad{and} \beta_j \phi_i(0) = \beta_i \phi_j(0), \quad i,j\in \kad.\]

As the third consequence of \eqref{wsp:1}, the resolvent $\rez{\gen_{0,\beta,0}}, \lam >0$, which is the Laplace transform of the Walsh's process semigroup, has the following representation: for $g\in \coss, x\ge 0, i\in \kad$,
\begin{align*}
 (\rez{\gen_{0,\beta,0}} g)_i(x)& =  (\rla^0g)_i(x) + 2 \czn \czn \e^{-\lam t} \varphi_t (x+y) \overline g(y) 
\ud y  \\ 
 &=   (\rla^0g)_i(x)  + 2 \sum_{i\in \kad}\beta_i C_i(g) (\elam^0)_i (x), \end{align*}
 where $\rla^0$, introduced in \eqref{loo:5},  is the resolvent of the minimal Brownian motion on $S$, $\elam^0$ is the Laplace transform of its life-time (see \eqref{lifetime}), and $C_i(g)$s are defined in \eqref{cee}.  The above relation, 
  not surprisingly,  is a special case of \eqref{loo:4}.

As shown recently in \cite{ela}, the fundamental formula \eqref{wsp:1} is but a reflection of a deeper result.  To see this, let $\mathsf F= (\cer)^\ka $ be the Cartesian product of $\ka$ copies of the space $\cer$ of Section \ref{sec:cf}, and let $\{\mc C(t), t \in \R\}$ be the \emph{Cartesian product basic cosine family} in $\mathsf F$ defined by means of the basic cosine family of \eqref{intro:0} as follows: 
\[ \mc C(t)(h_i)_{i \in \kad } = \seq{C(t)h_i}, \qquad t \in \R .\]
Using \eqref{symetrie} it is then easy to see that the subspace 
\begin{equation} \mathsf F_\beta \coloneqq \{ (h_i)_{i\in \kad}\in \mathsf F \colon h_i^e = h_j^e, i,j\in \kad \text{ and } (\overline h)^o =0\}\label{przef}\end{equation}
 is invariant under $\{\mc C(t), t \in \R\}$; here, as before, $\overline h\coloneqq \sum_{i\in \kad} \beta_i h_i$, and  $h^e$ and $h^o$ denote the even and the odd parts of $h$, respectively. 

 Moreover, $\mathsf F_\beta$ is isomorphic to $\coss \subset (\cerp )^\ka$. For, given $\seq{h_i} \in \mathsf F_\beta$, we obtain an element $\seq{f_i}\coloneqq \mc R \seq{h_i}$ by defining $f_i(x) = h_i(x), x\ge 0, i\in \kad$; note that, by definition of $\mathsf F_\beta$, $h_i(0) = h_j(0)$ and so $f_i(0)=f_j(0),  i,j\in \kad$. Vice versa, given $\seq{f_i}\in \coss$ we define $\seq{h_i}\coloneqq \mc E_\beta \seq{f_i}$ by 
 \[ h_i(x) = \begin{cases} f_i(x),& x \ge 0, \\
 -f_i(-x) + 2 \overline f (-x),& x < 0,   \end{cases} \]
 to check that $h_i^e (x)=  \overline f (x), x\ge 0, i \in \kad$ and $\sum_{i\in \kad}\beta_i h_i^o=0$,  which implies that $\seq {h_i}$ belongs to $\mathsf F_\beta$.  Since $\mc E_\beta $ and $\mc R$ are inverses of one another, it follows that $\{\mc C_\beta (t), t \in \R\} $ defined by 
 \[ \mc C_\beta (t) = \mc R \mc C(t) \mc E_\beta , \qquad t \in \R \]
is a strongly continuous cosine family in $\coss $. Now, a straightforward calculation shows that 
\begin{equation}\label{glebszy}  (\mc C_\beta (t) f)_i = C_{\mathsf {min}} (t) (f_i - \overline f ) + C_{\mathsf {ref}} (t) \overline f, \qquad f\in \coss, t \in \R,i \in \kad,  \end{equation}
where $ C_{\mathsf {min}} $ and $ C_{\mathsf {ref}}$, defined in Section \ref{sec:cf}, are cosine families related to the minimal and reflected Brownian motions. Because of \eqref{wsp:1} and the
Weierstrass formula, the generator of $\mc C_\beta$ coincides with the generator  of $T_{W_\beta}$, that is, with $\gen_{0,\beta,0}$. 

To summarize: \eqref{wsp:1} mirrors \eqref{glebszy}, and the latter formula is a direct consequence of the fact that $\mathsf F_\beta$ of \eqref{przef} is invariant under the Cartesian product basic cosine family.

\subsection{Stochastic description}\label{sec:opiswalsha}

Having described its semigroup, we proceed with stochastic characterization of Walsh's process itself. In this section, starting with a formal definition of the process, we present three characterization theorems. The first two of these come from the early paper \cite{barlow}, whereas the third has been obtained recently in \cite{bayraktar}. 

\subsubsection{Two theorems of Barlow, Pitman and Yor}\label{sec:bpy}
To begin with, it is clear from the intuitive description given in Section~\ref{sec:srac}, or either of formulae \eqref{wsp:1} and \eqref{gestosci}, that the distance of Walsh's spider from the graph's center is a reflected Brownian motion. In the Walsh's original construction, in turn, the spider process is obtained as a function of reflected Brownian motion started at $0$ as follows. Let a  sequence $\jcgi{\epsilon_i}$ of i.i.d. random variables be given such that $\Pb(\epsilon_i = j)=\beta_j, i\ge 0, j\in \kad$. Moreover, let us enumerate excursions of a given, independent of  $\jcgi{\epsilon_i}$, reflected  Brownian motion $\brown\refl$ in an arbitrary measurable way, and let $n(t)=n(t,\omega)$ be the number of the excursion at time $t$ in this enumeration (if $\brown\refl(t)= 0$, then $n(t)$ is defined arbitrarily, say $n(t)=1$). Then
\begin{equation}\label{defwal} W_\beta (t) \coloneqq (\epsilon_{n(t)},\brown\refl(t))\in S , \qquad t \ge 0\end{equation} 
is a Walsh's spider process with parameter $\beta$. This formula says that we place each excursion of a reflected Brownian motion on the $i$th ray with probability $\beta_i$ independently of all other excursions and of the reflected Brownian motion. 
 
\newcommand{\scan}{S_{\mathsf {can}}}
\newcommand{\cscan}{\mathfrak C(\scan)}

 One can also think of $W_\beta$ as a $\ka$ dimensional process. To this end, using \eqref{defwal},  for $i \in \kad$
 we define $ X_i(t) \coloneqq \brown\refl(t) $ whenever $n(t)=i$, and $X_i(t)=0$ otherwise. Then we can identify $W_\beta $ with the vector $\seq{X_i}$; we note that all possible values of this random vector lie in 
\[ \scan \coloneqq \bigcup_{i\in \kad} \{v \in \R^\ka\colon v =x \mathfrak e_i \text{ for some } x\ge 0, i\in \kad\}\]  
 where $\seq{\mathfrak e_i}$ is the standard basis of $\R^\ka$, and `can' stands for `canonical'.

In this spirit, Barlow, Pitman and Yor  \cite{barlow} have characterized Walsh's process as follows.
\begin{thm}\label{thm:barlowa}  Let $\brown$ be a one-dimensional standard Brownian motion, and $\sigma(\brown)\coloneqq \inf\{t\geq 0\colon \brown(t)=0\}$ be the first time $\brown$ hits zero. Also, let \[ X = \seq{X_i}\] be a strong Markov process with values in $\scan $, and let the first time it hits $\zero$ be denoted $\sigma(X)$. 
Then $X$ is Walsh's process with parameter  $\beta$ iff
\begin{itemize}
\item[(a)] for each $x\ge 0$ and $i\in \kad$ and a Borel subset $\mc B$ of the space of continuous paths
\begin{multline*}
\Pb \big\{(X_i(t)\1_{\{t\leq \sigma(X)\}})_{t\geq 0}\in \mc B \big|X(0)=x\mathfrak e_i \big\}\\
=\Pb \big\{(\brown(t)\1_{\{t\leq \sigma(\brown)\}})_{t\geq 0}\in\mc B \big|\brown(0)=x\big\},
\end{multline*}
\item[(b)] for each $t>0$, $i\in \kad$ and a Borel set $\mathpzc B \subset (0,\infty)$
\[\Pb\{X_i(t)\in \mathpzc B | X(0)=\zero \}= \beta_i \Pb\left \{|\brown(t)|\in \mathpzc B \big | \brown(0)=0\right \}.\]
\end{itemize}
\end{thm}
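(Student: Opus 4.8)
The plan is to establish the two implications separately. \emph{Necessity} of (a) and (b) is read directly off the explicit description of $W_\beta$ already recorded. By the construction \eqref{defwal}, when $W_\beta$ starts at $(i,x)$ with $x>0$ it agrees, up to its first visit to the center, with a one-dimensional Brownian motion issued from $x$ and watched until its first zero, so the killed path $(X_i(t)\1_{\{t\le\sigma(X)\}})_{t\ge0}$ has exactly the law demanded in (a); for $x=0$ both killed paths degenerate to the constant path at $\zero$. Condition (b) is immediate from the transition density \eqref{gestosci}, since $p_t(\zero,(i,y))=2\beta_i\varphi_t(y)=\beta_i\cdot 2\varphi_t(y)$ and $y\mapsto 2\varphi_t(y)$ is the density of $|\brown(t)|$ when $\brown(0)=0$. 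Here we use that, by \cite{barlow}, $W_\beta$ genuinely is a strong --- in fact Feller --- Markov process on $S$, identified with $\scan$.

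For \emph{sufficiency} I fix $f=(f_i)_{i\in\kad}\in\coss$ and $t>0$ and aim to show that $\E_x f(X(t))=(T_{W_\beta}(t)f)(x)$ for every $x\in\scan$; once this is done, $X$ has the same transition function --- equivalently the same generator $\gen_{0,\beta,0}$ --- as $W_\beta$, which is what it means for $X$ to be Walsh's process with parameter $\beta$. Starting from $\zero$: on $\{X(t)\ne\zero\}$ exactly one coordinate of $X(t)$ is strictly positive, so $f(X(t))=\sum_{i\in\kad}f_i(X_i(t))\1_{\{X_i(t)>0\}}$, and condition (b) together with $\sum_{i\in\kad}\beta_i=1$ yields at once both $\Pb_\zero\{X(t)=\zero\}=0$ and
\begin{equation*}
u(t):=\E_\zero f(X(t))=\sum_{i\in\kad}\beta_i\int_0^\infty 2\varphi_t(y)\,f_i(y)\ud y=\int_0^\infty 2\varphi_t(y)\,\overline f(y)\ud y .
\end{equation*}
A glance at \eqref{wsp:1} shows this equals $(T_{W_\beta}(t)f)(\zero)$, since $T_{\mathsf{min}}(t)g(0)=0$ for every $g$ (the kernel $\varphi_t(-y)-\varphi_t(y)$ vanishes) while $T_{\mathsf{ref}}(t)\overline f(0)=\int_0^\infty 2\varphi_t(y)\,\overline f(y)\ud y$.

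Next I start from $x\mathfrak e_i$ with $x>0$ and set $\sigma:=\sigma(X)$. Condition (a) forces the path of $X$ on $[0,\sigma]$ to be continuous, to stay on ray $i$, and to reach $\zero$ exactly at time $\sigma$, and it forces $\sigma<\infty$ almost surely with $\sigma$ distributed as the first hitting time of $0$ by a Brownian motion from $x$; write $q_x$ for that (stable-$\tfrac12$) density. Splitting the expectation at $\sigma$, using that on $\{t<\sigma\}$ we have $f(X(t))=f_i(X_i(t))$ --- a functional of the killed path, so that by (a) and the reflection principle its contribution is $\int_0^\infty (\varphi_t(x-y)-\varphi_t(x+y))f_i(y)\ud y$ --- and applying the strong Markov property at $\sigma$ (after which the process restarts from $\zero$), one obtains
\begin{equation*}
\E_{x\mathfrak e_i}f(X(t))=\int_0^\infty (\varphi_t(x-y)-\varphi_t(x+y))f_i(y)\ud y+\int_0^t u(t-s)\,q_x(s)\ud s .
\end{equation*}
On the other hand, writing \eqref{wsp:1} out coordinatewise, the two $\overline f$-terms combine --- because $-(\varphi_t(x-y)-\varphi_t(x+y))+(\varphi_t(x-y)+\varphi_t(x+y))=2\varphi_t(x+y)$ --- into $\int_0^\infty 2\varphi_t(x+y)\,\overline f(y)\ud y$, so that $(T_{W_\beta}(t)f)_i(x)=\int_0^\infty (\varphi_t(x-y)-\varphi_t(x+y))f_i(y)\ud y+\int_0^\infty 2\varphi_t(x+y)\,\overline f(y)\ud y$. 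Thus everything reduces to the identity
\begin{equation*}
\int_0^t u(t-s)\,q_x(s)\ud s=\int_0^\infty 2\varphi_t(x+y)\,\overline f(y)\ud y ,
\end{equation*}
which, after substituting the formula for $u$ and interchanging the two integrations, is precisely the classical first-passage identity $\int_0^t q_x(s)\,\varphi_{t-s}(y)\ud s=\varphi_t(x+y)$ --- the reflection-principle statement that the sub-probability density at $y>0$ of a Brownian motion from $x>0$ that has already visited $0$ by time $t$ equals $\varphi_t(x+y)$.

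I expect the main obstacle to be not any delicate estimate but the bookkeeping in this last step --- expanding $(T_{W_\beta}(t)f)_i$ from \eqref{wsp:1} and isolating exactly the piece that must coincide with $\int_0^\infty 2\varphi_t(x+y)\,\overline f(y)\ud y$; the only genuinely non-formal input is the reflection-principle identity just quoted, everything else being the strong Markov property together with elementary facts about $T_{\mathsf{min}}$ and $T_{\mathsf{ref}}$. A secondary point needing care is the path regularity behind the argument at $\sigma(X)$: that $\sigma(X)<\infty$ almost surely, that $X(\sigma(X))=\zero$, and that the decomposition of $f(X(t))$ started from $\zero$ together with the transfer of the pre-$\sigma$ functional via (a) are legitimate --- all of which follow from (a), from the one-coordinate-positive structure of $\scan$, and from strong Markovianity in the standard way.
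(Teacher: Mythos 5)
The paper itself gives no proof of this theorem --- it is quoted from Barlow, Pitman and Yor \cite{barlow} --- so there is nothing internal to compare your route against; your proposal has to stand on its own. Its overall architecture is sound: necessity read off from \eqref{defwal}, \eqref{wsp:1} and \eqref{gestosci} is fine, and in the sufficiency direction the plan of computing $\E_x f(X(t))$ for every starting point and matching it with $T_{W_\beta}(t)f$, then invoking the Markov property to match finite-dimensional distributions, is legitimate; the kernel algebra (the combination of the two $\overline f$-terms into $\int_0^\infty 2\varphi_t(x+y)\overline f(y)\ud y$, and the first-passage identity $\int_0^t q_x(s)\varphi_{t-s}(y)\ud s=\varphi_t(x+y)$) is correct.

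The genuine gap is the sentence ``Condition (a) forces the path of $X$ on $[0,\sigma]$ to be continuous, to stay on ray $i$, and to reach $\zero$ exactly at time $\sigma$, \dots with $\sigma$ distributed as the first hitting time of $0$''. Condition (a) controls only the $i$-th coordinate killed at $\sigma(X)$, and equality in law with killed Brownian motion tells you that $X_i$ is Brownian-like and positive up to its \emph{own} first zero $\tau$ and vanishes on $[\tau,\sigma(X)]$; it does \emph{not} tell you that $\tau=\sigma(X)$. Concretely, consider a process which, started at $x\mathfrak e_i$, runs a Brownian motion on ray $i$ until that coordinate first reaches $0$ and at that instant jumps to an interior point $z\mathfrak e_j$ of another ray, never actually visiting $\zero$ before later hitting it from ray $j$: its killed $i$-th coordinate is a Brownian motion up to its first zero and identically $0$ afterwards, i.e.\ it satisfies (a) verbatim, yet $\sigma(X)>\tau$, the process does not stay on ray $i$ before $\sigma(X)$, and $\sigma(X)$ does not have the density $q_x$. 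Such boundary-jump behaviour is exactly what the later sections of the paper construct, so it cannot be dismissed as pathological; it must be excluded either by an explicit continuity-of-paths hypothesis (which the statement here, unlike Theorem \ref{thm:Walsh}, does not make --- with continuity, $X(\tau)=\lim_{t\uparrow\tau}X_i(t)\mathfrak e_i=\zero$ and your argument closes), or by an argument using (b) together with the Markov property, showing that a jump away from the boundary would distort the center-started marginals at some later time. Your appeal to ``the one-coordinate-positive structure of $\scan$ and strong Markovianity in the standard way'' does neither, and since both the identification $f(X(t))=f_i(X_i(t))$ on $\{t<\sigma\}$ and the law of $\sigma$ used in the convolution step rest on this point, the sufficiency half is incomplete as written.
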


Condition (a) in the theorem says that up to the time it hits $\zero$, $W_\beta$ started at an $i$th ray is undistinguishable from the standard Brownian motion on this ray, and (b) says that $W_\beta$ started at the origin is at a time $t>0$ on the $i$th ray with probability $\beta_i, i \in \kad$. 

Next, we turn to a martingale characterization of Walsh's spider process. We begin by stating that, since the distance of Walsh's process from the origin is a reflected Brownian motion,  
the  local time of the former process at $0$ can be defined as the almost sure limit 
\begin{equation}\label{andriya}
L(t)\coloneqq \lim_{\varepsilon\to0+}\tfrac{1}{2\varepsilon}\int_0^t\1_{\{0<|W_\beta(s)|\leq \varepsilon\}}\ud s, \quad t \ge 0. 
\end{equation}
We remark in passing that both Walsh's process and reflected Brownian motion have zero sojourn at $0$ with probability $1$. Hence the integral 
above could have been replaced by $\int_0^t\1_{\{ |W_\beta(s)|\leq \varepsilon\}}\ud s$. For reasons that will become clear later,  it is, however, convenient to exclude $0$ from the integration (see, for example, the calculation leading to \eqref{wika:1}).

The following martingale characterization of Walsh's process was first given in \cite{barlow}, see also \cite{PavlyukevichPilipenko2022}.
\begin{thm}\label{thm:Walsh}
Let $X=\seq{X_i}$ and $L$ be two continuous processes, the first with values in 
$\R^\ka$, the second with values in $\R$, and  let  $\mathcal{F}^X$ be the filtration generated by $X$. Then $X$ is a Walsh's process with parameter $\beta$, and $L$ is its local time at   $0$ defined by \eqref{andriya}  iff
\begin{itemize}
\item [(1) ] $X_i(t)\geq 0, 1\leq i\leq \kad $, and $X_i(t)X_j(t)=0, i,j\in \kad, i \not = j$ for $t\geq 0$;
\item [(2) ] $L$ is a.s.\ nondecreasing $\mathcal{F}^X$-adapted process with $L(0)=0$ and
\[ 
\int_{[0,\,\infty)}\1_{\{X(s)\neq \zero \}}\, {\rm d} L(s)=0\quad\text{a.s.};
\]
\item [(3) ] the processes $M_i, i\in \kad $ defined by
\begin{equation}
\label{e:Mnu}
M_i(t)\coloneqq X_i(t)-\beta_i L(t),\quad t\geq 0
\end{equation}
are continuous square integrable martingales with respect to $\mathcal{F}^X$ with predictable quadratic variations
\begin{equation}
\label{e:brM}
\langle M_i\rangle (t)=\int_0^t \1_{\{X_i(s)>0\}} \ud s, \qquad t \ge 0;
\end{equation}
\item [(4) ] $\int_0^\infty \1_{\{X(s)=\zero\}} \ud s=0$ {\rm a.s.}
\end{itemize}

\end{thm}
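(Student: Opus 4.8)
The plan is to split the equivalence into its two halves, which are of rather different flavour: showing that Walsh's process together with its local time satisfies (1)--(4) is a verification on the explicit construction~\eqref{defwal}, whereas the converse is a uniqueness statement for the martingale problem that (1)--(4) constitute.

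For the \emph{necessity} part I would start from~\eqref{defwal}, so that $X_i(t)=\brown\refl(t)\1_{\{n(t)=i\}}$ with $\brown\refl$ a reflected Brownian motion, $L$ its symmetric local time at $0$, and the excursion labels $(\epsilon_k)$ i.i.d.\ and independent of $\brown\refl$. Then (1) and (4) are immediate --- at each time at most one coordinate is nonzero, all are nonnegative, and $\brown\refl$ has Lebesgue-null zero set by L\'evy's theorem (Section~\ref{sec:llt}) --- and (2) holds because $L$ is the local time of $|X|=\brown\refl$, hence nondecreasing, null at $0$, carried by $\{X=\zero\}$, and a measurable functional of the path $|X|$, so $\mathcal F^X$-adapted. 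The real content is (3): one must show $M_i=X_i-\beta_i L$ is a martingale with bracket $\int_0^\cdot\1_{\{X_i(s)>0\}}\ud s$. I would derive this from the It\^o--Tanaka formula together with L\'evy's identity $|X|=\widetilde\brown+L$ ($\widetilde\brown$ a standard Brownian motion): on each excursion interval labelled $i$ one has $X_i=|X|$, a Brownian motion there, so that $X_i$ has martingale part $\int_0^\cdot\1_{\{X_i(s)>0\}}\ud\widetilde\brown(s)$, and its bounded-variation part --- which lives only on the zero set of $|X|$ and among the excursion endpoints --- equals $\beta_i L$ because a fresh excursion carries the label $i$ with probability $\beta_i$ (a law of large numbers over the i.i.d.\ marks, or equivalently an occupation-times/excursion-measure computation, pins the constant to $\beta_i$). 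Square integrability of $M_i$ is clear from $0\le X_i(t)\le|X(t)|=\brown\refl(t)$, and the martingale property for $\mathcal F^X$ --- not merely the larger filtration carrying the $\epsilon_k$ --- follows by the tower property, $M_i$ being $\mathcal F^X$-adapted.

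For the \emph{sufficiency} part, assume (1)--(4). By (1) the quantity $|X|\coloneqq\sum_{i\in\kad}X_i$ is a well-defined nonnegative continuous process, and $M\coloneqq\sum_{i\in\kad}M_i=|X|-L$ (using $\sum_i\beta_i=1$) is a continuous square integrable martingale. Since each $\langle M_i\rangle$ is absolutely continuous with density $\1_{\{X_i>0\}}$ and, by (1), these densities have pairwise disjoint supports, the Kunita--Watanabe inequality forces $\langle M_i,M_j\rangle\equiv 0$ for $i\neq j$; hence $\langle M\rangle_t=\sum_i\int_0^t\1_{\{X_i(s)>0\}}\ud s=\int_0^t\1_{\{X(s)\neq\zero\}}\ud s=t$ by (4), so $M$ is a standard Brownian motion $\widetilde\brown$ by L\'evy's characterization. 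Thus $|X|=\widetilde\brown+L$ with $L$ nondecreasing, $L(0)=0$ and, by (2), increasing only when $|X|=0$, and uniqueness in the Skorokhod reflection problem (Section~\ref{sec:inverses}) identifies $|X|$ with the reflection of $\widetilde\brown$, that is, $|X|$ is a reflected Brownian motion with local time $L$ at $0$. Next, on each excursion interval $(g,d)$ of $|X|$ the functions $X_i$ are continuous, nonnegative, vanish at $g$ and $d$, and the open sets $\{t\in(g,d):X_i(t)>0\}$ are pairwise disjoint by (1) and cover $(g,d)$, so connectedness leaves exactly one nonempty and every excursion of $|X|$ lives on a single ray $\epsilon_{(g,d)}$. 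It then remains to prove that, conditionally on $|X|$, the labels of successive excursions are i.i.d.\ with $\Pb(\epsilon=j)=\beta_j$; once this is established, comparison with~\eqref{defwal} shows that $X$ has the law of $W_\beta$ --- hence is a Markov process, this being a property of the path-space law --- and that $L$ is its local time in the sense of~\eqref{andriya}. For that last step I would invoke It\^o's excursion theory: the excursions of $|X|$ indexed by local time form a Poisson point process, and (3) --- the martingale property of $M_i=X_i-\beta_i L$ with $\langle M_i\rangle=\int\1_{\{X_i>0\}}\ud s$ --- says precisely that the compensator, on the local-time scale, of the point process of ray-$i$ excursions is $\beta_i\,\ud L$; the characterization of independent marking of a Poisson point process through its compensators then yields both the mark law $\beta$ and the independence from $|X|$.

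The step I expect to be the main obstacle is this last one --- extracting, from the purely local (martingale) conditions (3)--(4), the global fact that the ray selection is an i.i.d.\ $\beta$-marking of the excursion point process independent of $|X|$ --- together with the companion identification of the bounded-variation part in the necessity half; both genuinely need It\^o's excursion calculus (or, in the necessity half, the It\^o--Tanaka formula, which conceals exactly this subtlety). A way to shorten the sufficiency half would be to first deduce the strong Markov property of $X$ from (1)--(4) --- the pair $(|X|,L)$ together with the ray currently occupied determining the conditional law of the future --- and then to verify conditions (a) and (b) of Theorem~\ref{thm:barlowa}, which by that theorem already characterize $W_\beta$ and its local time.
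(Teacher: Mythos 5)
The paper itself does not prove Theorem \ref{thm:Walsh}: it is quoted from \cite{barlow} (see also \cite{PavlyukevichPilipenko2022}), so your proposal has to stand on its own. Its architecture is reasonable, and the routine parts are correct: in the sufficiency half, the Kunita--Watanabe argument, L\'evy's characterization, and the Skorokhod-problem identification of the radial part $\sum_i X_i$ as a reflected Brownian motion with local time $L$ are exactly the computation the paper records right after the theorem, and the ``one ray per excursion'' observation is fine. The trouble is that both pivotal identifications are left as heuristics, and the mechanisms you name for them would not work as stated. In the necessity half you cannot ``derive (3) from It\^o--Tanaka'': It\^o--Tanaka applies to convex functions of a process already known to be a semimartingale, whereas the semimartingale decomposition of $X_i$ with finite-variation part $\beta_i L$ is precisely the assertion; and ``a law of large numbers over the i.i.d.\ marks'' identifies almost-sure asymptotic frequencies, not the compensator of a process. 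A workable argument is a direct computation of $\mathsf E_{(j,x)}X_i(t)$ and $\mathsf E_{(j,x)}L(t)$ from \eqref{wsp:1} or \eqref{gestosci} combined with the Markov property (and a similar computation for the bracket \eqref{e:brM}), or else the excursion master formula; either has to be carried out, not alluded to. In the sufficiency half, the claim that condition (3) ``says precisely'' that the ray-$i$ excursion point process has compensator $\beta_i\,\mathrm{d}\ell\otimes n$ is an overstatement: passing from the martingale property of $X_i-\beta_i L$ to the compensator of the marked excursion process is the actual crux, and even granted a compensator one must still show it is deterministic and of product form before any independent-marking criterion yields the i.i.d.\ $\beta$-labelling and independence from $|X|$. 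As written, both halves have a gap at exactly the step that carries the content of the theorem.

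For the sufficiency half there is a shorter route that stays inside the paper's toolkit and avoids excursion theory: by Dambis--Dubins--Schwarz each $M_i$ is $\brown_i\circ\langle M_i\rangle$ for a Brownian motion $\brown_i$, Knight's theorem makes the $\brown_i$ independent, the Skorokhod identification gives $X_i=\brown_i\refl\circ\langle M_i\rangle$ as in \eqref{eq:BarlowToBayraktar}, and the Tanaka computation \eqref{potrzebne}--\eqref{eq:ltimes} --- which uses only hypotheses (1)--(4) --- gives $\beta_i L=L_i\circ\langle M_i\rangle$; since $\sum_i\langle M_i\rangle(t)=t$ by (1) and (4), the processes $T_i\coloneqq\langle M_i\rangle$ satisfy the balance and normalization conditions of Theorem \ref{thm:baryaktar}, whose uniqueness and representation statements then identify $X$ as $W_\beta$ and $L$ as its local time in the sense of \eqref{andriya}. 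By contrast, your closing suggestion --- first deduce the strong Markov property from (1)--(4) and then invoke Theorem \ref{thm:barlowa} --- merely relocates the difficulty, since obtaining the strong Markov property from (1)--(4) is no easier than the marking statement itself.
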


Let us mention two immediate corollaries  to Theorem \ref{thm:Walsh}.  First, let 
$W_\beta$ be Walsh's process  with parameter $\beta$, and let 
$\Psi$ map $\kad $ onto $\mathpzc N \coloneqq \{1,\dots,\mathpzc n\}$, where $\mathpzc n \le \ka$ is a natural number. 
The process $(X_j)_{j\in \mathpzc N}$ defined by  
     \[
     X_j(t)=\sum_{\{i\colon \Psi(i)=j\}}  (W_\beta)_{i}(t), \qquad j\in \mathpzc N, t \ge 0
     \]
where $(W_\beta)_{i}$ is the $i$th coordinate of $W_\beta$, is then Walsh's process with the new  parameter $\widetilde \beta = (\widetilde \beta_j)_{j\in \mathpzc N}$ obtained as follows $\widetilde \beta_j \coloneqq \sum_{\{i\colon \Psi(i)=j\}} \beta_i, j \in \mathpzc N$. Moreover, the local times of $X$ and $W_\beta$ at $\zero$ coincide. Thus, in the specific instance of $\alpha=\gamma =0$, this establishes the fact suggested already by formula \eqref{let:1}: by lumping edges of the graph in which Walsh's process has its values, we obtain another Walsh's process, on a `smaller' graph. 

In particular, \begin{equation}\label{igrek} X(t)\coloneqq {\sum_{i\in \kad}}(W_\beta)_i(t), \qquad t\geq 0,\end{equation} is a reflected Brownian motion.
Notably, this process can be represented as
     \[
     X(t)=\brown(t)+L(t), \qquad  t\geq 0,
     \]
     where $\brown\coloneqq \sum_{i\in \kad} M_i$ (the martingales $M_i,i\in \kad$ are defined in point (3) above) 
     is a Brownian motion. 
\subsubsection{A theorem of Bayraktar et al.}\label{sec:bay}
Our next and final goal in this section is a representation of Walsh's process that emerged from the analysis of so called \emph{multi-armed bandits}, see  \cite{mandelbaum1987continuous, kaspi1995levy, barlow2000variably, bayraktar}. 
By way of introduction, we recall that any square integrable continuous martingale $M$ is a time-changed Brownian motion:  there exists a Brownian motion $\brown$ such that $M=\brown\circ \langle M\rangle$, where $\langle M \rangle $ is the quadratic variation of $M$ (see e.g. \cite{ikeda}*{Sec. 2.7, Thm. 7.2} for this statement in its more general form). In this context  \eqref{e:brM} says that for each $i\in \kad$, there exists a Brownian motion $\brown_i$ 
such that 
\[
X_i(t)=\brown_i \circ \langle M_i \rangle  (t) + \beta_i L(t), \qquad t \ge 0.
\]
The last expression in turn reveals that the paths of the process $X_i$ are solutions to the Skorokhod reflection problems with noise given by the paths of $ \brown_i \circ \langle M_i \rangle (t), t\geq0$. Since solutions to these problems are determined uniquely and are given by \eqref{srp:sol}, we see that 
\[
X_i(t)=\brown_i \circ  \langle M_i \rangle (t)  + 
\max_{s\in [0,t]} \left (- \brown_i  \circ \langle M_i \rangle (t)\vee 0 \right ), \qquad t\ge 0. 
\]
As an immediate consequence, 
\begin{equation}\label{eq:BarlowToBayraktar}
    X_i =\brown_i\refl\circ \langle M_i \rangle , \qquad i \in \kad, 
\end{equation}
where, as before,  $\brown_i\refl(t)\coloneqq \brown_i(t) +L_i(t), t \ge 0, i \in \kad $ and $ L_i(t)\coloneqq  
\max_{s\in [0,t]}$ $(-\brown_i(s)    \vee 0 ), t \ge 0,$ is the Skorokhod reflection of $\brown_i, i \in\kad $. The Brownian motions $\brown_i, i\in\kad$ are independent due to Knight's theorem, see \cite[Theorem 1.9, Chapter V]{revuz}.

Moreover, \eqref{e:Mnu} reveals that each $X_i$ is a semimartingale, as the sum of the martingale $M_i$ and the nondecreasing process $\beta_i L$. Hence, the Tanaka formula (\cite{karatzas}*{p. 220} or \cite{rogers2}*{p. 96}) applies to give  
\begin{align}
X_i(t) - X_i(0) &= |X_i(t)| - |X_i(0)| \nonumber \\&= \int_0^t \1_{\{X_i(s)>0\}} \ud (M_i(s) + \beta_i L(s)) + L_i(t)\nonumber \\\label{potrzebne}
&=\int_0^t \1_{\{X_i(s)>0\}} \ud M_i(s) + L_{i} (t) \end{align}
where in the first step we have used the fact that $X_i$ is nonnegative and in the last --- the fact that $L$ increases only when $X_i=0$; $L_i$ is the symmetric local time of $X_i$ at $0$. Also, \( \EE  (\int_0^t  1_{\{X_i(s)>0\}} \ud M_i(s) -  \int_0^t 1 \ud M_i(s))^2 = \EE (\int_0^t  1_{\{X_i(s)\le 0\}} \ud M_i(s) )^2 \) and this,  by Ito isometry, 
equals \[ \EE \int_0^t \1_{\{X_i(s)\le 0\}}\ud \langle M_i(t)\rangle = \int_0^t  \1_{\{X_i(s)\le 0\}} \1_{\{X_i(s)>0\}} \ud s =0, \qquad t \ge 0.\]
 with the previous-to-last equality following by \eqref{e:brM}. 
Hence, almost surely,  $\int_0^t 1_{\{X_i(s)>0\}} \ud M_i(s)= \int_0^t 1 \ud M_i(s)= M_i(t)-M_i(0)$ and so \eqref{potrzebne} renders $X_i(t) - X_i(0)= M_i(t) - M_i(0) + L_i(t)$. Since $X_i(0)=M_i(0)$, \eqref{e:Mnu} now proves that $\beta_i L $ and $L_i$ coincide and we can thus write 
\begin{align*}
\label{eq:balance_locTime_Walsh's process}
     \beta_i L(t)&= \lim_{\varepsilon\to0+} \tfrac{1}{2\varepsilon} \int_0^t \1_{\{0<|X_i(s)|\leq \varepsilon\}} \ud s 
     =\lim_{\varepsilon\to0+} \tfrac{1}{2\varepsilon} \int_0^t \1_{\{0<|B_i\refl \circ \langle M_i \rangle (s)|\le \eps \}}  \ud s\\
   &= \lim_{\varepsilon\to0+} \tfrac{1}{2\varepsilon} \int_0^{\langle M_i \rangle (t) } \1_{\{0<|B_i\refl(s))|\leq \varepsilon\}} \ud s\\& = L_i \circ \langle M_i \rangle (t)= L_i\left (\int_0^t \1_{\{X_i(z)>0\}}  {\rm d}z\right ), \qquad t \ge 0. 
\end{align*}
It follows that, almost surely, for all $i,j\in \kad$,  
\begin{equation}
\label{eq:ltimes}
L(t)=\frac{L_i(\int_0^t \1_{\{X_i(z)>0\}} {\rm d}z)}{\beta_i}=\frac{L_j(\int_0^t \1_{\{X_j(z)>0\}} {\rm d}z)}{\beta_j}, \qquad t\geq 0.   
\end{equation}

The so-obtained formula suggests that a copy of Walsh's process can be constructed as a concatenation of several independent Brownian motions balanced locally via ties similar to these visible above. The following theorem establishes that this is indeed the case; the time $T_i(t), t \ge 0$ featured in it is a  counterpart of $  
\int_0^t \1_{\{X_i(s)>0\}} \ud s, t \ge 0$, considered above, and should be interpreted as the time the process spends at the $i$th ray of $\scan$. As already mentioned, the original idea of the theorem comes from the theory of multi-armed bandits, see  \cite{mandelbaum1987continuous, kaspi1995levy, barlow2000variably, bayraktar}; the statement below is a version of a more general result \cite[Theorem 2.1]{bayraktar}. 
  \begin{thm} 
    \label{thm:baryaktar} Let $\beta=\seq{\beta_i}$ be a vector of positive numbers such that $\sum_{i\in \kad} \beta_i=1$. Given
 a standard $\ka$-dimensional Wiener process  $\brown=\seq{\brown_i}$ with $\brown(0)\in S_{\mathsf{can}}$, we define 
\[ L_i(t)\coloneqq \max_{s\in[0,t]}(-\brown_i(s)\vee 0) \mqquad{ and } \brown\refl_i(t)\coloneqq \brown_i(t)+L_i(t), \quad i\in \kad, t \ge 0.\]     
    Then there is a unique collection of continuous, non-negative and non-de\-creas\-ing processes $T_i, i\in \kad$ such that almost surely
    \begin{equation}
        \label{eq:balance_locTime_Walsh's process1}        \frac{L_i(T_i(t))}{\beta_i}=\frac{L_j(T_j(t))}{\beta_j},\qquad  i,j\in \kad, t\geq 0, 
    \end{equation}
and $\sum_{i\in \kad}T_i(t)=t, t\geq 0$.
    Moreover, the process 
    \begin{equation}
                \label{eq:repr_Bayr}
        X\coloneqq  \seq{\brown\refl_i\circ T_i} 
        \end{equation}
is then Walsh's process  with parameter $\beta.$ 
    \end{thm}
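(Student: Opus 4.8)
My plan has two stages: (A) the pathwise construction of the clocks $(T_i)$ together with their uniqueness, and (B) the identification of $X\coloneqq\seq{\brown\refl_i\circ T_i}$ with a Walsh process. Stage (A) is purely deterministic once the standard properties of local times are granted; for stage (B), once (A) is in hand, I would either verify the martingale characterization of Theorem~\ref{thm:Walsh} directly, or --- more economically --- deduce the claim from the forward computation of Section~\ref{sec:bay}. I expect the direct verification of the martingale property in the first route to be the one genuinely delicate point, and the second route sidesteps it.

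\textbf{Stage A.} For each $i\in\kad$ introduce $\tau_i(h)\coloneqq\inf\{s\ge0\colon L_i(s)>h\}$, the right-continuous generalized inverse of $L_i$. Since $\brown_i$ is a Brownian motion, $\tau_i$ is, up to the initial shift $\tau_i(0)$ dictated by $\brown_i(0)$, a stable-$\tfrac12$ subordinator; in particular a.s.\ it is strictly increasing (it has no interval of constancy), pure jump, and unbounded. Set $\Sigma(\ell)\coloneqq\sum_{i\in\kad}\tau_i(\beta_i\ell)$ for $\ell\ge0$, which inherits these three properties, and let $\ell(\cdot)$ be its generalized inverse $\ell(t)\coloneqq\inf\{\ell\ge0\colon\Sigma(\ell)>t\}$; strict monotonicity of $\Sigma$ makes $\ell$ continuous and nondecreasing, with $\ell(0)=0$. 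Because the $\brown_i$ are independent, a.s.\ no two of the subordinators $\ell\mapsto\tau_i(\beta_i\ell)$ jump at the same value of $\ell$, so each jump level $\ell$ of $\Sigma$ (with the convention $\Sigma(0-)\coloneqq0$) carries a unique index $i(\ell)$. I would then define, on the interval $\bigl[\Sigma(\ell-),\Sigma(\ell)\bigr)$, $T_i(t)\coloneqq\tau_i(\beta_i\ell-)$ for $i\ne i(\ell)$ and $T_{i(\ell)}(t)\coloneqq\tau_{i(\ell)}(\beta_{i(\ell)}\ell-)+\bigl(t-\Sigma(\ell-)\bigr)$, and $T_i(t)\coloneqq\tau_i(\beta_i\ell(t))$ for the remaining $t$. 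A routine check then gives that the $T_i$ are continuous, nonnegative and nondecreasing, that $\sum_iT_i(t)=t$ (using that $\Sigma$ is pure jump, so its gaps exhaust the time axis), and that $L_i(T_i(t))=\beta_i\ell(t)$ for every $i$ --- which is precisely the balance \eqref{eq:balance_locTime_Walsh's process1}, with common value $\ell$. For uniqueness, suppose $(T_i')$ is another admissible family and let $\ell'(t)\coloneqq L_i(T_i'(t))/\beta_i$ be its common level; the identity $L_i(T_i'(t))=\beta_i\ell'(t)$ forces $T_i'(t)\in\bigl[\tau_i(\beta_i\ell'(t)-),\tau_i(\beta_i\ell'(t))\bigr]$, so summing over $i$ and using $\sum_iT_i'(t)=t$ yields $\Sigma(\ell'(t)-)\le t\le\Sigma(\ell'(t))$, whence $\ell'(t)=\ell(t)$ by strict monotonicity of $\Sigma$; since at most one $\tau_i$ jumps at level $\ell(t)$, the constraint $\sum_iT_i'(t)=t$ then forces $T_i'(t)=T_i(t)$ for every $i$.

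\textbf{Stage B.} Put $L\coloneqq\ell$ and $M_i\coloneqq X_i-\beta_iL$; by the balance identity from Stage~A one has $M_i=\brown_i\circ T_i$. Conditions (1), (2) and (4) of Theorem~\ref{thm:Walsh} should follow directly from the geometry of the construction: inside a gap of $\Sigma$ only the arm $i(\ell)$ sits strictly inside one of its own excursions, so at each $t$ at most one $X_i(t)$ is positive and $X(t)=\zero$ exactly on the complement of the open gaps, giving (1); that complement is Lebesgue-null because $\Sigma$ is pure jump (its jumps sum to its total increment), giving (4); and $L=\ell$ is constant on each gap, hence puts no mass on $\{X\ne\zero\}$, which is the support part of (2). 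I would also record that $T_i(t)=\int_0^t\1_{\{X_i(s)>0\}}\,\ud s$: indeed $t\mapsto T_i(t)$ is Lipschitz with a.e.\ derivative equal to the indicator of the event that $t$ lies in a gap of $\Sigma$ with $i(\ell(t))=i$, and this indicator agrees with $\1_{\{X_i(t)>0\}}$ off the null set $\{X=\zero\}$. The only thing then left for Theorem~\ref{thm:Walsh} is that the $M_i=\brown_i\circ T_i$ are continuous square-integrable $\mathcal F^X$-martingales with $\langle M_i\rangle=T_i$; this also supplies the $\mathcal F^X$-adaptedness of $L$ needed in (2), since $\beta_iL=L_i\circ T_i$ is then the semimartingale local time of $X_i$ at $0$, hence a functional of $X$.

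\textbf{The main obstacle, and a way around it.} Everything in Stage~B is soft except the martingale property of $M_i=\brown_i\circ T_i$: the clock $T_i$ is built out of \emph{all} of the $\brown_j$, so $T_i(t)$ is not a stopping time for the natural filtration of $\brown_i$, and the elementary time-change theorem does not apply. This is where the multi-armed-bandit structure must be used: working with $\mathcal G_t\coloneqq\bigvee_{i}\mathcal F^{\brown_i}_{T_i(t)}$, one shows that each $T_i$ is a continuous $\mathcal G$-time change (the Gittins/allocation mechanism, cf.\ \cite{mandelbaum1987continuous, kaspi1995levy, barlow2000variably}), whence $\langle M_i\rangle=T_i$ and, since $\ud T_i\,\ud T_j=0$, the orthogonality $\langle M_i,M_j\rangle=0$; one then reduces $\mathcal G$ to $\mathcal F^X$ using $\brown_i=\brown\refl_i-L_i$. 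Alternatively --- and this is the route I would actually take --- one can avoid verifying (3) altogether. The forward computation of Section~\ref{sec:bay} (see \eqref{eq:BarlowToBayraktar} and \eqref{eq:ltimes}) shows that any Walsh process $W_\beta$ admits a representation $W_\beta=\seq{\hat\brown\refl_i\circ\hat T_i}$ with \emph{independent} Brownian motions $\hat\brown_i$, the clocks being $\hat T_i=\int_0^\cdot\1_{\{(W_\beta)_i(s)>0\}}\,\ud s$, and these satisfy $\sum_i\hat T_i(t)=t$ (by condition~(4) for $W_\beta$) and the balance \eqref{eq:ltimes}. By the uniqueness proved in Stage~A, $\hat T_i$ must coincide with the clock that our construction produces from $\hat\brown$; hence $W_\beta=\Phi(\hat\brown)$, where $\Phi$ is the fixed measurable pathwise map $(\omega_i)_i\mapsto\seq{\omega\refl_i\circ T_i}$ of Stage~A. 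Since $\hat\brown$ and the given $\brown$ are both standard $\ka$-dimensional Wiener processes with the same starting point in $S_{\mathsf{can}}$, hence equal in law, it follows that $\mathrm{Law}(X)=\mathrm{Law}(\Phi(\brown))=\mathrm{Law}(\Phi(\hat\brown))=\mathrm{Law}(W_\beta)$, which is the assertion.
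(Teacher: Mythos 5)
Your proposal is correct, and it takes a genuinely different route from the paper: the paper does not prove Theorem \ref{thm:baryaktar} at all, but cites it as a special case of \cite[Theorem 2.1]{bayraktar}, whose proof rests on the multi-parameter optional sampling/bandit machinery that you correctly single out as the hard point of a direct verification of condition (3) of Theorem \ref{thm:Walsh}. Your Stage A is, in substance, the deterministic clock construction the paper only develops much later and for a different purpose (your $\tau_i,\Sigma,\ell$ are the $\mka_i,\mka,\mel$ of Proposition \ref{prop:rept}, and your uniqueness argument parallels Lemma \ref{lem:balance_uniq} via the no-common-jumps property of independent subordinators, exactly the Disjointness Lemma argument used in the proof of Theorem \ref{thm:wba}); carrying it out already at this stage is legitimate and self-contained. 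Your Stage B, second route, is the genuinely new ingredient relative to the paper's presentation: a Yamada--Watanabe-style transfer in which the forward decomposition of Section \ref{sec:bay} (which depends only on Theorem \ref{thm:Walsh}, Tanaka's formula and Knight's theorem, hence is not circular) exhibits any Walsh process as $\Phi(\hat\brown)$ for the pathwise map $\Phi$ of Stage A applied to independent Brownian motions, and pathwise uniqueness of the clocks plus equality in law of $\hat\brown$ and $\brown$ then gives $\mathrm{Law}(\Phi(\brown))=\mathrm{Law}(W_\beta)$. What this buys is a proof of the theorem that avoids verifying the martingale property of $\brown_i\circ T_i$ and the multi-parameter strong Markov property altogether, at the cost of invoking the prior existence of Walsh's process and its martingale characterization; the cited approach of \cite{bayraktar} is heavier but establishes the time-change structure intrinsically, which is what the paper later exploits (e.g.\ for the independence statements around \eqref{eq:newBM_Bayrak}). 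Two small points you should make explicit if you write this up: the initial level $\ell=0$ must be treated as a jump of $\Sigma$ when $\brown(0)\neq\zero$ (only one $\tau_i(0)$ can be positive since $\brown(0)\in\scan$), and the claim that the moving coordinate is strictly positive inside a gap uses that, a.s., the set of increase of $L_i$ coincides with the zero set of $\brown_i\refl$.
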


A couple of remarks are here in order. First of all, 
only one coordinate of the Walsh's process $X$ defined in \eqref{eq:repr_Bayr} can be non-zero, $X$  has no sojourn at $\zero$, and the processes  $T_i$ from Theorem \ref{thm:baryaktar} satisfy $\sum_{i\in \kad}T_i(t)=t, t\geq 0$. Therefore, almost surely,
\[
T_i(t)=\int_0^t \1_{\{X_i(s)>0\}} \ud s,\quad t\geq 0,
\]
so that indeed $T_i(t)$ is the time spent at the $i$th ray up to time $t$. Also, the theorem shows that the local time $L(t)$ defined in Theorem \ref{thm:Walsh} is tied to the local times of Theorem \ref{thm:baryaktar} 
by 
\[
    L(t)= \frac{L_i(T_i(t))}{\beta_i}, \qquad i\in\kad, t\geq 0, \]
which agrees with \eqref{eq:ltimes}; moreover,      
   \begin{align}
        \label{eq:loctime_Bayr}
    L(t)&= \sum_{i\in\kad}  {L_i(T_i(t))}, \qquad t \ge 0.
    \end{align} 
 Finally,  formula \eqref{eq:repr_Bayr} is richer in meaning than \eqref{eq:BarlowToBayraktar}; whereas in the latter existence of a Brownian motion that underlies $X_i$ is claimed for each $i\in \kad$, the former asserts moreover that these Brownian motions are independent. 
 

Most importantly, the process $X$ of \eqref{eq:repr_Bayr}
 can be considered as a multi-parameter time change  of the multidimensional reflected Brownian motion    $\seq{\brown\refl_i}$. In fact, it was proved in  \cite[Lemma 3.3, Proposition 3.2]{bayraktar} that for \emph{multi-parameter stopping times}, that is, a class of processes that includes $\seq{T_i}$, an analogue of the 
the strong Markov property holds.

A similar argument shows that for any $t\geq 0$ the processes
\[\{\brown_i(T_i(t)+s)-\brown_i(T_i(t)), s\geq 0\}, \qquad i \in \kad \]
are mutually independent Brownian motions that are also independent of the processes describing the past: 
\[\{B_i(T_i(t)\wedge s), s\geq 0\}, \qquad i \in \kad. \]
Further, if a non-negative exponential random variable $\zeta$ is  independent of $\brown,$ then 
 the processes
\begin{equation}
    \label{eq:newBM_Bayrak}
    \{\brown_i(T_i(\sigma)+s)-\brown_i(T_i(\sigma)),s\geq 0\}, \qquad i \in \kad \end{equation}
where $\sigma\coloneqq \inf\{t\geq 0\colon L(t)=\zeta\},$ are  mutually independent Brownian motions that are also  independent of the processes describing the past 
\[\{\brown_i(T_i(\sigma)\wedge s), s\geq 0\}, \qquad i \in \kad. \]

We complete this section by remarking that 
    filtration generated by Walsh's process  is a delicate subject. Theorem \ref{thm:baryaktar} 
    shows that Walsh's process is a function of Brownian motions. However, the filtration generated by the Walsh process on a graph with more than three rays differs from the Brownian filtration \cite{tsirelson1997triple}.

\subsection{The degenerate case}\label{degenerat} 
So far, in our stochastic description, we have focused on the case where $\beta_i>0, i\in \kad$ and $\sum_{i\in \kad}\beta_i =1$. Since multiplying $F$ of \eqref{loo:1} by a positive constant does not change $F$'s kernel, the second of these conditions is just a normalization that additionally provides a natural interpretation of $\beta_i$s, but this is not the case  with the first of them. Hence, there is the need to clarify what happens when some, but not all, $\beta_i$ vanish.  

Let us thus consider the case where $\beta_i=0$ for all $i $ in a proper subset $\kal$ of $\kad$; without loss of generality we can assume that $\sum_{j\in \kad \setminus \kal}\beta_j =1$ (in other words, by definition, $W_\beta \coloneqq W_{\beta'}$ where $\beta' $ is obtained from $\beta$ by dividing its coordinates by  $\sum_{i \in \kad \setminus \kal} \beta_i$). If started from an $i$th edge with $i \in \kal$, the degenerate $W_\beta$ behaves like a one-dimensional Brownian motion until the time it reaches the origin;  at this instant it chooses, with probability $\beta_j$, to continue its motion on a $j$th edge with $j\not \in \kal$.  From that time on it never returns to the edge where it initially started, neither does it  go to any of the other edges with indices $i\in \kal$, but rather behaves like the Walsh's spider on the subset of the original graph formed by edges with indices $j\in \kad \setminus \kal$. In other words, points of edges with indices $i\in \kal$ are transient for the process, and the process is essentially the Walsh's spider on a smaller star graph with $\#(\kad \setminus \kal)$ edges. 
 
 Interestingly, the transition densities formula \eqref{gestosci} remains valid also in the degenerate case. In particular, for $i\in \kal$, the first line of  the formula says that from the perspective of the $i$th ray the process is that of minimal Brownian motion: once it reaches $\zero$ it is never seen again. At the same time, as expressed in the second line, the $i$th ray is invisible, that is, unreachable,  from the other edges.

\section{The full case of $\mum=0$; processes with continuous paths.}\label{sec:tcom=0}

Having described the process of Walsh, which, to recall, corresponds to the case of $\alpha=\gamma=0$ in \eqref{loo:1}, we now proceed to the general case of nonzero $\alpha$ and/or $\gamma$. In other words, we will now study the processes related to the boundary condition
\begin{equation}\label{contp:1} {\tfrac \alpha 2} f''(\zero) - \sum_{i\in \kad} \beta_i f'_i(0) + \gamma f(\zero) = 0 \qquad f \in \dom{\gen }. \end{equation}	
We start with the case where all betas are positive and $\sui \beta_i =1$.

We have already developed the intuition that increasing $\alpha$ forces the related process to linger at $\zero$ longer, and nonzero $\gamma $ tells of the possibility for the process to be killed, that is, to be undefined after a certain random time spent at $\zero$. The construction presented below confirms this intuition and gives deeper insight into what really happens. This construction, notably, can be carried out in a rather general context, and so, in Sections \ref{sec:sup}--\ref{sec:fno}
 we present this general picture, and specialize to the case of Walsh's process later in Section \ref{sec:sand}, where we provide a description of paths of the process related to \eqref{contp:1}.

\vspace{-0.2cm}
\subsection{Slowing down a Feller process at a point}\label{sec:sup} 

Let $X$ be a Feller process with values in a compact metric space $S$; we assume its trajectories  are  c\`adl\`ag. Moreover, let $a\in S$ be a point such that for $\sigma$ defined as the first time $X$  hits $a$ ($\sigma\coloneqq \inf\{t>0\colon  X(t)=a\}$) we have  
\begin{itemize} 
\item [(a)] $a$ is regular for itself, that is, $ \mathsf E_a \e^{-\lam \sigma}=1, \lam >0$, 
and \item [(b)] $S\ni x\mapsto \mathsf \elam (x) \coloneqq \mathsf E_x \e^{-\lam \sigma}\in [0,1]$ is continuous for all $\lam >0$. \end{itemize}
It is well-known \cite{blum}*{pp. 84--93} that even under less stringent assumptions (condition (b) is introduced in \cite{blum} much later, on p. 140) there is a local time, say,  $L$, of $X$ at $a$,  that is, a continuous additive functional (CAF) that may grow only when $X$ visits $a$. 
We recall that in fact there are many such local times, but that all of them are equivalent up to a multiplicative constant. For the present purposes it is irrelevant which local time at $a$ is used. 

To proceed, using $L$ and given $\alpha >0$, we define another CAF by  
\begin{equation}\label{aalpha}
A_{\alpha} (t) \coloneqq \begin{cases}t+\alpha L(t), & t\in [0,\tau),\\
\tau+\alpha L(\tau-), & t\in [ \tau, \infty),
\end{cases}
\end{equation}
 where $\tau$  is the life-time of $X$ (see also the remark at the end of this section).
Then, each trajectory of $A_\alpha$ is strictly increasing and continuous on $[0,\tau)$, and we have $A_\alpha(0)=0$.  It is moreover well-known that for each $t\geq 0$, the $[0,\infty]$-valued random variable $A_\alpha^{-1}(t)$ is a stopping time with respect to the filtration generated by $X$. It follows (see \cite[Theorem 10.10, Chapter 10.5]{dynkinmp} or \cite[Chapter II.6]{gands}) that $\mathcal S_\alpha X$ given by  \begin{equation}\label{lepki} \mathcal S_\alpha X (t) \coloneqq \begin{cases} X\circ A_\alpha^{-1}(t), & A_\alpha^{-1}(t)<\infty, \\ \text{undefined}, & A_\alpha^{-1}(t)=\infty, \end{cases} \end{equation} 
is a time-homogeneous Markov process. As we shall see in Section \ref{sec:fprop},  $\mathcal S_\alpha X$ has the Feller property.


For now, we will argue that $\mathcal S_\alpha X$ `slows down' when at $a$ and only at $a$, and spends positive time at this point. First of all, whenever $X$ is away from $a$, $L$ does not increase. Thus $A_\alpha$ increases linearly with slope $1$ and so does $A_\alpha^{-1}$.  This means that while away from $a$, $\mathcal S_\alpha X$ behaves precisely as $X$. However, when at $a$, $A_\alpha$ grows faster than linearly with slope $1$ and so $A_\alpha^{-1}$ grows slower than linearly with slope $1$, that is, slower than the regular time, as claimed.  This analysis shows  that excursions of $\mathcal S_\alpha X$ from $a$ coincide with excursions of $X$; it is only the parts of the trajectories where $X$ is at $0$ that are `stretched out'.

Turning to the fact that $\mathcal S_\alpha X$ spends a positive time at $a$, we write
\begin{align}
    \int_0^t\1_{\{\mathcal S_\alpha X(s)=a\}}\ud s&= \int_0^t\1_{\{X(A_\alpha^{-1}(s))=a\}} \ud s =
    \int_0^{A_\alpha^{-1}(t) }\1_{\{X(u)=a\}} \ud A_\alpha(u) \nonumber \\ &= \int_0^{A_\alpha^{-1}(t) }\1_{\{X(u)=a\}} \ud (u+ \alpha L(u)) \nonumber \\&=
    \int_0^{A_\alpha^{-1}(t) }\1_{\{X(u)=a\}} \ud u + \alpha L\circ A_\alpha^{-1}(t),  \label{dzis:1}
\end{align}
and note that the second term is positive for all $t\ge \sigma$. This establishes the claim.

We prove also that \[\mathcal S_\alpha L \coloneqq L\circ A_\alpha^{-1}\] is a continuous additive functional for $\mathcal S_\alpha X$. To this end, since the process $\int_0^t \1_{\{\mathcal S_\alpha X(s) =a\}} \ud s, t \ge 0$ is obviously a continuous additive functional, it suffices to show that for a constant, say, $c$, 
\begin{equation}\label{dzis:2} L\circ A_\alpha^{-1} (t) = c \int_0^t \1_{\{\mathcal S_\alpha X(s) =a\}} \ud s, \qquad t \ge 0, \end{equation}
almost surely with respect to any initial distribution. To prove this formula, in turn, we suppose first that $X$ has zero sojourn time at $a$, that is, by definition, that $\int_0^\infty \1_{\{ X(s) =a\}} \ud s=0$, almost surely as above. In this case, the first term in the last line of \eqref{dzis:1} is $0$, and then,  almost surely, 
\begin{equation}
    \label{eq:loctime_slow}
 \int_0^t\1_{\{\mathcal S_\alpha X(s)=a\}} \ud s = \alpha L\circ A_\alpha^{-1}(t), \qquad t \ge 0.
\end{equation}
This means that  \eqref{dzis:2} holds with $c \coloneqq \alpha^{-1}$. Also, if $X$ does not have zero sojourn at $a$, then for some $t >0$ the probability of  the event  $\{\int_0^t\1_{\{X(s)=a\}} \ud s>0\}$ is not zero.  As a result, $\int_0^t\1_{\{X(s)=a\}} \ud s, t \ge 0$ is a nontrivial local time, and so, by the uniqueness result mentioned already above, all other local times
are constant multiples of this one. In particular, $L(t) = c_1 \int_0^t\1_{X(u)=a} \ud u, t \ge 0$ for a certain $c_1>0$. But then, 
the first term in the last line of \eqref{dzis:1} is $c_1^{-1} L \circ A_\alpha^{-1} (t)$; this shows that \eqref{dzis:2} holds with $c\coloneqq (c_1^{-1} +\alpha)^{-1}$, completing the proof.

Before completing this section, we come back to formula \eqref{aalpha} to comment on its second line and its influence on the definition of the local time  $\mathcal S_\alpha L$.  Namely, given a Markov process that is not honest, it is tempting to agree that its CAF is undefined whenever the process is undefined. However, such an agreement leads to technical difficulties in interpreting the CAF's functional equation, and complicates further considerations. As it turns out (compare e.g. \cite{bandg}*{p. 149}), it is  thus more convenient to define the CAF, say, $K$, beyond the lifetime $\tau$ of the process by  
\begin{equation}
    \label{eq:agreement_LT}
    K(t):=K(\tau-), \qquad t\geq \tau,
\end{equation}  
so that, for $t\ge \tau$, $K(t)$ can be thought of as the CAF accumulated during the lifetime. 
Definition \eqref{aalpha} is in line with this more convenient approach. Throughout the paper,  we  will keep defining CAFs, and local times in particular, beyond the lifetimes of their processes as in \eqref{eq:agreement_LT}; see the next section for example.

\subsection{Killing a Feller process at a point}\label{sec:kaf} 

A Feller process can also be killed. To elaborate, let again $X$ be a process satisfying assumptions (a)--(b) of Section \ref{sec:sup} and $L$ be its local time at $a$. Given an independent exponentially distributed random variable  $\zeta$ with parameter $\gamma$, we let $\tau \coloneqq \inf \{t \ge 0\colon L(t)=\zeta\}$.  Then the process 
\begin{equation} \label{killedg} \mathcal K_\gamma X (t) \coloneqq \begin{cases} X(t), &t < \tau, \\
\text{undefined}, & t \ge \tau \end{cases}\end{equation}
will be referred to as \emph{killed} at $\tau$. It is well known that $\mathcal K_\gamma X$ is a strong Markov process. Moreover, it turns out that  \[\mc K_\gamma  L (t) \coloneqq \begin{cases} L(t), &t < \tau, \\
L(\tau-), & t \ge \tau  \end{cases}\] is a local time of $\mathcal K_\gamma X$ at $a$.

It is interesting that the transformations $\mc S_\alpha$ and $\mc K_\gamma $ in a sense commute. Whereas, in general, processes $\mc S_\alpha \mc K_\gamma X $ and  $ \mc K_\gamma \mc S_\alpha X $ obviously differ, they turn out to be the same if extra care is taken in their construction. Suppose namely that an exponential random variable $\zeta$ is fixed and  $\mc K_\gamma \mc S_\alpha X$ is defined to be equal to $\mc S_\alpha X$ up to (but not including)
the time when $\mc S_\alpha L =\zeta$, and undefined 
from that time on.  Also, let $\mc S_\alpha \mc K_\gamma X$ be the process $\mc K_\gamma X$ slowed down with the help of the local time $\mc K_\gamma L$  defined using  the same variable $\zeta$. Then, it is easy to see that $\mc S_\alpha \mc K_\gamma X= \mc K_\gamma \mc S_\alpha X $. 

\subsection{A criterion for Feller property}\label{sec:fprop} Our next aim is to prove that the strong Markov processes $\mathcal S_\alpha X$ and $\mathcal K_\gamma X$  inherit Fellerian nature of the original process.  To this end, in this section, we introduce an auxiliary criterion for Feller property, that is,  Proposition \ref{lem:feller} further down.

Let $\rla, \lam >0$ be the resolvent of a process $X$ satisfying conditions (a) and (b) of Section \ref{sec:sup}, that is, let 
\[\rla f(x)= \mathsf E_x \int_0^\infty \e^{-\lam t} f(X(t))\ud t, \qquad x\in S, \lam >0, f\in \coss .\] 
{For simplicity of exposition and without losing any essential examples, we assume furthermore that $a$ is an accumulation point of $S$ and the lifetime of $X$ is no smaller than $\sigma$, so that $X$ is defined at least to the time when it first reaches $a$.} Also, let the \emph{minimal} process $X_{\mathsf {min}}$ be defined as being identical to $X$ up to (but not including) time $\sigma$ and undefined from $\sigma$ on.  The strong Markov property of $X$ allows then showing that the resolvent $\rla^0, \lam >0$ of $X_{\mathsf {min}}$ is related to the resolvent of $X$ as follows 
\begin{equation}\label{pklucz} \rla f (x) = \rla^0 f (x) + \rla f(a) \, \elam (x), \qquad x \not = a, f \in \coss .\end{equation}
Indeed, since $X_{\mathsf {min}}$ and $X$ are identical before $\sigma$, and $X$ is strong Markov, we can write 
\begin{align*}
\rla f(x) &= \mathsf E_x  \int_0^\sigma \e^{-\lam t} f(X_{\mathsf{min}}(t)) \ud t + \mathsf E_x  \e^{-\lam \sigma}  \mathsf E_{X(\sigma)} \int_0^\infty \e^{-\lam t} f(X(t)) \ud t, \end{align*}
for $x\not =a$ and $f\in \coss$. This gives \eqref{pklucz} because $X_{\mathsf{min}}$ is undefined after $\sigma$ and $X(\sigma) = a$. Moreover, formula \eqref{pklucz} can be extended by continuity to $x=a$, but then assumption (a) on the original process forces $\rla^0f(a)=0, f \in \coss$; keeping this in mind we have \begin{equation}\label{klucz} \rla f =  \rla^0 f  + \rla f(a) \, \elam, \qquad  f \in \coss, \lam >0.\end{equation}

It should be stressed here that $\rla^0, \lam >0$ is not a Feller resolvent in the sense introduced in Section \ref{pofs}: for $f $ with $f(a)\not =0$ we cannot have $\lil \lam \rla^0 f=f$ because $\lam \rla^0 f(a)=0$ for all $\lam >0$.  Nevertheless, by \eqref{klucz}, for $f \in \mathfrak \cosso\coloneqq \{f\in \coss \colon f(a)=0\}$, 
\[ \|\lam \rla^0 f - f\|\le \|\lam \rla f - f\| + \|\elam\| |\lam \rla f(a) - f(a)| \]
and so $\lil \lam \rla^0 f =f$ because $\|\elam\|=1$ and $\rla, \lam >0$ is a Feller resolvent. Moreover, $\rla^0$ maps $\coss $ into $\mathfrak \cosso$. As a result, there is a strongly continuous semigroup in $\mathfrak \cosso$ that describes the minimal process. We note in passing that \eqref{klucz} implies 
\begin{equation}\label{kluczyk} 
\elam = \mathsf 1_S - \lam \rla^0 \mathsf 1_S.\end{equation}
Indeed, there is at least one honest process that is identical to $X_{\mathsf {min}}$ before $\sigma$ (for example, the process that stays at $a$ from $\sigma$ on), and for such a process we have $\lam \rla \mathsf 1_S =\mathsf 1_S$; multiplying both sides of \eqref{klucz} with $f=\mathsf 1_S$ by $\lam$ yields \eqref{kluczyk}.

It is an equally important point, however, that the procedure described above can be reversed: if $\wt X$ is a strong Markov process that coincides with the minimal process up to $\sigma$, then the resolvent $\wrla, \lam >0$ of $\wt X$  is related to $\rla^0, \lam >0$ as in \eqref{kluczt}  (and the reader may want to compare this result with \eqref{loo:4}):
\begin{equation}\label{kluczt} \wrla f =  \rla^0 f  + \wrla f(a) \, \elam, \qquad  f \in \coss, \lam >0.\end{equation}
This allows deducing Fellerian nature of such an $\wt X$ as follows.

\begin{proposition}\label{lem:feller}
Suppose that $\wt X$ is a strong Markov process that is identical to $X_{\mathsf {min}}$ for  $t< \sigma$.  Then $\wt X$ is a Feller process provided that  
\begin{equation}\label{warek} \lil \lam \wrla f(a) = f(a), \qquad f \in \coss.\end{equation}  
\end{proposition}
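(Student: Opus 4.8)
The plan is to verify the three defining properties of a Feller resolvent (as listed in Section~\ref{pofs}) for the family $\wrla, \lam>0$ of $\wt X$, and then invoke the standard fact that a regular pseudoresolvent satisfying $\lam\wrla\mathsf 1_S\le\mathsf 1_S$ is the resolvent of a Feller generator, hence of a Feller semigroup. The operators $\wrla$ are automatically non-negative, being resolvents of a Markov process, and they satisfy the Hilbert (resolvent) equation; moreover $\lam\wrla\mathsf 1_S\le\mathsf 1_S$ holds because $\wt X$ is sub-Markovian. So the only condition in need of proof is condition~2., that $\lil\lam\wrla f=f$ uniformly on $S$ for every $f\in\coss$.

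First I would record the key identity \eqref{kluczt}, namely $\wrla f=\rla^0 f+\wrla f(a)\,\elam$ for all $f\in\coss$, $\lam>0$, which was established earlier using the strong Markov property together with the fact that $\wt X$ agrees with $X_{\mathsf{min}}$ before $\sigma$. Multiplying by $\lam$ gives
\begin{equation*}
\lam\wrla f=\lam\rla^0 f+\bigl(\lam\wrla f(a)\bigr)\,\elam,\qquad f\in\coss,\ \lam>0.
\end{equation*}
From the discussion following \eqref{klucz} we already know that $\lil\lam\rla^0 f=f$ for $f\in\cosso$ (i.e.\ $f(a)=0$), that $\rla^0$ maps $\coss$ into $\cosso$, and that $\|\elam\|=1$ with, by \eqref{kluczyk}, $\elam=\mathsf 1_S-\lam\rla^0\mathsf 1_S$.

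The main step is then the following decomposition. Given an arbitrary $f\in\coss$, write $f=g+f(a)\mathsf 1_S$ where $g\coloneqq f-f(a)\mathsf 1_S\in\cosso$. For the part $g$ we have $\lil\lam\wrla g=\lil\lam\rla^0 g+\lim_\lam\bigl(\lam\wrla g(a)\bigr)\elam$; since $g(a)=0$, the hypothesis \eqref{warek} forces $\lim_\lam\lam\wrla g(a)=0$, so (using $\|\elam\|=1$) the second term vanishes in norm and $\lil\lam\wrla g=g$ uniformly. For the constant part, apply \eqref{kluczt} with $f=\mathsf 1_S$: $\lam\wrla\mathsf 1_S=\lam\rla^0\mathsf 1_S+(\lam\wrla\mathsf 1_S(a))\elam$, and using \eqref{kluczyk} in the form $\lam\rla^0\mathsf 1_S=\mathsf 1_S-\elam$ this rearranges to $\lam\wrla\mathsf 1_S=\mathsf 1_S-\bigl(1-\lam\wrla\mathsf 1_S(a)\bigr)\elam$; by \eqref{warek} applied to $\mathsf 1_S$ the scalar coefficient $1-\lam\wrla\mathsf 1_S(a)\to 0$, so $\lil\lam\wrla\mathsf 1_S=\mathsf 1_S$ uniformly. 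Adding the two contributions, $\lil\lam\wrla f=\lil\lam\wrla g+f(a)\lil\lam\wrla\mathsf 1_S=g+f(a)\mathsf 1_S=f$, which is condition~2.

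Having checked conditions 1.--3., I would conclude that $\wrla, \lam>0$ is a Feller resolvent, hence equals $\rez{\gen}$ for a Feller generator $\gen$, and therefore $\wt X$ — which has this resolvent — is a Feller process. The only mild subtlety worth spelling out is the passage from ``$\wt X$ has a Feller resolvent'' to ``$\wt X$ is a Feller process'': one should note that the Feller semigroup generated by $\gen$ and the transition function of $\wt X$ have the same Laplace transform in $\lam$, hence (by uniqueness of Laplace transforms of the relevant measure-valued maps) the same transition function, so $\wt X$ is indeed a realisation of the Feller semigroup. I expect this identification, rather than the resolvent computation itself, to be the only point requiring a word of care; the algebra with \eqref{kluczt} and \eqref{kluczyk} is routine.
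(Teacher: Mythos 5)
Your argument is correct and follows essentially the same route as the paper: both reduce condition 2.\ to the constant function $\mathsf 1_S$ and to functions in $\cosso$ via \eqref{kluczt}, \eqref{kluczyk}, $\|\elam\|=1$ and hypothesis \eqref{warek}. The one point the paper spells out that you gloss over is that $\wrla$ maps $\coss$ into itself --- this is not automatic for the resolvent of a general Markov process, but follows from \eqref{kluczt} together with the continuity of $\elam$ guaranteed by assumption (b) of Section \ref{sec:sup}.
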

\begin{proof}  By assumption (b) of Section \ref{sec:sup}, the right-hand side of \eqref{kluczt} is a continuous function whenever $f$ is,  that is, each $\wrla$ maps $\coss $ into itself.  Moreover, since $\wt X$ is a Markov process, these operators satisfy the Hilbert equation. They are also clearly nonnegative and condition 3. of the definition of Feller  resolvent is fulfilled. 

We are thus left with showing that $\lil \lam \wrla f =f, f \in \coss$. Starting with the case where $f=\mathsf 1_S$ we note that condition  \eqref{kluczt}, when combined with  \eqref{kluczyk}, renders 
\( \|\lam \wrla  \mathsf 1_S - \mathsf 1_S\| = (1 - \lam \wrla \mathsf 1_S(a))\|\ell_\lam\| = 1 - \lam \wrla \mathsf 1_S(a)\) and this converges to $0$ as $\lam \to \infty$, by assumption \eqref{warek}. Therefore, it suffices to show that $\lil \lam \wrla f =f, f\in \mathfrak \cosso$. However, for such $f$, $\lil \lam \rla^0 f =f $, and so the desired result is a consequence of 
 \eqref{kluczt} and \eqref{warek}; note again that $\|\elam\|=1$. 
 \end{proof}

In practice, to show \eqref{warek} it suffices to check that 
\begin{equation}\label{wareczek} \lim_{t\to 0+}f(\wt X(t)) =f(a), \qquad f\in \coss\end{equation}  almost surely conditional 
on  $\wt X (0)=a$. Indeed, if this conditions is satisfied, we also have    $\lil \lam \czn \e^{-\lam t}f(\wt X (t))\ud t = f(a)$ (almost surely as above) and then  \eqref{warek} follows by the Lebesgue Dominated Convergence Theorem. In particular, \eqref{wareczek} is fulfilled if the lifetime of the process starting at $a$ is positive with probability $1$.

\subsection{Fellerian nature of slowed down and killed processes} \label{sec:fno}

Proposition \ref{lem:feller} allows proving that the slowed down process $\mc S_\alpha X$ of \eqref{lepki} has Feller property. For, as we have already stated, $\mc S_\alpha X$ is strong Markov. Moreover, for $t<\sigma$ it is identical to $X_{\mathsf {min}}$ and so \eqref{kluczt} is in force. Finally, since $X$ has c\`{a}dl\`{a}g paths, so does $\mc S_\alpha X$ and thus condition \eqref{wareczek} is satisfied.

The same proposition can be used to show that also the killed process $\mc K_\gamma X$ has Fellerian nature. Indeed, first of all, $\mathcal K_\gamma X$ is strong Markov and identical to $X_{\mathsf{min}}$ for $t< \sigma$; hence \eqref{kluczt} is again in force. Moreover, by assumption, $\lim_{t\to 0+}f(X(t)) =f(a)$ for all $f\in \coss$  almost surely conditional on $X (0) = a$.  However, $\tau >\sigma$ almost surely, because $\zeta$ is almost surely positive and a positive value cannot be reached by a continuous process starting at zero, that is, by the local time of $X$ at $a$. Hence, before $\tau >0$, each path of $\mathcal K_\gamma X$ is the same as that of $X$, and this yields    $\lim_{t\to 0+}f(\mathcal K_\gamma X(t)) =f(a)$ almost surely as above, so that \eqref{wareczek} is satisfied.

\subsection{Slowing down and killing the Walsh's process}\label{sec:sand} 

Let us come back to the Walsh's process $W_\beta$ as defined in \eqref{eq:repr_Bayr}. This process can be slowed down at $\zero$: to this end we define 
\begin{equation}\label{stickywalsh} W_{\alpha,\beta} \coloneqq W_{\beta}\circ A_\alpha^{-1}\end{equation} 
where $A_\alpha^{-1}$ is the inverse of $A_\alpha$ of  \eqref{aalpha} with $L$ of \eqref{eq:loctime_Bayr}
being the symmetric local time of $W_\beta$ at $\zero$.  We know that the so-defined $W_{\alpha,\beta}$ is a Feller process. Indeed, $W_\beta$ satisfies conditions (a)--(b) of Section \ref{sec:sup}; we have $\scan \ni x\mathfrak e_i  \mapsto \mathsf E_{x\mathfrak e_i} \e^{-\lam \sigma} = \e^{-\slam x}, x\ge 0, i \in \kad$ (this is just the restatement of the definition of $\elam^0 $ introduced in \eqref{lifetime}), where $\sigma$ is the first time $W_\beta$ reaches the graph center $\zero$. Hence, the argument of the first part of Section \ref{sec:fno} is applicable.  

This process, in turn, can be
killed with the help of an exponential random variable $\zeta$ with parameter $\gamma$ that is independent of the process $W_{\alpha,\beta}$. The procedure is summarized in
\begin{equation}\label{wkilled}
W_{\alpha,\beta,\gamma}(t)\coloneqq \begin{cases}
    W_{\alpha,\beta} (t),& L\circ A_\alpha^{-1} (t)<\zeta,\\
    \text{undefined},&  L\circ A_\alpha^{-1} (t)\geq \zeta.
\end{cases}
\end{equation}
The same result can be obtained by reversing the order of slowing down  and killing, as follows. First we introduce 
\begin{equation}\label{wobg} W_{0,\beta, \gamma} (t) \coloneqq \begin{cases} W_{\beta}(t), & L (t)< \zeta,\\
 \text{undefined}, & L(t)\ge \zeta, \end{cases}\end{equation}
 where, again, $W_\beta $ is the Walsh's process of \eqref{eq:repr_Bayr}, $L$ is the local time of \eqref{andriya}, and $\zeta$, independent of $W_\beta$, is an exponential random variable with parameter $\gamma$. Then,  we slow down $W_{0,\beta,\gamma}$ at $0$: 
 \begin{equation} \label{wabg}  W_{\alpha,\beta, \gamma} (t) \coloneqq W_{0,\beta, \gamma} \circ A_\alpha^{-1} (t), \qquad t \ge 0. \end{equation}
It is easy to check, as in the general case, that both procedures lead to the same process. Moreover,  since $L\circ A_\alpha^{-1}$ is a local time of $W_{\alpha,\beta}$, formula \eqref{wkilled} is a particular case of \eqref{killedg}. Thus, we conclude that 
$W_{\alpha,\beta,\gamma}$ has Fellerian nature.

We end this section by noting that $L\circ A_\alpha^{-1}$ is the symmetric local time of $W_{\alpha,\beta}$, that is, almost surely,  

\begin{equation}\label{wika:1} L\circ A_\alpha^{-1} (t) =  \lim_{\varepsilon\to0+}\tfrac{1}{2\varepsilon}\int_0^t\1_{\{0<|W_{\alpha,\beta}(s)|\leq \varepsilon\}}\ud s, \quad t \ge 0. 
\end{equation}
Indeed, 
\begin{align*}
\int_0^t\1_{\{0<|W_{\alpha,\beta}(s)|\leq \varepsilon\}}\ud s& = \int_0^t\1_{\{0<|W_{\beta}\circ A_\alpha^{-1} (s)|\leq \varepsilon\}}\ud s \\&= \int_0^{A_\alpha^{-1}(t)}\1_{\{0<|W_{\beta}(s)|\leq \varepsilon\}}\ud A_\alpha (s)
\\&= \int_0^{A_\alpha^{-1}(t)}\1_{\{0<|W_{\beta}(s)|\leq \varepsilon\}}\ud (s + \alpha L(s)) \\&= \int_0^{A_\alpha^{-1}(t)}\1_{\{0<|W_{\beta}(s)|\leq \varepsilon\}}\ud s, 
\end{align*}
with the last equality following by the fact that $L$ may increase only if $W_\beta$ is at $\zero$. This shows \eqref{wika:1} by \eqref{andriya}.

\subsection{The generator of $W_{\alpha,\beta,\gamma}$; an approach via Dynkin's operator}\label{sec:dynkin}

We have established that the formula
\[ T_{\alpha,\beta,\gamma} (t) f(v) = \mathsf E_v f(W_{\alpha,\beta,\gamma}(t)), \qquad v\in \scan, f\in \cscan, t \ge 0\] 
defines a Feller semigroup $\rod{T_{\alpha,\beta,\gamma}(t)}$ in $\cscan$ (see Section \ref{sec:bpy} for the definition of $\scan$); the related process is that of Walsh but slowed down at $\zero$ with parameter $\alpha$ and killed with intensity $\gamma$. Our goal will be reached once we show that 
this semigroup is generated by $\gen_{\alpha,\beta,\gamma}$ of  \eqref{gengen}.

To this end, we will use the notion of \emph{characteristic operator of Dynkin}. To recall, under quite general mild conditions  the characteristic operator extends the generator of a Markov process
(see \cite{dynkinmp}*{ \S 3 of Chapter 5}), and coincides with the generator of a Feller process (see \cite{rogers}*{p. 256}). Thus, denoting by $  \mathfrak H_{\alpha,\beta,\gamma}$ the generator of  $\rod{T_{\alpha,\beta,\gamma}(t)}$, we have 
\begin{equation}\label{dynkina} \mathfrak H_{\alpha,\beta,\gamma}f(v) \coloneqq \lim_{\eps \to 0} \frac{\mathsf E_v f(W_{\alpha,\beta,\gamma}(\tau_\eps)) - f(v)}{\mathsf E_v \tau_\eps}, \qquad f\in \dom{\mathfrak H_{\alpha,\beta,\gamma}}, v\in \scan, \end{equation}
where $\tau_\eps$ is the first time $W_{\alpha,\beta,\gamma}$ exits  the open ball centered at $v$ with radius $\eps$.  
We will argue that $\mathfrak H_{\alpha,\beta,\gamma}$ extends $\gen_{\alpha,\beta,\gamma}$. 

For, if $v$ lies away from the center and is thus of the form $v=x\mathfrak e_i$ for some $x>0$ and $i\in \kad$, $\tau_\eps$ is the time needed for a standard Brownian motion to exit from the interval $(x-\eps,x+\eps)$ (as long as $\eps < x$). Indeed, because Brownian paths are continuous, the process cannot touch the graph's center before $\tau_\eps$, and thus before that time it behaves like a one-dimensional Brownian motion on the $i$th edge; in particular, before this time it is neither slowed down nor killed. Hence, $\mathsf E_v \tau_\eps = \eps^2$ (this  can be deduced e.g. from \cite{karatzas}*{Problem 8.14, p. 100}; other ways of reaching this conclusion are listed in \cite{kostrykin2010brownian}*{p. 10}). Moreover, using continuity of Brownian paths again, we see that $W_{\alpha,\beta,\gamma} (\tau_\eps)$ is either $(x-\eps)\mathfrak e_i$ or $(x+\eps)\mathfrak e_i$, and by symmetry both cases happen with the same probability. Thus, the right-hand side of  \eqref{dynkina} reduces to $\lim_{\eps \to 0} \frac {\frac 12 f((x+\eps)\mathfrak e_i) + \frac 12 f((x-\eps)\mathfrak e_i)-f(x\mathfrak e_i)}{\eps^2}$ and for $f\in \dom{\Delta}$ this equals $\frac 12 f''(v)$. 

To treat the case where $v$ is the graph's center, we need to gather some helpful information. 
\begin{itemize} 
\item [(i) ] First of all, Lemma 3.4 in \cite{kostrykin2010brownian} says that, as long as $\gamma =0$, 
\[ \tau_\eps = \sigma_\eps + \alpha L\circ \sigma_\eps = A_\alpha (\sigma_\eps) \]
where $\sigma_\eps$ is the time needed for a reflecting Brownian motion started at $0$ to reach $\eps$ for the first time. 
\item [(ii) ] Using \cite{karatzas}*{Problem 8.14, p. 100} alluded to above we see that $\mathsf E_0 \sigma_\eps = \eps^2$. Moreover, $\mathsf E_0  L\circ \sigma_\eps = \eps$.  Indeed, the second part of \eqref{deflocal} holds for all $t$ for almost all paths and can thus be also used with $t$ replaced by $\tau_\eps$ to yield $\eps = B(\tau_\eps) + L(\tau_\eps)$. Since Brownian motion is a martingale, a~reference to Doob's Optional Sampling Theorem reveals that $\mathsf E_0 B(\tau_\eps)=0$, completing the proof of the claim.

\item [(iii) ]By (i), $L\circ A_\alpha^{-1} (\tau_\eps) = L(\sigma_\eps)$, and so $\mathsf P_\zero (L\circ A_\alpha^{-1} (\tau_\eps)\le \zeta) = \mathsf P_0 (L (\sigma_\eps)\le \zeta)$ --- see  
\eqref{wkilled}.  Since $\zeta $ is  exponentially distributed with parameter $\gamma$, the latter probability equals \[\phantom{===} \czn \gamma \e^{-\gamma t}  \mathsf P_0 (L (\sigma_\eps)\le t)\ud t = \czn \e^{-\gamma t} \ud   \mathsf P_0(L  (\sigma_\eps)\le t)= \mathsf E_0 \e^{-\gamma L(\sigma_\eps)}.\]
The Laplace transform of $L(\sigma_\eps)$, in turn, can be obtained as a special case of formula 2.3.3 on p. 356 in \cite{borodin} (this is formula 2.3.3 on p. 362 in the 2015 corrected printing of this monograph); to this end, put $z=\eps$, $x=r=0$ and take the limit as $\alpha \to 0$, to obtain 
 \[ \mathsf P_\zero (L\circ A_\alpha^{-1} (\tau_\eps)\le \zeta) = \mathsf E_0 \e^{-\gamma L(\sigma_\eps)}= \tfrac 1{1+\eps \gamma};\]
the same formula for the Laplace transform of $L(\sigma_\eps)$ can be found on p. 429 of \cite{karatzas}; a semigroup-theoretic proof of the fact that $L(\sigma_\eps)$ is exponentially distributed with parameter $\eps$ is given in \cite{bobpil1}.
 \end{itemize}
With this information under our belt, we can calculate $\mathfrak H_{\alpha,\beta,\gamma}f (\zero)$. Since, conditional on $L\circ A_\alpha^{-1} (\tau_\eps) \le \zeta$, the process $W_{\alpha,\beta,\gamma}$ started at $\zero$ is at time $\tau_\eps$ at $\eps \mathfrak e_i$ with probability $\beta_i$, \[ \mathsf E_\zero f(W_{\alpha,\beta,\gamma}(\tau_\eps)) =\mathsf P_\zero (L\circ A_\alpha^{-1} (\tau_\eps)\le \zeta) \sum_{i\in \kad} \beta_i f(\eps \mathfrak e_i)\] 
Thus, by (ii) and (iii), in the case of $v=\zero,$ the right-hand side of \eqref{dynkina} reduces to $\lim_{\eps \to 0} \frac {\frac 1{1+\eps \gamma} \sum_{i\in \kad} \beta_i f(\eps \mathfrak e_i) -f(\zero)}{\eps^2+\alpha \eps}$. If $\alpha>0$, this limit, for $f$ that have the first derivative at $\zero$ along each edge, equals $\alpha^{-1} (\sum_{i\in \kad}\beta_i f_i'(0)-\gamma f(\zero))$.
Now, for $f\in \dom{\gen_{\alpha,\beta,\gamma}}$ this coincides with $\frac 12 f''(\zero)$, establishing that $\mathfrak H_{\alpha,\beta,\gamma}$ extends $\gen_{\alpha,\beta,\gamma}$. Similarly, as can be checked by de l'Hospital's rule,  in the case of $\alpha=0$ the above limit is equal to  $\frac 12 f''(\zero)+ \gamma (\gamma f(\zero) - \sui \beta_i f'_i(0))$ for $f\in \mathfrak C^2(S)$, and reduces to  $\frac 12 f''(\zero)$ for $f \in \dom{\gen_{\alpha,\beta,\gamma}}$. 

However, $\mathfrak H_{\alpha,\beta,\gamma}$, being a Feller generator, satisfies the positive-maxi\-mum principle, and thus 
cannot be a proper extension of a generator (see e.g. \cite{kallenbergnew}*{p. 377} or \cite{rogers}*{p. 242}). Since $\mathfrak H_{\alpha,\beta,\gamma}$ extends the generator $\mathfrak G_{\alpha,\beta,\gamma}$, \[ \mathfrak H_{\alpha,\beta,\gamma} = \gen_{\alpha,\beta,\gamma},\]
as desired.   

\subsection{The generator of $W_{\alpha,\beta,\gamma}$; an approach via resolvent}
\label{seq:res_Walsh_non_jump}

The fact that $W_{\alpha,\beta,\gamma}$ is generated by $\gen_{\alpha,\beta,\gamma}$ can also be established 
by looking at its resolvent, but then we need to use the results of \cite{itop,kostrykin2010brownian} in a more fundamental way than before.

To elaborate on this succinct statement, let us begin by noting that the process of Walsh can be seen as a concatenation of $\ka$ reflected Brownian motions on the edges having common local time at the graph's center --- this is the intrinsic meaning of Theorem \ref{thm:baryaktar}.  In $W_{\alpha,\beta}$ of \eqref{stickywalsh} each of these reflected Brownian motions is slowed down at $0$, and then in $W_{\alpha,\beta,\gamma}$ of \eqref{wkilled} --- killed with intensity $\gamma$. It is one of the main findings of \cite{itop,kostrykin2010brownian} that the first procedure transforms each of those reflected Brownian motions to the process generated by the one dimensional Laplace operator (see Section \ref{sec:bco}) with the boundary condition $\frac \alpha 2 f''(0)=f'(0)$, whereas the second --- to the  process generated by the same operator with the boundary condition $\frac \alpha 2 f''(0) -f'(0)+\gamma f(0)=0$.  The resolvent of the latter operator, let us call it $\mathfrak A$,  is well known and given by (see e.g. \cite{knigazcup}*{pp. 17-18}) 
\begin{equation*}
\rez{\mathfrak A} f (x) = C(f) \e^{\sqrt {2\lam} x } + D(f) \e^{-\sqrt {2\lam} x } - \sqrt{\tfrac 2\lam} \int_0^x \sinh \sqrt {2\lam} (x-y) f(y) \ud y,\end{equation*}
for all $x\ge 0$ and $f\in C[0,\infty]$, where $C(f)$ and $D(f) $ are constants defined as follows:
\begin{align*} C(f) = \frac 1{\sqrt {2\lam} } \int_0^\infty \e^{-\sqrt {2\lam} y } f(y) \ud y, \qquad 
D (f)=  \frac{(\sqrt {2\lam} - \alpha \lam - \gamma)C + \alpha f(0)}{\alpha\lam +  \sqrt {2\lam} + \gamma}. \end{align*}
In particular, 
\begin{equation*}
\rez{\mathfrak A} f (0) = C(f)+ D(f) = \frac{2\sqrt {2\lam}C(f) + \alpha f(0)}{\alpha\lam +  \sqrt {2\lam} + \gamma}.\end{equation*}

Coming back to $W_{\alpha,\beta,\gamma}$: using the strong Markov property as in \eqref{klucz}, we obtain the following formula for its resolvent
\[ \rla f = \rla^0 f + \rla f(\zero) \elam^0, \qquad f\in \cscan \]
where $\rla^0$ and $\elam^0$ are the resolvent of the minimal process and the Laplace transform of the time needed to reach the graph center, respectively (see \eqref{loo:5}). 
Moreover, since $W_{\alpha,\beta,\gamma}$ started at $\zero$ is at the $i$th edge with probability $\beta_i$ and while at this edge behaves like the process generated by $\mathfrak A$ described above, we have
\begin{equation}\label{aux} \rla f(\zero) = \sum_{i\in \kad} \beta_i \rez{\mathfrak A} f_i(0) =  \frac{2\sqrt {2\lam}\sum_{i\in \kad} \beta_i C_i(f) + \alpha f(\zero)}{\alpha\lam +  \sqrt {2\lam} + \gamma}\end{equation} with $C_i(f)$ defined in \eqref{cee}. It is now easy to see that $\rla f(\zero)$ is a particular case of $E(f)$  introduced in \eqref{loo:3} (recall that $\sui \beta_i=1$), and thus conclude that the resolvent of $W_{\alpha,\beta,\gamma}$ coincides with the resolvent of the generator $\gen_{\alpha,\beta,\gamma}$ of \eqref{gengen}, as was expected.



\subsection{The degenerate case of some  or all $\beta_i=0$}\label{sec:sui}

Throughout Section \ref{sec:tcom=0}, we have assumed that $\beta_i>0, i\in \kad$ and, without loss of generality, have normalized them to have $\sum_{i\in \kad}\beta_i =1$. Now is the time to clarify what happens when some or all $\beta_i$s vanish.  

Let us first consider the case where $\beta_i=0$ for all $i $ in a proper subset $\kal$ of $\kad$. Then $W_{\alpha,\beta, \gamma}$ is  a function of the degenerate process $W_\beta$ of Section \ref{degenerat}: by definition, $W_{\alpha,\beta,\gamma}$ is $W_\beta$ slowed down at $\zero$ as in \eqref{stickywalsh} and killed as in \eqref{wkilled} --- the local time of the degenerate Walsh's process coincides with the local time of the regular Walsh's process on a smaller graph with $\# (\kad \setminus \kal) $ edges.


The second case is that of $\kal =\kad$. Here, we must have  $\alpha >0$ for otherwise \eqref{mainas} is violated. To describe this scenario, we begin with the process which at each edge behaves like a Brownian motion but upon hitting $\zero$ for the first time stays at this point for ever. It is easy to check that the generator of this process is $\gen_{1,0,0}=\gen_{\alpha,0,0}$. Then, we modify the process by requiring that it is killed when its local time at $\zero$ (which is the same as the regular time) exceeds an independent exponential random variable with parameter $\gamma$. Again, a simple analysis of the related Dynkin operator shows that the modified process is generated by $\gen_{1,0,\gamma}=\gen_{\alpha,0,\alpha \gamma}$. 


 \subsection{Resolvents, semigroups and cosine families}

In the preceding sections we have established that the process related to the boundary condition \eqref{loo:0}  with $\mum=0$  is Walsh's process slowed down at the graph center and killed when the local time at the graph center exceeds an independent exponential random variable with parameter $\gamma$. This fact can also be seen, but perhaps less vividly, directly from  \eqref{loo:6}.  

First of all, if $\gamma$ is zero, $\elam$ is a zero function, as a reflection of the fact that the process is then honest. Moreover, $\lam \mapsto\e^{-\slam x}$ is the Laplace transform of the time needed for a Brownian motion starting at $x$ to reach $0$ (see \cite{ito}*{ Eq. 5) p. 26}, \cite{karatzas}*{Eq. (8.6) p. 96} or the original \cite{levybook}*{pp. 221-223}). 
It can be argued, moreover, that \begin{equation}\label{aa:1} \lam \mapsto  \frac \gamma{\lam \alpha + \slam \sui \beta_i + \gamma}, \end{equation} is completely monotone, and thus is the Laplace transform of the distribution of an $\R^+$-valued random variable. Therefore, 
$\elam (x)$ describes the distribution of the sum of two nonnegative random variables: one is the time to reach $0$ from $x$, the other --- the time spent at the boundary before the process is no longer defined. In particular, for $\alpha =1$ and $\sui \beta_i =0$, \eqref{aa:1} reduces to $\lam \mapsto  \frac \gamma{\lam +  \gamma} $, which we recognize as the Laplace transform of the exponential distribution with parameter $ \gamma $ (see Section \ref{sec:sui}, above).
 Likewise, for $\alpha=0$ and $\sui \beta_i =1$, \eqref{aa:1} reads $\lam \mapsto \frac \gamma{ \slam  + \gamma}$ which is the Laplace transform of the time needed for the L\'evy local time to exceed an independent exponential random variable with parameter $\gamma$, see  \cite{ito}*{p. 45, Eq. 1}. For $\alpha > 0$ and $\sui\beta_i =1$, the set of times the process spends at the boundary has positive Lebesgue measure, but contains no intervals, see  \cite{liggett}*{p. 128}.  

An additional insight into the process can also be gained from the analysis of the related cosine families and semigroups. Notably, see \cite{ela}, there is an elegant subspace of $\mathsf F$ that plays a similar role for the cosine family related to $W_{\alpha,\beta,0}$ as $\mathsf F_\beta$ of \eqref{przef} plays for $W_\beta$. As a result, there are counterparts of \eqref{glebszy} and \eqref{wsp:1}: to obtain a formula for the cosine family, or the semigroup, related to $W_{\alpha,\beta,0}$, it suffices to replace  $C_{\mathsf{ref}}$ by $C_{\mathsf{sticky}}$ and $T_{\mathsf{ref}}$ by $T_{\mathsf{sticky}}$ in the formulae just mentioned (see also the already cited \cite{barlow}). The same is true in the case of $W_{0,\beta,\gamma}$ with $\gamma >0$, but the general case seems to be still unknown.

We note also the paper \cite{adasma}, where existence of cosine families related to $W_{\alpha,\beta,\gamma}$ was established by a method of decomposition of resolvents inspired by Example 3.59 in \cite{liggett}. 

\subsection{{Lumping edges together}}\label{sec:lumping1}

In Section \ref{sec:bpy} we have seen, as  a corollary to  Theorem \ref{thm:Walsh},
that by lumping edges  of a star graph together, we transform Walsh's process to another Walsh's process, and the two processes have the same symmetric local time at $\zero$. Here, we extend this result to the case where additionally slowing down at $\zero$ and killing are allowed. 

To this end, let  
$W_\beta$ be Walsh's process  with parameter $\beta$, and let, as in Section \ref{sec:bpy}, for some natural number $\mathpzc n\le \ka$, 
$\Psi$ map $\kad $ onto $\mathpzc N \coloneqq \{1,\dots,\mathpzc n\}$. 
We know that the process $(X_j)_{j\in \mathpzc N}$ defined by  
     \[
     X_j(t)=\sum_{\{i\colon \Psi(i)=j\}}  (W_\beta)_{i}(t), \qquad j\in \mathpzc N, t \ge 0
     \]
where $(W_\beta)_i$ is the $i$th coordinate of $W_\beta$, 
is then Walsh's process also, but with the modified parameter $\widetilde \beta = (\widetilde \beta_j)_{j\in \mathpzc N}$, where $\widetilde \beta_j \coloneqq \sum_{\{i\colon \Psi(i)=j\}} \beta_i, j \in \mathpzc N$. Moreover, the symmetric local times of $W_\beta$ and $W_{\widetilde \beta}=X$ at $\zero$ coincide.

We claim that, for any $\alpha >0$, the process $(Y_j)_{j\in \mathpzc N}$ defined by  
     \[
     Y_j(t)=\sum_{\{i\colon \Psi(i)=j\}}  (W_{\alpha,\beta})_{i}(t), \qquad j\in \mathpzc N, t \ge 0
     \]
is $W_{\alpha,\widetilde \beta}$. Indeed, $W_{\alpha,\beta} $ is $W_\beta$ slowed down at $\zero$, as revealed in \eqref{stickywalsh}. Thus, for $j \in \mathpzc N $ and $t\ge0$, 
\[ Y_j (t) = \sum_{\{i\colon \Psi(i)=j\}}  (W_{\alpha})_{i}(A_\alpha^{-1}( t)) = X_j (A_\alpha^{-1} (t)) = (W_{\widetilde \beta})_j (A_\alpha^{-1} (t)).\]
Since $L$ featured in \eqref{aalpha}  (and defined in \eqref{andriya})  coincides simultaneously with the symmetric local time of $W_{\widetilde \beta}$, this relation shows that $Z =W_{\widetilde \beta} \circ A_\alpha^{-1} = W_{\alpha,\widetilde \beta},$ as claimed. 

Similarly, for any $\gamma>0$, the process $(Z_j)_{j\in \mathpzc N}$ defined by  
     \[
     Z_j(t)=\sum_{\{i\colon \Psi(i)=j\}}  (W_{\alpha,\beta,\gamma})_{i}(t), \qquad j\in \mathpzc N, t \ge 0
     \]
is $W_{\alpha,\widetilde \beta,\gamma}$. Indeed, as explained in \eqref{wkilled}, $W_{\alpha,\beta,\gamma}$ is identical to $W_{\alpha,\beta}$ before the time, say, $\tau$,  when $L\circ A_\alpha^{-1}$  reaches the level of an independent random variable $\zeta$ that is exponentially distributed with parameter $\gamma$, and is undefined later. Hence, for $i\in \mathpzc N$, $Z_j (t)$ is either undefined (for $t\ge \tau$) or equals $\sum_{\{i\colon \Psi(i)=j\}}  (W_{\alpha,\beta})_{i}(t)=W_{\alpha,\widetilde \beta}(t)$ (for $t<\tau$). Since $L$ is also the symmetric local time of $W_{\alpha,\widetilde \beta}$, this means that $Z$ is $W_{\alpha,\widetilde \beta,\gamma}$, as announced above. 

Notably, even though above we have tacitly assumed that all $\beta_i$ are positive and sum up to $1$, it can be argued that the principle of lumping the edges together applies also to the degenerate case of Section \ref{sec:sui}; we leave the details to the readers (see also Sections \ref{lumping} and \ref{lumping2}, further down).

\section{The case of non-zero but finite $\mum$}\label{sec:finite}

In this section, we turn to the case where $\mum$ in \eqref{loo:0} is non-zero but finite, and so the condition in question can be rewritten as  
\begin{equation*}\tfrac{\alpha}2 f''(\zero) - \sum_{i\in \kad} \beta_i f'_i(0) + \gamma  f(\zero) = \delta \int_{S_{\zero}} [f(x) - f(\zero)] \,\mu (\mud x),  \end{equation*}	
where $\mu \coloneqq \frac \mum {\mum(S_{\zero})}$ is a probability measure and $\delta \coloneqq \mum (S_{\zero})$. 
Equivalently, \begin{equation}\label{loo:7} \tfrac{\alpha}2 f''(\zero) - \sum_{i\in \kad} \beta_i f'_i(0) + (\gamma +\delta) f(\zero) = \delta \int_{S_{\zero}} f \ud \mu,  \end{equation}	
showing in particular that a non-zero $\mum$ automatically contributes a term to the coefficient of $f(\zero)$.  As a result, there are two natural interpretations of $\gamma$. 
\begin{itemize}
\item [(a) ] In the first of these, the process related to \eqref{loo:7} is seen as a modification of the process related to 
\begin{equation}\label{loo:7a} \tfrac{\alpha}2 f''(\zero) - \sum_{i\in \kad} \beta_i f'_i(0) + \delta f(\zero) = \delta \int_{S_{\zero}} f \ud \mu;  \end{equation}	
when the local time at $0$ of the latter exceeds an independent exponential time with parameter $\gamma$, the former is killed, that is, no longer defined. 
\item [(b) ] In the second interpretation,  the process related to  \eqref{loo:7} is seen as a continuation of $W_{\alpha,\beta,\gamma+\delta}$: at a random time when the latter ceases to be defined, the former is `resurrected' with probability $\frac \delta{\delta +\gamma}$ and starts all over again, forgetting the past, at a random point with distribution $\mu$. 
\end{itemize}
\begin{figure}
\begin{center}\includegraphics[width =11cm,height=7cm]{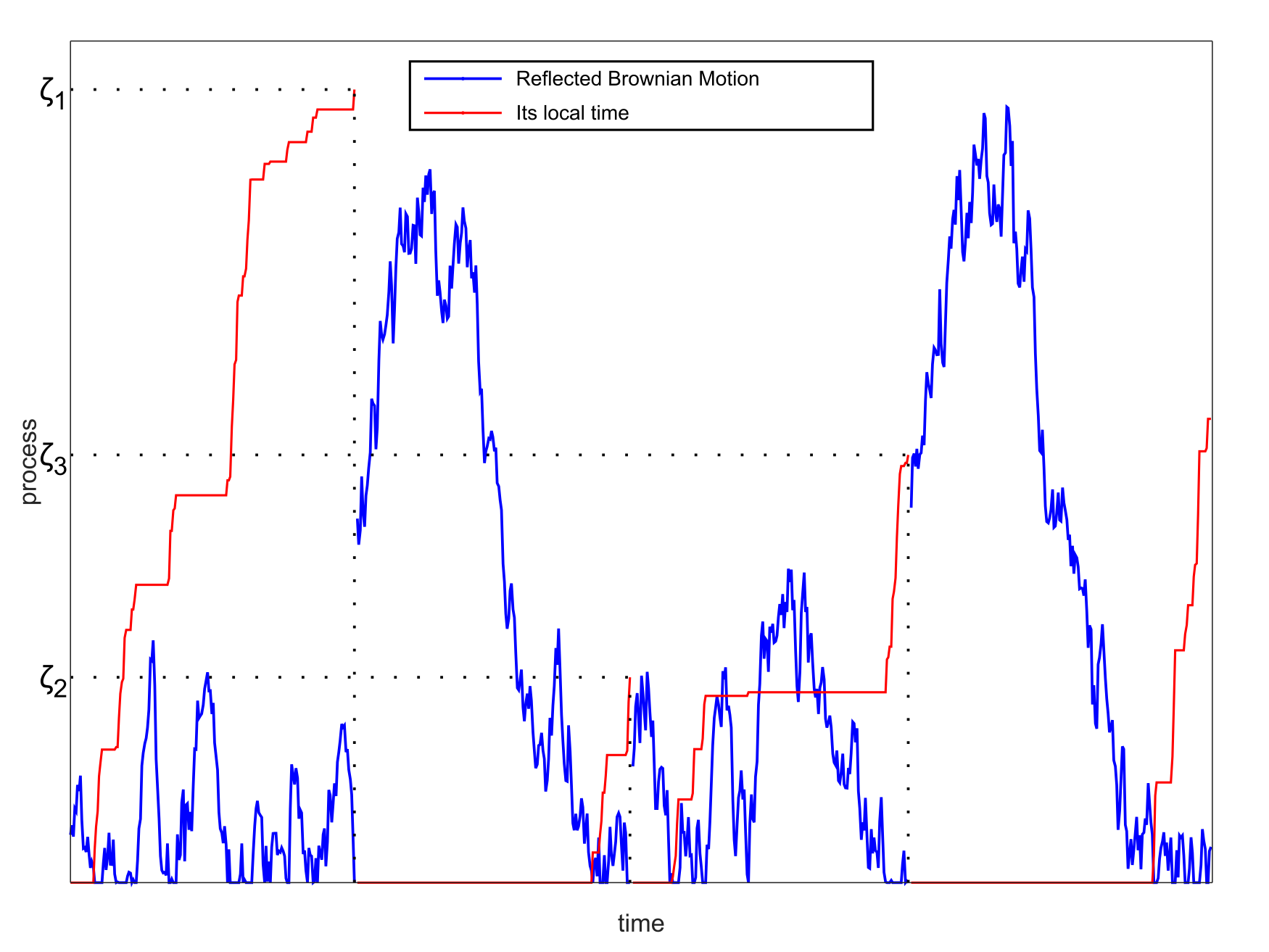}\end{center}
\caption{A Brownian motion on the nonnegative half-line with jumps from the boundary where $x=0$. When the local time at $x=0$ exceeds an independent random variable, the process starts all over again at a random point.  }
\label{fig1}
\end{figure}

Of course, interpretation (b) applies also to the process related to \eqref{loo:7a}: whenever the process generated by $\gen_{\alpha,\beta,\delta}$ ceases to be defined, the process related to \eqref{loo:7a} is resurrected and starts all over again at a random point.  Figure \ref{fig1} illustrates this procedure in the analogous situation of Brownian motion on the positive half-line. In the case of star graph the process can not only restart at any of the edges but also each excursion from zero can lead to a different edge; otherwise, the analogy is complete. 

Details of the stochastic construction are given below.
  As in the previous section, we start with a rather general situation to specialize to the case of processes on star graphs later.

\subsection{Concatenating infinitely many copies of one Feller process}\label{sec:cim}

Let the space $S$, the process $X$, the point $a$ and the local time $L$ be as in Section \ref{sec:sup}.   Guided by the interpretation (b) presented above, given  a probability measure $\mu$ on $S$ and real parameters $\gamma \ge 0$ and $\delta >0$, we construct a Feller process $\mc C_{\gamma,\delta,\mu}X$ which initially is identical to $\mc K_{\gamma+\delta} X$ (see  Section \ref{sec:kaf}), but when $\mc K_{\gamma+\delta} X$ ceases to be defined $\mc C_{\gamma,\delta,\mu}X$ can be continued: with probability $p\coloneqq \frac \delta{\gamma +\delta} $ it starts anew at a~random point distributed according to the measure $\mu$, and with probability $1-p$ is killed. 

Our construction uses the following building blocks --- the construction of the type presented below has apparently been devised by Ikeda, Nagasawa and  Watanable in \cite{ikedacon}; see also the recent \cite{fitzsi}.  
\begin{itemize} 
\item [(i) ] Processes $X_n, n \ge 1$ that are independent and have the same transition probabilities as $X$, and $L_n, n \ge 1$ that are their local times at $a$. However, whereas the initial distribution of $X_1$ is chosen arbitrarily, all the initial distributions of $X_n, n \ge 2$ are the same and identical to $\mu$. 
\item [(ii) ] Random variables $\zeta_n, n \ge 1$ that are mutually independent, and independent of $X_n$s, with common exponential distribution of parameter $\gamma+\delta$.  
\item [(iii) ] Bernoulli random variables $\epsilon_n, n \ge 2$ that are mutually independent, and independent of $X_n$s and $\zeta_n$s, such that $\mathsf P (\epsilon_n = 1) = p$ and $\mathsf P (\epsilon_n = 0) = 1-p$.  

\end{itemize}

\newcommand{\varsigmam}{\tau}
In terms of these, we first define $\mdelta_n\coloneqq \inf\{t\geq 0\colon L_n(t)=\zeta_n \}, n \ge 1$ as the moments when the local times at $a$ reach exponential levels, and then, if $X_n$s are honest, let 
\begin{equation}\label{eq:MP_jump_exit_constr}
 \mc C_{\gamma,\delta,\mu}X(t)\coloneqq  X_{n+1}(t-\varsigmam_n), \quad  t\in[\varsigmam_n, \varsigmam_{n+1}),  \qquad n=0,\dots, n_0-1, 
\end{equation} 
where 
\( \varsigmam_0\coloneqq 0, \varsigmam_n \coloneqq {\textstyle \sum_{k=1}^{n}} \mudelta_k, n \ge 1 \) and  $n_0 \ge 1$ is the smallest index such that $\epsilon_n =0$. In particular, $\mc C_{\gamma,\delta,\mu}X$ is left undefined from $\varsigmam_{n_0}$ onwards. Moreover, $\mc C_{\gamma,\delta,\mu} L $ given by 
  \begin{equation}
    \label{eq:localTime_jumps}
\mc C_{\gamma,\delta,\mu} L(t)\coloneqq \sum_{k=1}^{n} L_k(\mdelta_k)  +L_{n+1} (t-\tau_n), 
\, t\in[\varsigmam_n, \varsigmam_{n+1}),  \, n=0,\dots,n_0-1, 
\end{equation}
{together with the convention \eqref{eq:agreement_LT} that allows extending the definition beyond $\varsigmam_{n_0}$,}
is a local time of $\mc C_{\gamma,\delta,\mu}X$ at $a$. 
Also, if additionally $L_n (t)= \lim_{\varepsilon\to0+}\tfrac{1}{2\varepsilon}\int_0^t\1_{\{0<|X_n(s)|\leq \varepsilon\}}\ud s,$ for $t \ge 0$ and $n \ge 1$, then, clearly, $\mc C_{\gamma,\delta,\mu}L (t)$ $= \lim_{\varepsilon\to0+}\tfrac{1}{2\varepsilon}\int_0^t\1_{\{0<|\mc C_{\gamma,\delta,\mu} X (s)|\leq \varepsilon\}}\ud s,$ for $t \ge 0$. 

If $X_n$s are not honest, these formulae need to be modified as follows: we agree that if there is an $n<n_0$ and an $s\in [\tau_n,\tau_{n+1})$ such that $X_n(s)$ is undefined, then so is $\mc C_{\gamma,\delta,\mu}X$ from that time on.  However, $\mc C_{\gamma,\delta,\mu}L$ is continued beyond this time as in \eqref{eq:agreement_LT}.

It is easy to see that $ \mc C_{\gamma,\delta,\mu}X$ is strong Markov, because so is $\mc K_{\delta +\gamma}X$. It is also honest whenever $X_n$s are honest and $\gamma =0$.  We claim, moreover, that it possesses the Feller property. To prove this, we note first of all that $ \mc C_{\gamma,\delta,\mu}X$ extends the minimal process (the process that is undefined from the moment $\sigma$ when $X$ reaches $a$ for the first time), and thus Proposition \ref{lem:feller} can be used. 
Furthermore, in Section \ref{sec:fno} we have established that $\mathcal K_{\gamma+\delta} X$ is a Feller process, and in particular has \cadlag\ paths. It follows that, almost surely conditional on $\mathcal K_{\gamma+\delta} X$ starting at $a$, $\lim_{t\to 0+} f(\mathcal K_{\gamma+\delta} X (t)) = f(a)$; hence also $\lim_{t\to 0+} f( \mc C_{\gamma,\delta,\mu}X (t)) = f(a)$ because $ \mc C_{\gamma,\delta,\mu}X$ is identical to $\mathcal K_{\gamma+\delta} X$ for all the times smaller than $\tau\coloneqq \tau_1$, and $\tau >0$.  Thus, \eqref{wareczek} is satisfied, completing the proof.

For a future use we note that the resolvent of $\mc C_{\gamma,\delta,\mu}X$, denoted below $\wrla, \lam >0$, can be informatively expressed in terms of the resolvent $\rla^\tau, \lam >0$ of $\mathcal K_{\gamma +\delta} X$. For,  using strong Markov property of $\mc C_{\gamma,\delta,\mu}X$ as in Section \ref{sec:fprop} (this time at $\tau$, not at $\sigma$), we obtain
\begin{align*}
\wrla g(x) &=\rla^\tau g(x) + \mathsf E_x  \e^{-\lam \tau}  \mathsf E_{\mc C_{\gamma,\delta,\mu}X(\tau)} \int_0^\infty \e^{-\lam t} g(\mc C_{\gamma,\delta,\mu}X(t)) \ud t, \\ &= \rla^\tau g(x) + p \elam^\tau (x) \int_S \wrla g \ud \mu 
\end{align*}
with $\elam^\tau (x) \coloneqq  \mathsf E_x \e^{-\lam \tau}  $, because at $\tau$ the process $\mc C_{\gamma,\delta,\mu}X$ starts all over again with probability $p$ and its starting distribution is $\mu$. Next, we integrate both sides of the so-obtained relation with respect to $\mu$ to see that $\int_S \wrla g\ud \mu = \frac { \int_S \rla^\tau g\ud \mu }{1 - p \int_S \elam^\tau \ud \mu}$; the denominator here is never zero, even if $p=1$, because $\tau >0$ and so, for any $x\in S $, $\elam^\tau (x) \le  \elam^\tau (a) < 1$.   Since $\elam^\tau = \mathsf 1_S - \lam \rla^\tau \mathsf 1_S$, this yields
\begin{equation}\label{cim:5} \wrla g = \rla^\tau g + \frac{\int_S \rla^\tau g\ud \mu}{1-p + p\lam \int_S \rla^\tau \mathsf 1_S \ud \mu}\elam^\tau , \qquad g \in \coss. \end{equation}

\subsection{Two alternative constructions}\label{sec:tac}
Alternatively, we can start from processes of point (i) and two sequences, say,  $\jcg{\upzeta_{1,n}} $ and $\jcg{\upzeta_{2,n}}$ of random variables (jointly independent and independent of the processes), the first of these being exponentially distributed with parameter $\gamma$, the second --- exponentially distributed with parameter $\delta$. Then $\zeta_n \coloneqq \upzeta_{1,n}\wedge \upzeta_{2,n},n \ge 1$ are  exponential with parameter $\gamma +\delta$, as in point (ii). Moreover, $\epsilon_n \coloneqq  [\upzeta_{2,n}\le \upzeta_{1,n}], n \ge 1$ (where $[\cdot]$ is the Iverson bracket) have Bernoulli distribution with parameter $\frac \delta {\gamma +\delta}$ and are independent of $\zeta_n$s, as in point (iii). Hence, the rest of the construction proceeds as before.

We may also construct $\mc C_{\gamma,\delta,\mu}X$ in two steps,  by building $\mc C_{0,\delta,\mu}X$ first and then killing it with the help of transformation $\mc K_\gamma$ --- see interpretation (a) at the beginning of Section \ref{sec:finite}. Notably, the first step does not require the Bernoulli variables $\epsilon_n$s, for the process $\mc C_{0,\delta,\mu}X$ is always restarted; similarly, in the alternative construction, we need merely the sequence $\jcg{\upzeta_{2,n}}$, because $\gamma=0$ corresponds to  $\upzeta_{1,n}$s being infinite. Moreover, the second step requires just one additional exponentially distributed random variable. 

The situation resembles the following experiment. Let $\tau_n, n \ge 1$ be independent exponential random variables with common parameter $\gamma >0$, and let $p\in (0,1)$ be given. We wait for the exponential time $\tau_1$ to perform a random experiment which with probability $p$ results in a success. If we fail, we wait for the  additional exponential time $\tau_2$ to perform the random experiment for the second time, etc.: if we fail $n$ times, we wait for the time $\tau_{n+1}$ to have our next chance. Then, the time to the first success turns out to be exponential with parameter $p \gamma$.

To prove that \begin{equation}\label{andreya} \mc K_\gamma \mc C_{0,\delta,\mu}X\overset{d}=\mc C_{\gamma,\delta, \mu}X\end{equation} we suppose the latter process was constructed with the help of sequences   $\jcg{\upzeta_{1,n}} $ and $\jcg{\upzeta_{2,n}}$ and that for the construction of the former, besides the same copies of $X$,  the second of these sequences is used.  Also, let $\zeta$ be an independent random variable of exponential distribution with parameter $\gamma$, and assume initially that processes $X_n, n \ge 1$ are honest.

Initially, up to the first jump or killing,  $\mc K_\gamma \mc C_{0,\delta,\mu}X$ coincides with $X_1$, and the time of jump or killing is the moment when the local time of $X_1$ reaches the level of $\zeta \wedge \upzeta_{2,1}$; if $\zeta > \upzeta_{2,1}$, we see a jump, otherwise the process is killed. The same is true of  $\mc C_{\gamma,\delta, \mu}X$ except that we wait to the moment when the local time of $X_1$ reaches the level of $\upzeta_{1,1} \wedge \upzeta_{2,1}$; if $\upzeta_{1,1} > \upzeta_{2,1}$, we see a jump, otherwise the process is killed. Since $\zeta$ and $\upzeta_{1,1}$ have the same distribution, so do   $\mc K_\gamma \mc C_{0,\delta,\mu}X$  and $\mc C_{\gamma,\delta, \mu}X$ up to the first jump or killing. Suppose next that, for some natural $n$, both processes have the same distribution up to the time of $n$th jump or killing. We can also assume that neither of the processes has been killed as yet, because otherwise there is nothing to prove. This means, on the one hand, that $\upzeta_{1,k}\ge \upzeta_{2,k}, k=1,\dots,n$ and, on the other, that $\zeta > \mc C_{0,\delta,\mu}L(\tau_n)$, 
where $\tau_n$  is the moment when the local time of $X_n$ reaches the level of $\upzeta_{2,n}\le \upzeta_{1,n}$. By the memoryless property of exponential distribution, conditional on the event just described, $\wt \zeta \coloneqq \zeta- \mc C_{0,\delta,\mu}L(\tau_n)$ is still exponentially distributed with parameter $\gamma$. Moreover, the next jump or killing in $\mc K_\gamma \mc C_{0,\delta,\mu}X$ will occur at the moment when the local time reaches the level of $\wt \zeta \wedge \upzeta_{2,n+1}$; if $\wt \zeta > \upzeta_{2,n+1}$ we will see a jump, otherwise the process will be killed. Since a similar description applies to  $\mc C_{\gamma,\delta, \mu}X$, except that the role of $\zeta$ is played by $\upzeta_{1,n+1}$, this shows that the processes in question have the same distribution also to the $n+1$ jump or killing event, completing the proof by induction. To treat the case where $X_n$s need not be honest, it suffices to note that in this case additional killing events can occur between those considered above, but that their distribution is the same in both cases. 

\subsection{Operations of killing, slowing down and concatenating commute}\label{przemienne}
As explained in Section \ref{sec:kaf}, transformations $\mc K_\gamma$  and $\mc S_\alpha$ in a sense commute. An argument similar to that used to  establish \eqref{andreya} shows that so do $\mc C_{0,\delta,\mu}$ and  $\mc K_\gamma $ (and their composition amounts to $\mc C_{\gamma,\delta, \mu}$).  It can also be argued that $ \mc C_{0,\delta,\mu}$ commutes with $S_\alpha$ and that the family of transformations $\mc C_{\gamma,\delta, \mu}$ indexed by $\gamma $ and $\delta$ with fixed $\mu$ has the following semigroup property: 
\begin{equation*}
\mc C_{\gamma,\delta, \mu} \mc C_{\gamma', \delta',\mu} X \overset{d}=\mc C_{\gamma+\gamma',\delta +\delta',\mu}X.\end{equation*}


\subsection{Definition of $W_{\alpha,\beta,\gamma,\delta\mu}$}\label{sec:dow}

Having covered the general scenario, we return to the case of Brownian motions on star graphs. Let $\alpha,\beta, \gamma$ and $\mum = \delta \mu$ be given and assume additionally that $\sui \beta_i =1$. 

Then 
$W_{\alpha,\beta,\gamma,\delta\mu}$ is simply defined as $\mc C_{\gamma,\delta,\mu}W_{\alpha,\beta}$.  Therefore, we have the following special case of \eqref{cim:5}: 
\begin{equation}\label{dow:1} \wrla g = \rez{\gen _{\alpha,\beta,\gamma+\delta}}g + M(g) \elam ,\qquad g\in \coss \end{equation}
where $\wrla, \lam>0$ is a temporary notation for the resolvent of  $W_{\alpha,\beta,\gamma,\delta\mu}$,
\begin{equation}\label{dow:2} M(g) \coloneqq \frac {\delta \int_S \rez{\gen _{\alpha,\beta,\gamma+\delta}} g \ud \mu}{\gamma+ \delta \lam \int_S \rez{\gen _{\alpha,\beta,\gamma+\delta}} \mathsf 1_S \ud \mu}\end{equation}
and 
\begin{equation}\label{dow:3} 
(\elam)_i (x) = \frac {\gamma+\delta}{\lam \alpha + \slam  + \gamma +\delta} \e^{-\slam x}, \qquad x \ge 0, i\in \kad,\end{equation}
is the function introduced \eqref{loo:6}, but with $\sui{\beta_i}$ equal to $1$ and $\gamma $ replaced by $\gamma +\delta$. 

\subsection{The generator of $W_{\alpha,\beta,\gamma,\delta\mu}$}\label{sec:gow}
        

We claim that $W_{\alpha,\beta,\gamma,\delta\mu}$ is generated by  the operator \[ \gen_ {\alpha,\beta,\gamma,\delta\mu},\] 
defined as the restriction of the Laplace operator $\Delta$ to the set of functions $f\in \mathfrak C^2(S)$ such that $Ff = \delta \int_S f\ud \mu$, where $F$ was introduced in \eqref{loo:1}. To show this, it suffices to check that $\wrla$ of \eqref{dow:1} coincides with $\rez{\gen_{\alpha,\beta,\gamma,\delta\mu}}$. Moreover, 
 the solution to the resolvent equation for $\gen_ {\alpha,\beta,\gamma,\delta\mu}$, if it exists, is necessarily unique, because $\gen_ {\alpha,\beta,\gamma,\delta\mu}$ satisfies the positive maximum principle (see Appendix \ref{maks}), and is thus dissipative. Hence,
it suffices to show that $f\coloneqq \wrla g$ solves the resolvent equation for this operator.
 
Now, since both $\rez{\gen _{\alpha,\beta,\gamma+\delta}}g$ and $\elam$ belong to $C^2(S)$, so does $f$. Moreover, for the functional $F$ of \eqref{loo:1} (with $\gamma$ replaced by $\gamma +\delta$)  
we have $F \rez{\gen _{\alpha,\beta,\gamma+\delta}}g=0$ and $F\elam = \gamma + \delta$.  
Therefore, 
\begin{align*} Ff-\delta  \int_Sf \ud  \mu &= -\delta \int_S\rez{\gen _{\alpha,\beta,\gamma+\delta}}g\ud \mu + M(g)\, (\delta +\gamma - \delta \int_S \elam \ud \mu ), 
\end{align*}
this in turn is zero, because $\elam =\mathsf 1_S - \lam \rez{\gen _{\alpha,\beta,\gamma+\delta}}\mathsf 1_S$. Hence, $f$ belongs to $\dom{\gen _{\alpha,\beta,\gamma,\delta\mu}}$. Finally, since $\elam$ is an eigenvector of $\Delta$ corresponding to $\lam>0$, we check that $\lam f - \gen_ {\alpha,\beta,\gamma,\delta\mu}f = g$, completing the proof.

\subsection{$W_{\alpha,\beta,\gamma,\delta \mu}$ as a function of Brownian motion and subordinators}\label{sec:wab}

Next, we want to see that to construct a $W_{\alpha,\beta,\gamma,\delta \mu}$ is suffices to have just one $\ka$-dimensional Brownian motion, as opposed to its infinitely many independent copies used in Sections \ref{sec:cim}--\ref{sec:tac}. This will lead to a very handy formula expressing $W_{\alpha,\beta,\gamma,\delta \mu}$ in terms of this Brownian motion and certain subordinators.
 In this section  we assume that $\beta_i >0, i \in \kad$ and, for convenience, normalize our parameters so that $\sui \beta_i =1$; we will get rid of this additional assumption in Section \ref{sec:extend}.

\subsubsection{The construction} We start with the case where $\alpha=\gamma=0$, that is, with the process $W_{0,\beta,0,\delta \mu}$, where $\delta >0$ and $\mu $ is a measure on $\scan$ (see Section \ref{sec:bpy}). This process behaves like $W_\beta$ up to the moment when its local time at $\zero$ reaches the level of an independent, exponential random variable with parameter $\delta >0$, and then starts all over again at a random point with distribution  $\mu$.   Let 
 \begin{itemize}
\item [($\upalpha$)] $B$ be a $\ka$-dimensional Wiener process starting at a random point of $\scan$,
\item [($\upbeta$)] $\jcg{\zeta_n}$ be a sequence of independent, exponentially distributed random variables with parameter $\delta>0$ that are independent of $B$ also, and 
\item [($\upgamma$)] $(\xi_n)_{n\ge 1}$ be a sequence of independent random variables with common distribution $\mu$ that are independent of $B$ and $\jcg{\zeta_n}$ too. 
\end{itemize}
As a preparation for the main construction, using these building blocks, we produce a sequence $\jcg{B_n}$ of Wiener processes such that $B_1=B$ and  $B_n(0)=\xi_{n-1}, n \ge 2$. 
To proceed by induction we let $B_1\coloneqq B$, and,  assuming that $B_1,\ldots,B_n$ are already constructed, define $\mdelta_n\coloneqq \min \{t\ge 0\colon L_n(t) =\zeta_n\}$, where $L_n$ is the local time of $B_n$ at $\zero$. Theorem \ref{thm:baryaktar} tells us then that there is a unique collection of processes $T_{n,i}, i\in \kad$ such that a counterpart of \eqref{eq:balance_locTime_Walsh's process1} holds. Moreover, as noted at the end of Section \ref{sec:bay}, we can take  
$B_{n+1}\coloneqq \seq{B_{n+1,i}}$ where 
\[ B_{n+1,i} (t) \coloneqq \xi_{n,i} + B_{n,i} (T_{n,i} (\mdelta_n) +t) - B_{n,i}(T_{n,i}(\mdelta_n)), \qquad t \ge 0, i \in \kad \]
$B_{n,i}$s are coordinates of $B_n$ and $\xi_{n,i}$s are coordinates of $\xi_{n}$, as a Wiener process starting at $\xi_{n}$ and independent of $B_k(t), t\in [0,\Delta_k], 1\leq k\leq n.$ 
This completes the construction of $\jcg{B_n}$. We note that the processes $(B_n {(t\wedge \mdelta_n))}_{t\ge 0}$, $n \ge 1$ are independent. 

With this sequence under our belt, we proceed as follows. For $n \ge 1$ and $i\in \kad$,  let 
\[ L_{n,i}(t)\coloneqq \max_{s\in[0,t]}(-\brown_{n,i}(s)\vee 0) \mqquad{and} \brown\refl_{n,i}(t)\coloneqq \brown_{n,i}(t)+L_{n,i}(t), \qquad t \ge 0.\]   
Then, by Theorem   \ref{thm:baryaktar}, each   
    \begin{equation}
                \label{wab:1}
        X_n\coloneqq  \seq{\brown\refl_{n,i}\circ T_{n,i}}, \qquad n \ge 1,
        \end{equation}
is a Walsh's process  with parameter $\beta$, which for $n\ge 2$ starts at $\xi_{n-1}$. Therefore,  $W_{0,\beta,0,\delta \mu} $
can be constructed by means of the following formula, being a particular case of \eqref{eq:MP_jump_exit_constr}: 
\begin{equation*} W_{0,\beta,0,\delta \mu} (t) \coloneqq  X_{n+1}(t-\varsigmam_n), \quad  t\in[\varsigmam_n, \varsigmam_{n+1}),  \qquad n\ge 0 ,
\end{equation*} 
where 
\( \varsigmam_0\coloneqq 0$ and $ \varsigmam_n \coloneqq {\textstyle \sum_{k=1}^{n}} \mudelta_k, n \ge 1 \). Moreover, 
  \begin{equation}
    \label{wab:2}
L(t)\coloneqq \sum_{k=1}^{n} L_k(\mdelta_k)  +L_{n+1} (t-\tau_n), 
\qquad t\in[\varsigmam_n, \varsigmam_{n+1}),  n\ge 0, 
\end{equation}
is its local time, and since $L_n (t)= \lim_{\varepsilon\to0+}\tfrac{1}{2\varepsilon}\int_0^t\1_{\{0<|X_n(s)|\leq \varepsilon\}}\ud s,$ for $t \ge 0$ and $n \ge 1$, we have
\begin{equation}
    \label{eq:L_loc_W}
    L (t) = \lim_{\varepsilon\to0+}\tfrac{1}{2\varepsilon}\int_0^t\1_{\{0<|W_{0,\beta,0,\delta \mu}(s)|\leq \varepsilon\}}\ud s,\qquad    t \ge 0. 
\end{equation} 
Finally, for each $i\in \kad$, 
\begin{equation} T_i (t) \coloneqq \sum_{k=1}^{n} T_{k,i} (\mdelta_i)  +T_{n+1,i} (t-\tau_n), \qquad t\in[\varsigmam_n, \varsigmam_{n+1}),  n\ge 0, 
\label{wab:2a}\end{equation}
can be interpreted as the time spent by $W_{0,\beta,0,\delta \mu} $ at the $i$th edge of $\scan$ up to $t\ge 0$.  An induction argument shows that $\sum_{i=1}^\ka T_i(t) =t, t \ge 0$. 

\subsubsection{Representation via subordinators} 
Let, here and in what follows,  by abuse of notations, $B_{i},i\in \kad $ denote the $i$th coordinate of the original \linebreak
$\ka$-dimensional  process $B$; previously, $B_n, n\ge 1$ denoted the $n$th  copy of $B$, but we do not need this notation any longer. Also, let  $(W_{0,\beta,0,\delta \mu})_i$ be the $i$th coordinate of $W_{0,\beta,0,\delta \mu}$. 
Since, for each $n\ge 1$ and $i\in \kad$, $L_{n,i} =\beta_i L_n$, an induction argument can be used to show that
\[ (W_{0,\beta,0,\delta \mu})_i (t)  = B_{i}(T_i(t)) + \beta_i L(t) + \sum_{\{k\ge 1\colon \tau_k \le t\}}\xi_{k,i},\qquad  t \ge 0, i\in \kad.\]
This relation, in turn, can be rewritten as 
\[ (W_{0,\beta,0,\delta \mu})_i (t)  = B_{i}\circ T_i(t)  + \beta_i L(t) + {\textstyle \sum_{k=1}^{N\circ L(t)}}\xi_{k,i},\qquad  t \ge 0, i\in \kad.\]
where $N$ is the Poisson process constructed by means of $\jcg{\zeta_n}$, that is,  $N(t) $ is the largest integer $n\ge 1$ such that $\sum_{k=1}^n \zeta_k \le t$; if there is no integer with this property, $N(t) =0$. Indeed, since $L_{{j}}(\mdelta_{{j}}) =\zeta_{{j}}, {{j}} \ge 1$, we have $L(\tau_k) = \sum_{{{j}}=1}^k L_{{j}}(\mdelta_{{j}}) = \sum_{{{j}}=1}^k \zeta_{{j}}, k \ge 1$ and thus, $L$ being nondecreasing, condition $\tau_k \le t$ implies $\sum_{{{j}}=1}^k \zeta_{{j}}\le L(t)$. Moreover, since $\tau_k$ is the smallest of $t$ such that $L(t)=\sum_{{{j}}=1}^k \zeta_{{j}}$, the converse is also true, and so the conditions in question are equivalent. But this means that 
$\tau_k \le t$ iff $k\le N\circ L(t)$, establishing the relation. 

We further see that  
 \[ (W_{0,\beta,0,\delta \mu})_i = B_{i}\circ T_i  + U_i \circ L \qquad  i\in \kad,\]
where $U_i$ is the subordinator given by 
\begin{equation}\label{wab:3} U_i(t) \coloneqq  \beta_i t + {\textstyle \sum_{k=1}^{N(t)}}\xi_{k,i}, \qquad t\ge 0. \end{equation}
This relation reveals that (for each path) the pair $((W_{0,\beta,0,\delta \mu})_i, L)$ is a solution to the generalized Skorokhod problem for $B_i\circ T_i$ with noise $U_i$. Indeed, $(W_{0,\beta,0,\delta \mu})_i$ is nonnegative and, clearly,  $\int_0^\infty   \1_{\{B_i \circ T_i(t) >0\}}\ud L(t) =0$. Since $B_i \circ T_i$ does not have jumps, and $U_i$ is strictly increasing with $\grat U_i(t)=\infty$, the solution is unique and thus we have (see Section \ref{sec:inverses}) $L(t)=U_i^{-1} \circ K_i(t), t \ge 0$ where \( K_i(t)\coloneqq \sup_{s\in [0,t]} (-B_i \circ T_i (s) \vee 0)=\sup_{u\in [0,T_i(t)]} (-B_i(u) \vee 0)\).   
Introducing \begin{equation}\label{wab:4} \mc L_i(t) \coloneqq \sup_{s\in [0,t]} (-B_i(s) \vee 0), \qquad t \ge 0, i \in \kad \end{equation} (to be distinguished from $L_{{j}}$ featured in e.g. \eqref{wab:2}), we see finally that $K_i= \mc L_i \circ T_i$ and 
\begin{equation}\label{wab:5}(W_{0,\beta,0,\delta \mu})_i = (B_{i}  + U_i \circ U_i^{-1} \circ \mc L_i ) \circ T_i,  \qquad  i\in \kad. \end{equation}
Our discussion is summarized in the following result. 

\begin{thm}\label{thm:wba} There is a family $T_i, i \in \kad$ of nondecreasing nonnegative processes with continuous paths such that \begin{itemize}
\item [1. ]  $\sum_{i=1}^\ka T_i(t)=t, t \ge 0$, 
\item [2. ] $W_{0,\beta,0,\delta \mu}$ is represented as in \eqref{wab:5} for $U_i$ and $\mc L_i$ defined in \eqref{wab:3} and \eqref{wab:4}, respectively, and
\item [3. ] $U_i^{-1}\mc \circ \mc L_i \circ T_i$ does not depend on $i\in \kad$.  
\end{itemize}
Moreover, conditions 1. and 3. determine $T_i$s uniquely (a.s.), and the common value of $U_i^{-1}\circ \mc L_i \circ T_i$ coincides with the local time $L$ of $ W_{0,\beta,0,\delta \mu}$ --- see \eqref{wab:2} and {\eqref{eq:L_loc_W}.}
\end{thm}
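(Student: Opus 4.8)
The plan is to separate the statement into an existence-and-identification part and a uniqueness part, and to note that the former has in effect already been proved in the discussion preceding the theorem. One takes $T_i$ as in \eqref{wab:2a}: property 1.\ is the induction sketched there (which rests on $\sum_{i\in\kad}T_{n,i}(s)=s$ for each block, part of Theorem \ref{thm:baryaktar}); property 2.\ is \eqref{wab:5}; and both property 3.\ and the identification of the common value of $U_i^{-1}\circ\mc L_i\circ T_i$ with the local time $L$ follow from the fact, established in that discussion, that $\bigl((W_{0,\beta,0,\delta\mu})_i,L\bigr)$ is the unique solution of the generalized Skorokhod problem for the noise $B_i\circ T_i$ with reflection governed by $U_i$, whence $L=U_i^{-1}\circ\mc L_i\circ T_i$ for every $i\in\kad$. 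So the remaining task is uniqueness.

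For uniqueness I would argue by induction over the excursion blocks of $W_{0,\beta,0,\delta\mu}$. Let $(\wt T_i)_{i\in\kad}$ be any family of continuous, nonnegative, nondecreasing processes satisfying 1.\ and 3., and set $\wt\ell\coloneqq U_i^{-1}\circ\mc L_i\circ\wt T_i$ (independent of $i$ by 3.). Two structural facts would drive the argument. First, each $U_i$ of \eqref{wab:3} increases with slope $\beta_i$ away from the \emph{common} levels $\Lambda_n\coloneqq\sum_{k=1}^n\zeta_k$ and jumps by $\xi_{n,i}$ at $\Lambda_n$; since $U_i$ is strictly increasing, $U_i^{-1}$ is continuous, so $\ell\coloneqq L$ and $\wt\ell$ are continuous nondecreasing, and $\wt\ell(t)\to\infty$ (at least one $\wt T_i(t)\to\infty$ since $\sum_i\wt T_i(t)=t$, and then $\mc L_i(\wt T_i(t))\to\infty$ for that $i$). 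Second, with $\tau_n=\sum_{k=1}^n\mdelta_k$ the jump times of $W_{0,\beta,0,\delta\mu}$ one has $L(\tau_n)=\Lambda_n$. Writing $\wt\tau_n\coloneqq\inf\{t:\wt\ell(t)=\Lambda_n\}$, the induction on $n$ asserts that a.s.\ $\wt\tau_n=\tau_n$ and $\wt T_i=T_i$ on $[0,\tau_n]$ for all $i$; since $\Lambda_n\to\infty$ forces $\tau_n\to\infty$, this is enough.

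In the inductive step (the case $n=0$ being trivial) I would assume $\wt T_i=T_i$ on $[0,\tau_n]$, hence $\wt\ell=L$ there and $\wt\tau_n=\tau_n$, and pass to the block $[\tau_n,\wt\tau_{n+1})$. Setting $s=t-\tau_n$, $T_i^{\star}(s)\coloneqq\wt T_i(\tau_n+s)-T_i(\tau_n)$ and $\bar B_i(v)\coloneqq B_i(T_i(\tau_n)+v)-B_i(T_i(\tau_n))$, the multi-parameter strong Markov property (see the remarks after Theorem \ref{thm:baryaktar}) makes $\bar B=\seq{\bar B_i}$ a standard $\ka$-dimensional Brownian motion from the origin with $B_{n+1,i}=\xi_{n,i}+\bar B_i$. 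While $\wt\ell(t)\in[\Lambda_n,\Lambda_{n+1})$, condition 3.\ reads (wherever $\wt\ell(t)>\Lambda_n$, a continuity point of $U_i$) $\mc L_i(\wt T_i(t))=U_i(\wt\ell(t))=\beta_i\wt\ell(t)+\sum_{k\le n}\xi_{k,i}$; inserting $\mc L_i(\wt T_i(\tau_n+s))=\mc L_i(T_i(\tau_n))+\bar{\mc L}_i(T_i^{\star}(s))$ together with $\mc L_i(T_i(\tau_n))=-B_i(T_i(\tau_n))=\beta_i\Lambda_n+\sum_{k<n}\xi_{k,i}$ (valid since $(W_{0,\beta,0,\delta\mu})_i$ vanishes at $\tau_n-$), and doing likewise on the initial stretch where $\wt\ell=\Lambda_n$, this rearranges to the balance condition \eqref{eq:balance_locTime_Walsh's process1} for $B_{n+1}$ holding on $[0,\wt\tau_{n+1}-\tau_n)$, with $\wt\tau_{n+1}-\tau_n$ the first time the common quantity there reaches $\zeta_{n+1}$; here $\bar{\mc L}_i$ is the running maximum of $-\bar B_i$. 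Together with $\sum_i T_i^{\star}(s)=s$ this places $(T_i^{\star})$ in the setting of Theorem \ref{thm:baryaktar} for $B_{n+1}$ on $[0,\wt\tau_{n+1}-\tau_n)$, and its uniqueness part (pathwise, hence applicable on a bounded interval) gives $T_i^{\star}=T_{n+1,i}$ there, whence $\wt\tau_{n+1}=\tau_{n+1}$ and $\wt T_i=T_i$ on $[0,\tau_{n+1}]$.

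I expect the main obstacle to be precisely this last reduction: one must keep exact account of the offsets contributed by the jumps $\xi_{k,i}$ of the $U_i$, and of the flat stretches of $U_i^{-1}$ at the levels $\Lambda_n$ — during which $\ell=L$ stays constant while the freshly restarted Walsh excursion develops on $B_{n+1}$, which starts from $\xi_n$ rather than the origin — in order to see cleanly that condition 3.\ restricted to one block is the Walsh balance condition of Theorem \ref{thm:baryaktar} for the appropriate shifted Brownian motion, and that the block endpoints $\tau_n$ get pinned down at the same time.
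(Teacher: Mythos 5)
Your existence-and-identification part coincides with the paper's: the theorem's positive content is indeed established in the discussion preceding it (the $T_i$ of \eqref{wab:2a}, the Skorokhod-problem argument giving $L=U_i^{-1}\circ\mc L_i\circ T_i$ and \eqref{wab:5}), and the paper likewise reduces the proof to uniqueness. For uniqueness, however, you take a genuinely different and considerably heavier route. The paper isolates a purely deterministic statement (Lemma \ref{lem:balance_uniq}: if the $\ell_i$ have no common levels of constancy, there is at most one balancing family $t_i$ with $\sum_i t_i(s)=s$) and then applies it globally with $\ell_i=U_i^{-1}\circ\mc L_i$, verifying the hypothesis by showing that the generalized inverses $\mc L_i^{-1}\circ U_i$ are \emph{independent} subordinators — the independence of the $U_i$, despite their being driven by the same Poisson clock $N$, coming from a coloring argument since each $\xi_k$ has exactly one nonzero coordinate — and that independent subordinators have no common jump times (Kingman's disjointness lemma). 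You instead peel off the concatenation blocks $[\tau_n,\tau_{n+1})$ by induction and reduce each block, after bookkeeping of the jump offsets $\xi_{k,i}$ and of the flat stretches of $U_i^{-1}$, to the uniqueness part of Theorem \ref{thm:baryaktar} for the restarted Brownian motion $B_{n+1}$. Your bookkeeping is essentially right: on a block one does get $L_{n+1,i}(T_i^{\star}(s))/\beta_i=\wt\ell(\tau_n+s)-\Lambda_n$ for every $i$ (zero on the initial flat stretch), the identity $\mc L_i(T_i(\tau_n))=-B_i(T_i(\tau_n))=U_i(\Lambda_n-)$ does hold because $\tau_n$ is the \emph{first} time $L$ reaches $\Lambda_n$ and the block ends at the graph center, and the endpoint argument pinning $\wt\tau_{n+1}=\tau_{n+1}$ goes through by continuity of $\wt\ell$.

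The one step you have not justified is the parenthetical ``pathwise, hence applicable on a bounded interval''. Theorem \ref{thm:baryaktar} as stated asserts a.s.\ uniqueness among families of processes defined on all of $[0,\infty)$ and satisfying the balance and sum conditions for all $t$; it neither claims a pathwise (deterministic) uniqueness nor covers candidate families defined only up to a random time $\wt\tau_{n+1}-\tau_n$, and extending $(T_i^{\star})$ beyond that time while preserving the balance condition is not obvious. What repairs this is exactly the deterministic local statement the paper proves as Lemma \ref{lem:balance_uniq} (its proof uses only the values at a single time where two candidates differ, so it works on any interval), applied within the block to $\ell_i=L_{n+1,i}/\beta_i$, together with the a.s.\ fact that these block local times have no common levels of constancy (again via independence of their inverse subordinators). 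But once you have that lemma and that no-common-constancy fact, the paper's argument applies them once, globally, to $U_i^{-1}\circ\mc L_i$ and finishes immediately; your block induction then buys nothing beyond a more explicit view of the concatenation structure, at the price of the offset bookkeeping you yourself flag as the main obstacle. So: fill the bounded-interval/pathwise gap with a lemma of the type of Lemma \ref{lem:balance_uniq}, or better, apply that lemma globally and dispense with the induction.
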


The existence of the family described above has already been established; all we need to prove is the uniqueness. To this end, we present the following lemma that seems to be of some interest in itself. 

 \begin{lemma}
 \label{lem:balance_uniq}
     Let $\ell_i, i \in \kad $ be nondecreasing functions $\ell_i\colon[0,\infty)\to [0,\infty)$ that do not have common levels of constancy (this, by definition, means that there are no indexes $i\not =j$ and numbers $0\le a<b$ and $0\le c<d$ such that  $\ell_i(a) =\ell_i(b)=\ell_j(c)=\ell_j(d)$). Then
there is at most one collection $t_i, i\in \kad$ of nondecreasing functions $t_i\colon[0,\infty)\to [0,\infty)$ such that $\sum_{i\in \kad} t_i(s)=s, s\ge 0$ and $\ell_i \circ t_i$ does not depend on $i\in \kad$. 
 \end{lemma}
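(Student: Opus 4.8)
The plan is to argue by contradiction, using the \emph{no common levels of constancy} hypothesis as a rigidity statement. Suppose $(t_i)_{i\in\kad}$ and $(t_i')_{i\in\kad}$ are two collections of nondecreasing functions, each satisfying $\sum_{i\in\kad}t_i(s)=\sum_{i\in\kad}t_i'(s)=s$ for all $s\ge 0$, and such that $\ell_i\circ t_i$ and $\ell_i\circ t_i'$ are each independent of $i$; denote the respective common values by $m(s)$ and $m'(s)$. If the two collections are not identical, there is an $s\ge 0$ and an index, which I will call $i$, with $t_i(s)\ne t_i'(s)$; after possibly swapping the roles of the two collections I may assume $t_i(s)<t_i'(s)$.

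First I would note that the balance conditions force a companion index: since $\sum_{k\in\kad}(t_k(s)-t_k'(s))=0$ and the $i$th summand is negative, there is some $j\ne i$ with $t_j(s)>t_j'(s)$. Then monotonicity of $\ell_i$ gives $m(s)=\ell_i(t_i(s))\le\ell_i(t_i'(s))=m'(s)$, while monotonicity of $\ell_j$ gives $m(s)=\ell_j(t_j(s))\ge\ell_j(t_j'(s))=m'(s)$; hence $m(s)=m'(s)$, and I will write $v$ for this common value.

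Next I would feed the equality $m(s)=m'(s)=v$ back into the picture. From $\ell_i(t_i(s))=\ell_i(t_i'(s))=v$ together with $t_i(s)<t_i'(s)$ and the monotonicity of $\ell_i$, the function $\ell_i$ is constant equal to $v$ on $[t_i(s),t_i'(s)]$; symmetrically $\ell_j$ is constant equal to $v$ on $[t_j'(s),t_j(s)]$. Setting $a\coloneqq t_i(s)<b\coloneqq t_i'(s)$ and $c\coloneqq t_j'(s)<d\coloneqq t_j(s)$ yields $\ell_i(a)=\ell_i(b)=\ell_j(c)=\ell_j(d)=v$ with $i\ne j$, which is precisely a common level of constancy of $\ell_i$ and $\ell_j$ — contradicting the hypothesis. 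Therefore $t_i(s)=t_i'(s)$ for all $i\in\kad$ and all $s\ge 0$, i.e.\ the collection is unique.

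I do not expect a genuine obstacle here: once the set-up is in place the argument is a two-line chain of inequalities plus monotonicity. The only point requiring a little care is the first reduction, where the constraint $\sum_{i\in\kad}t_i(s)=s$ is used to produce simultaneously an index where $t_i$ strictly exceeds $t_i'$ and another where the reverse strict inequality holds; it is exactly this pair of opposite strict inequalities, combined with $m(s)=m'(s)$, that manufactures the forbidden pair of constancy intervals.
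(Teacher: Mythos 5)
Your proof is correct and follows essentially the same route as the paper: pick a time $s$ and indices $i,j$ where the two collections differ in opposite directions, use monotonicity of $\ell_i,\ell_j$ together with the balance condition to force the four values $\ell_i(t_i(s)),\ell_i(t_i'(s)),\ell_j(t_j'(s)),\ell_j(t_j(s))$ to coincide, and contradict the no-common-levels-of-constancy hypothesis. The only cosmetic difference is that you spell out the intermediate equality $m(s)=m'(s)$, which the paper leaves implicit.
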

 \begin{proof}
Striving for a contradiction, suppose that there are two different collections, say, $t_i,i\in \kad$ and $\hat t_i, i \in \kad$, of functions that satisfy the conditions listed in the lemma.  Then, for some $i\in \kad $ and $s \ge 0$, we have $a\coloneqq  t_i(s) \not = \hat t_i(s)=:b$, and without loss of generality we may assume that $0\le a<b$. Moreover, since  $\sum_{i\in \kad} t_i(s)= \sum_{i\in \kad} \hat t_i(s)$, there is a $j\not =i$ such that $d\coloneqq t_j(s) > \hat t_j (s)\eqqcolon c\ge 0$. Monotonicity of $\ell_i$ and $\ell_j$ yields therefore 
 $ \ell_i(t_i(s))\leq  \ell_i(\hat t_i(s))$ and $ \ell_j(t_j(s))\geq  \ell_j(\hat t_j(s))$. Since, by assumption, $\ell_i(t_i (s))= \ell_j(t_j (s)) $ and  $\ell_i(\hat t_i (s))= \ell_j(\hat t_j (s)) $, this is possible only if all these four numbers are the same, that is, when 
 \[
\ell_i(a)= \ell_i(b) = \ell_j(c)= \ell_j(d).
 \]
Now we have the required contradiction since we assumed that $\ell_i$ and $\ell_j$ do not have common levels of constancy.  
\end{proof}

\begin{proof}[Proof of uniqueness in Theorem \ref{thm:wba}]

By Lemma \ref{lem:balance_uniq} it suffices to show that the processes $U_i^{-1}\circ \mc L_i, i \in \kad $ do not have common levels of constancy, that is, that their generalized inverses $(U_i^{-1}\circ \mc L_i)^{-1} = \mc L_i^{-1}\circ U_i$ do not have common points of jumps (a.s.). But, each $U_i$ is clearly a subordinator and the well-known result  says that so is $\mc L_i^{-1}$ ({see \cite[Chapter IV, Theorem 8]{bertoin}}).  
Therefore,  $\mc L_i^{-1}\circ U_i, i \in \kad $ are subordinators. This reduces our task to showing that they are independent, because 
independent subordinators do not have common moments of jumps. Indeed, jumps of a subordinator are precisely the jumps of the corresponding Poisson point process, and the Disjointness Lemma in \S 2.2 of  
\cite{kingmanpoisson} says that independent Poisson point processes do not have common points of jumps.

Since $\mc L_i$s are independent among themselves and independent of $U_i$s, we are left with establishing that $U_i$s are independent, that is, that the compound Poisson processes, say,  $V_i, i \in \kad$, featured in \eqref{wab:3} are independent. These processes may seem to be dependent, being driven by the same Poisson process $N$, but in fact are not. This is because random variables $\xi_k,k \ge 1$ have values in $\scan \setminus \{\zero\}$ so that their coordinates $\xi_{k,i}$ are nonnegative, and for each index $k$ and event $\omega$ there is precisely one $i$ such that $\xi_{k,i}(\omega) >0$; this implies that the actual driving processes of $V_i$s are obtained from $N$ by (independent) coloring (see \cite{kingmanpoisson}*{p. 53}). \end{proof}

\subsubsection{Notes on Theorem \ref{thm:wba}} We note, first of all,  that Theorem \ref{thm:wba}  generalizes Theorem \ref{thm:baryaktar}. Indeed,  if there is no jump part in  \eqref{wab:3}, $U_i$ takes the form $U_i(t) = \beta_i t, t \ge 0$; hence, $U_i \circ U_i^{-1} (t)=t, t \ge 0$ for each $i\in \kad$ and consequently \eqref{wab:5} reduces to \eqref{eq:repr_Bayr}. At the same time, $U_i^{-1} (t)= \frac t{\beta_i},$ $t \ge 0, i \in \kad$ and so the requirement that   $U_i^{-1}\mc \circ \mc L_i \circ T_i$ does not depend on $i\in \kad$ simply means that balance conditions \eqref{eq:balance_locTime_Walsh's process1} are satisfied. We leave it to the reader to check that Theorem \ref{olepkimizabitym} generalizes the formula for $W_{\alpha,\beta,\gamma}$ introduced in Section \ref{sec:sand}.

Secondly, as a direct consequence of Theorem \ref{thm:wba} and commutation results of Section \ref{przemienne}, we obtain the following statement. 

\begin{thm}\label{olepkimizabitym} Let $W_{0,\beta,0,\delta\mu}$ be the process of \eqref{wab:5}. Then the distributions of $\mc S_\alpha\mc K_\gamma W_{0,\beta,0,\delta \mu}$ and $W_{\alpha,\beta,\gamma,\delta \mu}$ coincide, provided that in the construction of the former process one uses the  common value of $U_i^{-1}\mc \circ \mc L_i \circ T_i$,  $i\in \kad$ as the local time of $W_{0,\beta,0,\delta\mu}$. \end{thm}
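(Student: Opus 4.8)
The plan is to obtain the statement as a purely formal consequence of Theorem~\ref{thm:wba} together with the commutation relations between the transformations $\mc K_\gamma$, $\mc S_\alpha$ and $\mc C_{0,\delta,\mu}$ recorded in Sections~\ref{sec:kaf}, \ref{sec:tac} and \ref{przemienne}; no fresh probabilistic input is required.

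First I would note that the process $W_{0,\beta,0,\delta\mu}$ of \eqref{wab:5} is, by its very construction, a realization of $\mc C_{0,\delta,\mu}W_\beta$: the display preceding \eqref{wab:5} is exactly formula \eqref{eq:MP_jump_exit_constr} specialized to $X=W_\beta$ and $\gamma=0$, with the Walsh processes $X_n$ of \eqref{wab:1} serving as the independent copies of item~(i) in Section~\ref{sec:cim}. This also clarifies the role of the hypothesis on the local time: to apply $\mc K_\gamma$ and $\mc S_\alpha$ one must select a local time of $W_{0,\beta,0,\delta\mu}$ at $\zero$, and by Theorem~\ref{thm:wba} the common value of $U_i^{-1}\circ\mc L_i\circ T_i$, $i\in\kad$, is precisely its symmetric local time \eqref{eq:L_loc_W}; this is the normalization under which $W_{\alpha,\beta}=\mc S_\alpha W_\beta$ in \eqref{stickywalsh} and $W_{\alpha,\beta,\gamma,\delta\mu}=\mc C_{\gamma,\delta,\mu}W_{\alpha,\beta}$ in Section~\ref{sec:dow} were defined, so the whole chain below is consistent.

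The proof itself is then a short chain of identities in distribution,
\[\mc S_\alpha\mc K_\gamma W_{0,\beta,0,\delta\mu}=\mc S_\alpha\mc K_\gamma\mc C_{0,\delta,\mu}W_\beta\overset{d}{=}\mc K_\gamma\mc C_{0,\delta,\mu}\mc S_\alpha W_\beta\overset{d}{=}\mc C_{\gamma,\delta,\mu}\mc S_\alpha W_\beta=\mc C_{\gamma,\delta,\mu}W_{\alpha,\beta}=W_{\alpha,\beta,\gamma,\delta\mu},\]
where the first $\overset{d}{=}$ uses that $\mc S_\alpha$ commutes with $\mc K_\gamma$ (Section~\ref{sec:kaf}) and with $\mc C_{0,\delta,\mu}$ (Section~\ref{przemienne}), the second uses that $\mc K_\gamma\mc C_{0,\delta,\mu}$ amounts to $\mc C_{\gamma,\delta,\mu}$ (relation \eqref{andreya}), the third equality is \eqref{stickywalsh}, and the last is the definition of $W_{\alpha,\beta,\gamma,\delta\mu}$ from Section~\ref{sec:dow}.

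The only step I expect to need genuine care is the commutation of $\mc C_{0,\delta,\mu}$ with $\mc S_\alpha$, which Section~\ref{przemienne} only asserts. To pin it down one argues, in the spirit of the induction behind \eqref{andreya}, that slowing down at $\zero$ only stretches the sojourns of a path at $\zero$ while leaving the excursions away from $\zero$ intact, and that, since $\mc S_\alpha L=L\circ A_\alpha^{-1}$ is again a local time of the slowed process at $\zero$ (Section~\ref{sec:sup}), the local-time levels triggering the resurrections of $\mc C_{0,\delta,\mu}$ are unaffected by the slowdown; hence the two transformations act on disjoint features of the trajectory and commute, the behaviour past the lifetime being governed by the convention \eqref{eq:agreement_LT}. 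Everything else is routine bookkeeping.
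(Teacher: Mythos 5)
Your proof is correct and follows essentially the route the paper intends: the paper presents Theorem~\ref{olepkimizabitym} as a direct consequence of Theorem~\ref{thm:wba} (which identifies $W_{0,\beta,0,\delta\mu}$ as the concatenation $\mc C_{0,\delta,\mu}W_\beta$ with local time $U_i^{-1}\circ\mc L_i\circ T_i$) together with the commutation relations of Sections~\ref{sec:kaf}, \ref{sec:tac} and \ref{przemienne}, which is exactly your chain $\mc S_\alpha\mc K_\gamma\mc C_{0,\delta,\mu}W_\beta\overset{d}{=}\mc K_\gamma\mc C_{0,\delta,\mu}\mc S_\alpha W_\beta\overset{d}{=}\mc C_{\gamma,\delta,\mu}W_{\alpha,\beta}$. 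Your added sketch of why $\mc C_{0,\delta,\mu}$ and $\mc S_\alpha$ commute (the slowdown stretches only sojourns at $\zero$ while leaving excursions and the local-time levels triggering resurrections unchanged) merely fills in a step the paper itself only asserts in Section~\ref{przemienne}, so it is consistent with, not divergent from, the paper's argument.
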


Our final remark here concerns the symmetric local time $L$ of $W_{0,\beta,0,\delta \mu}$. Theorem \ref{thm:wba} includes, as its integral part, the representation
\[ L(t) = \grae \tfrac 1{2\eps}\int_0^t \1_{\{0< |W_{0,\beta,0,\delta \mu}(s)| <\eps \}} \ud s, \qquad t \ge 0,\]
and the fact that $L$ coincides with each and every process $U_i^{-1}\mc \circ \mc L_i \circ T_i,$ $i\in \kad$. 
The following corollary to the theorem provides an alternative description of $L$ issuing from these two statements. 

\begin{corollary}For each $i\in \kad$, 
\[ L(t)=\grae \tfrac{1}{2\beta_i\eps}\int_0^t\1_{\{0<(W_{0,\beta,0,\delta \mu})_i(s) <\eps\}}\ud s, \qquad t \ge 0\]
\end{corollary}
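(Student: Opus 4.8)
The plan is to reduce the claimed local-time formula for each coordinate to the already-established facts about $L$ and the time-changes $T_i$ from Theorem~\ref{thm:wba}. Recall that the corollary concerns $W_{0,\beta,0,\delta\mu}$, whose $i$th coordinate is, by \eqref{wab:5}, $(W_{0,\beta,0,\delta\mu})_i = B_i^{\text{ref}}\circ T_i$ in the notation where $B_i^{\text{ref}}$ is the Skorokhod reflection of the one-dimensional Brownian motion $B_i$ and $T_i$ is the continuous nondecreasing time spent on the $i$th ray, with $\sum_{i\in\kad}T_i(t)=t$. The key structural input is that the common value of $U_i^{-1}\circ\mc L_i\circ T_i$ equals $L$, together with the relation $L_{n,i}=\beta_i L_n$ that was used in Section~\ref{sec:wab}; informally, on the $i$th ray the reflected motion accumulates local time at rate $\beta_i$ relative to the process's own local time $L$.

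\textbf{Step 1.} Fix $i\in\kad$ and perform the substitution $u = T_i(s)$, exactly as in the computation leading to \eqref{wika:1} and in the displayed chain just before \eqref{eq:ltimes}. Since $T_i$ is continuous and nondecreasing with $\{(W_{0,\beta,0,\delta\mu})_i(s)>0\}=\{B_i^{\text{ref}}(T_i(s))>0\}$, and since $T_i$ increases only when the process is on the $i$th ray, one gets
\[
\tfrac{1}{2\eps}\int_0^t\1_{\{0<(W_{0,\beta,0,\delta\mu})_i(s)<\eps\}}\ud s
= \tfrac{1}{2\eps}\int_0^{T_i(t)}\1_{\{0<B_i^{\text{ref}}(u)<\eps\}}\ud u .
\]
Here I would be slightly careful: the change of variables is legitimate because $\1_{\{0<(W_{0,\beta,0,\delta\mu})_i(s)<\eps\}}=0$ whenever $s$ lies in an interval of constancy of $T_i$ (on such intervals the process is at $\zero$ or on another ray), so no contribution is lost or gained; this is the same bookkeeping as in Section~\ref{sec:sand}.

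\textbf{Step 2.} Let $\eps\to0+$. By L\'evy's theorem (Section~\ref{sec:llt}), $B_i^{\text{ref}}$ has a symmetric local time at $0$, call it $L_i^{\text{ref}}$, given by the a.s.\ limit $L_i^{\text{ref}}(v)=\grae\tfrac1{2\eps}\int_0^v\1_{\{0<B_i^{\text{ref}}(u)<\eps\}}\ud u$. Composing with the continuous clock $T_i$ and passing to the limit (the limit exists for a.e.\ path uniformly on compacts, so composition with the continuous $T_i$ is harmless) yields
\[
\grae \tfrac{1}{2\eps}\int_0^t\1_{\{0<(W_{0,\beta,0,\delta\mu})_i(s)<\eps\}}\ud s = L_i^{\text{ref}}\circ T_i(t),\qquad t\ge0.
\]
Then I would identify $L_i^{\text{ref}}\circ T_i$ with $\beta_i L$: on each excursion block $[\varsigmam_n,\varsigmam_{n+1})$ the process is a copy $X_{n+1}=W_\beta$, for which \eqref{eq:ltimes} (or the displayed chain before it, with $L_{n+1,i}=\beta_i L_{n+1}$) gives $L_{n+1,i}\circ T_{n+1,i}=\beta_i L_{n+1}$; summing over the blocks via \eqref{wab:2} and \eqref{wab:2a} and using additivity of both sides gives $L_i^{\text{ref}}\circ T_i=\beta_i L$ on all of $[0,\infty)$. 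Dividing by $\beta_i>0$ gives the assertion.

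\textbf{The main obstacle} I anticipate is the careful justification of interchanging the $\eps\to0+$ limit with the time-substitution $u=T_i(s)$ — i.e.\ ensuring the convergence defining $L_i^{\text{ref}}$ survives composition with the random continuous clock $T_i$ and can be patched across the countably many excursion blocks that make up $W_{0,\beta,0,\delta\mu}$. This is essentially the same technical point already handled in Section~\ref{sec:sand} (the derivation of \eqref{wika:1}) and in Section~\ref{sec:wab} (the passage \eqref{eq:L_loc_W}); so I would simply invoke those arguments, noting that the $\1_{\{0<\,\cdot\,\}}$ indicator excludes the set $\{W_{0,\beta,0,\delta\mu}=\zero\}$ which has zero Lebesgue measure by construction, and that $L$ itself was already shown to equal the common value $U_i^{-1}\circ\mc L_i\circ T_i$, so the two descriptions of $L$ are consistent. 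Everything else is the routine occupation-time change of variables, so I would not belabour it.
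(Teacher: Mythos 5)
Your argument rests on a misreading of \eqref{wab:5}, and this is a genuine error rather than a presentational slip. Formula \eqref{wab:5} does not say $(W_{0,\beta,0,\delta \mu})_i = B_i\refl\circ T_i$; it says $(W_{0,\beta,0,\delta \mu})_i = (B_i + U_i\circ U_i^{-1}\circ \mc L_i)\circ T_i$, and the inner process $X\coloneqq B_i + U_i\circ U_i^{-1}\circ \mc L_i$ is a Brownian motion with jumps from $0$, not the Skorokhod reflection $B_i+\mc L_i$ (the two differ as soon as $U_i$ has jumped; indeed $(W_{0,\beta,0,\delta \mu})_i$ has jumps, while $B_i\refl\circ T_i$ is continuous). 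Consequently your Step 2 computes the wrong limit: the symmetric local time of $X$ is not $\mc L_i$ but $\beta_i\,U_i^{-1}\circ \mc L_i$, which is exactly what the paper extracts from the one-edge case of Theorem \ref{thm:wba} after rescaling the subordinator via $V(t)= U_i(\beta_i^{-1}t)$ so that its drift equals $1$. Likewise your closing identity $\mc L_i\circ T_i=\beta_i L$ is false here: the relation $L_i\circ(\cdot)=\beta_i L$ of \eqref{eq:ltimes} is a Walsh (no-jump) fact, whereas for the jump process $L=U_i^{-1}\circ \mc L_i\circ T_i$ by Theorem \ref{thm:wba}, and $U_i^{-1}$ is not the linear map $\ell\mapsto \ell/\beta_i$ once a jump has occurred; your chain would thus yield $L=\beta_i^{-1}\mc L_i\circ T_i$, contradicting Theorem \ref{thm:wba} after the first jump on the $i$th ray.

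Two further points. First, even with the correct inner process, the change of variables in your Step 1 needs the fact that $T_i$ grows at unit rate precisely when the $i$th coordinate is positive, i.e. $T_i(t)=\int_0^t \1_{\{(W_{0,\beta,0,\delta \mu})_i(s)>0\}}\ud s$; this is not the bookkeeping of Section \ref{sec:sand} but is what the paper establishes via Lemma \ref{det}(a) together with the zero-sojourn identity, and it is the whole justification of the key identity \eqref{dzis:4}. Second, the block decomposition you sketch at the end of Step 2 could be salvaged into a correct, and in fact rather elementary, alternative proof in this finite-$\mum$ setting: apply the Walsh occupation-density identity on each block $[\varsigmam_n,\varsigmam_{n+1})$ to the block-wise reflected motions $B_{n+1,i}\refl$ of Section \ref{sec:wab} (not to the global $B_i\refl$), note that only finitely many blocks meet $[0,t]$ almost surely, and sum via \eqref{wab:2} and \eqref{wab:2a}. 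As written, however, you invoke this decomposition only to justify the false global identity above. The paper's own route avoids blocks altogether: it identifies $(W_{0,\beta,0,\delta \mu})_i$ with $X\circ T_i$ where $X=W_{0,1,0,\beta_i^{-1}\delta\mu_i}$, reads off the symmetric local time of $X$ from the one-edge case of Theorem \ref{thm:wba}, and then performs the occupation-time change of variables \eqref{dzis:4}.
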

\begin{proof}
We start by noting that in the case of $\ka=1$, the family $T_i, i \in \kad$ of time-changes reduces to just one process $T_1$ and this process is deterministic: $T_1(t)=t, t\ge 0$. Moreover, the vector $\beta$ becomes a scalar and is necessarily equal to $1$. Finally,  $W_{0,1,0,\delta \mu}$ has only one coordinate, and  \eqref{wab:5} becomes 
\begin{equation}\label{dzis:3}W_{0,1,0,\delta \mu} = B  + U \circ U^{-1} \circ \mc L ,\end{equation}
where (a) $B$ is a Brownian motion and $\mc L$ is its {running minimum}
defined by $\mc L(t) \coloneqq -\min_{s\in [0,t]} (B(s) \wedge 0),  t \ge 0$,  and (b) $U$ is an independent subordinator of the form 
\[ U(t) \coloneqq t + {\textstyle \sum_{k=1}^{N(t)}}\xi_{k}, \qquad t\ge 0. \]
where, as in \eqref{wab:3},  $N$ is a Poisson process with parameter $\delta$ and $\xi_k, k \ge 1$ are independent random variables with distribution $\mu$. $W_{0,1,0,\delta \mu} $ is evidently a Brownian motion on the right half-line with jumps from $0$.

Coming back to the general case of $\ka \ge 1$, let $i$ be fixed, and let $V(t) \coloneqq U_i(\beta_i^{-1}t), t \ge 0$, so that, by \eqref{wab:3},  
\[ V(t) = t + {\textstyle \sum_{k=1}^{N(t)}}\xi_{k,i}, \qquad t\ge 0, \]
but this time $N$ is a Poisson process with parameter $\delta \beta_i^{-1}$.  Then 
\[X \coloneqq B_{i}  + V \circ V^{-1} \circ \mc L_i ,\]
being of the form \eqref{dzis:3},
is a Brownian motion on the right half-line with jumps from $0$, as described above. More precisely, $X= W_{0,1,0,\beta_i^{-1}\delta \mu_i}$ where $\mu_i$ is the restriction of $\mu$ to the $i$th ray. Theorem \ref{thm:wba} says therefore that the local time $\grae \tfrac{1}{2\beta_i\eps}\int_0^t\1_{\{0<X(s)<\eps\}}\ud s,  t \ge 0$ of $X$ exists and equals $V^{-1}\circ \mc L_i
= \beta_i U_i^{-1} \circ \mc L_i$: 
\[ \beta_i U_i^{-1} \circ \mc L_i (t) = \grae \tfrac{1}{2\eps}\int_0^t\1_{\{0<X(s)<\eps\}}\ud s, \qquad t \ge 0.\]
Also, since  $ V \circ V^{-1}=U_i \circ U_i^{-1}$, \eqref{wab:5} says that $(W_{0,\beta,0,\delta \mu})_i =X\circ T_i$, and thus our task reduces to showing that, on the one hand, 
\begin{equation}\label{dzis:4} \int_0^t\1_{\{0<X \circ T_i(s) <\eps\}}\ud s= \int_0^{T_i(t)}\1_{\{0<X_i(u)   <\eps\}}\ud u, \qquad t \ge 0 \end{equation}
because, to recall, $L = U_i \circ \mc L_i \circ T_i$.

To prove this relation, we note first that, by Lemma \ref{det} (a), for all $j \in \kad$, $T_j(t)\geq \int_0^t\1_{\{(W_{0,\beta,0,\delta \mu})_j(s) >0 \}}\ud s, t \ge 0$; indeed, the lemma says that $T_j$ grows linearly with slope $1$ whenever $(W_{0,\beta,0,\delta \mu})_j$ is positive (the lemma is presented in Section \ref{sec:stocha}, but its proof does not use any of the results obtained in the meantime; in fact, the lemma is purely deterministic). On the other hand, $\sum_{j \in\kad }T_j(t)=t $ and  $\sum_{j \in\kad }\int_0^t\1_{\{ (W_{0,\beta,0,\delta \mu})_j(s) >0\}}\ud s=\int_0^t\1_{\{ |W_{0,\beta,0,\delta \mu}(s)| >0\}}\ud s = t, t\geq 0$. Hence, 
\(
T_j(t)=\int_0^t\1_{\{(W_{0,\beta,0,\delta \mu})_i(s) >0\}}\ud s 
\), for all indexes $j\in\kad$ and in particular for $j=i$:  
\[ 
T_i(t)=\int_0^t\1_{\{ X \circ T_i(s) >0\}}\ud s, \qquad t \ge 0
.\]
Also, since $\1_A= (\1_A)^2$  for any indicator function $\1_A$, the left-hand side of \eqref{dzis:4} equals
\(
\int_0^t (\1_{\{0<X\circ T_i(s) <\eps\}})^2 \ud s\) and this, by the relation just established is $\int_0^t\1_{\{0<X\circ T_i(s) <\eps\}}\ud T_i(s)\).  This completes the proof of \eqref{dzis:4} because the last expression can be rewritten, by the change of variables, as 
\(\int_0^{T_i(t)}\1_{\{0<X_i(u)   <\eps\}}\ud u
\). 
\end{proof}
\subsection{Extensions}\label{sec:extend}

\newcommand{\obeta}{\overline \beta}
Throughout Section \ref{sec:wab} we have assumed that (a) all betas are positive, and (b) they add up to $1$ (in Section \ref{sec:dow} only (b) is used). Again, (b) is made without loss of generality, for convenience and to have a natural interpretation of the betas. Indeed,   
the boundary condition \eqref{loo:0} with $\mum=\delta \mu$ remains the same if $\alpha, \beta, \gamma$ and $ \delta $ are multiplied by a positive constant. Hence, if (a) holds   we can let $\obeta \coloneqq \sum_{i\in \kad} \beta_i >0$ and 
\begin{equation}\label{marek:1} W_{\alpha,\beta,\gamma,\delta \mu} \coloneqq W_{\alpha',\beta',\gamma',\delta' \mu},  \end{equation}
where $\alpha',\beta',\gamma'$ and $\delta'$ are obtained by dividing $\alpha,\beta,\gamma$ and $\delta$ by $\obeta$.  

Notably, as a result of this definition, Section \ref{sec:dow} now applies also to the case where all betas are positive but $\overline \beta$ need not be $1$. Theorems \ref{thm:wba} and \ref{olepkimizabitym} similarly remain true without any essential changes even if (b) is violated.  Elaborating on this succinct statement, suppose (a) is satisfied but $\obeta \not =1$. Then, by definition, as in \eqref{wab:5} and \eqref{wab:3},
 \begin{equation*}
 (W_{0,\beta,0,\delta \mu})_i = (B_{i}  + U_i^{\sharp} \circ (U_i^\sharp)^{-1} \circ \mc L_i ) \circ T_i,  \qquad  i\in \kad. \end{equation*}
 where $U_i^\sharp (t) \coloneqq  (\beta_i/\obeta) t + {\textstyle \sum_{k=1}^{N^\sharp(t)}}\xi_{k,i}, t\ge 0 $
 and $N^\sharp$ is a Poisson process with rate $\delta/\obeta$. However, the formula $N (t) = N^\sharp (\obeta t), t \ge 0$ defines a Poisson process with rate $\delta$, and introducing $U_i (t) = \beta_i t +  {\textstyle \sum_{k=1}^{N(t)}}\xi_{k,i}, t\ge 0 $, we see that $U_i^\sharp (t)= U_i (t/\obeta), t \ge 0$. Consequently, $(U_i^\sharp)^{-1}=\obeta U_i^{-1}  $ and $U_i \circ U_i^{-1} = U_i^{\sharp} \circ (U_i^\sharp)^{-1}$, $i\in \kad$. Moreover, because of the first of these relations, the requirement that $(U_i^\sharp)^{-1} \circ \mc L_i \circ T_i$ does not depend on $i$  is equivalent to the requirement that $U_i^{-1} \circ \mc L_i \circ T_i$ does not depend on $i$, and thus, together with other conditions listed in Theorem \ref{thm:wba}, determines the same $T_i$s. As a result, 
\begin{equation}\label{mzoe:1} (W_{0,\beta,0,\delta \mu})_i = (B_{i}  + U_i \circ U_i^{-1} \circ \mc L_i ) \circ T_i,  \qquad  i\in \kad; \end{equation}
this means that, even if $\obeta \not =1$, formula \eqref{wab:5} is still in force, and neither $\beta_i$ nor the rate of the Poisson process need to be modified by dividing them by $\obeta$ --- this is what we mean by saying that Theorem \ref{thm:wba} remains essentially the same. 

However, it should be noted that, by the relation $(U_i^\sharp)^{-1}=\obeta U_i^{-1}  $ established above, the new local time $L \coloneqq U_i^{-1} \circ \mc L_i \circ T_i$ does not coincide with the old $L^\sharp \coloneqq (U_i^\sharp)^{-1} \circ \mc L_i \circ T_i$. Rather, we have, 
\begin{equation}\label{ele} L^\sharp = \obeta L  .\end{equation} But this state of affairs has positive consequences.  First of all, to obtain $W_{\alpha,\beta,0,\delta \mu}$ from $W_{0,\beta,0,\delta \mu}$ of \eqref{mzoe:1} we should, by definition, slow it down at $\zero$  by means of the inverse of $A_\alpha^\sharp $ defined by $A_\alpha^\sharp (t)\coloneqq t + (\alpha/\obeta )L^\sharp (t), t \ge 0$.   Since, by \eqref{ele}, $A_\alpha^\sharp$ coincides with $A_\alpha $ defined by $A_\alpha (t) = t +\alpha L(t),  t\ge 0$ the description of $W_{\alpha,\beta,0,\delta \mu}$ given in Theorem \ref{olepkimizabitym} applies also to the case where $\obeta \not =1$. Finally, to obtain $W_{\alpha,\beta,\gamma,\delta \mu}$, we should kill $W_{\alpha,\beta,0,\delta \mu}$ at the instant when $L^\sharp$ reaches the level of a random variable $\zeta$ that is exponentially distributed with parameter $\gamma/\obeta$. Since this is the instant when $L$ reaches the level of $\zeta/\obeta$ and $\zeta/\obeta$ is exponentially distributed with parameter $\gamma$, we conclude that also the description of $W_{\alpha,\beta,\gamma,\delta \mu}$ given in Theorem \ref{olepkimizabitym} applies to the case where $\obeta$ is not necessarily~$1$.

To summarize our discussion:  as long as all $\beta_i$s are positive, $W_{\alpha,\beta, \gamma, \delta \mu}$ can be characterized as follows. Let $\jcg{\xi_n}$ be a sequence of independent $\scan$-valued random variables with common distribution $\mu$ and $N$ be an independent Poisson process with intensity $\delta$.   Also, let 
\begin{equation}\label{radek:1} U_i(t) =\beta_i t + {\textstyle \sum_{k=1}^{N(t)}}\xi_{k,i},  \qquad t\ge 0,i \in \kad . \end{equation} 
Then there is a $\ka$-dimensional time change process $\seq{T_i}$ uniquely determined  by the requirements that $\sum_{i=1}^\ka T_i(t)=t, t \ge 0$ and $U_i^{-1}\mc \circ \mc L_i \circ T_i$ does not depend on $i\in \kad$ ($\mc L_i$s are defined in \eqref{wab:4}). Moreover, the symmetric local time $L$ of $W_{0,\beta,0,\delta\mu}$ (see \eqref{eq:L_loc_W}) coincides with $U_i^{-1}\mc \circ \mc L_i \circ T_i$ and
$W_{0,\beta,0,\delta \mu}$ has the following form:
\begin{equation}\label{radek:2} (W_{0,\beta,0,\delta \mu})_i = B_{i}\circ T_i  + U_i \circ L=(B_{i}  + U_i \circ U_i^{-1} \circ \mc L_i ) \circ T_i, \qquad i \in \kad. \end{equation}
Finally, if $\zeta$ is an independent exponentially distributed random variable with parameter $\gamma$, then
\begin{equation}\label{mzoe:2} 
W_{\alpha,\beta, \gamma,\delta \mu} (t) =
\begin{cases}  W_{0,\beta,0,\delta \mu} \circ A_\alpha^{-1} (t), &  U_i^{-1} \circ \mc L_i \circ T_i(t) < \zeta,\\ 
\text{undefined}, &   U_i^{-1} \circ \mc L_i \circ T_i(t) \ge \zeta ,
\end{cases}
 \end{equation}
where $A_\alpha (t) = t+\alpha L(t), t \ge 0$,  and the symmetric local time of $W_{\alpha,\beta, \gamma,\delta \mu} $ is given by 
\begin{equation}\label{mzoe:3}
K(t) =
\begin{cases}  U_i^{-1} \circ \mc L_i \circ T_i(t), &  U_i^{-1} \circ \mc L_i \circ T_i(t) < \zeta,\\ 
U_i^{-1} \circ \mc L_i \circ T_i(\zeta), &   U_i^{-1} \circ \mc L_i \circ T_i(t) \ge \zeta .
\end{cases}
 \end{equation}

\subsection{Lumping edges together}\label{lumping}
Let, as in Sections \ref{sec:bpy} and \ref{sec:lumping1}, for some natural number $\mathpzc n \le \ka$, 
$\Psi$ map $\kad $ onto $\mathpzc N \coloneqq \{1,\dots,\mathpzc n\}$. Also,  let, by a slight abuse of notation, $\Psi$ denote simultaneously the map $S_\ka \ni (i,x)\mapsto  (\Psi (i), x) \in S_{\mathpzc n}$, and the associated transformation  $D([0,\infty),S_\ka)\to D([0,\infty),S_{\mathpzc n})$ given by $(\Psi \omega) (t) = \Psi (\omega (t)), t\ge 0, \omega \in D([0,\infty),S_\ka)$. 
We claim that 
\begin{equation}\label{let:2} \Psi W_{\alpha,\beta, \gamma, \delta \mu} = W_{\alpha,\wt \beta, \gamma, \delta \wt \mu},\end{equation} 
where 
 $\widetilde \beta = (\widetilde \beta_j)_{j\in \mathpzc N}$ with $\widetilde \beta_j \coloneqq \sum_{\{i\colon \Psi(i)=j\}} \beta_i, j \in \mathpzc N$, and $\wt \mu $ is the image of the measure $\mu$ via $\Psi$. To clarify, \eqref{let:2} means that the distributions of $\Psi W_{\alpha,\beta, \gamma, \delta \mu} $ and $W_{\alpha,\wt \beta, \gamma, \delta \wt \mu}$, as vectors with values in $D([0,\infty), S)$, are the same.

To see this, we recall first of all that, by \eqref{marek:1}, without loss of generality, it can be assumed that $\sui \beta_i=1$.  Then, see Section \ref{sec:dow}, $W_{\alpha,\beta, \gamma, \delta \mu}$ is defined as $\mc C_{\gamma, \delta, \mu}W_{\alpha,\beta}$. This means that (a) before the time $\tau$ when $L\circ A_\alpha^{-1}$ reaches the level of an independent exponential random variable with parameter $\gamma +\delta$, $W_{\alpha,\beta, \gamma, \delta \mu}$ is identical to $W_{\alpha,\beta}$, and (b) at $\tau$, with probability $\frac \delta{\gamma +\delta}$, $W_{\alpha,\beta, \gamma, \delta \mu}$ starts all over again, forgetting its past, at a random point of $S_\ka$, its distribution being $\mu$. Accordingly, (a) as established in Section \ref{sec:lumping1}, before $\tau$, $\Psi W_{\alpha,\beta, \gamma, \delta \mu}$ is identical to $W_{\alpha,\wt \beta}$, and (b) at $\tau$, with probability $\frac \delta{\gamma +\delta}$, $\Psi W_{\alpha,\beta, \gamma, \delta \mu}$ starts all over again, forgetting its past, at a random point of $S_\mathpzc n$, its distribution being $\wt \mu$. Since $L\circ A_\alpha^{-1}$ is a common symmetric local time for $W_{\alpha,\beta}$ and $W_{\alpha,\wt \beta}$, this establishes the claim.

\subsection{Degenerate cases}\label{sec:degene}

As in Section \ref{sec:sui} we need to describe the process also in the degenerate case where some or all betas vanish. Basically, the related process is a concatenation of processes described in Section \ref{sec:sui}. Again, if started at the edge with $i\in \kal$, the process of Section \ref{sec:sui} rather quickly leaves this part of the state-space via origin, never to return. However, as long as $\mu$ has a nonzero mass at this edge, the process related to $\gen_ {\alpha,\beta,\gamma,\delta\mu}$ can come back here by a jump when the local time of the process (which from the time of reaching origin behaves like a possibly slowed down Walsh's process on a subgraph) at the origin reaches an exponential level.  Edges with $\beta_i=0$ and of zero measure $\mu$, form the transient part of the state-space.

If all betas vanish, $\alpha$ is necessarily positive and, as we know from Section \ref{sec:sui}, the process generated by $\gen_ {\alpha,0,\gamma}=\gen_ {1,0,\gamma/\alpha}$ ceases to be defined when it first reaches $0$ to be stopped there, and then stays there for an exponential time. However, that of    
$\gen_ {\alpha,0,\gamma,\delta \mu}$ is resurrected with probability $\frac {\delta}{\gamma +\delta}$ and starts all over again at a random point of the graph, distributed according to $\mu$. If it so happens that some edges are of zero measure $\mu$, they again form the transient part of the state-space, as the process can never return there. We will write more extensively on such cases in our analysis of infinite $\mum$. 

{We leave it to the readers to check that the principle of lumping edges together applies also to these degenerate cases --- in Section \ref{lumping2} we will give an independent proof of this fact.}

\newcommand{\afu}{\gen_ {\textnormal{full}}}
\newcommand{\afup}{\gen_ {\alpha,\beta,\gamma,\mum}}
\newcommand{\wfup}{W_ {\alpha,\beta,\gamma,\mum}}

\section{The case of infinite $ \mum$, analytic part}
\label{sec:tcoim}

We come to the case where $\mum (S) =\infty$; as throughout the paper, we assume that $\int (1\wedge x)\mum (\mud x)$ is finite. Given such a measure, nonnegative real parameters $\alpha$ and $\gamma$, and a nonnegative vector $\beta = \seq{\beta_i}$, we consider the restriction, say, 
\[\afup,\]
of $\Delta$ to the domain of $f\in \coss $ such that 
\begin{equation}\label{zet:1} \tfrac{\alpha}2 f'' (\zero) - \sum_{i\in \kad} \beta_i f_i'(0) + \gamma f(\zero) = \int_S [f(x) - f(\zero)] \, \mum (\mud x)
; \end{equation} note that for $f\in \mathfrak C^2(S)$, the function $S\ni x \mapsto f(x) - f(\zero)$ is $ \mum$ is  integrable.   We stress that, in contrast to the previous sections, $\beta_i$ need not be positive; in fact, in the extreme case we may even have $\alpha + \sui\beta_i =0$. Our main theorem in this section, Theorem \ref{thm:full}, states that $\afup$ is a Feller generator; the related process will be constructed using natural building blocks in Section \ref{sec:stocha}.

The idea of the proof is to approximate the searched-for semigroup generated  by $\afup$  by the semigroups described in Section \ref{sec:finite}. This approximation involves probability measures $\mu_\eps $ constructed from $\mum$ as follows. Since, by assumption, $\int (1\wedge x)\mum (\mud x)$ is finite, we have $\delta (\eps) \coloneqq \mum (\Gamma_\eps) < \infty$ for any $\eps >0$, where $\Gamma_\eps \subset S$ is the set of points in $S$ that lie at a distance $\ge \eps$ from the graph's center. Therefore, for every $\eps>0$, the formula \[  \mu_\eps (\cdot ) = \frac{\mum(\mc \cdot \cap \Gamma_\eps)}{\mum(\Gamma_\eps)}\] defines a Borel probability measure on $S$.  Notably, by assumption, we have $\grae \delta (\eps)$ $= \infty$ and at the same time the measures $\mu_\eps$ converge to the Dirac delta at the graph's center. 

We also need vectors $\beta(\eps)= \seq{\beta_i(\eps)}, \eps >0$ such that
\[ \beta_i(\eps) >0, \qquad i \in \kad, \eps >0 \mqquad{and} \grae \beta_i (\eps) = \beta_i, i \in \kad. \]
This assumption allows thinking of the semigroups generated by 
\[ \gen_\eps \coloneqq \gen_{\alpha,\beta(\eps), \gamma, \delta(\eps)\mu_\eps},\qquad \eps >0,\] described in Section \ref{sec:finite}.

\subsection{The generation theorem}

In terms of the objects introduced above, our main theorem says that the semigroup  generated by $\afup$ can be obtained as a limit, as $\eps \to 0$, of  the semigroups generated by $\gen_\eps$. The following lemma is a first step in this direction.


\newcommand{\rlae}{R_{\lam,\eps}}



\begin{lemma} \label{lem1} As $\eps \to 0$, the resolvents $\rez{\gen _\eps}, \lam>0$ converge strongly to the Feller resolvent $\rla, \lam >0$ given by  
\begin{equation}\label{loo:10} \rla g = \rla^0 g + \overline C (g) \elam^0, \qquad \lam >0, g\in \coss,  \end{equation}
where 
\begin{itemize}
\item [(a) ] $\rla^0, \lam >0$ is the resolvent of the minimal Brownian motion, and $\elam^0=\mathsf 1_S - \lam \rla^0\mathsf 1_S, \lam >0$ is the Laplace transform of its lifetime (see \eqref{lifetime}), 
\item [(b) ] $\overline C(g) \coloneqq \frac{\int_S \rla^0 g\ud \mum + C(g)}{\gamma + \lam \int_S \rla^0 \mathsf 1_S \ud \mum  +  \lam C(\mathsf 1_S)}$, $C(g)\coloneqq  2\slam \sui \beta_iC_i(g)  + \alpha g(\zero)$, and $C_i(g)$s were introduced in \eqref{cee}. 
\end{itemize}
\end{lemma}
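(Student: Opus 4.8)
To prove Lemma~\ref{lem1} I would work entirely at the level of resolvents, using the explicit formula for $\rez{\gen_\eps}$ that comes from combining \eqref{dow:1}--\eqref{dow:3} (with $\gamma$ there playing the role of $\gamma$ here, $\delta=\delta(\eps)$, $\mu=\mu_\eps$, and $\beta=\beta(\eps)$) and then letting $\eps\to0$. First I would record, from \eqref{loo:4}--\eqref{loo:6} applied to $\gen_{\alpha,\beta(\eps),\gamma+\delta(\eps)}$, that
\[
\rez{\gen_{\alpha,\beta(\eps),\gamma+\delta(\eps)}}g = \rla^0 g + E_\eps(g)\,\elam^0,
\]
where $E_\eps(g)$ is the quantity of \eqref{loo:3} with the obvious substitutions; and that the corresponding $\elam$ of \eqref{loo:6} is $(\gamma+\delta(\eps))(\lam\alpha+\slam\sui\beta_i(\eps)+\gamma+\delta(\eps))^{-1}\elam^0$. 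Plugging these into \eqref{cim:5}/\eqref{dow:1}--\eqref{dow:2} gives $\rez{\gen_\eps}g = \rla^0 g + (\text{scalar})\cdot\elam^0$, with the scalar an explicit rational expression in $\int_S\rla^0 g\,\ud\mu_\eps$, $\int_S\rla^0\mathsf 1_S\,\ud\mu_\eps$, $g(\zero)$, $\mathsf 1_S(\zero)$, the $C_i(g)$'s, $\lam$, $\alpha$, $\gamma$, $\delta(\eps)$ and the $\beta_i(\eps)$'s.

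**The key limit.** The heart of the matter is the behaviour of $\delta(\eps)\int_S h\,\ud\mu_\eps = \int_{\Gamma_\eps} h\,\ud\mum$ as $\eps\to0$, for $h$ of the form $h=\rla^0 g$ or $h=\rla^0\mathsf 1_S$. Since $\rla^0 g$ vanishes at $\zero$ and (being in the range of $\rla^0$) is Lipschitz near $\zero$ along each ray, with $(\rla^0 g)_i(x) = O(x)$ as $x\to0$, and since $\int_S(1\wedge x)\,\mum(\ud x)<\infty$, the integral $\int_{\Gamma_\eps}\rla^0 g\,\ud\mum$ converges as $\eps\to0$ to $\int_S\rla^0 g\,\ud\mum$ by monotone/dominated convergence — this is exactly where the assumption $\int(1\wedge x)\,\mum(\ud x)<\infty$ is used and where the affinity with \eqref{int:1} noted in the introduction pays off. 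One also needs the elementary fact that $C_i(g)$ is continuous in $g$ and $\beta_i(\eps)\to\beta_i$, $\alpha$ and $\gamma$ fixed; and that the denominator $\gamma + \lam\int_S\rla^0\mathsf 1_S\,\ud\mum + \lam C(\mathsf 1_S)$ is strictly positive (here $C(\mathsf 1_S)=2\slam\sui\beta_i\cdot\tfrac1{\slam}\cdot\tfrac1{\slam}\cdots$, in any case a sum of nonnegative terms, and if it and $\int_S\rla^0\mathsf 1_S\,\ud\mum$ both vanished then $\afup$ would be degenerate — one checks positivity directly, noting $\int_S\rla^0\mathsf 1_S\,\ud\mum>0$ whenever $\mum\neq0$). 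Passing to the limit in the rational expression then yields \eqref{loo:10} with $\overline C(g)$ as stated, after cancelling a common factor of $\delta(\eps)$ in numerator and denominator of the scalar (which is why $\gamma$, not $\gamma+\delta(\eps)$, survives in the denominator of $\overline C$).

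**Finishing: the Feller property.** Once strong convergence $\rez{\gen_\eps}\to\rla$ is established, it remains to check that the limit family $\rla$, $\lam>0$, is itself a Feller resolvent. I would invoke the standard stability theorem (Trotter--Kato type, in the resolvent form stated e.g. in the references cited in Section~\ref{pofs}): a strong limit of Feller resolvents satisfying the Hilbert equation, with dense range — equivalently, with $\lil\lam\rla f=f$ — is again a Feller resolvent. The Hilbert equation, nonnegativity, and the contraction bound $\lam\rla\mathsf 1_S\le\mathsf 1_S$ pass to the limit automatically; the only genuine point is $\lil\lam\rla f=f$ for all $f\in\coss$. For this I would argue as in the proof of Proposition~\ref{lem:feller}: from \eqref{loo:10}, $\lam\rla^0 f\to f$ for $f\in\cosso$ (where $\cosso$ here is $\{f\colon f(\zero)=0\}$) because $\rla^0$ is the minimal resolvent, $\|\elam^0\|=1$, and $\lam\overline C(f)\to f(\zero)$ as $\lam\to\infty$ — the last from the asymptotics of $C(f)$ (note $\lam\cdot\alpha f(\zero)/(\lam\alpha+\cdots)\to f(\zero)$ when $\alpha>0$, and the $\slam\sui\beta_i C_i(f)$ term handles the case $\alpha=0$, $\sui\beta_i>0$, via $\slam C_i(f)\to f_i(0)$; if $\alpha+\sui\beta_i=0$ one uses instead that $\lam\int_S\rla^0 f\,\ud\mum$ and $\lam^2\int_S\rla^0\mathsf 1_S\,\ud\mum$ have matching leading behaviour so that the ratio tends to $f(\zero)$). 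The main obstacle I anticipate is precisely this last asymptotic bookkeeping in the degenerate regime $\alpha+\sui\beta_i=0$: there the dominant contribution to both numerator and denominator of $\overline C(f)$ comes from the $\mum$-integrals, and one must extract the correct first-order behaviour of $\lam\int_{S}\rla^0 f\,\ud\mum$ as $\lam\to\infty$ (comparing it to $\lam\int_S\rla^0\mathsf 1_S\,\ud\mum$ weighted against $\mum$), which requires a slightly more careful Tauberian-type estimate using the kernel form \eqref{loo:5} of $\rla^0$ rather than soft arguments.
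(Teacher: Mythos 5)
Your first half — expressing $\rez{\gen_\eps}g$ as $\rla^0 g+(\text{scalar})\,\elam^0$ via \eqref{dow:1}--\eqref{dow:3} and \eqref{loo:4}--\eqref{loo:6}, and passing to the limit using $\int_{\Gamma_\eps}\rla^0 g\,\ud\mum\to\int_S\rla^0 g\,\ud\mum$ (dominated convergence, resting on $(\rla^0 g)_i(x)=O(x)$ and $\int(1\wedge x)\,\mum(\mud x)<\infty$), $\delta(\eps)\to\infty$, $\mu_\eps\to\updelta_\zero$ and $\beta(\eps)\to\beta$ — is essentially the paper's computation, and your identification of where the hypothesis on $\mum$ enters is correct (your remark about ``cancelling $\delta(\eps)$'' is loose: what actually happens is $E_\eps(g)\to 0$, $\ell_{\lam,\eps}\to\elam^0$ and $\delta(\eps)E_\eps(g)\to C(g)$, but the mechanism you describe is the right one).

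The gap is in the regularity step $\lil\lam\rla f=f$. You reduce it to $\lam\overline C(f)\to f(\zero)$ and split into cases, but in the decisive degenerate case $\alpha+\sui\beta_i=0$ (which is unavoidable here, since the lemma must cover it) you only assert that $\lam\int_S\rla^0 f\,\ud\mum$ and the denominator have ``matching leading behaviour'', deferring the proof to an unspecified ``Tauberian-type estimate''; that assertion is precisely the statement to be proved, so as written the proof is incomplete exactly where the lemma is hardest. (Two smaller slips: the denominator contains $\lam\int_S\rla^0\mathsf 1_S\,\ud\mum$, not $\lam^2\int_S\rla^0\mathsf 1_S\,\ud\mum$; and the Abelian fact you need is $2\lam\slam\, C_i(f)\sim\slam f_i(0)$, not $\slam C_i(f)\to f_i(0)$. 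Also, in the case $\alpha=0<\sui\beta_i$ you tacitly need $\lam\int_S\rla^0\mathsf 1_S\,\ud\mum=o(\slam)$, which you never state.) The paper avoids the entire case analysis: since $\mum(S)=\infty$ it first shows $\lil\lam\int_S\rla^0\mathsf 1_S\,\ud\mum=\infty$ (monotone convergence applied to $\int(1-\e^{-\slam x})\,\overline{\mum}(\mud x)$), which immediately gives $\lil\lam\rla\mathsf 1_S=\mathsf 1_S$; then, for $g$ with $g(\zero)=0$, it suffices by $\|\lam\rla\|\le 1$ to verify \eqref{dom:2} on the dense subset $g=\rmi^0 h$, $h\in\cosso$, where the Hilbert equation shows the numerator $\lam\int_S\rla^0 g\,\ud\mum$ stays bounded as $\lam\to\infty$ while the denominator blows up. If you want to keep your direct route, you must actually carry out the degenerate-case estimate (e.g. split $f=f(\zero)\mathsf 1_S+g$, bound $\lam\int_S\rla^0|g|\,\ud\mum\le\eps\,\lam\int_S\rla^0\mathsf 1_S\,\ud\mum+\|g\|\,\lam\int_S\rla^0\mathsf 1_{\{d(\cdot,\zero)\ge\eta\}}\,\ud\mum$ and show the last term is bounded in $\lam$ using the kernel \eqref{loo:5}); otherwise the paper's density/Hilbert-equation trick is the cleaner way to close the argument.
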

\begin{proof}As explained in Section \ref{sec:extend}, the generator of the process characterized by the vector of parameters $(\alpha,\beta (\eps), \gamma,\delta(\eps)\mu_\eps)$ is, by definition, the generator of the process characterized by the vector $(\obeta (\eps))^{-1}(\alpha,\beta (\eps), \gamma,\delta(\eps)\mu_\eps)$, where $\obeta (\eps) =\sui \beta_i(\eps) $. 
Moreover, in \eqref{dow:1} the resolvent of the latter generator is expressed in terms of the resolvent of the generator characterized by 
$(\obeta (\eps))^{-1}(\alpha,\beta (\eps), \gamma +\delta(\eps))$, that is, of $\gen_{\alpha,\beta (\eps), \gamma+\delta(\eps)}$. In the light of this formula we have namely 
\begin{align}\label{dod}
\rez{\gen _\eps} g = \rlae g + M_\eps (g)\ell_{\lam,\eps} \end{align}
where 
\begin{itemize} 
\item $\rlae \coloneqq \rez{\gen_{\alpha,\beta (\eps), \gamma+\delta(\eps)}}$, $\lam >0$,
\item $\ell_{\lam,\eps} \coloneqq \mathsf 1_S - \lam \rlae \mathsf 1_S$, so that --- see \eqref{loo:6} --- for $x \ge 0$ and $ i\in \kad$ we have $(\ell_{\lam,\eps})_i(x) = \frac {\gamma+\delta (\eps)}{\lam \alpha + \slam {\sui} \beta_i (\eps) + \gamma+\delta (\eps)} \e^{-\slam x}$, and 
\item $M_\eps (g) \coloneqq \frac {\delta (\eps) \int_S \rlae g \ud \mu_\eps}{\gamma+ \delta (\eps) \lam \int_S \rlae  \mathsf 1_S \ud \mu_\eps}$.
\end{itemize}
Furthermore, by \eqref{loo:4},
\[ \rlae g = \rla^0 g + E_\eps (g) \ell_{\lam}^0, \qquad \lam,\eps >0, g\in \coss, \]
where 
$E_\eps(g) \coloneqq \frac {2\slam \sui \beta_i (\eps) C_i(g) + \alpha g(\zero)}{\lam \alpha + \slam \sui \beta_i(\eps) + \gamma +\delta (\eps)}$. 

Since, clearly,  $\grae E_\eps (g)=0$, we have $\grae \rlae g= \rla^0g $ and, in particular, $\grae \ell_{\lam,\eps} = \elam^0$. Thus, to establish that $\grae \rez{\gen_\eps}g = \rla g$, it suffices to show that \begin{equation}\label{zoe:1} \grae M_\eps (g) = \overline C(g), \qquad g \in \coss,\end{equation} and this, of course, involves the analysis of $\grae \delta (\eps) \int_S \rlae g \ud  \mu_\eps $. 
For the latter, we note first that, by \eqref{loo:5}, the limit $\lim_{x\to 0+} x^{-1}(\rla^0g)_i (x) $ exists (and equals $2C_i(g)\slam$), $i \in \kad $. This implies that $\rla^0g$ is integrable with respect to $ \mum$, and therefore, by the Lebesgue dominated convergence theorem, $\grae \delta (\eps) \int_S \rla^0 g \ud  \mu_\eps = \grae \int_{\Gamma_\eps} \rla^0 g \ud \mum  = \int_S \rla^0 g\ud \mum $. Moreover, \[ \grae \delta (\eps) E_\eps (g) \int_S \ell_{\lam}\ud  \mu_\eps= C(g)  \grae \int_S \ell_{\lam,\eps} \ud  \mu_\eps =C(g),\] because the measures $ \mu_\eps$ converge to the Dirac delta measure at $\zero$.  This shows that the numerator of $M_\eps(g)$ converges to $\int_S \rla^0 g\ud \mum + C(g)$, whereas its denominator converges to $\gamma + \lam \int_S \rla^0 \mathsf 1_S \ud \mum  +  \lam C(\mathsf 1_S)$. This establishes \eqref{zoe:1}, for the denominator is larger than zero: $\gamma +\lam C(\mathsf 1_S)$ is nonnegative and $ \lam \int_S \rla^0 \mathsf 1_S \ud \mum  = \int_S (\mathsf 1_S -\elam^0) \ud \mum  $ is strictly positive because $\elam^0 <1$ everywhere outside of $\zero$, that is, almost everywhere with respect to $\mum$. 

Having established $\grae \rez{\gen_\eps}g= \rla g, g \in \coss $, we note next that $\rla, \lam >0$,
as a limit of Feller resolvents,  is a pseudoresolvent of nonnegative operators with $\|\lam \rla \|\le 1, \lam >0$, and check that $\lam \rla \mathsf 1_S \le \mathsf 1_S$ with equality for $\gamma =0$. Therefore, our task comes down to proving that 
\begin{equation}\label{lilek} \lil \lam \rla g =g , \qquad g\in \coss.\end{equation}
To this end, we claim first that 
\begin{equation}\label{wikusia} \lil \lam \int_S \rla^0 \mathsf 1_S \ud \mum  =\infty.\end{equation} 
Indeed,   introducing $\overline \mum$, a Borel measure on $[0,\infty)$, 
as the image of $\mum$ via the map $S\ni (x,i) \mapsto x$, we  see that $\lam \int_S \rla^0 \mathsf 1_S \ud \mum $ equals $\int_{[0,\infty)} (1-\e^{-\slam x})\overline \mum (\mud x)$, and thus converges to $\infty$ as $\lam \to \infty$, because by assumption $\mum$ is infinite, and thus so is $\overline \mum$.  

Next, we write   
\(\mathsf 1_S - \lam \rla \mathsf 1_S = (  1-\lam \overline C(\mathsf 1_S)) \elam^0 = \frac{\gamma}{\gamma + \lam \int_S \rla^0 \mathsf 1_S \ud \mum +C(\mathsf 1_S)} \elam^0\) and recall that $\|\elam^0\|_{\coss} = 1$. Since this, when combined with \eqref{wikusia}, yields $\lil \lam \rla \mathsf 1_S = \mathsf 1_S$,  in proving \eqref{lilek} we may restrict ourselves to $g\in \cosso$, that is, to $g$ with $g(\zero)=0$. But $\rla^0, \lam >0$ is the resolvent of the minimal Brownian motion semigroup, and hence we have $\lil \lam \rla^0 g =g $ for $g\in \cosso$. Because of $\|\elam^0\|_{\coss}=1$, this reduces our task to establishing that 
\begin{equation}\label{dom:2}
    \lil \frac {\lam \int_S \rla^0 g \ud \mum }{\gamma + \lam \int_S \rla^0 \mathsf 1_S \ud \mum } = 0, \qquad g \in \cosso.
\end{equation}
We note furthermore that, since $\|\lam\rla \|\le 1 $, it suffices to prove $\lil \lam \rla g$ $=g$, or, equivalently,  
the formula above, for $g$ in a dense subset of $\cosso$, and in particular, we can choose this subset to be the common range of $\rla^0, \lam >0$. 

However, for $g$ of the form $g=\rmi^0 h,$ where $ h \in \cosso$ and $\nu >0$ is fixed, the numerator in \eqref{dom:2} equals
\[ \int_S \nu \rla^0 \rmi^0 h \ud \mum  + \int_S \rmi^0 h \ud \mum  - \int_S \rla^0 h \ud \mum   \]
{by the Hilbert equation.}
If $h$ is nonnegative, the first and the third terms here are nonincreasing functions of $\lam$, and thus have finite limits, whereas the second does not depend on $\lam $ (and is finite). It follows that this expression has a finite limit as $\lam \to \infty$ for all $h\in \cosso$.  To complete the proof, therefore, it suffices to note that, by \eqref{wikusia}, the denominator in \eqref{dom:2} converges to $\infty$ as $\lam \to \infty$. 
\end{proof}

Our next result identifies $\afup$ as the generator of the Feller semigroup related to $\rla , \lam >0$ of \eqref{loo:10}. 

\begin{thm}\label{thm:full} The operator $\afup$ is a Feller generator, and its resolvent coincides with $\rla, \lam >0$. Moreover,
$\grae \e^{t\gen_ \eps} = \e^{t\afup}$ strongly and uniformly for $t$ in compact intervals.
\end{thm}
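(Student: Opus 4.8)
The plan is to establish the three claims in sequence, using Lemma \ref{lem1} as the analytic backbone. First I would observe that Lemma \ref{lem1} already gives us most of what we need: it shows that $\rla = \rla^0 g + \overline C(g)\elam^0$, $\lam>0$, is a Feller resolvent, \ie a family of nonnegative operators on $\coss$ satisfying the Hilbert equation, with $\lam\rla\mathsf 1_S\le\mathsf 1_S$ and $\lil \lam\rla g = g$ for all $g\in\coss$. By the standard Hille--Yosida--Ray correspondence recalled in Section \ref{pofs}, there is therefore a unique Feller generator, say $\gen$, with $\rez{\gen}=\rla$, and hence a unique associated Feller semigroup $\sem{\gen}=\{\e^{t\gen}\}_{t\ge0}$. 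It then remains to identify $\gen$ with $\afup$ and to upgrade the resolvent convergence of Lemma \ref{lem1} to semigroup convergence.

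The identification $\gen = \afup$ I would carry out exactly as in Sections \ref{sec:gow} and \ref{sec:3.2}: since $\afup$ satisfies the positive-maximum principle (Appendix \ref{maks}), it is dissipative, so it suffices to show that for every $\lam>0$ and $g\in\coss$ the function $f\coloneqq \rla g$ of \eqref{loo:10} lies in $\dom{\afup}$ and solves $\lam f - \Delta f = g$. That $f\in\mathfrak C^2(S)$ is immediate because $\rla^0 g$ and $\elam^0$ both belong to $\mathfrak C^2(S)$; that $\lam f - \Delta f = g$ follows because $\elam^0$ is an eigenfunction of $\Delta$ for the eigenvalue $\lam$ and $\rla^0$ is a right inverse of $\lam-\Delta$ on the minimal domain. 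The only real computation is checking that $f$ satisfies the boundary condition \eqref{zet:1}: one applies the functional $\tfrac\alpha2 f''(\zero) - \sum_i\beta_i f_i'(0)+\gamma f(\zero) - \int_S[f-f(\zero)]\,\mum(\mud x)$ to $f = \rla^0 g + \overline C(g)\elam^0$, uses $(\rla^0 g)_i(0)=0$, $\lim_{x\to0+}x^{-1}(\rla^0 g)_i(x)=2C_i(g)\slam$, and the explicit form of $\overline C(g)$ from part (b) of Lemma \ref{lem1}; the terms are arranged precisely so that the result is zero. This mirrors verbatim the verification in Section \ref{sec:gow}, only with the finite measure replaced by the infinite $\mum$, and the integrability $\int_S(1\wedge x)\,\mum(\mud x)<\infty$ is exactly what makes each term finite. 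Since $\afup$ is a dissipative operator whose range contains the whole of $\coss$ for every $\lam>0$, it has no proper dissipative extension, so $\afup=\gen$ and $\rez{\afup}=\rla$.

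For the semigroup convergence $\grae \e^{t\gen_\eps} = \e^{t\afup}$, the plan is to invoke the Trotter--Kato approximation theorem (see \eg \cite{ethier} or the references in Section \ref{pofs}): strong convergence $\grae \rez{\gen_\eps}g = \rez{\afup}g$ on all of $\coss$, for one (hence every) $\lam>0$, together with the fact that the limit operators form a Feller resolvent with dense range, implies $\grae \e^{t\gen_\eps}g = \e^{t\afup}g$ strongly, uniformly for $t$ in compact intervals. The hypotheses of Trotter--Kato are met: Lemma \ref{lem1} supplies the pointwise resolvent convergence and the identification of the limit as a genuine Feller resolvent, and all the $\gen_\eps$ are Feller generators by Section \ref{sec:finite}. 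The uniformity in $t$ on compacts is part of the conclusion of Trotter--Kato. I expect the main obstacle to be purely bookkeeping in the boundary-condition computation of the middle step --- making sure the $\eps\to0$ limits of $\delta(\eps)\int_S\rlae g\,\mathrm d\mu_\eps$ and of $\delta(\eps)E_\eps(g)\int_S\ell_{\lam,\eps}\,\mathrm d\mu_\eps$ (already computed in the proof of Lemma \ref{lem1}) are correctly matched against the two pieces $\int_S\rla^0 g\,\mathrm d\mum$ and $C(g)$ of the numerator of $\overline C(g)$ --- rather than any conceptual difficulty, since the heavy lifting (the Feller property of the limit resolvent, including \eqref{lilek}) is already done in Lemma \ref{lem1}.
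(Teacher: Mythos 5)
Your proposal is correct and follows essentially the same route as the paper: Lemma \ref{lem1} supplies the Feller resolvent, one checks that $f=\rla g$ lies in $\dom{\afup}$ (the boundary-condition computation with $\lim_{x\to0+}x^{-1}(\rla^0g)_i(x)=2C_i(g)\slam$ and the explicit $\overline C(g)$) and solves the resolvent equation, the positive-maximum principle/dissipativity of $\afup$ rules out a proper extension so $\afup$ is the generator with resolvent $\rla$, and the semigroup convergence is the Trotter--Kato(--Neveu) theorem. No gaps.
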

\begin{proof}
  Since $\rla, \lam >0$ is a Feller resolvent, there is an operator, say, $\gen$, such that $\rez{\gen}=\rla, \lam >0$. This operator's domain is the common range of $\rla, \lam >0$. We claim that 
\[ \dom{\gen} \subset \dom{\afup}.\]  
  
First of all, by \eqref{loo:10}, $\dom{\gen}\subset C^2(S)$, because both $\rla^0 g, g \in \coss $ and $\elam^0$ belong $C^2(S)$.  Hence, it suffices to check that for $f\in \dom \gen$, that is, for $f$ of the form $f=\rla g$ for some $g\in \coss$, condition \eqref{zet:1} is satisfied. Let $f$ be of this form. We already know (see the proof of Lemma \ref{lem1}) that $S\ni x\mapsto \rla^0g (x) - \rla^0g (\zero) = \rla^0g (x)$ is $ \mum$   integrable. Since, $\lim_{x\to 0+}(\e^{-\slam x} - 1)/x=-\slam $ the same is true of $S\ni x\mapsto \elam^0 (x) - \elam^0 (\zero)$, and we have, on the one hand, 
\begin{align}
\int_S [f(x) - f(\zero)]\mum (\mud x)& = \int_S \rla^0g \ud \mum  + \overline{C}(g) \int_S (\elam^0(x) - \elam^0 (\zero) )\mum (\mud x)\nonumber \\
&= \int_S\rla^0g \ud \mum  - \lam \overline{C} (g)\int_S \rla^0 \mathsf 1_S \ud \mum  \label{loo:11}.
 \end{align}  
On the other hand, $(\rla^0g)_i'(0)= 2C_i(g) \slam $ and $(\rla^0g)''(0)= -2g(\zero)$, showing that, for $F$ of \eqref{loo:1}, $F\rla^0g = - \alpha g(\zero) - 2\slam \sui \beta_i C_i(g) = - C(g)$. In particular,  because of $\ell_\lam^0 = \mathsf 1_S - \lam \rla^0 \mathsf 1_S,$ we have $F\ell_\lam^0=\gamma + \lam C(\mathsf 1_S)$, and, consequently,
\(
Ff = - C(g) +  \overline C(g)(\gamma + \lam C (\mathsf \mathsf 1_S)). \) A look at the definition of $\overline C(g)$ reveals now that the  last expression coincides with the right-most side in \eqref{loo:11}, that is, that $f$ satisfies \eqref{zet:1}, as claimed, and so the desired inclusion is established.   

Next, we will argue that $\afup$ extends $\gen$. Indeed, any $f \in \dom{\gen}$ is of the form $f =\rla g, g\in \coss $ and so, by definition, $\gen  f=\gen \rez{\gen}g= \lam \rez{\gen} g - g= \lam f - g$. At the same time, this $f$ belongs, as we have just proved, to $\dom{\afup}$, and a calculation, based on \eqref{loo:10} and $\Delta \elam^0 =\lam \elam^0$, shows that $\lam f - \afup f = g$, that is, $\afup f= \lam f - g$. This implies that $\afup f = \gen f, f \in \dom{\gen}$, as heralded above.

However, $\afup$ cannot be a proper extension of $\gen$, because $\afup$ satisfies the positive-maximum principle (see Appendix \ref{maks}) and is thus dissipative, and so we conclude that $\afup =\gen$. In particular, $\afup$ is a Feller generator with resolvent $\rla, \lam >0$.   
The rest therefore follows by the Trotter--Kato--Neveu theorem \cite{abhn,kniga,knigazcup,ethier,goldstein,kallenbergnew,pazy}. 
\end{proof}

\subsection{Continuous dependence on parameters}\label{sec:continuous}
As a direct corollary to Lemma \ref{lem1} and Theorem \ref{thm:full}, we obtain the following formula for the resolvent of $\afup$. We have 
\begin{equation}\label{joj:1} \rez{\afup}g = \rla^0 g + \overline C (g) \elam^0, \qquad \lam >0, g\in \coss
\end{equation}
where, as before, \[ \overline C(g) = \frac{\int_S \rla^0 g\ud \mum + C(g)}{\gamma + \lam \int_S \rla^0 \mathsf \mathsf 1_S \ud \mum  +  \lam C(\mathsf 1_S)} \text{ and } C(g)=  2\slam \sui \beta_iC_i(g)+ \alpha g(\zero),\] 
or, in the expounded form,
\[ \rez{\afup}g = \rla^0 g + \frac{ \alpha g(\zero) + 2\slam \sui \beta_iC_i(g)+\int_S \rla^0 g\ud \mum }{\lam \alpha + \slam \sui \beta_i +\gamma + \lam \int_S \rla^0 \mathsf \mathsf 1_S \ud \mum  } \elam^0.
\]

Notably, as we will now argue, this formula is valid for all possible choices of parameters. Indeed, for $\mum=0$, $\overline C(g) $  coincides with $E(g)$ of \eqref{loo:3} and so \eqref{joj:1} reduces to \eqref{loo:4}. Also in the limit case of $\alpha=\beta_i=0, i \in \kad $ and $\mum =0$ but $\gamma >0$, we have  $\overline C(g)=0$ and thus the right hand side in \eqref{joj:1} reduces to the minimal Brownian motion. Finally, to prove that \eqref{joj:1} includes also the case where $\mum$ is a nonzero but finite measure, one could transform \eqref{dow:1} to the desired form with the help of \eqref{loo:4}, but this approach turns out to be {computationally demanding}. It is relatively easier to note that the argument presented in the proof of Theorem \ref{thm:full} and showing that the right-hand side of \eqref{loo:10}, that is, the right-hand side of \eqref{joj:1}, is a solution to the related resolvent equation, works well also in the case of our interest.  Since such a solution is necessarily unique, we conclude that, as claimed, \eqref{dow:1} is a special case of \eqref{joj:1} but in a disguise.

Once we note that \eqref{joj:1} encompasses all the possible choices of $\alpha,\beta,\gamma$ and $\mum$, it is rather easy to see that the solution to the resolvent equation depends continuously on these parameters, and thus so do the corresponding semigroups and processes. Indeed, $C(g)$ depends continuously on $\alpha\ge 0$ and $\beta \ge 0$ and therefore $\overline C(g)$ depends continuously on these two parameters and on $\gamma \ge 0$ in the region where $\overline C(g)$'s denominator is nonzero. This region is composed of two subsets: in the first of these  $\alpha +\sui \beta_i>0$, in the second $\alpha +\sui \beta_i=0$ but $\gamma >0$ and/or $\mum$ is infinite (in fact, it suffices to assume that $\mum$ is nonzero, but the case of finite $\mum$ and  $\alpha +\sui \beta_i=0$ does not correspond to a Feller resolvent, see below). Moreover, $\overline C(g)$ depends continuously on $\mum$ in the following sense: if for certain L\'evy measures $\mum$ and $\mum_n, n \ge 1$, $\gra \int\rla^0g \ud \mum_n = \int\rla^0 g \ud \mum, g \in \coss, \lam >0$, then the same is true for the corresponding $\overline C(g)$s,  $g\in \coss$.

\begin{remark}
    It follows from   Theorems 2.6 and 2.8, and the proof of Theorem 2.10 in
Chapter V of \cite{blum} that  if $ \gamma=\beta_i=0,i\in\kad$ and $ \mum$   is an infinite measure on $\scan \setminus\{\zero\}$ such that $\int_{\scan} (1\wedge |x|) \mum (\mud x)<\infty,$ then \eqref{joj:1}  defines a resolvent of a Feller process whose excursion measure is equal up to a multiplicative constant to $\int_{\scan} \bar P_x \mum (\mud x).$ Here $\bar P_x$ is the distribution of a Brownian motion on $\scan$ that is stopped upon hitting $\zero$. 
\end{remark}

\subsection{Comparison to Rogers' work} 
\newcommand{\enla}{\mathpzc n_\lam}
\newcommand{\enmu}{\mathpzc n_\mu} It is worthwhile to look at  \eqref{joj:1} in the light of the seminal work of L.C.G. Rogers \cite{rogersik}, devoted to his exposition of It\^o excursion theory via resolvents.
In our context, the main result of this paper  can be stated as follows (see Rogers' Theorems 1 and 2): the formula 
\begin{equation}\label{rogers} \rla g = \rla^0 g + \overline C (g) \elam^0, \qquad \lam >0, g\in \coss
\end{equation}
defines a Feller resolvent $\rla, \lam >0$ iff there are  nonnegative constants $\alpha$ and $\gamma$, and  a family $\enla, \lam >0$ of finite measures concentrated on $S \setminus \{\zero\}$ such that
\begin{equation}\label{caod:6} \enla \mathsf 1 >0, \quad (\mu-\lam)\enmu \rla^0 = \enla- \enmu  , \qquad \lam, \mu >0 \end{equation} 
and $\overline C(g)$ is of the form $\overline C (g)= \frac {\enla g+\alpha g(0)}{\lam \alpha  + \lam \enla \mathsf 1 +\gamma}$ .
In \eqref{caod:6}, measures $\enla, \lam >0$ are  seen as functionals on $\coss$; as explained by Rogers, they are Laplace transforms of entrance laws for the minimal process. 

In this light, \eqref{joj:1} can be summarized by saying that in the case of Brownian motion on a star graph the family $\enla,\lam>0$ is necessarily of the form 
\begin{equation}\label{caod:7} \enla g =  \sui  \beta_i C_i (g) + \int_{S\setminus \{\zero\}} \rla^0 g \ud \mum, \qquad \lam >0, g \in \coss . \end{equation} 
This representation can also  be proved directly by arguing as in Section 3 of \cite{rogersik}; to this end, not surprisingly, one needs to use the fact (visible from \eqref{loo:5} and already referred to above) that, for each $i\in \kad$ and $g\in \coss$, the limit $\lim_{x\to 0+} x^{-1}(\rla^0g)_i (x) $ exists and equals $2C_i(g)\slam$.

\subsection{Lumping edges together} \label{lumping2} 
Formula \eqref{joj:1}, besides sharing a similar form with \eqref{loo:4}, inherits also from its predecessor a property that is worth recording again here, when we deal with the more general situation.  To wit, in the notations of Section \ref{sec:lump0}, we have
\begin{equation}\label{let:1a} \mc I^{-1} \rez{\gen_ {\alpha,\beta,\gamma,\mum}} \mc I = \rez{\gen_{\alpha,\widetilde \beta, \gamma,\wt \mum}}, \qquad \lam >0, \end{equation}
where $\gen_{\alpha,\widetilde \beta, \gamma,\wt \mum}$ is the generator in $\mathfrak C(S_\mathpzc n)$ with $\widetilde \beta = (\widetilde \beta_j)_{j\in \mathpzc N}$ given by  $\widetilde \beta_j 
 \coloneqq \sum_{\{i\colon \Psi(i)=j\}} \beta_i, j \in \mathpzc N$, and $\wt \mum$ is the image of the measure $\mum$ via the map $\Psi$.

This suggests that formula \eqref{let:2}, established for finite $\mum$, extends to infinite $\mum$ as well. That is, 
\begin{equation}\label{lump} \Psi W_{\alpha,\beta, \gamma, \mum} = W_{\alpha,\wt \beta, \gamma, \wt \mum},\end{equation} 
whatever the parameters are, and, again, this means that the distributions of both processes involved, seen as having values in $D([0,\infty), S_\ka)$, are the same.

To prove \eqref{lump}, we fix $\alpha, \beta, \gamma $ and $\mum$ (such that at least one of the conditions $\alpha +\sui \beta_i >0$ and $\mum (S_\ka)=\infty$ holds), and for each integer $n$ choose a positive vector $\beta_n$, a constant $\delta_n$ and a probability measure $\mu_n$ such that \begin{equation}\label{wnuk1} \gra \beta_n=\beta  \mqquad{and}  \gra \delta_n \int\rla^0g \ud \mu_n = \int\rla^0 g \ud \mum, g \in \mathfrak C(S_\ka), \lam >0.\end{equation} As explained in Section \ref{sec:continuous}, the semigroups generated by $\gen_{\alpha,\beta_n, \gamma, \delta_n \mu_n}$ converge then to the semigroup generated by $\afup$. Hence, by the Trotter--Sova--Kurtz--Macke\-vi\v cius Theorem (see e.g. \cite{kallenbergnew} p. 385, see also Theorem 2.5 in Chapter 4 in \cite{ethier}), the
processes $W_{\alpha,\beta_n, \gamma, \delta_n \mu_n}$    converge in distribution to $W_{\alpha,\beta,\gamma,\mum}$ in $D([0,\infty),S)$ (provided that they start from the same point), that is, for any continuous, bounded, real-valued function $\mathsf f$ on $D([0,\infty),S_\ka)$ and $x\in S_\ka$, 
\begin{equation}\label{wnuczek} \gra \mathsf E_x \mathsf f(W_{\alpha,\beta_n, \gamma, \delta_n \mu_n} ) =   \mathsf E_x \mathsf f(W_{\alpha,\beta, \gamma, \mum}).\end{equation}
Similarly, since \eqref{wnuk1} forces 
 \begin{equation*} \gra \wt {\beta_n}=\wt \beta  \mqquad{and}  \gra \delta_n \int\rla^0g \ud {\wt \mu_n} = \int\rla^0 g \ud \wt \mum, g \in \coss, \lam >0,\end{equation*}
we  have also 
\[ \gra \mathsf E_x \mathsf g(W_{\alpha,\wt{\beta_n}, \gamma, \delta_n \wt{\mu_n}} ) =   \mathsf E_x \mathsf g(W_{\alpha,\wt \beta, \gamma, \wt \mum}),\]
for any bounded, real-valued continuous function $\mathsf g$ on $D([0,\infty),S_{\mathpzc n})$ and $x\in S_{\mathpzc n}$.

Since \eqref{let:2} applies to  all the approximating processes, the obvious idea to complete the proof of \eqref{lump} is by approximation. To this end, we take a closer look at $\Psi$. To begin with, $S_\ka$ is equipped with the following natural metric: the distance between $(i,x)$ and $(j,y)$ is $x+y $ if $i\not =j$ and $|x-y|$ otherwise (we have used this metric in Section \ref{sec:dynkin} without spelling out the details). It is clear that $\Psi\colon S_\ka \to S_{\mathpzc n} $ is a non-expansive mapping with respect to this metric --- to see this it suffices to note that $|x-y|\le x+y $ for all nonnegative $x$ and $y$. The definition of the metric in $D([0,\infty), S_\ka)$ (see formulae (12.13) and (16.4) in \cite{bill2}) thus implies that $\Psi$ is also non-expansive as a map from 
$D([0,\infty),S_\ka)$ to $D([0,\infty),S_{\mathpzc n})$. It follows, that $\mathsf g \circ \Psi $ is a real-valued, bounded, continuous function on $D([0,\infty),S_{\ka})$ whenever $\mathsf g$  is a real-valued, bounded,  continuous function on $D([0,\infty),S_{\mathpzc n})$. 

This allows writing, for any $\mathsf g$ described above and $x\in S_{\mathpzc n}$, 
\begin{align*}\mathsf E_x \mathsf g(\Psi W_{\alpha,\beta, \gamma, \mum})& =  \mathsf E_x \mathsf g\circ \Psi( W_{\alpha,\beta, \gamma, \mum}) = \gra \mathsf E_x \mathsf g\circ \Psi (W_{\alpha,\beta_n, \gamma, \delta_n \mu_n} )\\& = \gra \mathsf E_x \mathsf g(\Psi W_{\alpha,\beta_n, \gamma, \delta_n \mu_n} ) = \gra \mathsf E_x \mathsf g( W_{\alpha,\wt {\beta_n}, \gamma, \delta_n \wt{\mu_n}} )  \\&= \mathsf E_x \mathsf g(W_{\alpha,\wt \beta, \gamma, \wt \mum}) . \end{align*}
Since $\mathsf g$ and $x$ are arbitrarily chosen, this completes the proof of \eqref{let:2}.



\subsection{The singular case}\label{sec:degenerate}
It remains to comment on the right-hand side of \eqref{joj:1} in the case where $\alpha +\sui \beta_i=0$ and $\mum$ is a nonzero but finite measure, so that $\mum = \delta \mu$ for a probability measure $\mu$ and $\delta =\mum (S) >0$; moreover, $C(g)=0$ and $\overline C(g)= \frac{\delta \int_S \rla^0 g\ud \mu}{\gamma + \lam \delta \int_S \rla^0 \mathsf \mathsf 1_S \ud \mu}$. The formula 
\begin{equation}\label{zez:2} \rla g \coloneqq \rla^0 g + \overline C (g) \elam^0, \qquad \lam >0, g\in \coss
\end{equation}
then still defines a pseudo-resolvent, that is, the operators $\rla, \lam >0$ satisfy the Hilbert equation. Moreover, $\lam\rla$s are positive contractions and $\lam \rla \mathsf 1_S \le \mathsf 1_S$ (with equality for $\gamma =0$). Nevertheless, $\rla , \lam >0$ is not a Feller resolvent, because condition 2. of the definition fails. 

To see this  we note that 
the calculation presented in the proof of Theorem \ref{thm:full} carries over to the case of interest with no substantial changes, and thus shows that the range of $\rla, \lam >0  $ is contained in the kernel of the bounded linear functional  $\mathpzc F\in (\coss )^*$ given by 
\[ \mathpzc F g = \delta \int_S g \ud \mu - (\gamma+\delta) g(\zero), \qquad g \in \coss;\]
this excludes the possibility of $\lil \lam \rla g =g$ for $g$ with $\mathpzc F g \not =0$. 

As expounded in the following proposition, one can still find a semigroup corresponding to $\rla, \lam >0$, but this semigroup is defined merely on 
the kernel of $\mathpzc F$, that is, on a (closed) subspace of $\coss$. 

\begin{proposition}\label{thm:dege}  Let $\cossf$ be the subspace of $g\in \coss$ such that $\mathpzc F g=0$.  
There is a (necessarily unique) strongly continuous semigroup $\rod{T(t)}$ of positive contractions in $\cossf$ such that 
\begin{equation} \rla g = \int_0^\infty \e^{-\lam t} T(t)g \ud t, \qquad \lam >0, g\in \cossf.\label{zez:3} \end{equation}
Moreover, $T(t)\mathsf 1_S = 1_S, t\ge 0$ iff $\gamma =0$.

\end{proposition}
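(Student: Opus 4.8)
## Proof proposal for Proposition \ref{thm:dege}

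The plan is to build the semigroup by a Hille--Yosida type argument carried out inside the Banach space $\cossf$ rather than in all of $\coss$. First I would check that the operators $\rla,\lam>0$ of \eqref{zez:2} map $\cossf$ into itself; in fact, as already observed in the proof of Theorem \ref{thm:full} (the calculation carries over verbatim to the present case), the range of each $\rla$ is contained in $\ker\mathpzc F=\cossf$, so a fortiori $\rla(\cossf)\subset\cossf$. Thus we may regard $\{\rla|_{\cossf},\lam>0\}$ as a family of bounded operators on $\cossf$. These operators satisfy the Hilbert equation (they form a pseudoresolvent on $\coss$, hence on the invariant subspace $\cossf$), are nonnegative, and satisfy $\lam\rla\mathsf 1_S\le\mathsf 1_S$ with equality precisely when $\gamma=0$ — all of this was noted in the text just before the statement. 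The one remaining ingredient of the Feller resolvent axioms is condition~2, $\lil\lam\rla g=g$, and this is exactly where the restriction to $\cossf$ pays off.

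The key step is therefore to prove $\lil\lam\rla g=g$ for $g\in\cossf$. I would argue as in the proof of Lemma \ref{lem1}: write $\lam\rla g = \lam\rla^0 g + \lam\overline C(g)\,\elam^0$. For $g\in\cosso$ (i.e.\ $g(\zero)=0$) we have $\lil\lam\rla^0 g=g$ because $\rla^0,\lam>0$ is the resolvent of the minimal Brownian motion semigroup, and since $\|\elam^0\|_{\coss}=1$ it suffices to show $\lil\lam\overline C(g)=0$ for such $g$; with $C(g)=0$ here, $\lam\overline C(g)=\dfrac{\lam\delta\int_S\rla^0 g\ud\mu}{\gamma+\lam\delta\int_S\rla^0\mathsf 1_S\ud\mu}$, and the argument of Lemma \ref{lem1} (Hilbert equation applied to $g=\rmi^0 h$, numerator bounded, denominator $\to\infty$ as in \eqref{wikusia}) gives the claim — note $\int_S\rla^0\mathsf 1_S\ud\mu>0$ strictly, since $\mu$ is a probability measure on $S_{\zero}$ and $\rla^0\mathsf 1_S>0$ there, and $\lam\int_S\rla^0\mathsf 1_S\ud\mu=\int_S(\mathsf 1_S-\elam^0)\ud\mu\to\|\mu\|=1$. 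It then remains to handle the one-dimensional complement: on $\cossf$ the function $\mathsf 1_S$ need not lie in the space unless $\gamma=0$, so I would instead pick any fixed $g_0\in\cossf$ with $g_0(\zero)\neq 0$ (such $g_0$ exists since $\mathpzc F$ does not vanish identically on functions nonzero at $\zero$), write every $g\in\cossf$ as $g=cg_0+(g-cg_0)$ with $c$ chosen so that $g-cg_0\in\cosso$, and verify $\lil\lam\rla g_0=g_0$ by a direct computation from \eqref{zez:2}, using that $\lim_{x\to0+}(\elam^0(x)-1)/x=-\slam$ and that the relation $\mathpzc F g_0=0$ forces the surviving boundary term in $\lim\lam\rla g_0$ to reconstruct $g_0(\zero)$ at $\zero$. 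Once condition~2 holds on $\cossf$, $\{\rla|_{\cossf}\}$ is a genuine Feller resolvent on $\cossf$, so by the Hille--Yosida--Ray theorem there is a unique strongly continuous semigroup $\rod{T(t)}$ of positive contractions on $\cossf$ with $\rla g=\int_0^\infty\e^{-\lam t}T(t)g\ud t$, which is \eqref{zez:3}. The last assertion, $T(t)\mathsf 1_S=\mathsf 1_S$ iff $\gamma=0$, is immediate: this requires $\mathsf 1_S\in\cossf$, which holds iff $\mathpzc F\mathsf 1_S=\delta-(\gamma+\delta)=-\gamma=0$, and then $\lam\rla\mathsf 1_S=\mathsf 1_S$ for all $\lam$ forces $T(t)\mathsf 1_S=\mathsf 1_S$, while conversely $T(t)\mathsf 1_S=\mathsf 1_S$ already presupposes $\mathsf 1_S\in\cossf$.

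The main obstacle I anticipate is the verification of $\lil\lam\rla g=g$ on the one-dimensional part transverse to $\cosso$: one must exhibit a concrete $g_0\in\cossf$ with $g_0(\zero)\ne 0$ and compute $\lim_{\lam\to\infty}\lam\rla g_0$ explicitly, showing the limit is $g_0$ and not some multiple of $\elam^0$-type correction. The delicate point is that $\overline C(g_0)$ involves both $\delta\int_S\rla^0 g_0\ud\mu$ (bounded, by $\mum$-integrability of $\rla^0 g_0$ as in Lemma \ref{lem1}) and the growing denominator, and one needs $\lam\overline C(g_0)\to g_0(\zero)$ in such a way that $g_0(\zero)\elam^0$ combined with $\lam\rla^0 g_0\to g_0-g_0(\zero)(\text{something})$ reassembles $g_0$ uniformly on $S$; checking this uniform convergence near $\zero$, where $\elam^0\to\mathsf 1_S$ slowly, is the part that requires care. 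Everything else — the Hilbert equation, positivity, contractivity, uniqueness of the semigroup, and the final iff — is routine given the results already in the paper.
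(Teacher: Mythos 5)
Your overall strategy (restrict the pseudoresolvent to $\cossf$, prove $\lil \lam \rla g=g$ there, invoke Hille--Yosida, and settle the final equivalence via $\mathpzc F \mathsf 1_S=-\gamma$) is the paper's strategy, but the central limit computation contains a genuine error. For the component with $g(\zero)=0$ you claim $\lil \lam \overline C(g)=0$ \emph{for all} $g\in \cosso$ by the mechanism of Lemma \ref{lem1} ("numerator bounded, denominator $\to\infty$ as in \eqref{wikusia}"). That divergence is exactly what fails in the singular case: here $\mum=\delta\mu$ is finite, so the denominator $\gamma+\lam\delta\int_S \rla^0\mathsf 1_S\ud\mu$ tends to $\gamma+\delta<\infty$ --- as your own parenthetical computation $\lam\int_S\rla^0\mathsf 1_S\ud\mu\to 1$ shows, contradicting the divergence invoked one clause earlier. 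Consequently $\lil \lam\overline C(g)=\delta\int_S g\ud\mu/(\gamma+\delta)$, which is \emph{not} zero for general $g\in\cosso$; were your claim true, $\rla,\lam>0$ would be regular on a subspace strictly larger than $\cossf$, contradicting the fact (noted just before the proposition) that the range of each $\rla$ lies in $\ker\mathpzc F$. The limit vanishes only because your piece $g-cg_0$ lies in $\cossf\cap\cosso$, where $\mathpzc F(g-cg_0)=0$ together with $(g-cg_0)(\zero)=0$ forces $\int_S(g-cg_0)\ud\mu=0$ --- and this membership in $\cossf$ is precisely the ingredient your argument never uses. The paper's proof is built on it: for $g\in\cossf$ with $g(\zero)=0$ the numerator tends to $\delta\int_S g\ud\mu=0$ while the denominator tends to $\gamma+\delta$.

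The second, smaller defect is that the transverse one-dimensional piece is not actually proved: you defer $\lil\lam\rla g_0=g_0$ for a $g_0\in\cossf$ with $g_0(\zero)\neq 0$ to "a direct computation" and yourself flag it as the main obstacle. The paper executes it with the concrete element $\mathsf 1^\sharp=\mathsf 1_S-c\,\ell_\nu^0$, $c=\gamma/(\gamma+\delta-\delta\int_S\ell_\nu^0\ud\mu)$, using the exit-law identity $(\lam-\nu)\rla^0\ell_\nu^0=\ell_\nu^0-\elam^0$. A direct computation along your lines is indeed possible for an arbitrary $g_0\in\cossf$: writing $\lam\rla^0 g_0=\lam\rla^0(g_0-g_0(\zero)\mathsf 1_S)+g_0(\zero)(\mathsf 1_S-\elam^0)$ reduces the matter to $\lil\lam\overline C(g_0)=g_0(\zero)$, and dominated convergence gives $\lil\lam\overline C(g_0)=\delta\int_S g_0\ud\mu/(\gamma+\delta)$, which equals $g_0(\zero)$ exactly because $\mathpzc F g_0=0$. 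In short, in this proposition the restriction to $\cossf$ cannot be treated as soft bookkeeping: the constraint $\mathpzc Fg=0$ must enter the limit computation itself, and as written your proof never lets it do so.
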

\begin{proof} We know that the restrictions of the operators $\rla, \lam >0$ to $\cossf$ form a pseudo-resolvent in this space, and that the norm of $ 
\lam\rla |_{\cossf} $ does not exceed $1$ for $\lam >0 $. Hence, by the Hille--Yosida Theorem, it  suffices to show that 
\begin{equation}\label{zez:5} \lil \lam \rla g = g, \qquad g \in \cossf.\end{equation} 

To this end, we fix a $\nu>0$ and choose a constant $c$ in such a way that $\mathsf 1^\sharp \coloneqq \mathsf 1_S  - c\ell_\nu^0$ belongs to $\cossf$, that is, we take $c\coloneqq \frac {\gamma }{\gamma +\delta -\delta \int_S \ell_\nu^0\ud \mu }$; note that $ \int_S \ell_\nu^0\ud \mu<1$ and so the denominator above is positive. Since $\elam^0, \lam >0$ is an exit law for $\rla^0, \lam >0$, we have 
\begin{equation}\label{zez:6} 
(\lam - \nu) \rla^0\ell_\nu^0 = \ell_\nu^0 - \elam^0, \qquad \lam >0. \end{equation}
This equation (a counterpart of the Hilbert equation), together with $\ell_\lam^0 = \mathsf 1_S - \lam \rla^0 \mathsf 1_S$ yields
\[\ell_\nu^0 - \lam \rla \ell_\nu^0 = (1-\lam C(\ell_\nu))\ell_\lam^0 - \nu R_\nu \ell_\nu^0 \mqquad{and} \mathsf 1_S - \lam \rla \mathsf 1_S = (1-\lam C(\mathsf 1_S))\ell_\lam^0, \]
for all $\lam >0$. Therefore, 
\begin{align*} \mathsf 1^\sharp - \lam \rla \mathsf 1^\sharp &= [1-\lam C(\mathsf 1_S) - c(1-\lam C(\ell_\nu^0))] \elam + c\nu \rla^0 \ell_\nu \\
&= \frac {\gamma - c(\gamma +\delta - \int_S (\nu \rla^0 \ell_\nu^0 + \ell_\nu^0)\ud \mu)}{\gamma +  \delta - \delta \int_S\elam^0 \ud \mu}\ell_\lam^0 + c \nu \rla^0\ell_\nu^0,\quad \lam >0,\end{align*}
where \eqref{zez:6} was used once more. Also $\lil \int_S\elam^0\ud \mu =0$ (because $\mu (\{\zero\})$ is $0$), implying that the denominator in the fraction next to $\ell_\lam^0$ above converges to $\gamma +\delta>0$; at the same time, by $\|\rla^0\|\le \lam^{-1}$, the numerator converges to $\gamma - c(\gamma +\delta - \int_S \ell_\nu^0\ud \mu) =0$.  The same estimate on $\|\rla^0\|$ together with   
$\|\elam\|_{\coss} =1$ allows us thus to conclude that the last expression in the display converges to $0$, as $\lam \to \infty$, that is, that $\lil \lam \rla \mathsf 1^\sharp = \mathsf 1^\sharp$. 

The problem therefore reduces to establishing that \eqref{zez:5} holds for  $g\in \cossf $ such that $g(\zero)=0$. For such $g$, $\lil \lam \rla^0 g = g$ and thus, because of \eqref{zez:2} and  $\|\elam\|_{\coss} =1$, we need to show that $\lil \frac{\delta \lam \int_S \rla^0 g\ud \mu}{\gamma + \delta \lam \int_S \rla^0 \mathsf \mathsf 1_S \ud \mu}=0$. Now, $\lil \lam \int_S \rla^0 \mathsf 1_S \ud \mu = 1 - \lil \int_S \ell_\lam \ud \mu =1$, and so the denominator in this fraction converges to $\gamma +\delta$.  At the same time, due to $\lil \lam \rla^0 g =g$, the numerator converges to $\delta \int_S g \ud \mu$. This integral equals zero,  because $g $ belongs to the kernel of $\mathpzc F$ and we have $g(\zero)=0$. This completes the proof of \eqref{zez:5}. 

Finally, if $\gamma >0$ there is no sense of talking about $T(t)\mathsf 1_S$ because $\mathsf 1_S$ does not belong to $\cossf$. On the other hand, if $\gamma =0$, a straightforward calculation shows that  $\overline C(\mathsf 1_S) = \lam^{-1}$. This implies $\lam \rla \mathsf 1_S =\mathsf 1_S, \lam >0$ and, hence, $T(t)\mathsf 1_S = \mathsf 1_S, t \ge 0$.  \end{proof}

We end this section by noting that the semigroup of Proposition \ref{thm:dege} is related to the following `singular' process. 
On each edge it behaves like a Brownian motion, but at the instant when it touches $\zero$, is killed with probability $\frac \gamma{\gamma +\delta}$ or, with probability $\frac \delta{\gamma +\delta}$,  starts all over again   
at a random point distributed according to $\mu$. It should be stressed here that, strictly speaking, $\zero$ thus does not belong to the state-space of the process. Also, the reader will not find it difficult to check that  the semigroup is generated by $\Delta$ with domain composed of $f\in C^2(S)$ such that both $f$ and $f''$ belongs to $\cossf$. 

\section{The case of infinite $ \mum$: stochastic analysis}\label{sec:stocha}

Let, as in the previous section $\mum(S)=\infty $ and $\int (1\wedge x)\mum (\mud x)$ be finite.  As a direct consequence of Theorem \ref{thm:full}, there exists an $S$-valued Feller process, say, $W_{\alpha,\beta, \gamma, \mum}$ generated by $\afup$, the restriction of $\Delta$ to the domain where \eqref{loo:0} holds, provided that $\alpha,\gamma \ge 0$ and $\beta \ge 0$.  It is our goal in this section to find an explicit formula for $W_{\alpha,\beta, \gamma, \mum}$, seen as having values in $\scan$, that would be analogous to that presented in Section \ref{sec:finite} for $W_{\alpha,\beta, \gamma, \delta \mu}$. 

We are working under the standing assumption saying that, for all $i\in \kad$,  
\begin{equation}\label{zez:4} \text{ the $\mum$ measure of the $i$th ray is infinite whenever }  \beta_i =0. \end{equation}
The case where this assumption is violated will be covered in Section \ref{sec:dom2}.  

\subsection{A representation of $W_{\alpha,\beta, \gamma, \mum}$}\label{sec:scprim}


To find a counterpart of \eqref{wab:5} we need, besides a $\ka-$dimensional Brownian motion $B$, an independent Poisson point process $\Pi$ with values in $[0,\infty) \times \scan$ and mean measure $\ud t \times \mum $.  To recall, see \cite{kingmanpoisson}, this means that $\Pi$ is a collection of points that are randomly scattered over $[0,\infty)\times \scan$ in such a way that their number in any measurable $A\subset [0,\infty)\times \scan $ is Poisson distributed with mean $(\mud t\times \mum) (A)$, and their numbers in disjoint subsets are independent (and independent of $B$). It will also be convenient to think of $\Pi$ as a random measure, being the sum of Dirac measures concentrated at the points of $\Pi$: 
\[ \Pi = N(\mud t,\mud x) \coloneqq \sum_{(t,x)\in \Pi}\updelta_{(t,x)}.\]

With the help of $\Pi$, we can define an $(\R^+)^\ka$-valued process 
\begin{equation}\label{repka}
\mc U(t)\coloneqq t \beta +\int_0^t\int_{\scan} x N(\mud s, \mud x) =t\beta +  \sum_{(s,x)\in \Pi, s\in [0,t]}x, \qquad t \ge 0. 
\end{equation}
As in the proof of uniqueness in Theorem \ref{thm:wba}, one can argue that coordinates of $\mc U$, denoted in what follows by $U_i, i\in \kad$, are independent subordinators. Because of our standing assumption \eqref{zez:4}, each $U_i$ is strictly increasing and so its generalized inverse $U_i^{-1}$ is continuous.

Our main result in this section says that, as long as \eqref{zez:4} is satisfied, $W_{\alpha,\beta,\gamma,\mum}$ can be represented in terms of $B$ and $\mc U$. 

\begin{thm}\label{thm:wbam} There is a family $T_i, i \in \kad$ of nondecreasing nonnegative processes with continuous paths such that \begin{itemize}
\item [1. ]  $\sum_{i=1}^\ka T_i(t)=t, t \ge 0$, 
\item [2. ] $W_{0,\beta,0,\mum}$ defined by requiring that its $i$th coordinate equals 
\begin{equation}(W_{0,\beta,0,\mum})_i = (B_{i}  + U_i \circ U_i^{-1} \circ \mc L_i ) \circ T_i,  \qquad  i\in \kad,\label{rep1} \end{equation}
(where $\mc L_i$ is defined in \eqref{wab:4}) is a Feller process in $\scan$ generated by $\gen_{0,\beta,0,\mum}$,
 and
\item [3. ] $U_i^{-1}\mc \circ \mc L_i \circ T_i$ does not depend on $i\in \kad$.  
\end{itemize}
Moreover, conditions 1. and 3. determine $T_i$s uniquely (a.s.), and the common value $L$ of $U_i^{-1}\circ \mc L_i \circ T_i, i \in \kad$ is a  local time of $ W_{0,\beta,0,\mum}$ at $\zero$.  Finally, $W_{\alpha,\beta,\gamma,\mum}$ can be represented as 
\begin{equation}\label{mainrep} W_{\alpha,\beta,\gamma,\mum} (t) = \begin{cases}
               W_{0,\beta,0,\mum} (  A^{-1}_{ \alpha}(t)), & L \circ  A^{-1}_{\alpha}(t)< \zeta ,\\
               undefined, & L \circ A^{-1}_{\alpha}(t)\geq \zeta , 
           \end{cases}\end{equation} 
           where $A_\alpha^{-1}$ is the inverse to $A_\alpha$ defined by $A_\alpha (t) = t +\alpha L(t), t \ge 0$ and $\zeta$ is an exponentially distributed random variable with parameter $\gamma$ that is independent of $B$ and $\Pi$.  
\end{thm}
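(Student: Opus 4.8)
The plan is to construct the process described in \eqref{rep1} directly from the Brownian motion $B$ and the Poisson point process $\Pi$, verify that it is Feller with generator $\gen_{0,\beta,0,\mum}$, and then obtain $W_{\alpha,\beta,\gamma,\mum}$ from it by the slowing-down and killing operations $\mc S_\alpha$ and $\mc K_\gamma$ of Sections \ref{sec:sup}--\ref{sec:kaf}. The natural route is to pass to the limit from the finite-$\mum$ case treated in Section \ref{sec:wab}: write $\mum_\eps$ for the restriction of $\mum$ to $\Gamma_\eps$ (points at distance $\ge \eps$ from $\zero$), so that $\mum_\eps = \delta(\eps)\mu_\eps$ is finite, and let $\Pi_\eps$ be the restriction of $\Pi$ to $[0,\infty)\times\Gamma_\eps$. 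Then $\mc U_\eps$, defined as in \eqref{repka} with $N$ replaced by the counting measure of $\Pi_\eps$, has independent-subordinator coordinates $U_{\eps,i}$, and the recipe of Section \ref{sec:extend}, in particular \eqref{radek:1}--\eqref{radek:2}, produces a copy of $W_{0,\beta(\eps),0,\mum_\eps}$ together with its local time $L_\eps = U_{\eps,i}^{-1}\circ\mc L_i\circ T_{\eps,i}$ and its time-changes $T_{\eps,i}$. As $\eps\to 0$ we have $U_{\eps,i}\to U_i$ and $\mc L_i$ is fixed, so the only real work is to show that the triple $(W_{0,\beta(\eps),0,\mum_\eps}, L_\eps, (T_{\eps,i}))$ converges, in an appropriate sense, to a triple $(W_{0,\beta,0,\mum}, L, (T_i))$ satisfying \eqref{rep1} and conditions 1.--3.

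\textbf{Key steps.} First I would establish that $U_i$, the $i$th coordinate of $\mc U$ in \eqref{repka}, is a strictly increasing subordinator under the standing assumption \eqref{zez:4}; this follows from the coloring argument already used in the proof of uniqueness in Theorem \ref{thm:wba} (the $\scan$-valued jumps of $\Pi$ have exactly one positive coordinate, so the $U_i$ arise from independent colorings of $\Pi$), together with the fact that when $\beta_i=0$ the $i$th ray carries infinite $\mum$-mass and hence $U_i$ has infinitely many jumps in every interval. Second, I would invoke Lemma \ref{lem:balance_uniq}: since $U_i^{-1}\circ\mc L_i$ have no common levels of constancy (their inverses $\mc L_i^{-1}\circ U_i$ are independent subordinators and so share no jump times, by the Disjointness Lemma cited in the proof of Theorem \ref{thm:wba}), there is at most one collection $(T_i)$ with $\sum_i T_i(t)=t$ and $U_i^{-1}\circ\mc L_i\circ T_i$ independent of $i$; existence follows either by the same Skorokhod-reflection fixed-point reasoning that underlies Theorem \ref{thm:baryaktar} or as the limit of the $(T_{\eps,i})$. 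Third, I would identify the generator: by Theorem \ref{thm:full} the resolvent of $\gen_{0,\beta,0,\mum}$ is \eqref{joj:1} with $\alpha=\gamma=0$, and by Lemma \ref{lem1} plus the Trotter--Kato--Neveu convergence in Theorem \ref{thm:full} the finite-$\mum$ semigroups converge to it; combining this with the convergence of the stochastic representations (via the Trotter--Sova--Kurtz--Mackevi\v cius theorem, exactly as in Section \ref{lumping2}) shows that the process \eqref{rep1} is a version of $W_{0,\beta,0,\mum}$ and that $L=U_i^{-1}\circ\mc L_i\circ T_i$ is its symmetric local time at $\zero$ — the local-time identity being preserved in the limit because each $L_\eps$ is the symmetric local time of $W_{0,\beta(\eps),0,\mum_\eps}$ by the corollary at the end of Section \ref{sec:wab}. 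Finally, \eqref{mainrep} is obtained by applying $\mc S_\alpha$ (slowing down at $\zero$ using $A_\alpha(t)=t+\alpha L(t)$) and then $\mc K_\gamma$ (killing when $L\circ A_\alpha^{-1}$ first reaches an independent $\mathrm{Exp}(\gamma)$ variable $\zeta$), exactly as in \eqref{wkilled}; that the resulting process is Feller and is generated by $\afup$ follows from Sections \ref{sec:fno} and \ref{przemienne} together with the resolvent computation already carried out in Section \ref{sec:gow} and Theorem \ref{thm:full}, and the commutation results guarantee the order of the two operations is immaterial.

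\textbf{Main obstacle.} The delicate point is the passage to the limit for the time-change processes $(T_{\eps,i})$ and, with it, the joint convergence of $(W_{0,\beta(\eps),0,\mum_\eps}, L_\eps)$: the map sending $(U_i^{-1}\circ\mc L_i)_{i\in\kad}$ to the balanced time-change $(T_i)$ is defined only implicitly through the balance equations \eqref{eq:balance_locTime_Walsh's process1}, and one must check that this map is continuous enough (in the Skorokhod topology, uniformly on compacts) for the convergence $U_{\eps,i}\to U_i$ to propagate. I expect one can avoid proving such a continuity statement head-on by instead taking the distributional convergence $W_{0,\beta(\eps),0,\mum_\eps}\Rightarrow W_{0,\beta,0,\mum}$ for granted from the analytic side (Theorem \ref{thm:full} + Trotter--Sova--Kurtz--Mackevi\v cius), constructing $(T_i)$ directly from $(U_i^{-1}\circ\mc L_i)$ via Lemma \ref{lem:balance_uniq}, defining the candidate process by \eqref{rep1}, and then verifying by a direct computation of excursions and of the local-time potential (cf.\ the formula \eqref{joj:1} and the remark invoking Chapter V of \cite{blum}) that this candidate has the correct law and that $L$ is genuinely its local time — in other words, matching the two constructions through their resolvents rather than through a pathwise limit. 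A secondary technical nuisance is the degenerate coordinates (those $i$ with $\beta_i=0$): here one must make sure that $U_i$, driven purely by the infinite jump part on the $i$th ray, is still strictly increasing so that $U_i^{-1}$ is continuous and $T_i$ is well defined, which is precisely what \eqref{zez:4} secures.
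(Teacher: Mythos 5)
Your overall architecture is the same as the paper's: restrict $\mum$ to $\Gamma_\eps$, realize the finite-measure approximations $W_{0,\beta(\eps),0,\delta(\eps)\mu_\eps}$ on the single pair $(B,\Pi)$, get convergence of their laws from Theorem \ref{thm:full} plus Trotter--Sova--Kurtz--Macke\-vi\v cius, build $T_i$ from the balance conditions with uniqueness via Lemma \ref{lem:balance_uniq}, and finish with $\mc S_\alpha$ and $\mc K_\gamma$. The genuine gap is exactly at the point you flag as the ``main obstacle'' and then propose to sidestep: showing that the explicit candidate \eqref{rep1} is in fact a version of the process generated by $\gen_{0,\beta,0,\mum}$. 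Distributional convergence from the analytic side only identifies the limit law; to match it with the pathwise formula you must prove that the approximating representations \eqref{appro:1} converge almost surely in $D([0,\infty),\R)$ to the right-hand side of \eqref{rep1}, and this is where all the real work lies: almost sure uniform convergence of $U_{\eps,i}$ and $U_{\eps,i}^{-1}$ (Lemma \ref{lem:convU}), an Arzel\`a--Ascoli argument giving convergence of the time-changes $T_{\eps,i}$ (Lemma \ref{lemacikladny}), the $M_1$-type continuity results for generalized inverses and composition (Lemma \ref{lemma:both}), and the deterministic balance Lemma \ref{det} needed to verify the no-common-jump hypotheses --- all assembled in Proposition \ref{thm:convergence1}. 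Your proposed alternative, ``matching the two constructions through their resolvents,'' is not carried out and is not obviously easier: to compute the resolvent of the candidate \eqref{rep1} directly you would first have to establish that it is a strong Markov (indeed Feller) process and then redo an It\^o--McKean/Rogers-type excursion analysis for the multi-ray, balance-time-changed process; the cited Blumenthal results cover the half-line with jumps from the boundary, not the graph with the implicit multi-parameter time change, so this route would amount to a separate substantial argument rather than a shortcut.

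Two further points are treated too quickly. For $\gamma>0$ you cannot simply quote Trotter--Sova--Kurtz--Macke\-vi\v cius and the killing transformation: the convergence theorem is stated for honest processes, so one must adjoin a cemetery state, check convergence of the resolvents on the enlarged space, and prove almost sure convergence of the killing times $\mat_\eps\to\mat$ (which uses that $\zeta$ is a.s.\ not a level of constancy of $L\circ A_\alpha$); none of this is in your sketch, and the identification of $\mc K_\gamma\mc S_\alpha W_{0,\beta,0,\mum}$ with the process generated by $\afup$ does not follow from Section \ref{sec:gow}, which treats finite $\mum$ only. Finally, the claim that ``the local-time identity is preserved in the limit'' is unreliable, precisely because local time is a discontinuous functional of the path (a point the paper itself makes when motivating \eqref{eq:blum}); the paper instead proves directly, via Blumenthal's description of the half-line process with jumps from $0$ and the continuity and monotonicity of $T_i$, that $L=U_i^{-1}\circ\mc L_i\circ T_i$ is a continuous additive functional of $W_{0,\beta,0,\mum}$ growing only at $\zero$. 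So the statement is correct, your skeleton is right, but the central technical content --- the pathwise convergence machinery or a worked-out substitute for it --- is missing.
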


Several comments are here in order. First of all, in this theorem, as before, by a local time of $ W_{0,\beta,0,\mum}$ at $\zero$ we mean a continuous additive functional that grows only when  $ W_{0,\beta,0,\mum}$ is at $\zero$. Also, once we establish that $L$ is a local time $ W_{0,\beta,0,\mum}$ at $\zero$, as a direct consequence we will get that $L \circ  A^{-1}_{\alpha}$  is a local time of $ W_{\alpha,\beta,0,\mum}$ at $\zero$. Similarly, $K$ defined by 
\begin{equation}\label{tildek} K (t)= \begin{cases} L \circ  A^{-1}_{\alpha}(t), &  L \circ  A^{-1}_{\alpha}(t)< \zeta, \\
 \zeta, & L \circ  A^{-1}_{\alpha}(t)\ge  \zeta.\end{cases} \end{equation}
will turn out to be a local time for $\wfup$.

As a further comment, we recall that, by Theorem \ref{thm:full}, the process $W_{0,\beta,0,\mum}$ 
 is obtained as a limit of processes studied in Section \ref{sec:finite} which have larger and larger intensities of `resurrections' (equal to $\gamma +\delta(\eps)$) whereas distributions of their position after these `resurrections' (equal to $\mu_\eps$) are concentrated more and more around the graph's origin. This leads to the intuition that jumps of $W_{0,\beta, 0, \mum}$  occur `with infinite intensity' but are `infinitesimally small'. Representation \eqref{rep1} confirms this. Indeed, for each $i\in \kad$, both $B_{i} \circ T_i$ and $ U_i^{-1} \circ \mc L_i  \circ T_i$ have continuous paths, and so the jumps of $W_{0,\beta, 0, \mum}$  come only from those of $\mc U$. These, in turn are the jumps of $\int_0^t\int_{\scan} x N(\mud s, \mud x)$. Now, \eqref{ocniczek} (further down) shows that $\int_0^t\int_{\scan} x N(\mud s, \mud x)$ is a sum of an absolutely convergent series, which by nature consists of infinitely many extremely small terms. 

Finally, Theorem \ref{thm:wbam} generalizes Theorem \ref{thm:wba} in a similar way as the latter generalizes Theorem \ref{thm:baryaktar}.

\subsection{Approximation in $D([0,\infty), \R)$; four lemmas}\label{sec:appro}

The main idea of the proof of Theorem \ref{thm:wbam} is to approximate $W_{0,\beta,0,\mum}$ by processes $W_{0,\beta (\eps),0,\delta(\eps)\mu_\eps}$ (where $\delta (\eps)$ and $\mu_\eps$ are those of the beginning of Section~\ref{sec:tcoim}) as constructed in Theorem \ref{thm:wba}. We know from Theorem \ref{thm:full} that the semigroups of processes $W_{0,\beta (\eps),0,\delta (\eps) \mu_\eps}$ converge to the semigroup of $W_{0,\beta,0,\mum}$, and so, by the Trotter--Sova--Kurtz--Macke\-vi\v cius Theorem (see e.g. \cite{kallenbergnew} p. 385, see also Theorem 2.5 in Chapter 4 in \cite{ethier}), the
processes $W_{0,\beta (\eps),0,\delta (\eps) \mu_\eps}$    converge in distribution to $W_{0,\beta,0,\mum}$ in $D([0,\infty),\R^\ka)$, provided that they start from the same point.  Since this in turn implies that each coordinate of $W_{0,\beta (\eps),0,\delta (\eps) \mu_\eps}$ converges in distribution to the corresponding coordinate of $W_{0,\beta,0,\mum}$ in the space $D([0,\infty),\R)$, to prove representation \eqref{rep1} it suffices to show that, for each $i\in \kad$, the $i$th coordinate of $W_{0,\beta (\eps),0,\delta (\eps) \mu_\eps}$ converges in $D([0,\infty),\R)$ to the right-hand side of \eqref{rep1} almost surely. This will be done in Proposition \ref{thm:convergence1} (see further down) which, however, needs to be preceded by
four lemmas. The proof of Theorem \ref{thm:wbam} will be completed in Section \ref{sec:dowodzik}.

To fix notation, we write 
\begin{equation}\label{appro:1} (W_{0,\beta (\eps),0,\delta (\eps) \mu_\eps})_i = (B_i + U_{\eps,i} \circ U_{\eps,i}^{-1} \circ \mc L_i ) \circ T_{\eps,i}, \qquad i \in \kad \end{equation}
where  $U_{\eps,i}$s are subordinators defined as counterparts of \eqref{wab:3}, and 
$T_{\eps,i}$s are nondecreasing nonnegative processes characterized in Theorem \ref{thm:wba}. Our first lemma reveals that all $U_{\eps,i}$s can be constructed by means of the Poisson process $\Pi$ introduced above and that, as $\eps \to 0$, the so-constructed processes converge to the coordinates of $\mc U$ of \eqref{repka}.  

\begin{lemma}\label{lem:convU}
The role of $U_{\eps,i}$ can be played by 
\begin{equation}\label{repusia}\seq{U_{\eps,i}(t)} \coloneqq t\beta (\eps) + \int_0^t \int_{\Gamma_\eps} x N(\mud t,\mud x), \qquad t \ge 0, \end{equation}
where, as before, $\Gamma_\eps$ is the set of points of $\scan$ that lie at a distance $\ge \eps$ from the graph's center. Moreover, for any $t>0$ and $i\in \kad$, almost surely,
\[ \grae \sup_{s\in [0,t]} |U_{\eps,i}(s)- U_i(s)| =0 \mqquad {and} \grae \sup_{s\in [0,t]} |U_{\eps,i}^{-1}(s)- U_i^{-1}(s)| =0.\]
\end{lemma}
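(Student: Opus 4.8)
The plan is to realize all the $U_{\eps,i}$'s as restrictions of a single Poisson point process and then control the effect of discarding small jumps. First I would justify the identification \eqref{repusia}: by the construction in Section \ref{sec:wab} (see \eqref{wab:3} and \eqref{radek:1}), for fixed $\eps$ the subordinator $U_{\eps,i}$ has drift $\beta_i(\eps)$ and L\'evy measure $\mu$ restricted to the $i$th ray and scaled by $\delta(\eps)$, which is precisely $\mum$ restricted to $\Gamma_\eps$ projected onto the $i$th ray. Since $\Pi$ has mean measure $\ud t\times\mum$, the restriction $\Pi\cap([0,\infty)\times\Gamma_\eps)$ is again Poisson with mean measure $\ud t\times(\mum|_{\Gamma_\eps})$ (this is the restriction property of Poisson processes, \cite{kingmanpoisson}), so the coordinates of $\seq{t\beta(\eps)+\int_0^t\int_{\Gamma_\eps}xN(\mud s,\mud x)}$ have exactly the right joint law; the coloring argument from the proof of uniqueness in Theorem \ref{thm:wba} shows the coordinates are independent subordinators. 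Hence \eqref{repusia} is a legitimate choice of $U_{\eps,i}$.

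Next I would prove the uniform (in $s\in[0,t]$) convergence $U_{\eps,i}\to U_i$. Writing $D_{\eps,i}(s):=U_i(s)-U_{\eps,i}(s) = (\beta_i-\beta_i(\eps))s + \int_0^s\int_{\scan\setminus\Gamma_\eps}x\,N(\mud r,\mud x)$, one sees that $D_{\eps,i}$ is nonnegative and nondecreasing in $s$ (for $\eps$ small enough that $\beta_i(\eps)\le\beta_i$, or just bounding $|\beta_i-\beta_i(\eps)|s$ separately), so $\sup_{s\in[0,t]}|D_{\eps,i}(s)| = D_{\eps,i}(t)$. Thus it suffices to show $D_{\eps,i}(t)\to 0$ a.s. as $\eps\to 0$. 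The drift part $|\beta_i-\beta_i(\eps)|t\to 0$ by the assumed convergence $\beta_i(\eps)\to\beta_i$. For the jump part, $\int_0^t\int_{\scan\setminus\Gamma_\eps}x\,N(\mud r,\mud x)$ is the sum of those atoms $(r,x)$ of $\Pi$ with $r\le t$ and $x$ within distance $\eps$ of $\zero$; as $\eps\downarrow 0$ these sets decrease to $\varnothing$, and the total sum over all atoms with $r\le t$ and $x$ at distance $\le 1$ from $\zero$, namely $\int_0^t\int_{\{|x|\le 1\}}|x|\,N(\mud r,\mud x)$, is a.s. finite because its expectation $t\int_{\{|x|\le 1\}}|x|\,\mum(\mud x)$ is finite by the standing assumption $\int(1\wedge x)\mum(\mud x)<\infty$. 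Dominated convergence for sums (or monotone convergence applied to the tails) then gives $\int_0^t\int_{\scan\setminus\Gamma_\eps}x\,N(\mud r,\mud x)\to 0$ a.s. Combining, $\grae\sup_{s\in[0,t]}|U_{\eps,i}(s)-U_i(s)|=0$ a.s.

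Finally I would transfer this to the generalized inverses. The key fact is that $U_i$ is a.s. strictly increasing (by the standing assumption \eqref{zez:4}, since $U_i$ has infinite L\'evy measure or positive drift on the relevant ray) and continuous except at its jump times, with $U_i(s)\to\infty$; for such a function, uniform convergence $U_{\eps,i}\to U_i$ on compacts forces locally uniform convergence of the generalized inverses $U_{\eps,i}^{-1}\to U_i^{-1}$ at every continuity point of $U_i^{-1}$, and since $U_i$ strictly increasing implies $U_i^{-1}$ is continuous, the convergence is in fact uniform on compacts. Concretely, fix $\eta>0$; by strict monotonicity there is a $\delta>0$ with $U_i(s+\delta)>U_i(s)$ for all $s\le t+1$ uniformly bounded below by some $c>0$ on a suitable grid, and then $\sup_{u}|U_{\eps,i}^{-1}(u)-U_i^{-1}(u)|\le\delta$ once $\sup_s|U_{\eps,i}(s)-U_i(s)|<c$; letting $\eta\to 0$ gives the claim. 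I expect the inverse step to be the main obstacle to write carefully, since it requires a clean lemma on stability of generalized inverses under uniform convergence of strictly increasing \cadlag\ functions (cf.\ the facts recalled in Section \ref{sec:inverses} and in \cite{bookandrey}); the jump-part estimate, by contrast, is a routine consequence of $\int(1\wedge x)\mum(\mud x)<\infty$ together with the restriction property of Poisson processes.
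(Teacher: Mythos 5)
Your proposal is correct and follows essentially the same route as the paper: identify the law of \eqref{repusia} via the Poisson structure of the jumps, bound $\sup_{s\le t}|U_i(s)-U_{\eps,i}(s)|$ by the vanishing drift discrepancy plus the tail of the a.s.\ absolutely convergent sum of small jumps (finite by $\int (1\wedge x)\,\mum(\mud x)<\infty$), and then pass to the generalized inverses using strict monotonicity of $U_i$ guaranteed by \eqref{zez:4}. The only difference is cosmetic: for the inverse step the paper simply cites \cite[Theorem 13.6.3]{Whitt}, whereas you sketch the standard $\eps$--$\delta$ argument directly, which is fine.
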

\begin{proof} Definition \eqref{repusia} implies that the number of jumps of $\seq{U_{\eps,i}(t)}$ in an interval of length $\varDelta t$ is Poisson distributed with parameter $\delta (\eps)\varDelta t \mu_\eps (\R) = \delta (\eps)\varDelta t$, and that the numbers of these jumps in disjoint intervals are independent. This shows (see e.g. \cite{bill}*{Ch. 23}) that the moments of jumps are the moments of jumps of a Poisson process with parameter $\delta (\eps)$. Since, moreover, the distribution of the size of each jump is $\mu_
\eps$, this shows the first part of the lemma (see \eqref{wab:3}).

To establish the second, we start by noting that, for any measurable $A\subset \scan$, $\int_0^t \int_{\scan} 1_A (x) N(\mud s,\mud x)$ is 
a Poisson random variable with parameter $t\mum(A)$, and so $\mathsf E \int_0^t \int_{\scan} 1_A (x) N(\mud s,\mud x) = t \mum (A), t \ge 0$. A standard procedure shows thus that, for any measurable $f\colon \scan \to \R^+$, $\mathsf E \int_0^t \int_{\scan} f (x) N(\mud s,\mud x)= t\int_{\scan} f(x)\mum(\mud x), t \ge 0$. In particular, 
\begin{align*} \mathsf E \int_0^t \int_{\scan} |x| \1_{\{|x|<1\}}  N(\mud s,\mud x)&= t\int_{|x|<1} |x| \mum(\mud x)\\&\le t \int_{\scan} (|x|\wedge 1) \mum (\mud x)< \infty,\end{align*}
where $|x|$ is the length of $x$. 
Together with the fact that $\mum (\Gamma_1) <\infty$ this shows that, for $t >0$, the sum 
\begin{equation}\label{ocniczek} \int_0^t \int_{\scan} x N(\mud s,\mud x) = \sum_{(s,x)\in \Pi, s\in [0,t]}x \end{equation}
has at most countably many terms, and thus can be thought of as a series; moreover, the series converges absolutely. But, for each $i\in \kad$ and $s\in [0,t]$, 
\begin{align*}
0\le  U_i(s) - U_{\eps,i} (s) &= |\beta_i - \beta_i(\eps)| + \int_0^s \int_{\{|x|<\eps\}} x_i N(\mud u,\mud x) \\
& \le |\beta_i - \beta_i(\eps)| + \int_0^t \int_{\{|x|<\eps\}} x_i N(\mud u,\mud x), \end{align*}
where $x_i$ is the $i$th coordinate of $x$. 
The last expression converges to $0$ as $\eps \to 0$, the integral being the partial sum of the absolutely convergent series of \eqref{ocniczek} that contains only the terms with norm smaller than $\eps$. This shows that $U_{\eps,i}\rightrightarrows U_i$ on $[0,t], i \in \kad$. The rest is now a direct consequence of  \cite[Theorem 13.6.3]{Whitt}, because each $U_i$ is strictly increasing. 
\end{proof}

Our next lemma establishes convergence of $T_{\eps,i}$s. 

\begin{lemma}\label{lemacikladny} As $\eps \to 0$, $T_{\eps,i}$s converge, almost surely, uniformly with respect to $t$ in finite intervals, to nondecreasing, continuous and nonnegative $T_i$s uniquely characterized by the following conditions: 
\begin{itemize}
\item [1. ] $\sum_{i\in \kad} T_i(t) =t, t \ge 0$,
\item [2. ] $U_i^{-1}\mc \circ \mc L_i \circ T_i$ does not depend on $i\in \kad$. 
\end{itemize}
 \end{lemma}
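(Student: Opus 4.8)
The plan is to combine three ingredients: the explicit balance characterization of the $T_{\eps,i}$'s supplied by Theorem~\ref{thm:wba} (applied to the approximating parameters $\beta(\eps),\delta(\eps)\mu_\eps$), the uniform convergence $U_{\eps,i}\rightrightarrows U_i$ and $U_{\eps,i}^{-1}\rightrightarrows U_i^{-1}$ on compacts already secured in Lemma~\ref{lem:convU}, and the purely deterministic uniqueness statement of Lemma~\ref{lem:balance_uniq}. First I would fix a sample point in the almost-sure event on which all of the conclusions of Lemma~\ref{lem:convU} hold, on which the paths of $B$, the $\mc L_i$, and the $U_i$ are continuous (resp.\ \cadlag), and on which the $\mc L_i$'s have no common levels of constancy — the last coming from the independence/coloring argument in the proof of uniqueness in Theorem~\ref{thm:wba}, since $\mc L_i^{-1}\circ U_i$ are independent subordinators. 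On this event everything becomes a deterministic convergence problem.

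The main step is a compactness-plus-identification argument. Since $\sum_{i\in\kad}T_{\eps,i}(t)=t$ for every $\eps$, each family $\{T_{\eps,i}\}_{\eps>0}$ is, for fixed $i$, uniformly bounded on $[0,t]$ by $t$ and consists of nondecreasing functions; by Helly's selection theorem every sequence $\eps_n\to 0$ has a subsequence along which $T_{\eps_n,i}\to T_i$ pointwise for all $i$ simultaneously, with $T_i$ nondecreasing and $\sum_i T_i(t)=t$. Because the $T_i$'s sum to the continuous function $t\mapsto t$ and each is monotone, each $T_i$ is automatically continuous, and monotone pointwise convergence to a continuous limit upgrades (Dini/Pólya-type argument) to \emph{uniform} convergence on compacts. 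Next I would pass to the limit in the defining relation $U_{\eps_n,i}^{-1}\circ\mc L_i\circ T_{\eps_n,i}$ is independent of $i$: using uniform convergence of $U_{\eps_n,i}^{-1}$ to the continuous function $U_i^{-1}$, uniform continuity of $\mc L_i$ on compacts, and uniform convergence $T_{\eps_n,i}\to T_i$, the composition converges uniformly on compacts to $U_i^{-1}\circ\mc L_i\circ T_i$, so the limiting process inherits the balance property (condition~2). Thus the limit $(T_i)_{i\in\kad}$ satisfies conditions~1 and 2. By Lemma~\ref{lem:balance_uniq} — applied with $\ell_i := U_i^{-1}\circ\mc L_i$, which have no common levels of constancy on our event — such a collection is unique; since every subsequential limit coincides with this unique object, the full family $T_{\eps,i}$ converges, uniformly on compacts, to it. Finally I would note that continuity and nonnegativity of each $T_i$ are already built into the limit.

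The one delicate point — the expected main obstacle — is the passage to the limit inside the composition when $\mc L_i$ or the $T_{\eps,i}$ are merely monotone rather than strictly monotone: if $T_i$ has a flat and $U_i^{-1}\circ\mc L_i$ jumps near the endpoint of that flat, one must be careful that $U_{\eps_n,i}^{-1}\circ\mc L_i\circ T_{\eps_n,i}$ does not oscillate in the limit. This is handled by the fact that $U_i^{-1}$ is continuous (our standing assumption \eqref{zez:4} forces each $U_i$ to be strictly increasing), so $U_i^{-1}\circ\mc L_i$ is continuous and the composition argument is a genuine ``uniform $\circ$ uniform'' statement with no Skorokhod-topology subtleties; the auxiliary fact that $\mc L_i$ is continuous likewise removes any issue there. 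A secondary check is that the exceptional null set does not depend on $i$ or on the subsequence, which is immediate since there are finitely many $i$ and the null set in Lemma~\ref{lem:convU} already works for all $i$ at once.
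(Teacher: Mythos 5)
Your proposal is correct and follows essentially the same route as the paper: compactness of the family $T_{\eps,i}$ on the almost-sure event from Lemma \ref{lem:convU}, passage to the limit in the balance condition $U_{\eps,i}^{-1}\circ \mc L_i\circ T_{\eps,i}$, and uniqueness of the limit via Lemma \ref{lem:balance_uniq} applied to $\ell_i=U_i^{-1}\circ \mc L_i$, whence every subsequential limit coincides and the full family converges uniformly on compacts. The only immaterial difference is the compactness step: the paper notes that $\sum_{i\in\kad}T_{\eps,i}(s)=s$ makes each $T_{\eps,i}$ Lipschitz with constant $1$ and invokes Arzel\`a--Ascoli directly, whereas you use Helly selection plus a Dini-type upgrade of monotone pointwise convergence to a continuous limit.
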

\begin{proof} By Lemma \ref{lem:convU}, $U_{\eps,i}^{-1}$s converge uniformly in compact subintervals to $U_i^{-1}$s on a set of probability $1$. In what follows we consider only events from this set. 

Let $t>0$ be fixed. Since $\sum_{i\in \kad} T_{\eps,i} (s)=s, s \in [0,t], \eps \in (0,1]$ and $T_{\eps,i}$s are nonnegative, $\max_{s\in [0,t]}T_{\eps,i}(s)\le t, i \in \kad$. Moreover, for $s'>s$, we have 
\( 0\le T_{\eps,i} (s') - T_{\eps,i}(s) = s'-s - \sum_{j\not =i} [T_{\eps,j}(s') - T_{\eps,j}(s)] \le s'-s,i \in \kad\),  because $T_{\eps,i}$s are nondecreasing. This shows that $T_{\eps,i}$s are Lipschitz continuous with Lipschitz constant $1$. Hence, the Arzela--Ascoli theorem applies, showing that $T_{\eps,i}$s, as restricted to $C[0,t]$, form a pre-compact set.

Hence, for any sequence $\jcg{\eps_n}$ of positive numbers not exceeding $1$ and converging to $0$, there is a subsequence $\left(\eps_{n_k}\right )_{k\ge 1}$ such that, for each $i$, $T_{\eps_{n_k},i}$s converge, as $k\to \infty$, uniformly in $[0,t]$,  to a $T_i$. Clearly, $T_i$s are nonnegative, nondecreasing and continuous, and satisfy $\sum_{i\in \kad} T_i(s) =s, s \in [0,t]$. Moreover, since $U_{\eps,i}^{-1} \circ \mc L_i \circ T_{\eps,i}$ does not depend on $i$, neither does $U_{i}^{-1} \circ \mc L_i \circ T_{i}$. Therefore, one can argue as in the proof of uniqueness in Theorem \ref{thm:wba} (see Lemma \ref{lem:balance_uniq} in particular) that $T_i$s are determined uniquely. Since $\jcg{\eps_n}$ was chosen arbitrarily, this completes the proof.  \end{proof}

Before presenting the third lemma we recall that our aim is to show that the right-hand sides of \eqref{appro:1} converge, as $\eps \to 0$, to the right-hand side of \eqref{rep1} in $D([0,\infty),\R)$, $i\in \kad$. The difficulty lies in the fact that neither addition nor composition is a continuous operation in $D([0,\infty),\R)$, and both formulae involve such manipulations. Fortunately, Lemma 
\ref{lemacikladny} shows that, as $\eps \to 0$, $B_i \circ T_{\eps,i}$ converge uniformly with respect to $t$ in finite intervals, to a continuous process (equal to $B_i\circ T_i$), and in such a case  Theorem 4.1 in \cite{Whitt:1980} can be used  to establish that our task comes down to proving that
  \begin{equation}\label{suffices} \grae U_{\eps,i} \circ U_{\eps,i}^{-1} \circ \mc L_i  \circ T_{\eps,i} = U_{i} \circ U_{i}^{-1} \circ \mc L_i \circ T_{i} \qquad \text{in } 
D([0,\infty),\R). \end{equation} 
(The theorem reveals that sums of elements of the Skorokhod space converge to the sum of limits if the latter exist and at least one of them is continuous.) Moreover, to deal with compositions, we recall the following lemma; its part (a) comes from \cite[Lemma 3.3]{IksanovPilipenkoPrykhodko2021}, and its part (b) --- from \cite[Proposition 2.3]{Straka+Henry:2011}.
\begin{lemma} \label{lemma:both} Let $f_\eps,g_\eps, \eps \in [0,1]$ be members of $D([0,\infty), \mbR)$ 
such that we have $\grae f_\eps = f_0$ and $\grae g_\eps =g_0$ in this Skorokhod space. Assume also that each $g_\eps$ is nonnegative and nondecreasing. 
\begin{itemize}
\item [(a) ] Assume that $g_0$ is continuous and that if, for some $t\ge 0$, $g_0(t)$ is a point of discontinuity of $f_0$ then $g_0(t')\not = g_0(t)$ for all other $t'\ge 0$. Then $\grae f_\eps \circ g_\eps = f_0 \circ g_0$ in  $D([0,\infty), \mbR)$.
\item [(b) ] Assume that $g_\eps$ are unbounded and that $g_0$ is strictly increasing. Then $\grae g_\eps \circ g_\eps^{-1} = g_0 \circ g_0^{-1}$ in  $D([0,\infty), \mbR)$.

\end{itemize}

\end{lemma}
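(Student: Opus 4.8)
The statement to prove here is Lemma \ref{lemma:both}, which asserts two continuity-type facts about composition in the Skorokhod space $D([0,\infty),\mbR)$: part (a) concerns $f_\eps\circ g_\eps$ when $g_0$ is continuous, and part (b) concerns $g_\eps\circ g_\eps^{-1}$ when $g_0$ is strictly increasing and the $g_\eps$ are unbounded. Since the lemma is explicitly attributed to \cite[Lemma 3.3]{IksanovPilipenkoPrykhodko2021} for part (a) and \cite[Proposition 2.3]{Straka+Henry:2011} for part (b), the cleanest strategy is to present the proof as a reduction to those cited results, with the verification that their hypotheses are met; I would not reprove them from scratch.

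\medskip

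The plan is as follows. First I would fix a compact time interval $[0,T]$ and recall that convergence in $D([0,\infty),\mbR)$ is equivalent to convergence in $D([0,T'],\mbR)$ for a cofinal family of continuity points $T'$ of the limit, so that all arguments can be localized to a compact interval. For part (b), I would observe that $g_0$ strictly increasing and unbounded guarantees that $g_0^{-1}$ is continuous and finite everywhere, and then invoke \cite[Proposition 2.3]{Straka+Henry:2011} (or, alternatively, the inverse-map continuity result \cite[Theorem 13.6.3]{Whitt} already used in the proof of Lemma \ref{lem:convU}) to get $g_\eps^{-1}\to g_0^{-1}$ uniformly on compacts, and hence $g_\eps\circ g_\eps^{-1}\to g_0\circ g_0^{-1}$; the key point is that $g_0\circ g_0^{-1}$ is precisely the "filling-in" of the graph of $g_0$ and the composition map is continuous at such pairs. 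For part (a), I would note that $g_0$ continuous and nondecreasing, together with the separation hypothesis (no $t'\neq t$ with $g_0(t')=g_0(t)$ whenever $g_0(t)$ is a discontinuity point of $f_0$), is exactly the condition under which composition $D\times C^{\uparrow}\to D$ is continuous at $(f_0,g_0)$; this is the content of \cite[Lemma 3.3]{IksanovPilipenkoPrykhodko2021}, and I would just check that the hypotheses transcribed in our statement match theirs verbatim.

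\medskip

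The one genuinely delicate point — and the step I expect to need the most care — is the matching of hypotheses in part (a): one must be sure that the separation condition as phrased here ("if $g_0(t)$ is a point of discontinuity of $f_0$ then $g_0(t')\neq g_0(t)$ for all $t'\neq t$") is strong enough to exclude the pathology where the time-change "lingers" exactly at a jump location of $f_0$, which would make $f_\eps\circ g_\eps$ oscillate. Here monotonicity of the $g_\eps$ is what saves us: because each $g_\eps$ is nondecreasing, the image $g_\eps([0,t])$ is an interval, and the limiting parametrization cannot create spurious back-and-forth crossings of a discontinuity level of $f_0$; this is precisely why \cite[Lemma 3.3]{IksanovPilipenkoPrykhodko2021} requires $g_\eps$ nondecreasing, which we have assumed. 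I would spell this out in one sentence and then cite.

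\medskip

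Thus the proof write-up would be short: \emph{Proof.} Part (a) is \cite[Lemma 3.3]{IksanovPilipenkoPrykhodko2021}; its hypotheses — $f_\eps\to f_0$ and $g_\eps\to g_0$ in $D([0,\infty),\mbR)$, each $g_\eps$ nonnegative and nondecreasing, $g_0$ continuous, and the stated separation condition at discontinuity points of $f_0$ — are exactly those assumed above. Part (b) is \cite[Proposition 2.3]{Straka+Henry:2011}: since $g_0$ is strictly increasing and the $g_\eps$ are unbounded and nondecreasing, $g_\eps^{-1}\to g_0^{-1}$ uniformly on compacts (equivalently by \cite[Theorem 13.6.3]{Whitt}), and composition with the convergent $g_\eps$ then yields $g_\eps\circ g_\eps^{-1}\to g_0\circ g_0^{-1}$ in $D([0,\infty),\mbR)$. $\square$ I would keep it at roughly that level of detail, since the substance lives in the two cited references and repeating their proofs would only lengthen the paper without adding content.
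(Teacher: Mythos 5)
Your proposal matches the paper exactly: the paper offers no proof of this lemma at all, simply recalling part (a) from \cite[Lemma 3.3]{IksanovPilipenkoPrykhodko2021} and part (b) from \cite[Proposition 2.3]{Straka+Henry:2011}, which is precisely the citation-plus-hypothesis-check you propose. Your additional remarks on localization and on the role of monotonicity of the $g_\eps$ are sensible but go beyond what the paper records.
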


Lemma \ref{lemma:both} will be used in conjunction with the following result. 
\newcommand{\epsi}{\epsilon}
\begin{lemma}\label{det}
Let $\ell_i, i \in \kad $ be nondecreasing functions $\ell_i\colon[0,\infty)\to [0,\infty)$ that do not have common levels of constancy. Furthermore, let $t_i, i\in \kad$ be nondecreasing functions $t_i\colon[0,\infty)\to [0,\infty)$ such that $\sum_{i\in \kad} t_i(u)=u, u\ge 0$ and $\ell_i \circ t_i$ does not depend on $i\in \kad$. 

\begin{itemize} 
\item [(a) ] Suppose that for some $i\in \kad$, $a\ge 0$ and $\epsi >0$ we have $\ell_i (a+\epsi)=\ell_i(a)$, and, for some $s\ge 0$, $t_i(s)=a$. Then, for $u\in [s,s+\epsi]$, 
\[ t_i(u) = a +u -s \mquad{and} t_j(u) = t_j(s), \quad j\not =i,\]
that is, in this small interval, $t_i$ grows linearly with slope $1$ whereas the other $t_j$s remain constant. 
\item [(b) ] Suppose that, for an $s>0$, we have $\ell_i(v) < \ell_i (t_i(s))$ for all $i\in \kad$ and $v< t_i(s)$. Then $t_i(u) < t_i(s)$ for all $i\in \kad$ and $u<s$. 

\end{itemize} 
\end{lemma}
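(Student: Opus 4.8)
The plan is to prove both parts by exploiting the single structural fact that the functions $\ell_i\circ t_i$ all coincide, so that if $t_i$ advances through an interval on which $\ell_i$ is flat, the common composed function $\ell_i\circ t_i$ does not move, and hence the other $t_j$'s cannot advance through an interval on which $\ell_j$ is strictly increasing. First I would fix notation: write $m(u)\coloneqq \ell_i\circ t_i(u)$ for the common value, independent of $i\in\kad$, and note that $m$ is nondecreasing because each $\ell_i$ and each $t_i$ is.

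For part (a): suppose $t_i(s)=a$ and $\ell_i$ is constant on $[a,a+\epsilon]$. Since each $t_j$ is $1$-Lipschitz-from-above relative to $\sum_j t_j(u)=u$ — more precisely, for $s\le u\le s+\epsilon$ we have $0\le t_i(u)-t_i(s)\le u-s\le\epsilon$, so $t_i(u)\in[a,a+\epsilon]$ — the function $\ell_i$ is constant along the range of $t_i$ over $[s,s+\epsilon]$, hence $m(u)=\ell_i(t_i(u))=\ell_i(a)=m(s)$ for all $u\in[s,s+\epsilon]$. Consequently, for every $j\neq i$, $\ell_j(t_j(u))=m(u)=m(s)=\ell_j(t_j(s))$ on $[s,s+\epsilon]$, so $\ell_j$ takes the same value at $t_j(s)$ and at $t_j(u)\ge t_j(s)$. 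Now I invoke the no-common-levels-of-constancy hypothesis: $\ell_i$ is constant on $[a,a+\epsilon]=[t_i(s),t_i(s)+\epsilon]$, a genuine interval of positive length; if some $\ell_j$ were also constant on a nondegenerate subinterval $[t_j(s),t_j(u)]$ with $t_j(u)>t_j(s)$, that would exhibit a common level of constancy of $\ell_i$ and $\ell_j$, a contradiction. Hence $t_j(u)=t_j(s)$ for all $j\neq i$ and all $u\in[s,s+\epsilon]$; feeding this back into $\sum_j t_j(u)=u$ gives $t_i(u)=u-\sum_{j\neq i}t_j(s)=u-(s-a)=a+u-s$, as claimed. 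One technical point to handle carefully: if $t_j(s)=t_j(u)$ but $\ell_j$ is \emph{not} locally constant there, no contradiction arises and nothing needs proving — the argument only forces constancy of $t_j$, which is exactly the conclusion.

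For part (b): assume that at $s>0$ we have, for every $i\in\kad$, that $\ell_i(v)<\ell_i(t_i(s))$ whenever $v<t_i(s)$; equivalently, $t_i(s)$ is not an interior point from the left of any flat stretch of $\ell_i$, i.e. $\ell_i$ is "strictly increasing at $t_i(s)$ from the left." I want $t_i(u)<t_i(s)$ for all $i$ and all $u<s$. Striving for a contradiction, suppose some $t_{i_0}(u)=t_{i_0}(s)$ for a $u<s$; then by monotonicity $t_{i_0}$ is constant on $[u,s]$, and since $\sum_j t_j$ is strictly increasing there must be some $j_0\neq i_0$ with $t_{j_0}(u)<t_{j_0}(s)$. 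By the common-composition property, $\ell_{i_0}(t_{i_0}(u))=m(u)$ and $\ell_{j_0}(t_{j_0}(u))=m(u)$, while $\ell_{j_0}(t_{j_0}(s))=m(s)$; using the left-strict-increase hypothesis for $\ell_{j_0}$ at $t_{j_0}(s)$ and $t_{j_0}(u)<t_{j_0}(s)$ we get $m(u)=\ell_{j_0}(t_{j_0}(u))<\ell_{j_0}(t_{j_0}(s))=m(s)$, so $m(u)<m(s)$. On the other hand $m(u)=\ell_{i_0}(t_{i_0}(u))=\ell_{i_0}(t_{i_0}(s))=m(s)$ since $t_{i_0}$ is constant on $[u,s]$ — contradicting $m(u)<m(s)$. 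Hence no such $i_0$ exists and part (b) follows.

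The main obstacle, as I see it, is bookkeeping rather than conceptual: one must be scrupulous about the directions of inequalities and about the distinction between "$t_j$ is constant on an interval" and "$\ell_j$ is constant on an interval," since the no-common-levels-of-constancy hypothesis is only violated by genuine intervals of positive length, and the Lipschitz-type bound $0\le t_i(u)-t_i(s)\le u-s$ (an immediate consequence of monotonicity of the $t_j$'s together with $\sum_j t_j(u)=u$) is what guarantees that the relevant ranges of the $t_i$'s actually land inside the flat stretch in part (a). No deep machinery is needed; the argument is a short chase through the identity $\ell_i\circ t_i=m$.
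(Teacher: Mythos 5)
Your proof is correct and follows essentially the same route as the paper: in (a) you use the bound $0\le t_i(u)-t_i(s)\le u-s$ to keep $t_i(u)$ inside the flat stretch of $\ell_i$, conclude that the common composition $\ell_j\circ t_j$ is frozen on $[s,s+\epsilon]$, and then invoke the no-common-levels hypothesis to pin the other $t_j$'s (your direct "would create a common level" contradiction is just a rearrangement of the paper's step showing $\ell_j(v)>\ell_j(t_j(s))$ for $v>t_j(s)$), finishing with the summation identity; in (b) your contradiction via the strictly increasing sum is the paper's argument verbatim. No gaps.
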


\begin{proof}\ \ 

 (a) Since $t_j$s are nondecreasing and $\sum_{j\in \kad} t_j (u) =u, u \ge 0$, we have $t_i(s) \le t_i(u)\le t_i(s) +u-s=a+u-s, u\ge s$. Thus, $\ell_i\circ t_i$ stays equal $\ell_i(a)$ for a short while: we have
\begin{equation}\label{stalosc} \ell_i \circ t_i (u) = \ell_i(a), \qquad u\in [s,s+\epsi], \end{equation}
because the inequality just established shows that, for $u$ as above,
 $t_i(u)$ lies in $[a,a+\epsi]$ and $\ell_i$ is nondecreasing. 
 
 Next, recalling that $\ell_j\circ t_j$ does not depend on $j\in \kad$, we see that $\ell_j \circ t_j(s) = \ell_i \circ t_i (s) = \ell_i (a), j \in \kad$.  It follows that 
 \[ \ell_j (v) > \ell_j(t_j(s)), \qquad v > t_j(s), j\not =i;\]
 indeed, $\ell_j(t_j(s)) = \ell_i(a)$ cannot be a level of constancy of $\ell_j, j\not =i$, being a level of constancy of $\ell_i$. This in turn proves the second part of our thesis, for the inequality $t_j(u) > t_j(s)$ for some $j\not =i$ and $u\in [s,s+\epsi]$ would imply $\ell_j\circ t_j (u) > \ell_j\circ t_j (s)=\ell_i(a)$  and so, in view of \eqref{stalosc}, contradict the fact that $\ell_j\circ t_j$ coincides with $\ell_i\circ t_i$.  Finally, to see that the first part is a consequence of the second, we write 
\( u ={\textstyle \sum_{j\in \kad}} t_j(u) = t_i(u) + {\textstyle \sum_{j\not = i}} t_j(s)= t_i(u) +s -t_i(s) = t_i(u) +s -a , u\in [s,s+\epsi]. \)  

(b) Striving for a contradiction, assume that there is a $u<s$ and an $i\in \kad$ such that $t_i(u) = t_i(s)$. Since $\sum_{j\in \kad} t_j$ is a strictly increasing function, there is a $j\in \kad$ such that $t_j(u) < t_j(s)$ and so $\ell_j\circ t_j(u) < \ell_j \circ t_j(s)$ whereas, clearly, $\ell_i \circ t_i(u) = \ell_i \circ t_i(s)$.  Now we have the required contradiction since by assumption $\ell_i\circ t_i$ and $\ell_j\circ t_j $ do not differ. \end{proof}

\subsection{Approximation in $D([0,\infty), \R)$; the main proposition}\label{sec:approprim}
Here is the approximation theorem heralded before.

\begin{proposition} \label{thm:convergence1}For $i \in \kad$, the right-hand sides of \eqref{appro:1} converge almost surely, as $\eps \to 0$, to the right-hand side of \eqref{rep1} in $D([0,\infty),\R)$.
\end{proposition}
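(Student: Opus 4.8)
The plan is to assemble the convergence coordinate-by-coordinate from the four preparatory lemmas, handling separately the inner time-change $T_{\eps,i}$, the subordinators and their inverses, and the composition with the Brownian path. Fix $i\in\kad$ and work on the set of full probability on which the conclusions of Lemmas \ref{lem:convU} and \ref{lemacikladny} both hold. First I would record the uniform convergences already in hand: $U_{\eps,i}\rightrightarrows U_i$ and $U_{\eps,i}^{-1}\rightrightarrows U_i^{-1}$ on compact intervals (Lemma \ref{lem:convU}), $T_{\eps,i}\rightrightarrows T_i$ on compact intervals with $T_i$ continuous, nondecreasing, $1$-Lipschitz and $\sum_i T_i(t)=t$ (Lemma \ref{lemacikladny}), and hence $B_i\circ T_{\eps,i}\rightrightarrows B_i\circ T_i$ (uniform convergence is preserved under composition with uniformly convergent continuous inner functions, $B_i$ being uniformly continuous on compacts). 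Uniform convergence on compacts implies convergence in $D([0,\infty),\mbR)$, so by Theorem 4.1 in \cite{Whitt:1980} it suffices to prove \eqref{suffices}, i.e. $U_{\eps,i}\circ U_{\eps,i}^{-1}\circ\mc L_i\circ T_{\eps,i}\to U_i\circ U_i^{-1}\circ\mc L_i\circ T_i$ in $D([0,\infty),\mbR)$, since the remaining piece $B_i\circ T_{\eps,i}$ converges to a continuous limit.

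Next I would peel \eqref{suffices} apart with Lemma \ref{lemma:both}. By part (b) of that lemma, $U_{\eps,i}\circ U_{\eps,i}^{-1}\to U_i\circ U_i^{-1}$ in $D([0,\infty),\mbR)$, because $U_{\eps,i}$ are unbounded nondecreasing and $U_i$ is strictly increasing (the latter by the standing assumption \eqref{zez:4}: the $\mum$-mass of the $i$th ray is infinite whenever $\beta_i=0$, so the drift $\beta_i>0$ or the Lévy measure is infinite, making $U_i$ strictly increasing). Now I want to compose on the right with $g_\eps:=\mc L_i\circ T_{\eps,i}$, which is nonnegative and nondecreasing, converges uniformly on compacts to the continuous $g_0:=\mc L_i\circ T_i$, so that Lemma \ref{lemma:both}(a) applies provided the non-pinning condition holds: whenever $g_0(t)=\mc L_i\circ T_i(t)$ is a point of discontinuity of $f_0:=U_i\circ U_i^{-1}$, then $g_0(t')\neq g_0(t)$ for all other $t'$. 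This is precisely where Lemma \ref{det} enters: the discontinuity points of $U_i\circ U_i^{-1}$ are exactly the values $U_i(v)$ at which $U_i$ jumps, equivalently the right endpoints of intervals of constancy of the subordinator $U_i$; Lemma \ref{det}(a) (applied with $\ell_j=U_j^{-1}\circ\mc L_j$... — more directly, Lemma \ref{det}(b) and the fact, established in the proof of Theorem \ref{thm:wba}, that the processes $U_i^{-1}\circ\mc L_i$ have no common levels of constancy) shows that $\mc L_i\circ T_i$ cannot revisit such a value. I would spell this out: if $\mc L_i\circ T_i(t)=\mc L_i\circ T_i(t')$ for $t<t'$ lands at a jump time $v=U_i^{-1}(\mc L_i\circ T_i(t))$ of $U_i$, then $U_i$ is constant on a right-neighbourhood of $v$; by the common-value property $L:=U_i^{-1}\circ\mc L_i\circ T_i$ is constant on $[t,t']$, and then Lemma \ref{det}(a) forces $T_i$ to be strictly increasing (slope $1$) on $[t,t']$ while $\mc L_i$ is constant there — consistent; the contradiction is obtained from the no-common-levels-of-constancy property of the $U_j^{-1}\circ\mc L_j$ together with $\sum_j T_j(t)=t$, exactly as in Lemma \ref{lem:balance_uniq}.

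Then, applying Lemma \ref{lemma:both}(a) gives $U_{\eps,i}\circ U_{\eps,i}^{-1}\circ\mc L_i\circ T_{\eps,i}\to U_i\circ U_i^{-1}\circ\mc L_i\circ T_i$ in $D([0,\infty),\mbR)$, which is \eqref{suffices}. Adding back $B_i\circ T_{\eps,i}$ via Theorem 4.1 in \cite{Whitt:1980} yields that the right-hand side of \eqref{appro:1}, namely $(B_i+U_{\eps,i}\circ U_{\eps,i}^{-1}\circ\mc L_i)\circ T_{\eps,i}$, converges in $D([0,\infty),\mbR)$ to $(B_i+U_i\circ U_i^{-1}\circ\mc L_i)\circ T_i$, which is the $i$th coordinate of \eqref{rep1}. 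Since $i\in\kad$ was arbitrary, the proof is complete.

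The main obstacle I expect is the non-pinning hypothesis of Lemma \ref{lemma:both}(a): verifying rigorously that the limit inner process $\mc L_i\circ T_i$ does not hit a discontinuity value of $U_i\circ U_i^{-1}$ more than once. This is not automatic from the individual lemmas and genuinely needs the interaction between (i) the balance condition $U_i^{-1}\circ\mc L_i\circ T_i$ being independent of $i$, (ii) the a.s. absence of common levels of constancy of the family $U_i^{-1}\circ\mc L_i$ (equivalently, the a.s. absence of common jumps of the subordinators $\mc L_i^{-1}\circ U_i$, from the proof of Theorem \ref{thm:wba}), and (iii) the deterministic structure of Lemma \ref{det}. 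A careful but elementary case analysis, parallel to the proof of uniqueness in Theorem \ref{thm:wba}, closes this gap; everything else is a mechanical chaining of the uniform/Skorokhod convergences already supplied by Lemmas \ref{lem:convU}–\ref{lemma:both} and the cited continuity results of Whitt.
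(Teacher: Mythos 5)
Your overall architecture is the paper's: reduce, via Theorem 4.1 of \cite{Whitt:1980} and the uniform convergence of $B_i\circ T_{\eps,i}$ to the continuous $B_i\circ T_i$, to proving \eqref{suffices}; get $U_{\eps,i}\circ U_{\eps,i}^{-1}\to U_i\circ U_i^{-1}$ from Lemma \ref{lemma:both}(b) and \eqref{zez:4}; then close with Lemma \ref{lemma:both}(a), Lemma \ref{det} and the no-common-levels-of-constancy property. The one structural difference is that you apply Lemma \ref{lemma:both}(a) once, with inner function $\mc L_i\circ T_{\eps,i}$, whereas the paper applies it twice (first composing with $\mc L_i$, then with $T_{\eps,i}$). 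That is legitimate in principle, but the non-pinning hypothesis you must then check splits into two genuinely different scenarios, and your verification covers only one of them.

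Here is the gap. Suppose $\mc L_i\circ T_i(t)=\mc L_i\circ T_i(t')=x$ for some $t<t'$, where $x=U_i(v-)$ is a jump point of $U_i\circ U_i^{-1}$ (note: the jump points are the left limits $U_i(v-)$, not the values $U_i(v)$, and it is $U_i^{-1}$, not the strictly increasing $U_i$, that is constant near such a point). If $T_i(t)=T_i(t')$, then Lemma \ref{det}(a) does give a contradiction, since $U_i^{-1}\circ\mc L_i$ is constant on a right neighbourhood of $T_i(t)$ and hence $T_i$ must grow with slope $1$ immediately after $t$. But if $T_i(t)<T_i(t')$, then $\mc L_i$ itself is constant, equal to $x$, on the nondegenerate interval $[T_i(t),T_i(t')]$; that is, the jump value $U_i(v-)$ is a level of constancy of $\mc L_i$, equivalently a jump point of $\mc L_i^{-1}$. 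Nothing deterministic excludes this configuration: there $T_i$ increases with slope $1$, the other $T_j$ stay put, $L$ is constant, and only the single index $i$ has $x$ as a level of constancy of $U_i^{-1}\circ\mc L_i$, so the cross-index no-common-levels property and the reasoning of Lemma \ref{lem:balance_uniq} that you invoke are simply silent — the contradiction you announce does not materialize. What rules this scenario out is a probabilistic fact internal to the index $i$: almost surely the jump points of $U_i\circ U_i^{-1}$ and of $\mc L_i^{-1}$ are disjoint, because $\mc L_i^{-1}$ is a subordinator (so it jumps at any fixed point with probability zero), $U_i\circ U_i^{-1}$ has countably many jumps, and $U_i$ is independent of $B_i$. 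This is precisely the paper's Step 1 (its reason for composing with $\mc L_i$ separately), and it is absent from your ingredient list (i)--(iii), which only contains the balance condition, the cross-index independence behind the absence of common levels of constancy, and Lemma \ref{det}. The proposal is repairable — insert this independence argument, or perform the composition in two stages as the paper does — but as written it does not prove the non-pinning hypothesis, so the appeal to Lemma \ref{lemma:both}(a) is unjustified.
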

\begin{proof} As we have seen in Section \ref{sec:appro}, it suffices to show \eqref{suffices}. Let $i\in \kad$ be fixed. 

\bf Step 1. Convergence of $U_{\eps,i} \circ U_{\eps,i}^{-1} \circ \mc L_i.$ \rm 

In Lemma \ref{lem:convU} we have established that $\grae U_{\eps,i}(t) = U_i(t)$  and   $\grae U_{\eps,i}^{-1}(t) = U_i^{-1}(t)$ for all $t\ge 0$ almost surely, and the limit is uniform with respect to $t$ in all compact subsets of $[0,\infty)$. It follows that $f_\eps \coloneqq U_{\eps,i} \circ U^{-1}_{\eps,i}$ converge to $f_0 \coloneqq U_{i} \circ U^{-1}_{i}$ in the sense of $D([0,\infty),\R)$, by Lemma \ref{lemma:both} (b), because $U_i$ is strictly increasing. At the same time, $g_\eps \coloneqq \mc L_i,\eps \in [0,1]$ converge  trivially, as $\eps \to 0$, to $g_0$. Since, moreover, $g_0$ is continuous, to apply  Lemma \ref{lemma:both} (a) we need to check that, almost surely, for any point $u$ where $U_i\circ U_i^{-1}$ jumps, there is at most one time $t$ such that $\mc L_i(t)=u$. But, if there are two distinct times with this property, $u$ is a level of constancy of $\mc L_i$, and thus a point of jump of $\mc L^{-1}_i$. Hence, it suffices to show that, with probability $1$, the sets of times of jumps of $U_i\circ U_i^{-1}$ and $\mc L_i^{-1}$ are disjoint. However, $\mc L_i^{-1}$ is known to be a subordinator (see \cite{bertoin}*{Thm. 8 p. 114}) and thus the probability that it jumps at a given time is zero. Moreover, $U_i\circ U_i^{-1} $ is \cadlag \, and thus has at most countable number of jumps. Since $\mc L^{-1}_i$ is independent of $U_i\circ U_i^{-1}$, the probability that they have a common point of jump is zero also. This completes the proof. 

\newcommand{\andrija}{
At first we take $f_\eps\coloneqq U_{\eps,i}\circ U^{-1}_{\eps,i} ,$ $g_\eps\coloneqq   L_i$ in
 Then  $U_{\eps,i}\circ U^{-1}_{\eps,i}\to U_i\circ U^{-1}_i, \eps\to0$ a.s., see . Note and $s$ is a    level of constancy  of $L_i$ if $L_i^{-1}$ has a jump at $s.$ Recall that  $L_i^{-1}$ is an inverse subordinator and hence $\Pb(L_i^{-1} \text{has a jump at a fixed point } s)=0.$ Any c\`adl\`ag process has   at most countable set of jumps.   Since the process $U_i\circ U^{-1}_i $ and  $L_i^{-1}$ are  independent, 
     \[
     \Pb(L_i^{-1} \text{and } U_i\circ U^{-1}_i \text{have common instants of jump} )=0.
     \] 
     Further we will assume without noticing that we consider only $\omega$s from the set of probability 1 where $U_i$s, $L_i^{-1}$s doesn't have common points of jumps, we have convergence stated in Lemma \ref{lem:convU}, etc.}

\bf Step 2. Convergence of $U_{\eps,i} \circ U_{\eps,i}^{-1} \circ \mc L_i \circ T_{\eps,i}.$ \rm 

Our strategy is to use Lemma \ref{lemma:both} (a) again, this time with $f_\eps \coloneqq U_{\eps,i}\circ U^{-1}_{\eps,i}\circ \mc L_i$ and  $g_\eps \coloneqq T_{\eps,i}, \eps \in [0,1]$. Thus, by the first step and Lemma \ref{lemacikladny}, the task comes down to showing that, almost surely, if $u$ is a point of jump of $U_{ i}\circ U^{-1}_{ i}\circ \mc L_i$ and, for a $t\ge  0$, $T_i(t)=u$, then $T_i(t')\not =T_i(t)$ for all nonnegative $t'\not =t$.

We start our argument by recalling (see the proof of Theorem \ref{thm:wba}) that processes $U_j^{-1}\circ \mc L_j, j \in \kad$ almost surely  do not have common points of constancy.  Hence, by disregarding a set of probability $0$, we can thus assume that the paths of $\ell_j \coloneqq U_j^{-1} \circ \mc L_j$ and $t_j \coloneqq T_j, j \in \kad $ satisfy the main assumption of Lemma \ref{det}.

\newcommand{\ble}{

This is clear intuitively: An $s$ described above is a point of jump of $U_{ i}\circ U^{-1}_{ i}\circ \mc L_i \circ T_i$ and thus a point of jump of the $i$th coordinate of $W_{0,\beta,0,\mum}$. Since such a jump occurs when $\mc L_i$ reaches a specified level for the first time, $\mc L_i$ must increase for at least a short time before $u$ and so $T_i$ must increase for at least a short time before $s$. Furthermore, at $s$, the $i$th coordinate of  $W_{0,\beta,0,\mu}$ jumps to a point of positive distance from origin and thus $T_i$, the time spend at the $i$th  edge, must increase for some time after $s$, because the paths of the process on each edge are continuous. 

Now take $f_\eps \coloneqq U_{\eps,i}\circ U^{-1}_{\eps,i}\circ L_i$,  $g_\eps \coloneqq \widetilde T_{\eps,i}$.
Let $s_0$ be a point of jump of $U_{ i}\circ U^{-1}_{ i}\circ L_i.$ Then   $ U^{-1}_{ i}\circ L_i( s_0) $ is a point of jump of $U_i$, and, consequently, there is $\delta>0$ such that $ U^{-1}_{ i}(L_i(s_0)+v)= U^{-1}_{ i}(L_i(s_0 +v))=U^{-1}_{ i}(L_i(s_0)), \ v\in [0,\delta]$. 

Recall that  $ U^{-1}_{j}\circ L_j, 1\leq j\leq d,$ do not have same level of constancy a.s.
The proof of \eqref{eq:increase of T}, and therefore, the proof of the Theorem follows from next statement, where $l_i:=U_i^{-1}\circ L_i.$}
 
 For each such path, to show that the set $\{t\ge 0\colon T_i(t)=u\}$, which we know is nonempty, is a singleton, we let $t$ be its minimum; this $t$ belongs to the set because $T_i$ is a continuous function. Moreover, since $u$ is a level of constancy of $\ell_i$, Lemma \ref{det} (a) tells us that $T_i(t') > T_i(t)$ for $t'>t$. Moreover, $\ell_i(s)< \ell_i(t_i(t))$ for  $s< t_i(t)$ because, by continuity, $t_i(t) = \min \{s\colon \ell_i(s)=u\}$. Finally, we have $\ell_j(s)< \ell_j(t_j(t))$ for $s<t_j(t)$ since $u=\ell_j (t_j(t))$ is not a level of constancy of $\ell_j, j\not =i$. Lemma \ref{det} says therefore that $T_i(t')< T_i(t)$ for $t'<t$, completing the proof.  \end{proof}

 \subsection{Proof of Theorem \ref{thm:wbam}}\label{sec:dowodzik}   

\subsubsection{The case of $\gamma =0$.}\label{8.4.1} \ \ 

\rm Theorem \ref{thm:full} says that the semigroups related to $W_{\alpha,\beta, 0, \delta (\eps) \mu_\eps} $ converge to that of $W_{\alpha,\beta, 0, \mum} $. Thus, by the Trotter--Sova--Kurtz--Macke\-vi\v cius Theorem,  processes $W_{\alpha,\beta, 0, \delta (\eps) \mu_\eps} $  converge in distribution to $W_{\alpha,\beta, 0, \mum} $ in $D([0,\infty), \R^\ka)$ and, as a consequence, in $[D([0,\infty), \R)]^\ka $ as well.  

Next, let 
\begin{equation}\label{czasy} A_{\eps,\alpha} (t) \coloneqq  t + \alpha  L_\eps(t) \mqquad{and} L_\eps (t) \coloneqq U_{\eps,i}^{-1} \circ \mc L_i \circ T_{\eps,i} (t), \quad t \ge 0,\eps >0\end{equation}  (recall that  $U_{\eps,i}^{-1} \circ \mc L_i \circ T_{\eps,i} $ does not depend on $i\in \kad$).  By combined Lemmas \ref{lem:convU} and \ref{lemacikladny}, \(\grae A_{\eps,\alpha} = A_\alpha\)  ($A_\alpha$ is defined in Theorem  \ref{thm:wbam}) almost surely and uniformly in compact subsets of $[0,\infty)$. Hence, as a corollary to Proposition \ref{thm:convergence1}, we see that, for each $i\in \kad$, $(W_{0,\beta,0,\delta (\eps) \mu_\eps})_i \circ A_{\eps,\alpha}^{-1}$ (see \eqref{appro:1})  converges, as $\eps \to 0$, to $(B_{i}  + U_i \circ U_i^{-1} \circ \mc L_i ) \circ T_i \circ A_\alpha^{-1} $ in $D([0,\infty),\R)$; 
indeed, since $A_\alpha$ is strictly increasing and continuous, so is its inverse, and thus Lemma \ref{lemma:both} (a) is in force. In other words, 
$W_{\alpha,\beta, 0, \delta (\eps) \mu_\eps} $, which, by Theorem \ref{olepkimizabitym}, coincides with  
$W_{0,\beta, 0, \delta (\eps) \mu_\eps} \circ A_{\eps,\alpha}$, converges almost surely in  $[D([0,\infty),\R)]^\ka$ to the process whose $i$th coordinate is $(B_{i}  + U_i \circ U_i^{-1} \circ \mc L_i ) \circ T_i \circ A_\alpha^{-1} $. Since almost sure convergence implies convergence in distribution, the process just described is identified as $W_{\alpha,\beta, 0, \mum} $. This completes the proof in  the case of $\gamma =0$. 

\subsubsection{The case of $\gamma >0$.}\label{sec:gamma}\ \  %

\rm 

The difficulty with this case is technical in nature: the Trotter--Sova--Kurtz--Macke\-vi\v cius Theorem is usually stated for processes that are honest, whereas $W_{\alpha,\beta, \gamma,\mum}$ with $\gamma >0$ is clearly undefined from a random time on. A natural way to circumvent this difficulty is to extend $\scan$ by adjoining to it an additional point (sometimes called the cemetery or coffin state,  see \cites{kniga,bass,kallenbergnew}), and redefine  $W_{\alpha,\beta, \gamma,\mum}$ to make it honest by requiring that the new process is at the coffin state whenever the old is undefined. Below are the details. 

\newcommand{\scand}{\scan^\dagger}

\bf (A) Analysis of resolvents. \rm 
We choose a  point $\dagger $ in $\R^\ka \setminus \scan$, let $\scand$ be the union of $\scan $ and $\{\dagger\}$, and let $W_{\alpha,\beta,\gamma,\delta(\eps) \mu_\eps}^\dagger$ be the process with values in $\scand$, given by 
\begin{equation}\label{givenby}
W_{\alpha,\beta,\gamma,\delta(\eps) \mu_\eps}^\dagger (t) \coloneqq \begin{cases}
               W_{\alpha,\beta,0,\delta(\eps)\mu_\eps} (t), & L_\eps \circ  A^{-1}_{\eps,\alpha}(t)< \zeta ,\\
               \dagger, & L_\varepsilon \circ A^{-1}_{\eps, \alpha}(t)\geq \zeta ; 
           \end{cases}
           \end{equation}  as before, $\zeta$ is an exponentially distributed random variable   with parameter $\gamma$, which is independent of $B$ and $\mc U$, and $L_\eps$ and $A_{\eps,\alpha}$ are defined in \eqref{czasy}.  It is clear, see Theorem \ref{olepkimizabitym}, that $W_{\alpha,\beta,\gamma,\delta(\eps) \mu_\eps}^\dagger$ equals $\dagger$ whenever $W_{\alpha,\beta,\gamma,\delta(\eps) \mu_\eps}$ is undefined, but otherwise these processes coincide (needless to say, the former process started at $\dagger$ stays at this point forever).
 
 It follows that the related semigroup, say, $T_\eps^\dagger$, in  $\mathfrak C(\scand)$ is given by $T_\eps^\dagger (t)f (\dagger)=f(\dagger)$ and 
 \( T_\eps^\dagger (t)f (x) = \mathsf E_x f(W_{\alpha,\beta,\gamma,\delta(\eps) \mu_\eps}) + \mathsf P_x [L_\eps \circ A_{\eps,\alpha}^{-1} (t) \ge \zeta ] f(\dagger),  x\in \scan\) for $ t\ge 0$ and $f\in \mathfrak C(\scand);$ this semigroup is clearly honest. There is also a handy counterpart of this equality in terms of resolvents. To wit, denoting by $\rlae^\dagger$ the Laplace transform of $T_\eps^\dagger$, and viewing $\mathfrak C(\scan)$ as the subspace of $\mathfrak C(\scand)$ of functions that vanish at $\dagger$, we have 
\[ \rlae^\dagger f = \rez{\gen_\eps} f + \lam^{-1}f(\dagger) \ell_{\lam,\eps}, \qquad f \in \mathfrak C(\scan)\subset \mathfrak C(\scand)\]
and $\lam \rlae^\dagger \mathsf 1_{\{\dagger\}}=  \mathsf 1_{\{\dagger\}}, \lam >0$. Here, 
\begin{itemize}
\item  as in Section \ref{sec:tcoim}, $\gen_\eps$ is the generator of $ W_{\alpha,\beta (\eps),\gamma,\delta(\eps) \mu_\eps}$, 
\item $\mathsf 1_{\{\dagger\}}\in \mathfrak C(\scand)$ equals $1$ at $\dagger$ and is $0$ otherwise, 
\item  $\rez{\gen_\eps}$ is seen as an operator mapping the subspace $\mathfrak C(\scan)\subset \mathfrak C(\scand) $ to itself,
\item  $\ell_{\lam,\eps}\in \mathfrak C(\scan)$ is the Laplace transform of the lifetime of  $ W_{\alpha,\beta,\gamma,\delta(\eps) \mu_\eps}$, that is,  $\ell_{\lam,\eps}=\mathsf 1_{\scan} - \lam \rez{\gen_\eps} \mathsf 1_{\scan}$, where $\mathsf 1_{\scan}\in \mathfrak (\scand)$ equals $1$ for $x\in \scan$ and is $0$ otherwise.  
\end{itemize}
This formula, in turn, shows that the strong limit $\rla^\dagger \coloneqq \grae \rlae^\dagger $ exists and is given by 
\[ \rla^\dagger f = \rez{\afup} f + \lam^{-1}f(\dagger) \ell_{\lam}, \qquad f \in \mathfrak C(\scan)\subset \mathfrak C(\scand)\]
and $\lam \rlae^\dagger \mathsf 1_{\{\dagger\}}=  \mathsf 1_{\{\dagger\}}, \lam >0$, where $\ell_\lam \coloneqq \mathsf 1_{\scan} - \lam \rez{\afup} \mathsf 1_{\scan}$ is the Laplace transform of the lifetime of $ W_{\alpha,\beta,\gamma,\mum}$. Indeed, in Lemma \ref{lem1} and Theorem \ref{thm:full} we have established that $\grae \rlae = \rez{\afup}, \lam >0$. Furthermore, it is easy to see that $\rla^\dagger, \lam >0$ is a Feller resolvent: for example, the condition $\lil \lam \rla^\dagger f = f, f \in \mathfrak C(\scand)$ can be established by noting that $\mathfrak C(\scand)$ is a direct sum of $\mathfrak C(\scan)$ and the subspace spanned by $\mathsf 1_{\{\dagger\}}$, where convergence is obvious. It follows thus that  there is a Feller process $W_{\alpha,\beta,\gamma,\mum}^\dagger$ with values in $\scand$, that is a limit in the sense of distributions, of $W_{\alpha,\beta,\gamma,\delta(\eps)\mu_\eps}^\dagger$.  

\newcommand{\mat}{\mathpzc t}
\bf (B) Analysis of paths. \rm 
In order to describe the limit process in more detail, let us first take a closer look at the time, say, $\mat$, at which $W_{\alpha,\beta,\gamma,\mum}^\dagger$ reaches $\dagger$. For $x\in \scan$ this is the lifetime of $W_{\alpha,\beta,\gamma,\mum}$, and for $x=\dagger$ this is $0$. Since the Laplace transform of the lifetime of $W_{\alpha,\beta,\gamma,\mum}$ started at an $x\in 
\scan$ is $\lam \mapsto \ell_\lam(x)$, the Laplace transform of the time at which $W_{\alpha,\beta,\gamma,\mum}^\dagger$ started at an $x\in \scand$ reaches $\dagger$ is $\lam \mapsto \ell_\lam^\dagger(x)$, where $\ell_\lam^\dagger \in \mathfrak C(\scand)$ is obtained from $\ell_\lam\in  \mathfrak C(\scan)$ by assigning it value $1$ at $\dagger$.  Analogously, the Laplace transform of the time, say, $\mat_\eps$, at which $W_{\alpha,\beta,\gamma,\delta(\eps)\mu_\eps}$ started at an $x\in \scand$ reaches $\dagger$ is $\lam \mapsto \ell_{\lam,\eps}^\dagger (x)$, where $\ell_{\lam,\eps}^\dagger$ coincides with $\ell_{\lam,\eps}$ on $\scan$, and equals $1$ at $\dagger$. Since, as we have seen in point (A), $\grae \ell_{\lam,\eps} =\ell_\lam$ in $\mathfrak C(\scan)$, we have also 
$\grae \ell_{\lam,\eps}^\dagger =\ell_\lam^\dagger$ in $\mathfrak C(\scand)$, and this means that $\grae \mat_\eps=\mat$ weakly. 

Formula \eqref{givenby} can be used to provide yet more specific information on $\mat$, and, in particular, to prove that $\mat_\eps $ converge to $\mat$ almost surely.  First of all, the formula reveals that \begin{equation}\label{maletepsy}\mat_\eps = \inf \{t\ge 0\colon L_\eps \circ  A^{-1}_{\eps,\alpha}(t)=\zeta\}.\end{equation} To proceed, we recall the following real analysis result. Suppose that,  for some $t_0$, we are given nondecreasing, continuous 
functions $f_\eps\colon [0,t_0] \to \R^+$ starting at $0$ that converge uniformly to a function $f$. Suppose also that, for an $a>0$ 
there is a $\mat \in (0,t_0)$ such that $f(\mat)=a$, but $a$ is not a level of constancy of $f$. Then for $\mat _\eps \coloneqq \inf \{t\ge 0\colon f_\eps (t)=a\}, \eps >0$ the limit of  $\mat_\eps$ exists and equals $\mat$. Indeed, for any $s \in (\mat, t_0]$, we have $f(s) > a$, 
because $a>0$ is not a level of constancy of $f$. Therefore, for $\eps $ small enough, $f_\eps (s) >a$, and this, by the Darboux property, implies that $\mat_\eps <s$. Since a similar argument shows that, for $s \in [0,\mat)$, $\mat_\eps > s$ as long as $\eps $ is small enough, the result is established. (To see that the claim is no longer true if the assumption that $a$ is not a level of constancy is dropped, consider $t_0=2,$ $a=1$, $f(t)= \min (t,1), t \in [0,2]$ and piecewise linear $f_\eps $ with the graph connecting points $(0,0), (1,1-\eps)$ and $(2,1)$.)

Armed with this information we come back to $\mat_\eps$ of \eqref{maletepsy}. By combined Lemmas \ref{lem:convU} and \ref{lemacikladny}, 
\begin{equation}\label{supl}
\grae L_\eps \circ  A^{-1}_{\eps,\alpha} (t) = L \circ A_\alpha (t)\end{equation} uniformly with respect to $t$ in compact intervals. Moreover, a level of constancy of $ L \circ A_\alpha$ is a level of constancy of $L$ and thus a point of jump of $\mc U$. Since $\mc U$ is independent of $\zeta$ and this random variable has continuous distribution, with probability one $\zeta$ is not a level of constancy of $L \circ A_\alpha $. It follows that the real analysis result proved above applies, and we infer that $t_\eps $ converge, almost surely, to the time when $ L \circ A_\alpha$ reaches the level of $\zeta$, that is, to 
\begin{equation}\label{malet} 
\mat = \min\{t\ge 0\colon  L \circ A_\alpha (t)=\zeta \}.\end{equation}

Finally, we will show that the paths of $W_{\alpha,\beta,\gamma,\mum}^\dagger$ are obtained from those of  $W_{\alpha,\beta,0,\mum}$ by requiring that they are equal to $\dagger$ from $\mat$ on. In this argument we will be aided by the following result, which can be thought of as an exercise in using Skorokhod's metric. Assume that $f_\eps, \eps > 0$ are members of $D([0,\infty),\scan)$, and that each coordinate of $f_\eps $ converges, as $\eps\to 0$, to the corresponding coordinate of $f$ in the Skorokhod topology. Suppose, moreover, that $\mat_\eps$ are positive numbers converging to a $\mat>0$ and that $\mat $ is a point of continuity of $f$. Then, for  $\widehat f_\eps, \eps > 0$ defined as \[\widehat f_\eps (t)
\coloneqq  \begin{cases}
        f_\varepsilon(t),& t<\mat_\eps,\\
        \dagger,& t\geq \mat_\eps, 
    \end{cases}\]
each coordinate of $\widehat f_\eps $ converges, as $\eps \to 0$, to the corresponding coordinate of $\widehat f$ in the Skorokhod metric. This result has a bearing on the issue at hand, because (a) we know from the case of $\gamma =0$ that for almost all paths, each coordinate of $W_{\alpha,\beta, 0, \delta (\eps) \mu_\eps} $, converges in the Skorokhod topology to the corresponding coordinate of  
$W_{\alpha,\beta, 0, \mum} $, (b) we have established that, almost surely, $\grae \mat_\eps =\mat$, and (c)~points of discontinuity of $W_{\alpha,\beta, 0, \mum} $ are precisely points of jumps of $\mc U$, and since $\mc U$ is independent of $\zeta$, a continuous random variable, the probability that $\mat $ is a point of discontinuity of $W_{\alpha,\beta, 0, \mum} $ is zero. Thus, paths of $W_{\alpha,\beta, \gamma, \delta (\eps) \mu_\eps}^\dagger$ converge, almost surely, to those given by  $W_{\alpha,\beta,0,\mum}(t)$ for  $L \circ A_\alpha^{-1}(t) < \zeta$ and $\dagger $ otherwise. 

On the other hand, we know from the analysis of resolvents and the Trotter--Sova--Kurtz--Macke\-vi\v cius Theorem that the processes $W_{\alpha,\beta, \gamma, \delta (\eps) \mu_\eps}^\dagger$ converge in distribution to $W_{\alpha,\beta, \gamma, \mum}^\dagger$. Hence, 
\[ W_{\alpha,\beta, \gamma, \mum}^\dagger (t)= \begin{cases} W_{\alpha,\beta,0,\mum}(t), & L \circ A_\alpha^{-1}(t) < \zeta, \\ \dagger, & L \circ A_\alpha^{-1}(t) \ge \zeta.\end{cases} \]  
Since this establishes \eqref{mainrep}, our proof is completed, save for the fact that $L$ is a local time of $W_{0,\beta,0,\mum}$.

To reach our last goal, let $i\in \kad$ be such that $\mum_i$, the restriction of $\mum$ to the $i$th ray, is infinite, and let us take a look at the process \[ X \coloneqq   B_{i}  + U_i \circ U_i^{-1} \circ \mc L_i  .\]
Similarly as the process in \eqref{dzis:3}, $X$ is a Brownian motion on the right half-line with jumps from $0$, but, since $\mum_i$ is infinite, these jumps tend to be `infinitely small' and occur `infinitely often'. Such processes are described in detail in \cite{blum}. In particular, it follows from  \cite[p. 68]{blum}  that the local time of $X$ is $U_i^{-1} \circ \mc L_i$, and that, almost surely with respect to any initial distribution, 
\[
  U_i^{-1} \circ \mc L_i(t)=\grae \frac{\sum_{s\leq t}\1_{\{X(s-)=0,X(s)>\eps\}}}{\mum_i([\eps,\infty))}, \qquad t\geq 0. 
\]
Hence 
\begin{align*}
L(t)=  U_i^{-1} \circ \mc L_i\circ T_i(t)&= \grae \frac{\sum_{s\leq T_i(t)}\1_{\{X(s-)=0, X(s)>\eps\}}}{\mum_i([\eps,\infty))}\\
&=\grae \frac{\sum_{s\leq t}\1_{\{X(T_i(s-))=0,\  X(T_i(s))>\eps\}}}{\mum_i([\eps,\infty))}
\qquad \text{a.s.}  
\end{align*}
 because function $T_i$ is continuous and monotonous. Since $X\circ T_i$ is the $i$th coordinate of $W_{0,\beta,0,\mum}$, this formula shows that $L$ is a continuous additive functional of $W_{0,\beta,0,\mum}$. Moreover, $L$ does not grow except when $W_{0,\beta,0,\mum}$ is at $\zero$. It follows that $L$ is a local time for this processes and thus we are done.

\subsection{$W_{\alpha,\beta, \gamma, \mum}$ without assumption \eqref{zez:4}}\label{sec:dom2}

In this section we sketch a picture of $W_{\alpha,\beta, \gamma, \mum}$ in the case where \eqref{zez:4} is violated.  To begin with, since $\mum$ is assumed to be infinite, we have a nonempty set of indices $\mathpzc J$ such that 
\begin{equation}\label{was:1} \text{ the $\mum$ measure of the $i$th ray is infinite whenever }  \beta_i =0, \qquad i\in \mathpzc J. \end{equation}
Let $S_{\mathpzc J}$ and  $S_{\kad \setminus \mathpzc J}$ be the subgraphs of $S$ formed by edges with indices in $\mathpzc J$ and $\kad \setminus \mathpzc J$, respectively. Also, let $\mum_{\mathpzc J}$  and $\mum_{\kad\setminus \mathpzc J}$ be the restrictions of $\mum $ to $S_{\mathpzc J}$ and  $S_{\kad\setminus \mathpzc J}$, respectively.

This scenario naturally splits into two sub-cases: the first, a bit simpler one, where $\mum$ is concentrated on  $\mathpzc J,$ and the one where it is not. In the first of these, $W_{\alpha,\beta, \gamma, \mum}$ is in a sense identical with a process of the type described in Theorem \ref{thm:wbam} having values in the subgraph $S_\mathpzc J$ of $S$. To wit,
\begin{equation}\label{was:2} W_{\alpha,\beta, \gamma, \mum} \overset {\text{ident.}}= W_{\alpha,\beta^\natural, \gamma, \mum_\mathpzc J}, \end{equation}
where $\beta^\natural$ is obtained from $\beta$ by throwing away all its zero coordinates with indexes in $\mathpzc {K\setminus J}$. 
To elaborate: When started in $S_{\mathpzc J}$, $W_{\alpha,\beta, \gamma, \mum}$ is indistinguishable from  $W_{\alpha,\beta^\natural, \gamma, \mum_\mathpzc J}$; in particular, despite frequently going through $\zero$, it never enters the proper subgraph $S_\mathpzc {K\setminus J}$. Also, when started in one of the edges of $S_{\mathpzc{K \setminus J}}$, the process behaves like a one-dimensional Brownian motion until the time when it reaches $\zero$; from that time on, it behaves like  $W_{\alpha,\beta^\natural, \gamma, \mum_\mathpzc J}$ and `does not see' the subgraph $S_{\mathpzc{K \setminus J}}$ anymore.

When  $\mum\not =\mum_{\mathpzc J}$, the situation changes: the fact that $\mum_{\mathpzc{K\setminus J}}>0$ allows the process to return to the previously omitted subgraph $S_{\mathpzc {K\setminus J}}$ (or at least its part; there may be edges of $S_{\mathpzc{K\setminus J}}$ with $\mum$ measure $0$).  More precisely,  $W_{\alpha,\beta, \gamma, \mum}$
turns out to be a concatenation of an infinite number of copies of processes on $S$ that can be identified with  $W_{\alpha,\beta^\natural, \gamma + |\mum_{\kad\setminus \mathpzc J}|, \mum_\mathpzc J}$ as in \eqref{was:2}.  This means that 
\begin{itemize} 
\item [(a) ] when started in $S_\mathpzc {K\setminus J}$,  $W_{\alpha,\beta, \gamma, \mum}$ initially behaves like a one-dimensional Brownian motion, \item [(b) ] from the moment it reaches $\zero $ (or if started at $S_\mathpzc J$), it  behaves like $W_{\alpha,\beta^\natural, \gamma + |\mum_{\kad\setminus \mathpzc J}|, \mum_\mathpzc J}$, 
\item [(c) ] when the local time of $W_{\alpha,\beta^\natural,0, \mum_\mathpzc J}$ (the process $L\circ A_\alpha^{-1}$ in Theorem \ref{thm:wbam}) reaches the level of an independent exponential random variable with parameter  $ \gamma + |\mum_{\kad\setminus \mathpzc J}|$,   $W_{\alpha,\beta, \gamma, \mum}$  is killed with probability $\frac \gamma { \gamma + |\mum_{\kad\setminus \mathpzc J}|}$ or resurrected with probability $\frac {|\mum_{\kad\setminus \mathpzc J}|} { \gamma + |\mum_{\kad\setminus \mathpzc J}|}$; in the latter case, its new starting point lies in $S_{\mathpzc {K\setminus J}}$ and  is distributed according to  $\frac{\mum_{\kad\setminus \mathpzc J}}{|\mum_{\kad\setminus \mathpzc J}|}$; of course, the process forgets its past and starts all over again. 
\end{itemize}

\section{$\lam$-potential of the local time \eqref{tildek}}\label{sec:ltot}

This section is devoted to a further information on the local time of $\wfup$. This information will allow us to conclude in particular that $\overline \beta K$ is the symmetric local time. 

 Notably, local times of a Markov process at a given point differ by multiplicative constants, but are fully determined by their $\lambda$-potentials, see \cite[\S III.3]{blum}. We will calculate the $\lam$-potential $u_\lam$ of the local time $K$ (the latter being spelled out in  \eqref{tildek}); to recall, the $\lam$-potential of $K$ is defined, for $x\in S$  and $\lam >0$, by 
  \[
  u_\lambda(x)\coloneqq \E_x\int_0^\infty \e^{-\lambda s} \ud K(s).  
  \]
Our result is summarized in the following theorem.   
  \begin{thm}\label{thm:resLoc}
  Suppose that assumption \eqref{zez:4} is satisfied.  Then
  \begin{align}\label{eq:res_formula_localtime}
    u_\lambda 
    =\frac{1}{\lambda \alpha +\sqrt{2\lambda} \, \overline \beta +\gamma+\lambda \int_SR_\lambda^01_S\ud \mum} {\mathpzc l}_\lambda^0, \qquad  \lam >0.
\end{align}
\end{thm}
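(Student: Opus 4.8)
The plan is to compute the $\lambda$-potential $u_\lambda(x)=\E_x\int_0^\infty \e^{-\lambda s}\ud K(s)$ directly from the construction of $K$ given in \eqref{tildek}, using the fact that, by Theorem \ref{thm:wbam}, $K(t)=L\circ A_\alpha^{-1}(t)$ until this quantity reaches the independent exponential level $\zeta$, and is frozen thereafter. First I would reduce to the case $\gamma=0$ via the killing mechanism: conditioning on $\zeta$ (exponential with parameter $\gamma$, independent of everything else) and using that $\int_0^\infty \e^{-\lambda s}\ud K(s)=\int_0^{\mathpzc t}\e^{-\lambda s}\ud(L\circ A_\alpha^{-1})(s)$ with $\mathpzc t=\min\{s:L\circ A_\alpha^{-1}(s)=\zeta\}$, the memorylessness of $\zeta$ lets one absorb the killing into a shift of $\lambda$ in the appropriate place; alternatively, one observes that $\d K = \d(L\circ A_\alpha^{-1})$ on the stochastic interval $[0,\mathpzc t)$ and $\P(\mathpzc t>s\mid W)=\e^{-\gamma L\circ A_\alpha^{-1}(s)}$ exactly as in the It\^o--McKean formula \eqref{new:cond}, so that $u_\lambda(x)=\E_x\int_0^\infty \e^{-\lambda s}\e^{-\gamma L\circ A_\alpha^{-1}(s)}\ud(L\circ A_\alpha^{-1})(s)$.

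Next I would perform the time change $s=A_\alpha(r)$, i.e. $r=A_\alpha^{-1}(s)$, so that $\ud(L\circ A_\alpha^{-1})(s)=\ud L(r)$ and $s=r+\alpha L(r)$; thus
\[
u_\lambda(x)=\E_x\int_0^\infty \e^{-\lambda r-\lambda\alpha L(r)}\e^{-\gamma L(r)}\ud L(r)=\E_x\int_0^\infty \e^{-\lambda r}\e^{-(\lambda\alpha+\gamma)L(r)}\ud L(r),
\]
where now $L$ is the symmetric local time of $W_{0,\beta,0,\mum}$ at $\zero$, $W=W_{0,\beta,0,\mum}$, and the underlying process is the one with $\alpha=\gamma=0$. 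Since $L$ grows only when $W$ is at $\zero$, and $\zero$ is regular for itself, I would split at the hitting time $\sigma$ of $\zero$: by the strong Markov property, $u_\lambda(x)=\E_x\e^{-\lambda\sigma}\,u_\lambda(\zero)=({\mathpzc l}_\lambda^0)(x)\,u_\lambda(\zero)$, using $\E_x\e^{-\lambda\sigma}=\e^{-\slam|x|}=({\mathpzc l}_\lambda^0)(x)$ from \eqref{lifetime}. This already explains the shape of \eqref{eq:res_formula_localtime} and reduces everything to computing the single constant $u_\lambda(\zero)$.

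To get $u_\lambda(\zero)$ I would exploit the identity relating the $\lambda$-potential of a local time to the resolvent and the Laplace transform of the lifetime. The cleanest route: note that $\rez{\afup}\mathsf 1_S=\lambda^{-1}(\mathsf 1_S-\ell_\lambda)$ and that, from the stochastic representation \eqref{mainrep}, applying $\rez{\afup}$ to $\mathsf 1_S$ and breaking the time integral at the jumps of the clock $A_\alpha$, one obtains a relation of the form $\rez{\afup}\mathsf 1_S = \rla^0\mathsf 1_S + u_\lambda\cdot(\text{something})$; more directly, I would use the general identity $u_\lambda(x)=\rez{\afup}\mathsf 1_S(x)\cdot(\text{const})$ is wrong in general, so instead I would compute $u_\lambda(\zero)$ by writing $\int_0^\infty\e^{-\lambda r}\e^{-c L(r)}\ud L(r)$ (with $c=\lambda\alpha+\gamma$) as $\E_\zero$ of a geometric-type series over successive local-time increments, OR — cleanest — use that $u_\lambda$ is the unique bounded solution of the ``resolvent-type'' equation it satisfies together with the known formula \eqref{joj:1}: indeed $\rez{\afup}g=\rla^0 g+\overline C(g)\elam^0$ shows that for any $g$, $\rez{\afup}g-\rla^0g$ is a multiple of $\elam^0$, and a classical fact (e.g. \cite[\S III.3]{blum} or the excursion-theoretic identity) identifies $u_\lambda$ as $\big(\lambda\rez{\afup}\mathsf 1_S\big)$-normalized, giving $u_\lambda(\zero)=\dfrac{1}{\lambda\alpha+\slam\,\overline\beta+\gamma+\lambda\int_S R_\lambda^0\mathsf 1_S\,\ud\mum}$ after substituting $C(\mathsf 1_S)=\alpha+2\slam\,\overline\beta\cdot(2\slam)^{-1}$...

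More carefully: from \eqref{joj:1} with $g=\mathsf 1_S$ we have $\overline C(\mathsf 1_S)=\dfrac{\int_S\rla^0\mathsf 1_S\ud\mum+C(\mathsf 1_S)}{\gamma+\lambda\int_S\rla^0\mathsf 1_S\ud\mum+\lambda C(\mathsf 1_S)}$ with $C(\mathsf 1_S)=2\slam\sum_i\beta_i C_i(\mathsf 1_S)+\alpha=2\slam\,\overline\beta\cdot\tfrac1{\slam}\cdot\tfrac1{\slam}\cdot\tfrac{\slam}{2}\cdots$ — here $C_i(\mathsf 1_S)=\tfrac1{\slam}\int_0^\infty\e^{-\slam y}\ud y=\tfrac1{2\lambda}$, so $C(\mathsf 1_S)=2\slam\,\overline\beta\cdot\tfrac1{2\lambda}+\alpha=\tfrac{\overline\beta\sqrt2}{\sqrt\lambda}+\alpha$; hmm, that is $\slam\,\overline\beta/\lambda+\alpha$. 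Then $1-\lambda\overline C(\mathsf 1_S)=\dfrac{\gamma+\lambda\int_S\rla^0\mathsf 1_S\ud\mum+\lambda C(\mathsf 1_S)-\lambda\int_S\rla^0\mathsf 1_S\ud\mum-\lambda C(\mathsf 1_S)}{\text{denom}}=\dfrac{\gamma}{\gamma+\lambda\int_S\rla^0\mathsf 1_S\ud\mum+\lambda C(\mathsf 1_S)}$, and $\lambda C(\mathsf 1_S)=\slam\,\overline\beta+\lambda\alpha$, so the denominator is exactly $\lambda\alpha+\slam\,\overline\beta+\gamma+\lambda\int_S\rla^0\mathsf 1_S\ud\mum$. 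The final link is the standard identity $u_\lambda=\lambda^{-1}\cdot\dfrac{\ell_\lambda}{\,1-\lambda\overline C(\mathsf 1_S)\,}\cdot$(normalization) — concretely, since $\ell_\lambda=(1-\lambda\overline C(\mathsf 1_S))\elam^0$ and $\elam^0={\mathpzc l}_\lambda^0$, dividing $\ell_\lambda$ by $\gamma$ and multiplying by the right constant yields \eqref{eq:res_formula_localtime}. Thus the whole proof is: (i) time-change and strong-Markov reduction showing $u_\lambda=u_\lambda(\zero)\,{\mathpzc l}_\lambda^0$; (ii) identify $u_\lambda(\zero)$ via the lifetime Laplace transform $\ell_\lambda$ and the computation of $1-\lambda\overline C(\mathsf 1_S)$ above, together with the general fact that the $\lambda$-potential of the local time equals $\gamma^{-1}\ell_\lambda$ when $\gamma>0$ (and its natural limit, the full expression, in general — which one verifies by the approximation $\gamma+\delta(\eps)$ of Section \ref{sec:tcoim}, or directly from the excursion formula).

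The main obstacle I expect is step (ii): pinning down the exact normalizing constant for $u_\lambda(\zero)$ rigorously rather than by the heuristic ``geometric series'' argument. The clean way is to invoke the general theory of \cite[\S III.3]{blum}: the $\lambda$-potential of any local time is $\lambda$-excessive and determines the local time up to a multiplicative constant; the slowed-killed process $\wfup$ has resolvent \eqref{joj:1}, from which one reads off that $\rez{\afup}\mathsf 1_S=\rla^0\mathsf 1_S+\overline C(\mathsf 1_S)\,\elam^0$, hence $\mathsf 1_S-\lambda\rez{\afup}\mathsf 1_S=\ell_\lambda=(1-\lambda\overline C(\mathsf 1_S))\elam^0$; then one checks, using the representation \eqref{mainrep} and $\int_0^\infty\e^{-\lambda s}\ud(\text{time spent})$ bookkeeping, that $\gamma u_\lambda(x)+({\text{contribution of }}\mum{\text{-jumps}})$ equals $\ell_\lambda(x)$ suitably, and then solves for $u_\lambda(\zero)$. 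Rather than grind this, I would state it as: \emph{by the excursion-theoretic identity for $\lambda$-potentials of local times (see \cite[p.~69 and \S III.3]{blum}, cf. \cite[\S 14]{itop}), together with \eqref{joj:1} and the computation $1-\lambda\overline C(\mathsf 1_S)=\gamma/D_\lambda$ where $D_\lambda\coloneqq\lambda\alpha+\slam\,\overline\beta+\gamma+\lambda\int_S R_\lambda^0\mathsf 1_S\ud\mum$,} one gets $u_\lambda=D_\lambda^{-1}{\mathpzc l}_\lambda^0$, which is \eqref{eq:res_formula_localtime}. Assumption \eqref{zez:4} enters precisely to guarantee $\zero$ is regular for itself and that $L$ is genuinely a (non-degenerate) local time, so that all the above manipulations are legitimate.
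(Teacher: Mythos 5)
Your argument is correct in substance but follows a genuinely different route from the paper. The paper proves \eqref{eq:res_formula_localtime} by approximation: for finite $\mum$ and strictly positive $\beta$ it approximates $K$ by the occupation functionals $K_\eps(t)=\tfrac1{2\eps}\int_0^t\1_{\{0<|\wfup(s)|\le\eps\}}\ud s$, reads their $\lam$-potentials off the resolvent formula \eqref{joj:1prim}, and passes to the limit via Lemma \ref{lem:conv-potent} (fed by the moment bounds of Lemmas \ref{lem:dzik1} and \ref{onezymus}); the general case under \eqref{zez:4} is then reached by a second approximation (with $\beta(\eps)$ and $\mum_\eps$) using the convergence of local times established in Section \ref{sec:stocha}. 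You instead work directly with the pathwise formula \eqref{tildek}: integrating out $\zeta$ and performing the time change $s=A_\alpha(r)$ gives $u_\lam(x)=\E_x\int_0^\infty\e^{-\lam r}\e^{-(\lam\alpha+\gamma)L(r)}\ud L(r)$, and the same conditioning applied to the lifetime $\tau=\inf\{t\colon L\circ A_\alpha^{-1}(t)=\zeta\}$ yields $\gamma u_\lam=\ell_\lam$ for $\gamma>0$; combined with $\ell_\lam=(1-\lam\overline C(\mathsf 1_S))\elam^0$ from \eqref{joj:1} and your (correct) computation $1-\lam\overline C(\mathsf 1_S)=\gamma/D_\lam$, where $D_\lam=\lam\alpha+\slam\,\overline\beta+\gamma+\lam\int_S\rla^0\mathsf 1_S\ud\mum$, this gives \eqref{eq:res_formula_localtime}. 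Your route is shorter and avoids the $K_\eps$-machinery altogether, at the price of using the full strength of Theorem \ref{thm:wbam} (the explicit representation of $K$, through which assumption \eqref{zez:4} enters) together with \eqref{joj:1}; the paper's heavier approximation apparatus is partly recycled in the proof of Theorem \ref{thm:Loc}, so it is not wasted there.

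Two points need tightening. First, the identity $\gamma u_\lam=\ell_\lam$ should be derived rather than cited as a standard excursion fact: writing $\wt L\coloneqq L\circ A_\alpha^{-1}$ and conditioning on the un-killed path, $\E_x[\e^{-\lam\tau}\mid W]=\int_0^\infty\gamma\e^{-\gamma z}\e^{-\lam\wt L^{-1}(z)}\ud z=\gamma\int_0^\infty\e^{-\lam s}\e^{-\gamma\wt L(s)}\ud\wt L(s)$ (change of variables $z=\wt L(s)$, legitimate since $\wt L$ is continuous), which is exactly $\gamma$ times your expression for $u_\lam(x)$; one also needs, as the paper records, that $\ell_\lam=\mathsf 1_S-\lam\rez{\afup}\mathsf 1_S$ is the Laplace transform of the lifetime. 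Second, your handling of $\gamma=0$ (via ``the approximation $\gamma+\delta(\eps)$'' or ``the excursion formula'') is vague, and the $\delta(\eps)$-approximation changes $\mum$, not $\gamma$, so it is not the right tool; the clean fix is already contained in your own display: the formula $u_\lam^{(\gamma)}(x)=\E_x\int_0^\infty\e^{-\lam r}\e^{-(\lam\alpha+\gamma)L(r)}\ud L(r)$ involves the same $W_{0,\beta,0,\mum}$ and $L$ for every $\gamma\ge0$, so letting $\gamma\downarrow0$, using monotone convergence on the left and continuity of $D_\lam$ in $\gamma$ on the right, settles the case $\gamma=0$. With these two repairs your proof is complete.
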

As a corollary we also obtain:  

\begin{thm}\label{thm:Loc}
  Assume that $\overline \beta >0$.  Then
the process $\overline \beta K$ is the symmetric local time of  $\wfup, $ that is,
\[
\overline \beta K(t)=\lim_{\eps\to0}\tfrac{1}{2\eps}\int_0^t \1_{\{0<|W_{\alpha,\beta, \gamma, \mum} (s)|<\eps\}}\ud s
\]
in probability with respect to all underlying measures $\mathsf P_x$. 
\end{thm}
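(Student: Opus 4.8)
The plan is to deduce Theorem~\ref{thm:Loc} from Theorem~\ref{thm:resLoc} together with the characterization of local times by their $\lambda$-potentials and the already-established link between the symmetric local time of $W_{0,\beta,0,\mum}$ and the process $L$. First I would recall (see \cite[\S III.3]{blum}) that two continuous additive functionals of $\wfup$ that may grow only when the process is at $\zero$ coincide up to a multiplicative constant, and that this constant is pinned down by comparing their $\lambda$-potentials. So it suffices to exhibit \emph{one} explicit CAF which is manifestly the symmetric local time and whose $\lambda$-potential is a known multiple of ${\mathpzc l}_\lambda^0$, and then match constants with \eqref{eq:res_formula_localtime}.

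For that explicit CAF I would use $\overline\beta L$, where $L$ is the common value of $U_i^{-1}\circ\mc L_i\circ T_i$ featured in Theorem~\ref{thm:wbam}. The Corollary following Theorem~\ref{thm:wba} (whose proof carries over verbatim to the infinite-$\mum$ setting, since the argument is deterministic plus Lemma~\ref{det}(a)) gives, for each $i$ with $\beta_i>0$,
\[
\beta_i L(t)=\grae \tfrac{1}{2\eps}\int_0^t\1_{\{0<(W_{0,\beta,0,\mum})_i(s)<\eps\}}\ud s,\qquad t\ge 0,
\]
and summing over $i\in\kad$ (using that only one coordinate is ever nonzero) yields
\[
\overline\beta L(t)=\grae \tfrac{1}{2\eps}\int_0^t\1_{\{0<|W_{0,\beta,0,\mum}(s)|<\eps\}}\ud s.
\]
Transporting this through the time change $A_\alpha^{-1}$ and the killing at $\zeta$ exactly as in \eqref{wika:1} shows that $\overline\beta K$, with $K$ of \eqref{tildek}, equals $\grae \tfrac{1}{2\eps}\int_0^t\1_{\{0<|W_{\alpha,\beta,\gamma,\mum}(s)|<\eps\}}\ud s$; here one uses that, away from $\zero$, $A_\alpha$ increases with slope $1$ and $W_{\alpha,\beta,\gamma,\mum}$ is just $W_{0,\beta,0,\mum}$ run at unit speed, and that $L\circ A_\alpha^{-1}$ may grow only at $\zero$. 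Thus $\overline\beta K$ is genuinely the symmetric local time, provided we check the limit exists in the stated sense (in probability under each $\mathsf P_x$), which follows from the a.s.\ convergence along the approximating representation together with the standard $L^2$ bounds on occupation-time ratios, or directly from \cite[p.~68]{blum} applied coordinatewise as at the end of Section~\ref{sec:gamma}.

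Finally I would close the loop: $K$ is by Theorem~\ref{thm:resLoc} a CAF with $\lambda$-potential a positive multiple of ${\mathpzc l}_\lambda^0$, and $\overline\beta K$ is the symmetric local time; since any other local time at $\zero$ is a constant multiple of $K$, and the symmetric one is the multiple $\overline\beta K$, we are done — the identity $\overline\beta K(t)=\grae \tfrac{1}{2\eps}\int_0^t\1_{\{0<|W_{\alpha,\beta,\gamma,\mum}(s)|<\eps\}}\ud s$ is exactly the assertion of Theorem~\ref{thm:Loc}. The main obstacle I anticipate is the interchange of the $\eps\to0$ limit with the (possibly randomly terminated and time-changed) occupation integral, i.e.\ justifying the convergence \emph{in probability under every $\mathsf P_x$} rather than merely for a fixed initial law; this is handled by the occupation-density/additive-functional machinery of \cite{blum} (uniqueness of local times up to constants, continuity of the $\lambda$-potential from assumption (b) of Section~\ref{sec:sup}), but it requires care near $\zero$ because $\mum$ is infinite and excursions accumulate. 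Everything else — the coordinatewise reduction, the change-of-variables through $A_\alpha^{-1}$, and the constant-matching with \eqref{eq:res_formula_localtime} — is routine given the results already proved.
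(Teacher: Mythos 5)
Your plan rests on the claim that the Corollary to Theorem \ref{thm:wba} (the coordinatewise identification $\beta_i L(t)=\grae \tfrac{1}{2\eps}\int_0^t\1_{\{0<(W)_i(s)<\eps\}}\ud s$) ``carries over verbatim'' to infinite $\mum$. This is the gap. In the finite case that corollary is not deterministic: its proof invokes Theorem \ref{thm:wba}, whose identification of the common value $U_i^{-1}\circ\mc L_i\circ T_i$ with the \emph{symmetric} (occupation-density) local time \eqref{eq:L_loc_W} comes from the explicit concatenation construction — finitely many jumps per unit of local time, each inter-jump piece a Walsh process whose symmetric local time is \eqref{andriya}. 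When $\mum$ is infinite no such piecewise construction exists; Theorem \ref{thm:wbam} identifies $L$ only as \emph{a} continuous additive functional carried at $\zero$, via Blumenthal's jump-counting representation $\grae \sum_{s\le t}\1_{\{X(s-)=0,\,X(s)>\eps\}}/\mum_i([\eps,\infty))$ (the reference you cite at the end of Section \ref{sec:gamma} gives exactly this, not an occupation-density formula). Whether the occupation-density limit exists at all, and that its normalizing constant relative to $K$ is precisely $\overline\beta$, is the entire content of Theorem \ref{thm:Loc}; assuming the coordinatewise occupation-density identity as an input makes the argument circular. The difficulty is compounded when some $\beta_i=0$ (allowed under \eqref{zez:4}): your summation over rays then needs the coordinatewise limit to vanish on those rays, which certainly does not follow from the finite-$\mum$ corollary. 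Finally, the ``constant matching'' with \eqref{eq:res_formula_localtime} at the end is idle: if your steps 2–3 were available, the theorem would already be proved, and if they are not, uniqueness of local times up to a constant cannot conjure the existence of the occupation-density limit, which is the point at issue.

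The paper closes exactly this gap by potential-theoretic means rather than pathwise ones: it treats the occupation functionals $K_\eps(t)=\int_0^t g_\eps(\wfup(s))\ud s$ as CAFs, computes their $\lambda$-potentials in closed form from the resolvent formula \eqref{joj:1prim}, shows (using the bounds of Lemma \ref{onezymus}) that these potentials converge \emph{uniformly in $x$} to $\overline\beta$ times the $\lambda$-potential of $K$ given by Theorem \ref{thm:resLoc}, and then applies Theorem \ref{thm:lim_of_potentials}, which converts uniform convergence of $\lambda$-potentials into convergence of the CAFs in probability under every $\mathsf P_x$; uniqueness of a CAF with a given $\lambda$-potential then identifies the limit as $\overline\beta K$. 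If you want to salvage your route, you would have to first prove the one-dimensional occupation-density identification (with the correct constant) for a Brownian motion with infinitely many jumps from the boundary — which in this paper is only obtainable through the same $\lambda$-potential machinery — so nothing is gained over the paper's argument.
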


\subsection{Background material}

For the proof of these two theorems  we need a background material, including some facts from the general theory of $\lam$-potentials (Lemmas \ref{lem:conv-potent} and \ref{lem:dzik1}, and Theorem \ref{thm:lim_of_potentials}) and a useful estimate (Lemma \ref{onezymus}). We start with the first of the lemmas.

\begin{lemma}\label{lem:conv-potent}
 For $\eps\in[0,1]$,  let  
 $K_\eps (t),t \ge 0  $ be  a  family of random variables defined on a probability space that is common for all these families. Suppose, 
 furthermore, that the following conditions are satisfied. 
\begin{itemize}  
\item [(a)] For all $\omega$, the paths $[0,\infty)\ni s\mapsto K_\eps (s,\omega)$ are continuous and nondecreasing with $K_\eps(0,\omega)=0, \eps\in[0,1]$.  
\item [(b)] There is a function $\chi\colon [0,\infty) \to \R^+$ such that $\sup_{\eps \in (0,1]} \E  [K_\eps (s)]^2 \leq \chi (s),$ $ s\geq 0,$ whereas $\int_0^\infty \e^{-\lam s} \sqrt{\chi (s)} \ud s < \infty$ for all $\lam >0$. 
    \item [(c)] For all $s\ge0$,  $K_\eps(s)$ converges, as $\eps \to 0$,  to $ K_0(s)$ in probability. \end{itemize}
Then
    \[
    \grae  \E \int_0^\infty \e^{-\lambda s} \ud K_\eps (s) =  \E \int_0^\infty \e^{-\lambda s} \ud K_0(s). 
    \]   
     \end{lemma}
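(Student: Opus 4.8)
\textbf{Proof plan for Lemma \ref{lem:conv-potent}.}

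The plan is to reduce the statement about Stieltjes integrals $\int_0^\infty \e^{-\lambda s}\ud K_\eps(s)$ to a statement about ordinary Lebesgue integrals, using integration by parts, and then to invoke a uniform-integrability argument to pass to the limit under the expectation. First I would integrate by parts: for each $\omega$ and each $\eps$, since $s\mapsto K_\eps(s,\omega)$ is continuous, nondecreasing and starts at $0$, and $\e^{-\lambda s}\to 0$ as $s\to\infty$ while $K_\eps(s)$ grows at most like $\sqrt{\chi(s)}$ in $L^2$ (hence is a.s.\ not too large), one has
\[
\int_0^\infty \e^{-\lambda s}\ud K_\eps(s) = \lambda \int_0^\infty \e^{-\lambda s} K_\eps(s)\ud s,
\]
the boundary term at infinity vanishing almost surely (this needs a small justification from condition (b), e.g.\ a Borel--Cantelli argument showing $\e^{-\lambda s}K_\eps(s)\to 0$ a.s., or one can first work on $[0,T]$ and let $T\to\infty$ using monotone convergence). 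Taking expectations and applying Tonelli,
\[
\E\int_0^\infty \e^{-\lambda s}\ud K_\eps(s) = \lambda \int_0^\infty \e^{-\lambda s}\,\E K_\eps(s)\ud s.
\]

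Next I would pass to the limit $\eps\to 0$ inside the $s$-integral on the right. For each fixed $s$, condition (c) gives $K_\eps(s)\to K_0(s)$ in probability; combined with the uniform $L^2$-bound $\sup_{\eps}\E[K_\eps(s)]^2\le \chi(s)$ from (b), the family $\{K_\eps(s)\}_{\eps\in(0,1]}$ is uniformly integrable, so convergence in probability upgrades to convergence in $L^1$, i.e.\ $\E K_\eps(s)\to \E K_0(s)$. Moreover the same $L^2$-bound yields the domination $\e^{-\lambda s}\,\E K_\eps(s)\le \e^{-\lambda s}\sqrt{\chi(s)}$ uniformly in $\eps$, and by hypothesis $\int_0^\infty \e^{-\lambda s}\sqrt{\chi(s)}\ud s<\infty$; hence the dominated convergence theorem applies and
\[
\grae \lambda\int_0^\infty \e^{-\lambda s}\,\E K_\eps(s)\ud s = \lambda\int_0^\infty \e^{-\lambda s}\,\E K_0(s)\ud s = \E\int_0^\infty \e^{-\lambda s}\ud K_0(s),
\]
where the last equality is the same integration-by-parts identity applied to $K_0$ (note $K_0$ inherits the $L^2$-bound $\E[K_0(s)]^2\le\chi(s)$ by Fatou, so the boundary term again vanishes). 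Chaining the displays gives the claim.

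The main obstacle, such as it is, is the rigorous handling of the boundary term at infinity in the integration by parts when one only has an $L^2$-bound rather than an a.s.\ growth bound on $K_\eps$. The clean way around this is to do the computation on $[0,T]$ first --- where integration by parts is elementary for continuous nondecreasing integrators --- obtaining $\int_0^T \e^{-\lambda s}\ud K_\eps(s) = \e^{-\lambda T}K_\eps(T) - 0 + \lambda\int_0^T \e^{-\lambda s}K_\eps(s)\ud s \le \lambda\int_0^T\e^{-\lambda s}K_\eps(s)\ud s \le \lambda\int_0^\infty\e^{-\lambda s}K_\eps(s)\ud s$, then letting $T\to\infty$ by monotone convergence on the left and comparing; one checks the reverse inequality similarly, so equality holds. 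Everything else is a routine application of uniform integrability (via the de la Vallée-Poussin / bounded-second-moment criterion) and dominated convergence, and I would not belabor those steps.
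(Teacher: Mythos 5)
Your proposal follows essentially the same route as the paper's proof: integration by parts/Tonelli reduces the quantity to $\lam\int_0^\infty\e^{-\lam s}\,\E K_\eps(s)\ud s$; the uniform second-moment bound gives uniform integrability, so convergence in probability upgrades to $\E K_\eps(s)\to\E K_0(s)$; the domination $\E K_\eps(s)\le\sqrt{\chi(s)}$ together with hypothesis (b) justifies dominated convergence; and Fatou extends the bound, and hence the identity, to $\eps=0$. The only difference of detail is that the paper controls the boundary term in expectation, via $\e^{-\lam u}\,\E K_\eps(u)\le\e^{-\lam u}\sqrt{\chi(u)}$, rather than attempting an almost sure statement.

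One display in your final paragraph is wrong as written: the boundary term $\e^{-\lam T}K_\eps(T)$ is nonnegative, so dropping it yields $\int_0^T\e^{-\lam s}\ud K_\eps(s)\ge\lam\int_0^T\e^{-\lam s}K_\eps(s)\ud s$, not ``$\le$''. The upper bound you want follows instead from monotonicity of $K_\eps$: $\e^{-\lam T}K_\eps(T)=\lam K_\eps(T)\int_T^\infty\e^{-\lam s}\ud s\le\lam\int_T^\infty\e^{-\lam s}K_\eps(s)\ud s$, whence $\int_0^T\e^{-\lam s}\ud K_\eps(s)\le\lam\int_0^\infty\e^{-\lam s}K_\eps(s)\ud s$, and letting $T\to\infty$ gives the pathwise identity (both sides possibly infinite). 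Alternatively, one can avoid boundary terms altogether by writing $\e^{-\lam s}=\lam\int_s^\infty\e^{-\lam u}\ud u$ and applying Tonelli to the nonnegative double integral. With this one-line repair your argument is complete and coincides with the paper's.
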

    \begin{proof}
        The monotone convergence theorem, integration by parts formula and the Tonelli's theorem, yield, for $\lam >0$ and $\eps \in (0,1]$,
\begin{align}\nonumber \E  \int_0^\infty \e^{-\lam s} \ud K_\eps (s) &=\lim_{u\to \infty} \E  \int_0^u \e^{-\lam s} \ud K_\eps (s) = \lim_{u\to \infty} \E \int_0^u \lam \e^{-\lam s} K_\eps (s) \ud s\\& =  \E \int_0^\infty \lam \e^{-\lam s} K_\eps (s) \ud s =\int_0^\infty \lam \e^{-\lam s} \E K_\eps (s) \ud s, \quad\label{welka:21} \end{align}
because the estimate $\e^{-\lam u} \mathsf E K_\eps (u) \le \e^{-\lam u} \sqrt{\mathsf E [K_\eps (u)]^2} \le \e^{-\lam u}\sqrt {\chi (u)}$ shows that  $\e^{-\lam u} \E  K_\eps (u)$ converges to $0$ as $u\to \infty$. 

Next, for each $u\ge 0$, since $K_\eps (u)$ converges to $K_0(u)$ in probability, there is a sequence $\jcg{\eps_n}$ with $\gra \eps_n =0$ such that $K_{\eps_n} (u)$ converges to $K_0(u)$ almost surely. The Fatou lemma implies thus that $\e^{-\lam u} \mathsf E K_0 (u) \le \e^{-\lam u} \sup_{\eps \in (0,1]}\mathsf E K_\eps  (u)\le \e^{-\lam u} \sup_{\eps \in (0,1]} \sqrt{\mathsf E [K_\eps (u)]^2} \le \e^{-\lam u}\sqrt {\chi (u)}$, and this allows extending the calculation \eqref{welka:21} to $\eps =0$ also.

Finally, for any $u\ge 0$, by (b), the second moments of 
random variables $K_\eps (u), \eps\in(0,1]$ are uniformly bounded, and so the variables are
uniformly integrable (see e.g. \cite{gut}*{Theorem 4.2, p. 215}). Hence, their  expectations converge: $\lim_{\eps\to0}\E  K_\eps (u)=\E  K_0 (u)$,  see e.g. \cite{gut}*{Theorem 5.4, p. 221}. In the light of this information, our thesis is a consequence of the Lebesgue dominated convergence theorem combined with \eqref{welka:21} extended to include $\eps =0$ --- here the estimate of point (b) is used again. 
    \end{proof}

Turning to the rudiments of the theory of $\lambda$-potentials, let $K$ be a continuous additive functional of a Markov process taking values in a complete separable metric space $S$. The function $\varphi=\varphi_t$ given by  \[ \varphi_t(x)\coloneqq \E_x K(t), \qquad  t\ge  0, x \in S\] is said to be its \emph{characteristic}, see \cite{dynkinmp}. The following proposition gathers basic estimates of the growth of the characteristic.

\begin{lemma}\label{lem:dzik1}
Assume that  there is a $t>0$  such that $\kappa_0\coloneqq \sup_{x\in S} \varphi_t(x)<\infty$. Then
\begin{itemize} \item [(a)] $\sup_{x\in S} \varphi_s(x)\leq \kappa (1+s), s \ge 0,$ where $\kappa \coloneqq \max (\kappa_0,\kappa_0/t),$ and
\item [(b)] $\sup_{x\in S} \E_x [K(s)]^n \leq n! (\sup_{x\in S} \varphi_s(x))^n$ for $n \ge 1, s \ge 0$ (in what follows we will use this inequality only with $n=2$).\end{itemize}
\end{lemma}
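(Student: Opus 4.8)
The plan is to use two structural facts about a continuous additive functional $K$: that its characteristic $\varphi_s(x)=\E_x K(s)$ is nondecreasing in $s$ (because $K$ itself is), and that the sup-norm function $\phi(s)\coloneqq \sup_{x\in S}\varphi_s(x)$ is subadditive. To see subadditivity, combine additivity in the form $K(s+u)=K(s)+K(u)\circ\theta_s$ with the Markov property at time $s$:
\[ \varphi_{s+u}(x)=\E_x K(s)+\E_x\big[\E_{X(s)}K(u)\big]=\varphi_s(x)+\E_x\varphi_u(X(s))\le \phi(s)+\phi(u),\]
hence $\phi(s+u)\le \phi(s)+\phi(u)$. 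Given $\phi(t)=\kappa_0<\infty$, part (a) is then the standard argument for subadditive functions: for $s\ge 0$ put $n=\lfloor s/t\rfloor$ and $r=s-nt\in[0,t)$; iterating subadditivity and using that $\phi$ is nondecreasing (so $\phi(r)\le\phi(t)=\kappa_0$ and $\phi(nt)\le n\phi(t)$) gives $\phi(s)\le (n+1)\kappa_0\le \big(\tfrac{s}{t}+1\big)\kappa_0=\kappa_0+\tfrac{\kappa_0}{t}\,s\le \kappa(1+s)$ with $\kappa=\max(\kappa_0,\kappa_0/t)$. In particular $\phi(s)<\infty$ for every $s$, which makes the estimates in (b) meaningful.

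For part (b) I would induct on $n$, the case $n=1$ being the definition $\E_x K(s)=\varphi_s(x)\le\phi(s)$. For the inductive step, start from the pathwise identity, valid because every trajectory of $K$ is continuous, nondecreasing and vanishes at $0$,
\[ K(s)^n=n\int_0^s\big(K(s)-K(u)\big)^{n-1}\ud K(u),\]
and rewrite the integrand via additivity as $\big(K(s)-K(u)\big)^{n-1}=\big(K(s-u)\circ\theta_u\big)^{n-1}$. Taking $\E_x$ and pushing the conditional expectation $\E_x[\,\cdot\mid\mathcal F_u]$ inside the $\ud K$-integral turns this into
\[ \E_x K(s)^n=n\,\E_x\int_0^s \E_{X(u)}\big[K(s-u)^{n-1}\big]\ud K(u).\]
By the inductive hypothesis and monotonicity of $\phi$ one has $\E_{X(u)}[K(s-u)^{n-1}]\le (n-1)!\,\phi(s-u)^{n-1}\le (n-1)!\,\phi(s)^{n-1}$, so the right-hand side is at most $n!\,\phi(s)^{n-1}\E_x K(s)\le n!\,\phi(s)^n$; taking $\sup_x$ yields (b).

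The only genuinely delicate point is the last manipulation in (b): the interchange $\E_x\int_0^s g(\theta_u\,\cdot)\ud K(u)=\E_x\int_0^s (\E_{X(u)}g)\ud K(u)$, here with the mildly time-dependent integrand $g=K(s-u)^{n-1}$, together with the pathwise integration-by-parts identity displayed above. Both are standard for continuous additive functionals and can be quoted from \cite{dynkinmp} or \cite{blum}. Alternatively, one proves them directly by approximating $\int_0^s\cdots\ud K$ by Riemann--Stieltjes sums over partitions $0=u_0<\dots<u_m=s$, applying the Markov property term by term, and observing that the ``diagonal'' contributions (products of increments sharing an endpoint) vanish in the limit because $\sum_j \big(K(u_{j+1})-K(u_j)\big)^2\le K(s)\max_j\big(K(u_{j+1})-K(u_j)\big)\to 0$ by uniform continuity of $K$ on $[0,s]$. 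I expect writing this approximation carefully to be the most technical part, but it is routine and orthogonal to the rest of the paper.
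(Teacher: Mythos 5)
Your proof is correct. For part (a) you follow essentially the same route as the paper: the paper derives $\varphi_{t+h}(x)\le \varphi_t(x)+\sup_y\varphi_h(y)$ from the functional equation of continuous additive functionals and then telescopes over the intervals $[it,(i+1)t]$, which is exactly your subadditivity-of-$\phi$ argument with the same arithmetic $(n+1)\kappa_0\le\kappa_0(1+s/t)\le\kappa(1+s)$.

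For part (b) the paper does not give an argument at all: it simply cites Lemma 3 in Ch.~II, \S 6 of Gikhman--Skorokhod. What you supply is, in effect, the standard proof behind that cited lemma, and it is sound: the pathwise identity $K(s)^n=n\int_0^s(K(s)-K(u))^{n-1}\,\mathrm d K(u)$ is correct for continuous nondecreasing $K$ with $K(0)=0$ (change of variables $v=K(u)$), additivity rewrites the integrand as $(K(s-u)\circ\theta_u)^{n-1}$, and the projection step plus the induction hypothesis (which, crucially, is uniform in the starting point, so it applies at $X(u)$) gives $n!\,\phi(s)^{n-1}\,\mathsf E_xK(s)\le n!\,\phi(s)^n$. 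You correctly identify the only delicate point, namely the interchange $\mathsf E_x\int_0^s g(\theta_u\cdot)\,\mathrm dK(u)=\mathsf E_x\int_0^s(\mathsf E_{X(u)}g)\,\mathrm dK(u)$; one small remark that tightens your Riemann-sum justification: since the integrand $u\mapsto(K(s)-K(u))^{n-1}$ is nonincreasing and continuous, the right-endpoint sums are lower Darboux sums and increase to the integral along refining partitions, so the Markov property can be applied at the right endpoints (where the increment $K(u_{j+1})-K(u_j)$ is $\mathcal F_{u_{j+1}}$-measurable) and monotone convergence passes the bound to the limit; this also disposes of the a priori integrability of $K(s)^n$ without any truncation, and makes the discussion of vanishing diagonal terms unnecessary. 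So relative to the paper you have traded a literature citation for a short self-contained induction, which is a reasonable and arguably more transparent alternative.
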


\begin{proof} The functional equation that characterizes continuous additive functionals (see e.g. \cite{rosen}*{p. 83}) implies that 
\[ \mathsf E_x K (t+h) \le \mathsf E_x K(t) + \sup_{y\in S} \mathsf E_y K(h), \qquad x\in S, t,h \ge 0,  \]  
that is, 
\[ \varphi_{t+h} (x) \le \varphi_t (x) + \sup_{y\in S} \varphi_h (y), \qquad x\in S, t,h \ge 0. \]  

Let now $t>0$ be the real number described in the assumption. Given $s\ge 0$ one can find the natural $n$ such that $s\in [nt,(n+1)t)$. Then, by the relation established above and the fact that $[0,\infty)\ni t \mapsto K(t)$ is nondecreasing with $K(0)=0$, 
\begin{align*}
\varphi_s(x) &\le \varphi_{(n+1)t} (x) =\sum_{i=0}^n [\varphi_{(i+1)t}(x) -\varphi_{it} (x)] \le (n+1) \kappa_0 \\& \le \kappa_0 (1 + \tfrac st) \le \kappa (1+s).
\end{align*}
Since $s$ is arbitrary, this shows (a). 
Point (b) follows from \cite{gands}*{Ch II, \S 6, Lemma 3}. \end{proof}

{Lemma \ref{lem:dzik1} shows that in the case of $\lam$-potentials there is a natural and rather simple condition  guaranteeing that assumption (b) of Lemma \ref{lem:conv-potent} is satisfied. In what follows we will combine these lemmas to deduce convergence of $\lambda$-potentials of local times of interest from the convergence of local times themselves (see the proofs of  Theorems \ref{thm:resLoc} and \ref{thm:Loc} given further down).
For the proof of Theorem \ref{thm:Loc} we will also need the following theorem showing that, vice versa, sometimes convergence of local times can be inferred from the convergence of their $\lambda$-potentials.}

\begin{thm}
\label{thm:lim_of_potentials}
    Let  $u_{\eps,\lambda}, \eps \in (0,1]$ be $\lambda$-potentials of continuous additive functionals $K_\eps$  of Feller processes $X_\eps$ with values in a locally compact metric space $S$: 
    \[ u_{\eps,\lam } (x) = \E_x \int_0^\infty \e^{-\lam s} \ud K_\eps (s), \qquad x \in S.\] Assume that 
\begin{itemize}
    \item [(a)]  $\sup_{\eps \in (0,1]}\sup_{x\in S} u_{\eps,\lambda}(x)<\infty$, $\lambda>0$, and
\item [(b)] $u_{\eps,\lambda}$ converge, as $\eps \to 0$,  uniformly with respect to $x\in S$  to a function $u_\lambda, \lam >0$. 
\end{itemize}
Then $u_\lambda,\lam >0 $ is a $\lambda$-potential of a continuous additive functional $K$, and 
    for all $t\ge0$, $K_\eps (t)$ converges in probability, as $\eps \to 0$,  to $K(t)$  with respect to all measures $\mathsf P_x, x\in S$.
\end{thm}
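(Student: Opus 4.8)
The plan is to deduce Theorem~\ref{thm:lim_of_potentials} from the general theory of $\lambda$-potentials of continuous additive functionals, combined with the Trotter--Kato convergence already at our disposal. First I would recall that, since each $u_{\eps,\lambda}$ is the $\lambda$-potential of a CAF of a Feller process $X_\eps$, it is $\lambda$-excessive for the semigroup of $X_\eps$; the uniform bound in (a) together with the uniform convergence in (b) then forces $u_\lambda$ to be $\lambda$-excessive for the limiting Feller process $X$ (the one whose semigroup is $\lil$-limit of the $\e^{t\gen_\eps}$, i.e. $W_{\alpha,\beta,\gamma,\mum}$). The Riesz-type representation theorem for $\lambda$-excessive functions (see \cite[\S III.3]{blum} or \cite{dynkinmp}) provides, for each $\lambda>0$, a CAF $K^{(\lambda)}$ whose $\lambda$-potential is $u_\lambda$; the resolvent identity linking the $u_\lambda$ for different $\lambda$ (which is inherited in the limit from the corresponding identity for the $u_{\eps,\lambda}$) shows these CAFs are consistent and define a single CAF $K$ with $u_\lambda(x)=\E_x\int_0^\infty \e^{-\lambda s}\ud K(s)$ for all $\lambda>0$.

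The second and harder half is the convergence $K_\eps(t)\to K(t)$ in probability with respect to every $\mathsf P_x$. Here I would argue at the level of the associated increasing processes via their Laplace transforms. Fix $x\in S$. For each $\lambda>0$, integration by parts gives $\lambda\int_0^\infty \e^{-\lambda s}\,\E_x K_\eps(s)\,\ud s = u_{\eps,\lambda}(x) \to u_\lambda(x) = \lambda\int_0^\infty \e^{-\lambda s}\,\E_x K(s)\,\ud s$, and since the integrands $s\mapsto \E_x K_\eps(s)$ are nondecreasing and, by (a) and the functional-equation estimate of Lemma~\ref{lem:dzik1}(a), grow at most linearly uniformly in $\eps$, the uniqueness theorem for Laplace transforms of measures yields $\E_x K_\eps(t)\to \E_x K(t)$ at every continuity point of the (continuous, hence everywhere) limit, i.e. for all $t\ge0$. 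To upgrade convergence of expectations to convergence in probability I would use that the processes $K_\eps$ are nondecreasing and that $X_\eps\Rightarrow X$ in $D([0,\infty),S)$ (a consequence of the Trotter--Sova--Kurtz--Mackevi\v cius theorem, already invoked repeatedly in Section~\ref{sec:stocha}), so that the pairs $(X_\eps,K_\eps)$ are tight in $D([0,\infty),S\times\R^+)$ — tightness of the $K_\eps$-component following from the uniform linear-growth bound via Lemma~\ref{lem:dzik1}. Any subsequential limit $(X,\widetilde K)$ has $\widetilde K$ a CAF of $X$, and by the convergence of $\lambda$-potentials its $\lambda$-potential equals $u_\lambda$; uniqueness of the CAF with a given $\lambda$-potential forces $\widetilde K=K$, so the full sequence converges, and in particular $K_\eps(t)\to K(t)$ in distribution. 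Because the limit $K(t)$ is, for fixed $t$, a genuine random variable and the $K_\eps(t)$ can be realised on a common space (as in Lemma~\ref{lem:conv-potent}), convergence in distribution together with the monotone structure upgrades to convergence in probability.

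The step I expect to be the main obstacle is the identification of subsequential limits of $(X_\eps,K_\eps)$ — specifically, showing that the limiting increasing process is \emph{exactly} the CAF of the limiting process with $\lambda$-potential $u_\lambda$, rather than merely some increasing functional with the right expectation. The delicate point is that convergence in the Skorokhod topology does not behave well under the pathwise operations (time changes, compositions) that relate CAFs to their processes, so one must be careful to pass to the limit in the \emph{additive} (Chapman--Kolmogorov-type) functional equation, not pathwise. I would handle this by testing against resolvent operators: for $f\in\coss$ and $\lambda>0$, $\E_x\int_0^\infty \e^{-\lambda s} f(X_\eps(s))\ud K_\eps(s)$ can be computed via the strong Markov property in terms of $u_{\eps,\lambda}$ and the resolvent of $X_\eps$ (this is the standard "potential operator of a CAF" formula, e.g. \cite[\S III.3]{blum}), and both ingredients converge by hypothesis and by Lemma~\ref{lem1}/Theorem~\ref{thm:full}; matching the limit with the analogous expression built from $u_\lambda$ and the resolvent of $X$ pins down $\widetilde K=K$. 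An alternative, which may be cleaner, is to bypass tightness of pairs entirely and argue directly: once $\E_x K_\eps(t)\to \E_x K(t)$ for all $t$, combine it with the uniform second-moment bound of Lemma~\ref{lem:dzik1}(b) (so the $K_\eps(t)$ are $L^2$-bounded, hence uniformly integrable) and with a monotonicity/Dini-type argument on the increasing functions $t\mapsto \E_x K_\eps(t)$ to get uniform convergence on compacts, and finally observe that for nondecreasing processes with converging expectations and a continuous limit, convergence of one-dimensional marginals in $L^1$ is automatic — which is exactly convergence in probability for each fixed $t$. I would present the proof along these lines, invoking \cite{blum,dynkinmp,gands} for the structural facts about CAFs and $\lambda$-potentials and Lemmas~\ref{lem:conv-potent}--\ref{lem:dzik1} for the quantitative estimates.
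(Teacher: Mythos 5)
The decisive step of your argument --- passing from convergence of the $\lam$-potentials to convergence in probability of $K_\eps(t)$ --- is exactly where the real content of the theorem lies, and neither of your two routes closes it. The ``cleaner alternative'' rests on the claim that for nondecreasing processes with converging expectations and a continuous limit, convergence of one-dimensional marginals in $L^1$ (hence in probability) is automatic; this is false. Take $K_\eps(t)=t\,\xi_\eps$ and $K(t)=t\,\xi$ with $\xi_\eps,\xi$ nonnegative, $L^2$-bounded, all with mean $1$ but $\xi_\eps$ not converging to $\xi$ in any stochastic sense: expectations converge for every $t$, paths are continuous and nondecreasing, uniform integrability holds, yet $K_\eps(t)\not\to K(t)$ in probability. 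Knowing $\E_x K_\eps(t)\to\E_x K(t)$ (even for all $x$ and $t$) only controls conditional means, not fluctuations. Your first route fares no better: tightness of $(X_\eps,K_\eps)$ and identification of subsequential limits can only deliver convergence in \emph{distribution} of $K_\eps(t)$, and being ``realised on a common space'' does not upgrade distributional convergence to convergence in probability unless the limit is degenerate or one proves joint convergence of $(K_\eps(t),K(t))$ --- which is again the whole problem. Moreover this route quietly imports hypotheses absent from the statement: nothing is assumed about the processes $X_\eps$ converging to a limit process (in the paper's application they are in fact all the same process, and $\mathsf P_x$ is a single family of measures), so invoking Trotter--Sova--Kurtz--Macke\-vi\v cius and Skorokhod-space tightness of the pairs is off target.

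The paper does not prove the theorem either; it points to the argument of Theorem 3.8, p.~162 in \cite{bandg} (and the related Theorem 6.4, Ch.~VI, \S 3 of \cite{dynkinmp}). The idea there, which is the ingredient missing from your proposal, is a martingale/$L^2$ energy estimate: for a CAF with bounded $\lam$-potential $u$, the process $\e^{-\lam t}u(X(t))+\int_0^t \e^{-\lam s}\ud K(s)$ is a martingale, and one bounds $\E_x\bigl[\bigl(\int_0^t \e^{-\lam s}\ud (K_\eps-K)(s)\bigr)^2\bigr]$ by a quantity of order $\sup_x|u_{\eps,\lam}(x)-u_\lam(x)|$ times the uniform bound from your hypothesis (a); uniform convergence of potentials then forces $L^2$ (hence in-probability) convergence of the functionals. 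Your existence half is closer to the mark, but note that $\lam$-excessivity alone does not make $u_\lam$ the potential of a CAF --- one needs it to be a \emph{regular} potential, a property inherited from the uniform limit of potentials of CAFs, and this has to be checked rather than absorbed into a ``Riesz-type representation'' citation.
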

Unfortunately, a detailed proof of this theorem is quite long and requires discussing a number of notions we would rather avoid discussing in this paper. Neither could we find such a proof in the existing literature. Hence, we need to restrict ourselves to stating that the theorem can be established by following the arguments presented in the proof of Theorem 3.8 on page 162  in \cite{bandg}; 
we are grateful to Prof. Patrick J. Fitzsimmons for this observation. We note also that a similar result, stating that uniform convergence of characteristics entails convergence of continuous additive functionals, is contained in  Theorem 6.4 Ch. VI, \S 3 of  \cite{dynkinmp}.

\subsection{Proof of Theorem \ref{thm:resLoc}}

To complete the preparations for the proof of Theorem  \ref{thm:resLoc}, as mentioned earlier, we need estimates that will allow us to use Lemma \ref{lem:conv-potent}
and Theorem \ref{thm:lim_of_potentials} in the particular case of additive continuous functionals of interest to us --- these estimates are gathered in Lemma \ref{onezymus}, further down. To set the stage for the lemma, let $M_t, t\ge 0$ be the minimal Brownian motion on the star graph $S$ with $\ka $ edges. The relation 
\begin{align}\rla^0g(i,x)&\coloneqq  \E_{(i,x)} \czn \e^{-\lam t} g(M_t) \ud t \nonumber \\ & = \tfrac 1{\slam} \czn (\e^{-\slam |x-y|} - \e^{-\slam (x+y)}) g(i,y)\ud y\nonumber\\&= 2C_i(g) \sinh \slam x 
- \sqrt {\tfrac 2{\lambda}}\int_0^x \sinh \slam (x-y) g(i,y)\ud y, \label{atka:1} \end{align}
an incarnation of \eqref{loo:5}, has been established for $g\in C(S)$, but a standard procedure extends it to all integrable $g$ (for $x\ge 0$ and $i \in \kad$). Hence, it makes sense to speak of $\rla^0 g $ for integrable functions $g$ also; for such $g$,  $\rla^0g $ is the continuous function defined by the right-hand side of \eqref{atka:1}.  In the specific instance of $g_\eps, \eps \in (0,1]$ defined by 
\begin{equation}\label{beatka:4} g_\eps(x)\coloneqq \tfrac 1{2\eps} \1_{\{0<|x|<\eps\}}, \qquad x \in S, \end{equation}
we can similarly extend \eqref{joj:1}. Indeed,  each $g_\eps$ is a pointwise limit of a sequence of continuous functions vanishing at $\zero$ and 
having a common integrable bound. Hence,  \eqref{joj:1} together with the dominated convergence theorem yield 
\begin{equation}\label{joj:1prim} \E_x \czn \e^{-\lam t} g_\eps(\wfup (t)) \ud t = \rla^0 g_\eps (x) + \overline C (g_\eps) \elam^0(x),\end{equation} for $\, x\in S, \lam >0$ and  $\eps \in (0,1]$, 
where 
\[ \overline C(g_\eps) = \frac{2\slam \sui \beta_iC_i(g_\eps)+\int_S \rla^0 g_\eps \ud \mum }{\lam \alpha + \slam \sui \beta_i +\gamma + \lam \int_S \rla^0 \mathsf \mathsf 1_S \ud \mum  }, \]
whereas $C_i(g_\eps) = \frac 1{2\eps \slam}\int_0^\eps \e^{-\slam y}\ud y$, $i \in \kad, \eps \in (0,1], \lam >0$. 
\begin{lemma}\label{onezymus} For each $\lam >0$, there is a constant $\kappa_\lam$ such that 
\begin{equation}
    \label{eq:bound_resol}
|R^0_\lambda g(i,x)|\leq \kappa_\lambda (1\wedge x) \|g\|_{L^1(S)}, \qquad x \ge 0, i \in \kad,
\end{equation}
for all integrable $g$ on $S$, where, of course, $\|g\|_{L^1(S)}\coloneqq\sui \int_0^\infty |g(i,y)|\ud y$.  
Moreover, for each $t>0$, 
\[ \max_{i\in \kad}\,  \sup_{x\ge 0}\sup_{\eps\in(0,1]} \E_{(i,x)} \int_0^t g_\eps (\wfup (s))\ud s <\infty .\]

\end{lemma}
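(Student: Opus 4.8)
The plan is to prove the two bounds separately, since the second one follows easily once the first is in place.

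\textbf{Step 1: the resolvent estimate \eqref{eq:bound_resol}.} I would work directly from the explicit representation \eqref{atka:1}, that is, from
\[ (\rla^0 g)_i(x) = \tfrac 1{\slam} \czn (\e^{-\slam |x-y|} - \e^{-\slam (x+y)}) g_i(y)\ud y. \]
The factor $\e^{-\slam |x-y|} - \e^{-\slam (x+y)}$ is the crucial object. First, it is trivially bounded by $1$ in absolute value (both exponentials lie in $[0,1]$), which immediately gives $|(\rla^0g)_i(x)| \le \tfrac 1{\slam}\|g\|_{L^1(S)}$, so the ``$1$'' part of $1\wedge x$ is handled with $\kappa_\lam \ge \tfrac 1{\slam}$. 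For the ``$x$'' part, I would use the elementary inequality $|\e^{-\slam |x-y|} - \e^{-\slam(x+y)}| \le \slam\bigl((x+y) - |x-y|\bigr) = \slam \cdot 2(x\wedge y) \le 2\slam x$, which holds because $e^{-a}-e^{-b}\le b-a$ for $0\le a\le b$ and $(x+y)-|x-y| = 2\min(x,y)$. This yields $|(\rla^0g)_i(x)| \le 2x\|g\|_{L^1(S)}$. Combining the two bounds gives \eqref{eq:bound_resol} with, say, $\kappa_\lam \coloneqq \max\{\tfrac 1{\slam}, 2\}$ (or any constant dominating both), uniformly in $i\in\kad$ and in integrable $g$. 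I expect this to be routine; the only mild subtlety is making sure the inequality $e^{-a}-e^{-b}\le b-a$ is applied with the correct ordering of $|x-y|$ and $x+y$, which is automatic since $|x-y|\le x+y$.

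\textbf{Step 2: the uniform bound on $\E_{(i,x)}\int_0^t g_\eps(\wfup(s))\ud s$.} Here I would invoke \eqref{joj:1prim}, which expresses the $\lam$-potential of $g_\eps$ for the process $\wfup$ as $\rla^0 g_\eps + \overline C(g_\eps)\elam^0$. Fix any $\lam>0$. The characteristic $\varphi_t^{(\eps)}(i,x) \coloneqq \E_{(i,x)}\int_0^t g_\eps(\wfup(s))\ud s$ of the continuous additive functional $K_\eps(t) = \int_0^t g_\eps(\wfup(s))\ud s$ is nondecreasing in $t$ and satisfies
\[ \varphi_t^{(\eps)}(i,x) \le \e^{\lam t}\,\E_{(i,x)}\int_0^\infty \e^{-\lam s} g_\eps(\wfup(s))\ud s = \e^{\lam t}\bigl(\rla^0 g_\eps(i,x) + \overline C(g_\eps)\,\elam^0(i,x)\bigr), \]
so it suffices to bound the right-hand side uniformly in $\eps\in(0,1]$, $i\in\kad$ and $x\ge 0$. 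Now $\|g_\eps\|_{L^1(S)} = \tfrac 1{2\eps}\cdot \ka\eps \cdot\text{(something)} $ — more precisely $\|g_\eps\|_{L^1(S)} = \sui \int_0^\eps \tfrac 1{2\eps}\ud y = \tfrac\ka 2$, a constant independent of $\eps$; hence Step 1 gives $|\rla^0 g_\eps(i,x)| \le \tfrac\ka 2\kappa_\lam$ uniformly. For the second term, $0\le \elam^0\le 1$, and $\overline C(g_\eps)$ must be shown bounded uniformly in $\eps$: its numerator is $2\slam\sui\beta_i C_i(g_\eps) + \int_S \rla^0 g_\eps\ud\mum$, where $C_i(g_\eps) = \tfrac 1{2\eps\slam}\int_0^\eps\e^{-\slam y}\ud y \le \tfrac 1{2\slam}$ is bounded, and $\int_S \rla^0 g_\eps\ud\mum \le \kappa_\lam\|g_\eps\|_{L^1(S)}\int_S(1\wedge x)\mum(\mud x) = \tfrac\ka 2\kappa_\lam\int_S(1\wedge x)\mum(\mud x) < \infty$ by the standing integrability assumption on $\mum$ together with \eqref{eq:bound_resol}; the denominator $\lam\alpha + \slam\sui\beta_i + \gamma + \lam\int_S\rla^0\mathsf 1_S\ud\mum$ is strictly positive and independent of $\eps$ (this is exactly the positivity noted in the proof of Lemma \ref{lem1} and used in Theorem \ref{thm:full}, valid under assumption \eqref{zez:4} and $\mum(S)=\infty$, or indeed whenever $\alpha+\sui\beta_i>0$). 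Thus $\sup_{\eps\in(0,1]}\overline C(g_\eps) < \infty$, and the claim follows.

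\textbf{Main obstacle.} The genuinely substantive point is bounding $\int_S \rla^0 g_\eps\ud\mum$ uniformly in $\eps$; everything else is bookkeeping. The estimate \eqref{eq:bound_resol} of Step 1 is precisely designed for this: without the factor $(1\wedge x)$, the integral against the possibly infinite measure $\mum$ need not converge, but since $\mum$ satisfies $\int_S(1\wedge x)\mum(\mud x)<\infty$ and $\rla^0g_\eps$ vanishes linearly at the center at a rate controlled independently of $\eps$ (because $\|g_\eps\|_{L^1(S)}$ is constant in $\eps$), the integral is finite and bounded uniformly. I would make sure to state Step 1 before Step 2 in the write-up so that this dependence is transparent, and to note explicitly that the constant $\kappa_\lam$ may be taken to depend only on $\lam$ (through $\slam$) and not on $\eps$, $i$, $x$ or $g$.
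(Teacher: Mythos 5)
Your proof is correct and follows essentially the same route as the paper's: two bounds on $\rla^0 g$ (one uniform, one linear in $x$) combined into the $(1\wedge x)$ estimate, then the characteristic bounded by $\e^{\lam t}$ times the $\lam$-potential from \eqref{joj:1prim}, with $\|g_\eps\|_{L^1(S)}=\tfrac\ka2$ and the uniform boundedness of $\overline C(g_\eps)$ obtained exactly as you do, by integrating \eqref{eq:bound_resol} against $\mum$. The only (harmless) difference is in how the linear-in-$x$ bound is derived: you use the kernel form of \eqref{atka:1} together with $\e^{-a}-\e^{-b}\le b-a$, whereas the paper uses the other expression and the boundedness of $\sinh(\slam y)/y$ on $(0,1]$; your variant even yields a slightly more explicit constant $\kappa_\lam$.
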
 
\begin{proof}The expression  in the second line of  \eqref{atka:1} makes it clear that $|\rla^0 g(i,x)|$  does not exceed $ \tfrac 1{\slam} \|g\|_{L^1(S)}$ for all $x\ge 0, i\in \kad$ and $\lam >0.$ However, for $x\in (0,1]$ we can estimate differently: using the other expression, we see that 
\begin{align*} |\rla^0 g(i,x)| & \le \left (2C_i(g) + \sqrt{\tfrac 2\lambda} \int_0^x |g(i,y)| \ud y\right )\sinh \slam x \\& \le 2 \sqrt{\tfrac 2\lambda} \|g\|_{L^1(S)}\sinh \slam x \le  2x \sqrt{\tfrac 2\lambda}  \sup_{y\in (0,1]} \tfrac{\sinh \slam y}y \|g\|_{L^1(S)},  \end{align*}
and the supremum is finite because so is $\lim_{y\to 0+}\tfrac{\sinh \slam y}y$.   These two inequalities render \eqref{eq:bound_resol} with $\kappa_\lam $ being the larger of $\frac 1{\slam}$ and   $2 \sqrt{\tfrac 2\lambda}  \sup_{y\in (0,1]} \tfrac{\sinh \slam y}y$. 
(The case of $x=0$ is immediate since $\rla^0 g(i,0) =0$, $i \in \kad, \lam >0$.)

To establish the rest, we write 
\begin{align}
 \E_{(i,x)} \int_0^t g_\eps (\wfup (s))\ud s &\le  \e^{\lam t} \E_{(i,x)} \int_0^t \e^{-\lam s} g_\eps (\wfup (s))\ud s\nonumber\\
 &\le \e^{\lam t} (\rla^0 g_\eps (x) + \overline C (g_\eps) \elam^0(x)) \nonumber \\
 &\le \e^{\lam t} (\tfrac {\ka  \kappa_\lam}2 + \overline C(g_\eps)),  \label{beatka:2}\end{align}
where first \eqref{joj:1prim} and then \eqref{eq:bound_resol} was used (together with $\|g_\eps\|_{L^1(S)} = \frac \ka 2$). This reduces our task to showing that $\sup_{\eps \in (0,1]} \overline C(g_\eps)$ is finite, and for this it suffices to show that the supremum over $\eps \in (0,1]$ of the numerators of $\overline C(g_\eps)$ is finite. However, since $0\le C_i(g_\eps)\le \frac 1{2\slam}$, the latter supremum does not exceed $\overline \beta + \sup_{\eps \in (0,1]} \int_S \rla^0g_\eps \ud \mum$. This is finite by \eqref{eq:bound_resol}.  
 \end{proof}

We are finally ready to present the proof of Theorem   \ref{thm:resLoc}. Here is its plan. 
\begin{itemize} 
\item [(i) ] First we consider the case where $\mum$ is finite, $\beta_i>0, i\in\kad$ and $\sui\beta_i=1$; parameters $\alpha\ge 0$ and $\gamma \ge 0$ are chosen arbitrarily.  Here, we use the fact that $K$ can be approximated by the  continuous additive functionals $K_\eps, \eps \in (0,1] $ defined by
\begin{equation}\label{jutro:1} K_\eps (t) \coloneqq \tfrac 1{2\eps}  \int_0^t \1_{\{0< |W_{\alpha,\beta,\gamma,\mum}(s)|\le \eps\}}  \ud s=  \int_0^t g_\eps (W_{\alpha,\beta,\gamma,\mum}(s))  \ud s,  \end{equation}
for $t\ge 0$, employ the previously obtained formula \eqref{joj:1prim} for their $\lam$-potentials, and justify passage to the limit as $\eps \to 0$. 
\item [(ii) ] Next, we drop the assumption $\sui \beta_i =1$ by means of \eqref{marek:1} and \eqref{ele}. 
\item [(iii) ] To treat the general case, we approximate $K$ by local times of the type considered in (i) and (ii). 
\end{itemize}

\begin{proof}[Proof of Theorem \ref{thm:resLoc}]  \ \ \ 

{\bf (i)}  Assume that  $\mum$ is finite   (perhaps even zero), $\beta_i>0, i\in\kad$ and $\sui\beta_i=1$; parameters $\alpha\ge 0$ and $\gamma \ge 0$ are chosen arbitrarily. 
In particular,  $\mum = \delta \mu$ where $\delta \ge 0$ and $\mu$ is a probability measure. Then, by the definition of Section \ref{sec:dow}, $W_{\alpha,\beta,\gamma,\mum}= W_{\alpha,\beta,\gamma,\delta\mu}$ is $\mc C_{\gamma,\delta,\mu}W_{\alpha,\beta}$.  Therefore, as expounded in Section \ref{sec:cim}, the local time of $W_{\alpha,\beta,\gamma,\mum}$ is constructed by means of a sequence of copies of the symmetric local time of $W_{\alpha,\beta}$, that is, of the local time given by \eqref{wika:1}. It follows that for each $s\ge 0$ the value ${K(s)}$  of the local time is the almost sure limit, and  hence also the limit in probability (for  the all underlying probability measures $\mathsf P_x$) of  $K_\eps (s) $ defined in \eqref{jutro:1}. 

This suggests that we could perhaps use Lemma \ref{lem:conv-potent} (with $K_0 \coloneqq K$) to conclude that $\lam$-potentials of $K_\eps$s converge to that of $K$ --- we have just discovered that condition (c) of the lemma holds. To check the other conditions, we recall that,  by the second part of Lemma \ref{onezymus}, the characteristics of the continuous additive functionals $K_\eps$ have a common finite bound: for each $t \ge 0$, 
\[ \sup_{x\in \scan} \sup_{\eps \in (0,1]} \E_x K_\eps (t) < \infty. \]
Lemma \ref{lem:dzik1} then implies that, for a certain (finite) constant $\kappa>0$, 
\[ \sup_{x\in \scan} \sup_{\eps \in (0,1]} \E_x [K_\eps (s)]^2 \le  2\kappa^2 (1+s)^2, \qquad s \ge 0. \]
Hence, also condition (b) of Lemma \ref{lem:conv-potent}  is satisfied. Since condition (a) is obvious, we infer that 
 the $\lam$-potential $u_\lam$ of ${K}$ is given by (see \eqref{joj:1prim})
\[ u_\lam (x) = \grae \mathsf E_x \czn \e^{-\lam t} g_\eps (W_{\alpha,\beta,\gamma,\mum}(s))\ud s = \grae (\rla^0 g_\eps (x) + \overline C (g_\eps) \elam^0(x)),\] $x\in S$. This limit further reduces to  $\grae \overline C (g_\eps) \elam^0(x)$, because, as can be seen from the third line in \eqref{atka:1}, $\grae \rla^0g_\eps (x) =0$ for each $x\in S$. (Indeed, we have $\grae C_i(g_\eps)=\frac{1}{2\sqrt{2\lambda}}$, and so the first term in  $\rla^0g_\eps (x)$ converges to $\frac{1}{\sqrt{2\lambda}}\sinh\slam x$; since the limit of the second term is also equal to $\frac{1}{\sqrt{2\lambda}}\sinh\slam x$, the claim follows.) 

Now, the denominator in $\overline C_\lam (g_\eps)$ does not depend on $\eps$ and  equals $\lam \alpha + \slam  +\gamma + \lam \int_S \rla^0 \mathsf \mathsf 1_S \ud \mum  $ because $\sui \beta_i =1$ by assumption. Also, the numerator involves two summands, of which the first, by $\grae C_i(g_\eps)=\frac{1}{2\sqrt{2\lambda}}$, converges to $\sui \beta_i =1$. We have also proved that $\grae \rla^0g_\eps (x) =0$ for each $x\in S$. This, however, when combined with   \eqref{eq:bound_resol}, yields by the  dominated convergence theorem that $\grae \int \rla^0 g_\eps \ud \mum =0$.  
Hence,  
 \begin{align}\label{jutro:2}
    u_\lambda 
    =\frac{1}{\lambda \alpha +\sqrt{2\lambda} +\gamma+\lambda \int_SR_\lambda^01_S\ud \mum} {\mathpzc l}_\lambda^0, \qquad  \lam >0,
\end{align}
establishing \eqref{eq:res_formula_localtime} in the first case.

{\bf (ii)}  Assume now that the assumption $\sui \beta_i =1$ is violated, but the remaining ones are satisfied. Formulae \eqref{marek:1} and \eqref{ele} tell us that in this case the $\lam$-potential of the symmetric local time of $W_{\alpha,\beta,\gamma,\mum}$  is the $\lambda$-potential of the symmetric local time  of $W_{\alpha/\overline \beta,\beta/\overline \beta,\gamma/\overline \beta,\mum/\overline \beta}$ divided by $\overline \beta$. This shows that \eqref{eq:res_formula_localtime} holds also in this case.

{\bf (iii)} We finally allow some of $\beta_i$s to vanish, but compensate this by assumption \eqref{zez:4}. In order to extend our formula to this case, we approximate $W_{\alpha,\beta, \gamma, \mum} $ by processes  $W_{\alpha,\beta(\eps), \gamma, \delta (\eps) \mu_\eps} $ as in Sections \ref{sec:tcoim} and
\ref{sec:stocha}. More precisely,  we take positive $\beta_i(\eps), {i\in\kad}$ that converge to $\beta_i$ as $\eps \to 0$, $i \in \kad$,   and define finite measures $\mum_\eps\coloneqq \delta (\eps) \mu_\eps$ as the restrictions of $\mum $ to  the sets $\Gamma_\eps$; as before, $\Gamma_\eps \subset S$ is composed of points in $S$ that lie at a distance $\ge \eps$ from the graph's center. Then, for each $\eps\in (0,1]$, 
\begin{itemize}
\item $L_\eps$ defined in \eqref{czasy} is the symmetric local time of $W_{0,\beta(\eps),0, \mum_\eps}$,
\item $L_\eps \circ A_{\eps,\alpha}^{-1}$ (see \eqref{czasy} again) is the symmetric local time of $W_{\alpha,\beta(\eps),0, \mum_\eps}$, 
\item and the symmetric local time of  $W_{\alpha,\beta(\eps),\gamma, \mum_\eps}$, say, $K^\sharp_\eps$, (the superscript $\sharp$ is to distinguish this local time from $K_\eps$ of \eqref{jutro:1}) defined by an analogue of \eqref{mzoe:3}, can in view of definition \eqref{maletepsy} be written as 
\begin{equation}\label{beatka:1} K_\eps^\sharp (t) = L_\eps \circ A_{\eps,\alpha}^{-1} (t \wedge \mathpzc t_\eps), \qquad t \ge 0.\end{equation}
\end{itemize}
Since the coordinates of $\beta(\eps)$ are positive and $\mum_\eps$ is finite and formula \eqref{eq:res_formula_localtime} has been establish for such parameters, the $\lam$-potential of $K_\eps^\sharp$ is given by 
\begin{align}\label{beatka:3} 
\E_x\int_0^\infty \e^{-\lambda s} \ud K_\eps^\sharp (s)    
     =
     \frac{1}{\lambda \alpha +\sqrt{2\lambda} \; \overline {\beta(\eps)}+\gamma+\lambda \int_SR_\lambda^0\mathsf 1_S\ud \mum_\eps} {\mathpzc l}_\lambda(x). 
\end{align}
The right-hand side above converges to $ \frac{1}{\lambda \alpha +\sqrt{2\lambda} \overline \beta +\gamma+\lambda \int_SR_\lambda^0\mathsf 1_S\ud \mum} {\mathpzc l}_\lambda (x)$ as $\eps\to0$, because  $\grae \int_SR_\lambda^01_S\ud \mum_\eps=\grae \int_S\1_{\Gamma_\eps}R_\lambda^0\mathsf 1_S\ud \mum= \int_S R_\lambda^0\mathsf 1_S\ud \mum$ (by the Lebesgue dominated convergence theorem) and $\grae \beta (\eps) =\beta$. Hence, to establish \eqref{eq:res_formula_localtime} it suffices to show that the left-hand side converges to $\E_x\int_0^\infty \e^{-\lambda s} \ud K (s)$, the $\lambda$-potential of $K$ evaluated at $x$. 

To this end we will use Lemma \ref{lem:conv-potent} for $K_\eps\coloneqq K_\eps^\sharp$ and $K_0\coloneqq K$. Let us fix $x\in \scan$. First of all, it is clear that condition (a) of the lemma is satisfied.  Secondly, 
we recall formula \eqref{supl} speaking on convergence of $ L_\eps \circ  A^{-1}_{\eps,\alpha} (s)$ to  $L \circ A_\alpha (s)$, convergence that is uniform with respect to $s$ in finite intervals, and the fact established in Section \ref{sec:gamma} (part (B)) that $\mathpzc t_\eps$ converges to $\mathpzc t$ of \eqref{malet} almost surely with respect to $\mathsf P_x$ (we recall that forumla \eqref{supl} is established under standing assumption \eqref{zez:4}).  We thus infer that
\begin{equation}
    \label{eq:convergence_CAF_K}
    \grae  K_\eps^\sharp (s) = \grae L_\eps \circ  A^{-1}_{\eps,\alpha}(s\wedge \mat_\eps )  = L \circ  A^{-1}_{\alpha}(s\wedge \mat), \qquad s \ge 0
\end{equation}
in the mode that is stronger than convergence in probability (in $\mathsf P_x$); moreover,  in view of \eqref{malet},  we recognize  $L \circ  A^{-1}_{\alpha}(s\wedge \mat)$ as $K(s)$. This shows that condition (c) of the lemma is satisfied also. Finally, we estimate as in \eqref{beatka:2} to see that 
\begin{align*}
\sup_{\eps \in (0,1]} \sup_{x\in \scan} \E_x K_\eps^\sharp &(t) \leq \e^{\lambda t}\sup_{\eps \in (0,1]} \sup_{x\in \scan}\E_x\int_0^\infty \e^{-\lambda s} \ud K_\eps^\sharp (s)\\
&= \sup_{\eps \in (0,1]} \sup_{x\in \scan}\ \frac{\e^{\lam t}}{\lambda \alpha +\sqrt{2\lambda} \; \overline {\beta(\eps)}+\gamma+\lambda \int_SR_\lambda^0\mathsf 1_S\ud \mum_\eps} {\mathpzc l}_\lambda(x)\\ &\leq \sup_{\eps \in (0,1]}  \frac{\e^{\lam t}}{\lambda \alpha +\sqrt{2\lambda} \; \overline {\beta(\eps)}+\gamma+\lambda \int_SR_\lambda^0\mathsf 1_S\ud \mum_\eps}
\end{align*}
Since the last fraction converges, as $\eps \to 0+$, to a finite limit and $\int_SR_\lambda^0\mathsf 1_S\ud \mum_\eps$ increases when $\eps$ decreases, by redefining $\beta (\eps)$ if necessary for $\eps$s that are not so close to $0$, we can make sure that the supremum in the last line is finite. This, however, as in the proof of part (i), allows inferring that condition (b) of the lemma is also satisfied. 

Hence, the left-hand side of \eqref{beatka:3} converges to $\E_x\int_0^\infty \e^{-\lambda s} \ud K (s)$, as desired. This completes the proof.  \end{proof}

\subsection{Proof of Theorem \ref{thm:Loc}} 

 In terms of functions $g_\eps$ defined in \eqref{beatka:4}, our task is to show that 
\[
\overline \beta K(t)=\lim_{\eps\to0}\tfrac{1}{2\eps}\int_0^t g_\eps (W_{\alpha,\beta, \gamma, \mum} (s)) \ud s, \qquad t \ge 0
\]
in probability with respect to all underlying measures $\mathsf P_x$. The idea is to use Theorem \ref{thm:lim_of_potentials} which allows deducing such convergence from the convergence of the $\lam$-potentials, say, $u_{\eps, \lam}$, of $K_\eps$, where $K_\eps (t) \coloneqq \tfrac{1}{2\eps}\int_0^t g_\eps (W_{\alpha,\beta, \gamma, \mum} (s)) \ud s
$, $t \ge 0$.  We have 
\[ u_{\eps,\lam} (x) = \rla^0 g_\eps (x) + \overline C(g_\eps) \elam^0 (x), \qquad \eps \in (0,1], \lam >0,\]
see \eqref{joj:1prim} and the definition of $\overline C(g_\eps) $ following this formula. We need to find a common bound for  $ u_{\eps,\lam} (x) $ and the limit $\grae  u_{\eps,\lam} (x)$.

{\bf (i) }Starting with the latter, we argue as in the last lines of part (i) of the proof of Theorem  \ref{thm:resLoc} that $\grae \overline C(g_\eps)= \frac{\overline \beta}{\lambda \alpha +\sqrt{2\lambda}\overline \beta +\gamma+\lambda \int_SR_\lambda^01_S\ud \mum}$ (note that now $\overline \beta$ need not be $1$). In the same part of the proof just mentioned, but a bit earlier, we have also shown that $\grae \rla^0 g_\eps (x)=0, x\in \scan$; this allows us to conclude, by \eqref{eq:res_formula_localtime},
 that $\grae u_{\eps,\lam} (x) $ exists and coincides with the $\lam$-potential of $\overline \beta K$. However, assumption (a) of Theorem \ref{thm:lim_of_potentials} speaks of convergence that is uniform with respect to $x$, and thus we still need to show that $\grae \rla^0 g_\eps (x)=0$ uniformly in  $x\in \scan$. 
 
 To this end, we take a look at the second line in \eqref{atka:1}; this line reveals that $\slam \rla^0 g_\eps (x)$ can be written as 
 \(\e^{-\slam x} \tfrac 1\eps \int_0^\eps \sinh \slam y \ud y   \)
for $x \ge \eps$, and as 
\( \e^{-\slam x} \tfrac 1\eps \int_0^x \sinh \slam y \ud y + \tfrac 1\eps \sinh \slam x \int_x^\eps \e^{-\slam y}\ud y \) for $x\in [0,\eps]$. The first expression above does not exceed $\tfrac 1\eps \int_0^\eps \sinh \slam y \ud y$ and the second does not exceed $\tfrac 1\eps \int_0^\eps \sinh \slam y \ud y + \max_{0\le x\le \eps}\sinh\slam x$. Since $\grae \sinh \slam \eps =0$ implies $\grae \tfrac 1\eps \int_0^\eps \sinh \slam y \ud y =0$ and $\grae \max_{0\le x\le \eps}\sinh\slam x=0$, this completes the proof of uniform convergence of $\rla^0 g_\eps (x)$ to $0$, as $\eps \to 0$.

{\bf (ii) } To find a bound for $u_{\eps,\lam}(x)$ we estimate as in \eqref{beatka:2}:
\begin{align}
u_{\eps,\lam}(x) & \le \tfrac {\ka  \kappa_\lam}2 + \overline C(g_\eps),  \label{beatka:5}\end{align}
where again \eqref{eq:bound_resol} was used together with $\|g_\eps\|_{L^1(S)} = \frac \ka 2$.  
Since we know from the end of the proof of Lemma \ref{onezymus} that $\sup_{\eps \in (0,1]} \overline C(g_\eps)$ is finite, this shows that assumption (a) of  Theorem \ref{thm:lim_of_potentials} is satisfied. 

The theorem is thus applicable and says that $K_\eps$s converge to a CAF whose $\lambda$-potential coincides with the $\lam$-potential of $K$. But $\lam$-potentials characterize CAFs uniquely; $K_\eps$s must converge therefore to $K$. This completes the proof.

\newcommand{\mka}{\mathcal {k}}
\newcommand{\mel}{\mathcal {l}}
\section{Formulae for $T_i$s}\label{sec:calculation}
Our goal in this section is to express explicitly the time-change processes $T_i, 1\le i\le\ka$ featured in the representation of the process $W_{\alpha,\beta,\gamma, \mum}$ given in Theorem~\ref{thm:wbam}. Our argument hinges on Proposition \ref{prop:rept} (presented below) that belongs to the realm of real analysis, and complements Lemma \ref{lem:balance_uniq}. To recall, the problem at hand is whether, having given  continuous nondecreasing functions $\ell_i\colon [0,\infty)\to [0,\infty)$ with $\ell_i (0)=0$, $i \in \kad$, one can find a collection of functions $t_i\colon [0,\infty)\to [0,\infty)$   such that  \begin{equation}\label{calc:1} \sui t_i(s) =s, s \ge 0 \mqquad{and} \ell_i \circ t_i \text{ does not depend on } i\in \kad.\end{equation}
 Lemma \ref{lem:balance_uniq} establishes that the solution to the problem is unique provided that $\ell_i$s do not have common levels of constancy. Proposition \ref{prop:rept}, in turn, says that a solution always exists, and under assumption of no levels of constancy accompanied by $\gras \ell_i (s) =\infty, i \in \kad$, provides a representation of $t_i$s in terms of functions \[ \mka_i, i \in \kad, \mka   \mqquad{and} \mel \] defined as follows: 
\begin{itemize} \item   $\mka_i\coloneqq \ell_i^{-1}$ is the generalized inverse of $\ell_i$, that is,
\[
 \mka_i(s)\coloneqq \inf\{u\geq 0 \colon \ell_i(u)>s\}, \qquad s\ge 0, i\in \kad,
 \]
\item   $\mka \coloneqq \sui \mka_i$, and 
\item  $\mel \coloneqq \mka^{-1}$ is the generalized inverse of $\mka$. \end{itemize}
 We note that $\mka_i$s are well-defined provided that $\gras \ell_i (s) =\infty, i \in \kad$. Moreover, $\mka_i$s are strictly increasing  since $\ell_i$s are continuous. It follows that $\mka$ is strictly increasing as well, and thus $\mel$ is continuous.

\begin{proposition}\label{prop:rept}
   Assume that $\ell_i\colon [0,\infty) \to [0,\infty),i\in \kad$ are continuous non-decreasing functions such that $\ell_i(0)=0$ and $\gras \ell_i (s) =\infty, i\in \kad$. 
 \begin{itemize} 
 \item [(a) ]  There is a collection of nondecreasing continuous  functions  $t_i\colon[0,\infty)\to [0,\infty), i\in \kad$ such that \eqref{calc:1} is satisfied.  
\item [(b) ]   If, additionally, $\ell_i$s  do not have common levels of constancy, $t_i$s are determined uniquely.
Furthermore, the function $\mel$ introduced above coincides with $\ell_i\circ t_i, i \in \kad$, and $t_i$s  are represented as follows: 
\begin{equation}
    \label{eq:T_i_formula}
    t_i(s)=\mka_i \circ \mel (s)+\1_{(\mdelta \mka_i)\circ \mel (s)>0} (s-\mka \circ \mel (s)), \quad s \ge 0, i\in \kad,
\end{equation} 
where $\mdu \mka_i$ is the jump of $\mka_i,$ that is, $\mdu \mka_i (u) = \mka_i(u) - \mka_i(u-), u \ge 0, i \in \kad$. 
    \end{itemize}
    \end{proposition}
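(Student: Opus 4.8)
The plan is to treat the two parts separately, building the solution $t_i$ explicitly via the candidate formula \eqref{eq:T_i_formula} and then checking it meets the requirements; uniqueness in part (b) is already handed to us by Lemma \ref{lem:balance_uniq}, so the real work is existence in (a) and the verification of the representation in (b). First I would fix the auxiliary functions $\mka_i = \ell_i^{-1}$, $\mka = \sui \mka_i$ and $\mel = \mka^{-1}$, noting that since each $\ell_i$ is continuous with $\gras \ell_i(s)=\infty$ the inverse $\mka_i$ is finite, strictly increasing and \cadlag{}, and each $\mka_i$ satisfies the two standard facts about generalized inverses visible in Figure \ref{inverses}: $\ell_i \circ \mka_i (u) = u$ at every point $u$ where $\mka_i$ is continuous, while at a jump point $u$ of $\mka_i$ one has $\mka_i(u) = \mka_i(u-)$ held fixed on a whole interval of $\ell_i$-values. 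Symmetrically, $\mka$ being strictly increasing makes $\mel$ continuous and nondecreasing, with $\mka \circ \mel(s) \ge s \ge \mka \circ \mel(s) - (\mdelta \mka)(\mel(s))$ — this inequality is the source of the correction term in \eqref{eq:T_i_formula}.

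For part (a), I would first dispose of the case where $\ell_i$s are well-separated (no common levels of constancy) by simply \emph{defining} $t_i$ through \eqref{eq:T_i_formula} and verifying the two conditions in \eqref{calc:1}. Summing \eqref{eq:T_i_formula} over $i$: the main terms give $\mka \circ \mel(s)$, and since at any point $s$ at most one $\mka_i$ can jump (this is exactly where ``no common levels of constancy'' enters — a common jump of $\mka_i$ and $\mka_j$ would mean $\ell_i$ and $\ell_j$ share a level of constancy), the indicator terms contribute exactly $s - \mka \circ \mel(s)$ once, yielding $\sui t_i(s) = s$. For the second condition I would show $\ell_i \circ t_i(s) = \mel(s)$ for every $i$: when $(\mdelta \mka_i)(\mel(s)) = 0$ the point $\mka_i(\mel(s))$ is a continuity point of $\mka_i$ so $\ell_i(\mka_i(\mel(s))) = \mel(s)$; when $(\mdelta \mka_i)(\mel(s)) > 0$ one checks that $t_i(s) = \mka_i(\mel(s)) + (s - \mka\circ\mel(s))$ still lands inside the constancy interval of $\ell_i$ at value $\mel(s)$, using $0 \le s - \mka\circ\mel(s) \le (\mdelta\mka)(\mel(s)) = (\mdelta \mka_i)(\mel(s))$ and the relation between jumps of $\mka_i$ and flat stretches of $\ell_i$. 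Continuity and monotonicity of each $t_i$ then follow because $\mel$ is continuous and the piecewise description glues correctly across the (countably many) jump times of $\mka$. For the general case of part (a) — arbitrary continuous nondecreasing $\ell_i$ that may share levels of constancy — I would use an approximation/perturbation argument: replace $\ell_i$ by $\ell_i^{(n)}(s) \coloneqq \ell_i(s) + s/n$ (or by a strictly-increasing-on-the-relevant-set modification), which destroys all common levels of constancy, obtain $t_i^{(n)}$ from the separated case, use the uniform Lipschitz bound $|t_i^{(n)}(s') - t_i^{(n)}(s)| \le |s'-s|$ (immediate from $\sum t_j^{(n)} = \mathrm{id}$ and monotonicity, as in the proof of Lemma \ref{lemacikladny}) together with Arzel\`a--Ascoli to extract a convergent subsequence, and pass to the limit in both conditions of \eqref{calc:1}, which are closed under uniform convergence since the $\ell_i$ are continuous.

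For part (b), uniqueness is Lemma \ref{lem:balance_uniq}. It remains to identify the common value of $\ell_i \circ t_i$ with $\mel$ and to confirm the formula \eqref{eq:T_i_formula}; but this is precisely what the construction in the separated case of part (a) already produced, so under the standing hypotheses of (b) the functions built there \emph{are} the unique solution, and the identification $\ell_i \circ t_i = \mel$ was verified en route. I would close by remarking that the hypothesis $\gras \ell_i(s) = \infty$ is used only to guarantee the $\mka_i$ are everywhere finite; without it one restricts to the relevant bounded range.

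\textbf{Main obstacle.} I expect the delicate point to be the bookkeeping at jump points of $\mka$ in the separated case — specifically, showing that $t_i(s)$ as defined by \eqref{eq:T_i_formula} genuinely lands in the correct constancy interval of $\ell_i$ (so that $\ell_i \circ t_i = \mel$ holds on the nose) and that exactly one indicator ``fires'' so the sum telescopes to $s$. This requires carefully pairing the jump of $\mka_i$ at a value $v = \mel(s)$ with the flat stretch $\{u : \ell_i(u) = v\} = [\mka_i(v-), \mka_i(v)]$ of $\ell_i$, and then verifying that the offset $s - \mka\circ\mel(s)$ never exceeds the length $(\mdelta\mka_i)(\mel(s))$ of that stretch. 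The approximation step in the general case is comparatively routine once the Lipschitz bound and Arzel\`a--Ascoli are invoked, mirroring arguments already deployed in Lemmas \ref{lem:balance_uniq} and \ref{lemacikladny}.
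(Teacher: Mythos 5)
Your proposal is correct in substance, but it reaches part (b) by a genuinely different route than the paper. For (a) you do essentially what the paper does: perturb to $\ell_i+\mathrm{id}/n$, use the Lipschitz bound $\lvert t_i^{(n)}(s')-t_i^{(n)}(s)\rvert\le\lvert s'-s\rvert$ and Arzel\`a--Ascoli, and pass to the limit in \eqref{calc:1} (the paper only needs the strictly increasing case for the approximants, where the formula collapses to $t_i=\mka_i\circ\mel$). The divergence is in (b): the paper never verifies \eqref{eq:T_i_formula} directly; it identifies $\mel$ with $\ell_i\circ t_i$ by passing to the limit in Whitt's $M_1$/$SM_1$ topologies (where inversion is continuous, and summation is continuous in the absence of common discontinuities), and then extracts \eqref{eq:T_i_formula} pointwise --- at continuity points of $\mka_i$ via a real-analysis lemma on convergence of generalized inverses, and at jump points via Lemma \ref{det}(a). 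You instead plug the candidate \eqref{eq:T_i_formula} in directly, check that under the separation hypothesis at most one indicator fires (so the sum telescopes to $s$) and that the corrected value lands in the flat stretch $[\mka_i(\mel(s)-),\mka_i(\mel(s))]$ of $\ell_i$ (so $\ell_i\circ t_i=\mel$ on the nose), and then invoke Lemma \ref{lem:balance_uniq}. This is more elementary --- no Skorokhod-space machinery --- and it proves constructively that the formula defines a solution, which the paper obtains only through a limiting procedure; the paper's route, in exchange, gets continuity and monotonicity of the $t_i$ for free, since its $t_i$ arise as locally uniform limits of continuous nondecreasing functions, and it reuses $M_1/SM_1$ tools already present elsewhere in the paper. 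The one thin spot in your plan is precisely this last point: you must verify by hand that the functions defined by \eqref{eq:T_i_formula} are nondecreasing and continuous before Lemma \ref{lem:balance_uniq} can be used to identify them with the solution from (a); this does go through (monotonicity by a case analysis on $\mel(s_1)<\mel(s_2)$ versus $\mel(s_1)=\mel(s_2)$, continuity at the endpoints of a flat stretch of $\mel$ because $\mdu\mka(v)\to0$ as $v$ approaches the jump point from either side), but it should be written out rather than asserted. Two small slips to correct: the offset is nonpositive, i.e.\ $-(\mdu\mka)(\mel(s))\le s-\mka\circ\mel(s)\le 0$, not $0\le s-\mka\circ\mel(s)$ as written later in your sketch; and the identity $\ell_i(\mka_i(u))=u$ holds for every $u$ simply because $\ell_i$ is continuous, starts at $0$ and tends to infinity --- it does not require $u$ to be a continuity point of $\mka_i$.
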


Before presenting the proof, we remark that
\eqref{eq:T_i_formula} is a counterpart of a formula obtained originally in 
        \cite{bayraktar}. However, differences between these two are worth spelling out here.  First of all, on the technical side,  in \cite{bayraktar} the role of $\mka_i$s is played by the 
        \cadlag \, inverses of $\ell_i$s defined as $\ell^{\leftarrow}(t)\coloneqq\inf\{s\geq 0 \colon l_i(s)\geq t\}, t \ge 0$. More importantly, the counterpart of \eqref{eq:T_i_formula}, as obtained in  \cite{bayraktar} applies merely to the case of Brownian motion and has been proved by stochastic means. Formula \eqref{eq:T_i_formula}, on the contrary, is of purely deterministic nature, and applies to paths of all processes that satisfy the assumptions of the proposition.

\begin{proof}[Proof of Proposition \ref{prop:rept}] Throughout the proof, we say that a sequence of functions converges locally uniformly if the convergence is uniform in compact subintervals of $[0,\infty)$.

\bf (a) \rm  Suppose first that $\ell_i$s are strictly increasing. Then the generalized  inverses $\mka_i, i\in \kad$ become the usual inverses, and as such are continuous; also, we obviously have $\mka_i \circ \ell_i (s) = \ell_i\circ \mka_i (s) =s, s\ge 0, i \in \kad$.   It follows that $\mka$ and $\mel$ are continuous and strictly increasing and that  $\mel\circ \mka (s) = \mka \circ \mel (s)=s, s \ge 0$. Hence, introducing \[ t_i \coloneqq \mka_i \circ \mel, \qquad i \in \kad \] (this is a very particular case of \eqref{eq:T_i_formula}), we see that $\ell_i \circ t_i $ does not depend on $i$, being equal to $\mel$. Since,  at the same same time $\sui t_i (s) = \sui \mka_i \circ \mel (s) = \mka \circ \mel (s) = s, s \ge 0$, we see that these $t_i$s satisfy conditions \eqref{calc:1}.

In the general case we approximate each $\ell_i, i\in \kad$ by a sequence of strictly increasing functions, say, $\jcg{\ell_{n,i}}$. For example, we may take $\ell_{n,i}(t)$ $\coloneqq \frac 1n t + \ell_i (t), t \ge 0, n\ge 1$, so that $\gra \ell_{n,i} = \ell_i, i \in \kad$ locally uniformly.  Then, for each $n\ge 1$, there is a collection $t_{n,i},i \in \kad$ of nondecreasing continuous functions satisfying \eqref{calc:1} with $\ell_i$ replaced by $\ell_{n,i}$. Arguing as in Lemma \ref{lemacikladny} we check that, for each integer $m\ge 1$ and $i\in \kad$, elements of the sequence $\jcg{t_{n,i}}$ form a pre-compact set in $C[0,m]$. Hence, we can choose a subsequence of this sequence that converges in $C[0,1]$, and from that one, a subsequence that converges in $C[0,2]$, etc. Therefore, the diagonal argument shows that, for each $i \in \kad$,  there is a subsequence of $\jcg{t_{n,i}}$ 
that converges to a function $t_i$ locally uniformly.  Moreover, it is rather easy to see that this subsequence can be chosen in such a way that  $\jcg{t_{n,i}}$ converges locally uniformly for all $i \in \kad$, that is, that the subsequence is the same for all $i\in \kad$.   Then, since $\ell_{n,i}\circ t_{n,i}$ does not depend on $i$, we can see immediately that neither does $\ell_i \circ t_i$, and similarly that $\sui t_i (s) = s, s \ge 0$. This, however, means that we have found $t_i$s that satisfy \eqref{calc:1}, completing the proof of (a).

\bf (b) \rm By assumption, $\ell_i$s do not have common levels of constancy; thus, Lemma \ref{lem:balance_uniq} guarantees that the collection of functions $t_i, i \in \kad $ satisfying \eqref{calc:1} obtained in (a) is unique, and thus we can restrict ourselves to proving the rest of (b).

Before continuing, let us recapitulate. We know from (a) that  there are strictly increasing functions $\ell_{n,i}, n\ge 1, i\in \kad$ (for each $i\in \kad$, $\jcg{\ell_{n,i}}$ is a subsequence of the sequence denoted $\jcg{\ell_{n,i}}$ in point (a)),
such that $\gra \ell_{n,i}=\ell_i$ locally uniformly, and simultaneously  $t_i =\gra t_{n,i}, i \in \kad$ in the same way, where $t_{n,i}, i \in \kad$ satisfy \eqref{calc:1} with $\ell_i$ replaced by $\ell_{n,i}$. In particular, introducing $\mel_n $ as a common value of $\ell_{n,i} \circ t_{n,i}, i \in \kad$, that is, 
\[ \mel_n \coloneqq \ell_{n,i} \circ t_{n,i}, \qquad i \in \kad,\]
 we see that $\mel_n$s converge locally uniformly to a function, say, $\mel_\infty$. This function is a common value of $\ell_i \circ t_i$,
\[ \mel_\infty \coloneqq \ell_{i} \circ t_{i}, \qquad i \in \kad,\] 
 but we would like to identify it with $\mel $ --- the function introduced above as the generalized inverse of $\mka \coloneqq \sui \mka_i$. This is the first of our two goals (the other one being formula \eqref{eq:T_i_formula}).

\bf Goal no. 1. \rm It is the biggest obstacle on our way that the operation  of taking the generalized inverse is not continuous in the metric of the Skorokhod space used in this paper so far, and neither is the operation  of summation. 
Fortunately, the metric used so far (the most popular one, see e.g. \cite{bill2}, known also as $J_1$) is not the only one at our disposal: in fact, there are at least four natural, related but not identical, topologies in this space, introduced by Skorokhod himself. In particular, Theorem 13.6.3 in \cite{Whitt} says that a sequence of monotone functions in $D((0,\infty), \R)$ converges in the so-called $M_1$ metric iff so does the sequence of their generalized inverses. 
This convergence also is implied by a pointwise convergence of the functions on a dense subset of $(0,\infty)$, and implies such pointwise convergence. As applied to our scenario, the theorem declares thus that, for each $i \in \kad$,  the generalized inverses $\mka_{n,i}$ of restrictions of $\ell_{n,i}$ to $(0,\infty)$ converge to the generalized inverse of $\ell_i$ in the sense of $M_1$ in  $D((0,\infty), \R)$; for, $\ell_{n,i}$ converge locally uniformly to $\ell_i$ and thus also in the sense of $M_1$. For future use we record this information  as 
\begin{equation}\label{fft:1}    \lim_{n\to\infty}\mka_{n,i}\overset{M_1} = \mka_i, \qquad i \in \kad.
\end{equation}
   
To continue, we recall that whereas, as we have just seen, $M_1$ topology has nice properties related to the operation of taking inverse, the so-called $SM_1$ topology (i.e., strong $M_1$ topology) has nice properties related to Cartesian products. Elaborating on this succinct statement, given functions $f\colon \ap{(}0,\infty)\to \R^i$ and $g\colon\ap{(}0,\infty) \to \R^j$ for some integers $i$ and $j$,  we will write $(f,g)$ to denote the function $t \mapsto (f(t),g(t))\in \R^{i+j}$. In this notation, Theorem 12.6.1 in \cite{Whitt} says that if a sequence, say, $\jcg{f_n}$ of elements of $D(\ap{(}0,\infty),\R^i)$ converges to an $f$ in the sense of $SM_1$, and a~sequence $\jcg{g_n}$ of elements of $D(\ap{(}0,\infty),\R^j)$ converges to a  $g$  in the sense of $SM_1$, and, additionally, $f$ and $g$ do not have common points of discontinuity (i.e., there is no argument $t$ such that both $f$ and $g$ are discontinuous at $t$), then $(f_n,g_n)$s converge to $(f,g)$ in the sense of $SM_1$ in $D(\ap{(}0,\infty), \R^{i+j})$. 
(We note on the side that the Whitt book speaks of convergence of members of Skorokhod spaces of functions defined on a finite interval, not on $[0,\infty)$ or $(0,\infty)$. However, the argument in the case of our interest is analogous, see \S 12.9 in \cite{Whitt}.)

Lucky for us, $SM_1$ has further nice properties. First of all, as explained on p. 394 in \cite{Whitt}, even though in general topologies $M_1$ and $SM_1$ differ, in the case of $D(\ap{(}0,\infty),\R)$ they coincide. Secondly, in $SM_1$ the operation of summation is, in a limited sense, continuous. To be more explicit: Corollary 12.7.1 in \cite{Whitt} says that if, for some integer $i$, two sequences, say, $\jcg{f_n}$ and  $\jcg{g_n}$, of elements of $D(\ap{(}0,\infty),\R^i)$ converge to $f$ and $g$, respectively, in the metric of $SM_1$, and, additionally, $f$ and $g$ do not have common points of discontinuity, then the sequence $\jcg{f_n+g_n}$ converges to $f+g$ in the same metric. 

These three results allow us to argue as follows. First of all, we note that convergence in \eqref{fft:1} can be replaced by that in the sense of $SM_1$. Then, using Theorem 12.6.1 in \cite{Whitt} a couple of times, we conclude that the functions $ t \mapsto \seq{\mka_{n,i} (t)} \in \R^\ka$ converge, as $n\to \infty$, to the function  $ t \mapsto (\mka_{i}  (t)) \in \R^\ka$ in the sense of $SM_1$ metric; the theorem is applicable because, by assumption, $\ell_i$s do not have common points of constancy, and so $\mka_i$s do not have common points of discontinuity.  By the same token,  Corollary 12.7.1 of \cite{Whitt} can be used to see that 
$\sui \mka_{n,i} $ converge to $\mka = \sui \mka_i$ in the $SM_1$ metric of $D(\ap{(}0,\infty),\R)$, or, which is the same, in the $M_1$ metric of this space.  But then, by  Theorem 13.6.3 in \cite{Whitt}, the generalized inverses of $\sui \mka_{n,i} $ converge to the generalized inverse of $\mka $, that is, to $\mel$. But $\mel_n$s are defined as the usual inverses of  $\sui \mka_{n,i} $ (see part (a)), and we know that $\mel_\infty = \gra \mel_n$ locally uniformly. Since all the functions involved here are  
c\`adl\`ag, this completes the proof of the fact that (on the entire half-line)
\[ \mel = \mel_\infty = \ell_i \circ t_i, \qquad i \in \kad. \]

\bf Goal no. 2. \rm We are left with showing \eqref{eq:T_i_formula}. All we know so far about $t_i$s is that they are locally uniform limits of $t_{n,i}$s, that the latter satisfy 
\[ t_{n,i} = \mka_{n,i} \circ \mel_n, \qquad i\in \kad, n\ge 1, \]
and that $\mel_n$s converge locally uniformly to $\mel_\infty=\mel$. 

To proceed, we need the following real analysis result concerning generalized inverses. Suppose that nondecreasing functions $f_n$ converge locally uniformly to a function $f$ and that a $u\ge 0$ is not a level of constancy of $f$, that is, that $u$ is a point of continuity of $f^{-1}$; suppose furthermore that nonnegative numbers $u_n $ converge to $u$. Then $\gra f_n^{-1}(u_n) = f^{-1}(u)$. 
Indeed, in such circumstances, since $f^{-1}$ is continuous at $u$, for any $\eps>0$ there exists a $\delta>0$ such that 
\[
f(f^{-1}(u)-\eps)\leq u-\delta < u < u+\delta\leq  f(f^{-1}(u)+\eps).
\]
Therefore
\[
\limsup_{n\to\infty}f_n(f^{-1}(u)-\eps)\leq u-\delta < u < u+\delta\leq  \liminf_{n\to\infty} f_n(f^{-1}(u)+\eps), 
\]
and consequently
\[
f^{-1}(u)-\eps\leq \liminf_{n\to\infty} f_n^{-1}(u-\delta)  \leq \limsup_{n\to\infty} f_n^{-1}(u+\delta)\leq  f^{-1}(u)+\eps. 
\]
Since $\eps>0$ is arbitrary, monotonicity of $f_n^{-1}$ implies  thus that we have
$\gra f_n^{-1}(u_n) = f^{-1}(u)$ as long as $\lim_{n\to\infty }u_n=u.$.

Armed with this knowledge, fix an $i\in \kad$ and take an $s\ge 0$ such that $u\coloneqq \mel (s)$ is a point of continuity of $\mka_i$, that is, not a level of constancy of $\ell_i$. Since $\gra \mel_n (s) = \mel (s)$, the result proven above implies that $t_i(s) = \gra  \mka_{n,i} ( \mel_n (s)) = \mka_i (\mel (s))$ and thus establishes \eqref{eq:T_i_formula} in this case. 

It remains therefore to treat the case where $u\coloneqq \mel(s)$ is not a point of continuity of $\mka_i$, that is, we have $\mdu \mka_i(u)>0 $ ($i$ is still fixed).

In such a scenario, $u$ is a level of constancy of $\ell_i$ and thus not a level of constancy of $\ell_j, j \not =i$. It follows that $u$ is a point of continuity of $\mka_j, j \not =i$ and we conclude that    $ \mdu \mka(u)= \mdu \mka_i(u)$. This means that at $u$ both $\mka$ and $\mka_i$ have a jump of value, and the size of the jump is in both cases the same. 
For simplicity of notation, let us write simply $\mdelta $ instead of $ \mdu \mka(u)$ and  $\mdu \mka_i(u)$.

Since $\mka$ has a jump at $u$ of size $\mdelta$ and $\mel$ is the generalized inverse of $\mka$, $\mel$ remains constant  at the level $u$ in a time interval of length $\mdelta$; moreover to the left of this interval $\mel (v) $ is smaller than $s$, and larger than $s$ to the right of this interval. We record this fact as follows
\begin{equation}\label{fft:2} \mel (v)\begin{cases} <u, & v< v^\flat,\\
= u, & v\in [v^\flat,v^\flat +\mudelta], \\ 
>u, & v> v^\flat,\end{cases}
\end{equation} 
where $v^\flat \coloneqq \inf\{v\geq 0 \colon \mel(v)=u\}$.   
Similarly, since $\mka_i$ is the inverse of $\ell_i$,
\begin{equation}\label{fft:3} \ell_i (v) \begin{cases} <u, & v< v^\sharp,\\
= u, & v\in [v^\sharp,v^\sharp +\mudelta], \\ 
>u, & v> v^\sharp,\end{cases} \end{equation} 
where $v^\sharp \coloneqq \inf\{v\geq 0 \colon \ell_i (v)=u\}$. 


So prepared, let us consider a sequence $\jcg{v_n}$ of reals smaller than but converging to $v^\flat$ and chosen in such a way that each $v_n$ is a point of continuity of $\mka_i$. Then, by the already established part, 
\[ t_i(v_n) = \mka_i (\mel (v_n)), \qquad  n \ge 1. \] 
Since $t_i$ is continuous, the left-hand side above converges to $t_i(v^\flat)$. At the same time, 
since $\mel$ is continuous, $\gra \mel (v_n) = \mel (v^\flat) =u$ and so, since $\mka_i$ has left limits, the right-hand side converges to $\mka_i (u-)$,  which, by \eqref{fft:3} equals $v^\sharp$.  This shows that  $t_i(v^\flat)= v^\sharp$, that is, that $v^\flat$ is the time when $t_i$ reaches the level of constancy of $\ell_i$ visible in \eqref{fft:3}.  

Lemma \ref{det} (a) says thus that, in the interval $[v^\flat,v^\flat+\mudelta]$, $t_i$ grows linearly with slope $1$:
\[ t_i(v)=v^\sharp +v-v^\flat  \] 
(whereas the other $t_j$s stay constant). 
Next, since $\mel (s) =u$, \eqref{fft:2} shows that $s \in [v^\flat,v^\flat +\mudelta]$, and so the above formula applies to $v=s$. Formula \eqref{fft:2} shows also that $\mka (u) = v^\flat +\mudelta$. Thus, recalling that $v^\sharp = \mka_i(u-) = \mka_i(u) - \mudelta$, we finally see that
\begin{align*}
t_i(s) &= \mka_i(u) - \mudelta +s -v^\flat = \mka_i (u) + s - \mka (u) \\&= \mka_i \circ \ell_i (s) + s - \mka \circ \ell_i (s). \end{align*} 
This completes the proof. \end{proof}

As a corollary to this proposition, we see that the time-change processes $T_i, i \in \kad$ spoken of in Theorem \ref{thm:wbam} are given by a counterpart of our formula \eqref{eq:T_i_formula} (almost surely). This is simply because, as discussed in the proof of the theorem,  paths of processes 
$U_i^{-1}\mc \circ \mc L_i $ do not have common levels of constancy (almost surely).

\section{Synopsis} \label{sec:synopsis}

\subsection{The analytical part}
The analytical part of the paper can be summarized as follows. Let $\coss$ be the space of continuous functions on the natural $\ka$-point compactification $S$ of the infinite star graph with $\ka$ edges and center at $\zero$ (see Section \ref{sec:looa} for details). Also, let $\mathfrak C^2(S)$ be the space of twice continuously differentiable functions on $S$, and  $\Delta$  be the Laplace operator given by $\Delta f = \frac 12 f''$ on $\mathfrak C^2(S)$. Our point of departure is the fact that for any Feller generator, say, $\gen$,  in $\coss$ that is a restriction of $\Delta $, there are non-negative constants $\alpha, \beta_i,i\in \kad $, and $\gamma$, and a (possibly infinite) positive Borel measure $\mum$ on $S_{\zero} \coloneqq S\setminus \{\zero\}$ such that 
$\int_{S_{\zero}} (1\wedge x)\, \mum (\mud x)<\infty$, at least one of the conditions $\alpha + \sui \beta_i >0$ and $\mum (S) =\infty$ holds, and
\begin{equation}\label{syn:1} {\tfrac \alpha 2} f''(\zero) - \sum_{i\in \kad} \beta_i f'_i(0) + \gamma f(\zero) = \int_{S_{\zero}} [f(x) - f(\zero)]\, \mum (\mud x), \qquad f \in \dom{\gen }. \end{equation}	

In our search for a converse, we exclude the possibility for $\mum$ to have a mass at any of the points at infinity, because it allows the related process to jump from $\zero$ to one of those points never to come back to the regular state-space, and such a behavior is uninteresting. Under this additional assumption we prove that for any set of parameters $\alpha,\beta=\seq{\beta_i}$, $\gamma$ and $\mum$ described above, the operator $\Delta$ as restricted to the space of $f$ such that \eqref{syn:1} holds, is a Feller generator. 

As in the seminal paper of L.C.G. Rogers \cite{rogersik}, the resolvent of this generator can be expressed by means of the resolvent of the minimal Brownian motion and the parameters described above --- see \eqref{joj:1}. Formula \eqref{joj:1} reduces to \eqref{loo:4} when $\mum =0$ and has an alternative form \eqref{dow:1} when $\mum$ is nonzero but finite. Moreover, it informs of continuous dependence of the underlying processes on the quadruple of parameters. In the degenerate case where $\alpha + \sui \beta_i =0$ and yet $\mum (S) < \infty$, \eqref{joj:1} is not a Feller resolvent, but when restricted to a subspace of $\coss$ gives rise to a Feller-like semigroup; in the related process, however, the graph center does not belong to the state-space  --- see Section \ref{sec:degenerate}.  

\subsection{The stochastic part}
Stochastic analysis part commences with the description of the all-important Walsh's spider process, corresponding to the case where, in the boundary condition above,  $\alpha=\gamma=0$, $\mum$ is zero measure, and all $\beta_i$s are positive and add up to $1$. In Section \ref{sec:wbm} we recall two of this process' characterizations, due to Barlow, Pitman and Yor \cite{barlow}. Also, following the recent work by Bayraktar et al. \cite{bayraktar}, we describe the process by means of a multi-dimensional time-change.  

In Section \ref{sec:tcom=0} we show that by slowing down the Walsh's process at the point $\zero$, see \eqref{stickywalsh}, we obtain a new Feller process, the process generated by the operator with boundary condition \eqref{syn:1} featuring nonzero $\alpha $ (and still with $\gamma =0$ and $\mum=0$). Moreover, 
the so-obtained process can be further modified by requiring that it is killed when its local time at $\zero$ reaches the level of an independent exponential random variable, see \eqref{wkilled}. Such a modified process still enjoys the Feller property; in fact, it is generated by the Laplace operator with boundary condition \eqref{joj:1} where both $\alpha$ and $\gamma$ positive ($\mum$ is still zero). Interestingly, if proper care is exercised, the order in which we slow down and kill can be reversed without changing the end result.

In Section \ref{sec:finite} we turn to the case where $\mum$ is nonzero but finite, so that it is a scalar multiple of a probability measure $\mu$: $\mum =\delta \mu$. We show that the related process can be constructed as a concatenation of the processes of the type considered in Section \ref{sec:tcom=0}: when the process of the latter type is killed, the new process with certain probability restarts at a random point distributed according to the measure $\mu$. Theorem \ref{thm:wba} provides a description of the process corresponding to $\alpha =\gamma =0$ and $\mum = \delta \mu$ (with the standing assumption that $\beta_i$s are positive and add up to $1$) by means of (a) a single $\ka$-dimensional Brownian motion, (b) a family of subordinators build of $\beta_i$s and a~compound Poisson process, and (c) a multi-parameter time-change.  Notably, the compound Poisson process   is a function of $\mum$, for $\delta$ is its intensity and $\mu$ is the distribution of its jumps. In the limit case  where $\mum$ vanishes, Theorem \ref{thm:wba} reduces to the result of Bayraktar et al. cited above. The full case of nonzero but finite  $\mum$ is obtained by scaling parameters if necessary (see \eqref{marek:1}) and then slowing down and/or killing the process of Theorem \ref{thm:wba}. Degenerate scenarios are described in Section \ref{sec:degene}.

The main result of Section \ref{sec:stocha}, Theorem \ref{thm:wbam}, extends Theorem \ref{thm:wba} to the situation where $\mum $ is infinite. The crucial difference is that  in these new circumstances jumps of the underlying process are not modeled by means of a compound Poisson process but rather by a Poisson  point process   with mean measure $\ud t \times \mum$. Nevertheless, the theorem remains in nature the same as Theorem \ref{thm:wba}: it builds the searched-for process of a standard Brownian motion,  a family of subordinators and a multi-dimensional time-change. Theorem \ref{thm:wbam}, with formulae \eqref{rep1} and \eqref{mainrep}, can be thought of as the climax of the paper and of its stochastic part in particular. Interestingly, this multi-dimensional time-change can be given rather explicitly --- see formula \eqref{eq:T_i_formula}.

In this context, Theorem \ref{thm:resLoc} that provides an explicit form of the $\lam$-potential of the local time of the process with boundary condition \eqref{syn:1}, is rightfully viewed as a corollary to the main results of the analytic and stochastic parts of the paper. 
 \section{Appendix} 

In this appendix we prove two results taken for granted in the main body of the paper.  

\subsection{Existence of constants $\alpha,\beta_i,\gamma$ and measure $\mum$ such that \eqref{loo:0} holds}\label{bckon}

This section is devoted to the proof of the proposition stated in Section \ref{sec:bcft}, saying that for each Feller generator $\gen$ that is a restriction of $\Delta$ there are constants $\alpha,\beta_i, i\in \kad$ and $\gamma $ and a measure $\mum$ such that the domain of $\gen$ is characterized by relation \eqref{loo:0}.

To begin the proof we note that, by assumption, 
\begin{equation}\label{caod:1}  \pol f''(\zero)= \grato t^{-1} [T(t)f(\zero) - f(\zero)], \qquad f \in \dom{\gen}. \end{equation}
Moreover, $f \mapsto T(t)f(\zero)$ being a positive functional on $\coss$ of norm not exceeding one, there is a Borel sub-probability measure $\mathsf n_t$ such that $T(t)f(\zero)= \int_{S}f  \ud \mathsf n_t$ and thus we have
\[ \pol f''(\zero) =\grato \left[  t^{-1} \int_{S}[f (x) - f(\zero)] \mathsf n_t (\mud x) - a_t \, f(\zero)\right ], \] 
where 
\[ a_t \coloneqq t^{-1}(1-\mathsf n_t(S)) \ge 0, \qquad t >0. \]  
(We note that the measures $\mathsf n_t,t >0$ cannot be all equal to zero, because by assumption $\grato T(t)f(\zero)=f(\zero).$) Introducing 
\( g =\seq{g_i}$ with $g_i(x) \coloneqq \frac {f_i(x) - f(\zero)}{1-\e^{-x}}, x >0 \) and recalling that $\gen$ is a restriction of $\Delta$,
we see that this function  extends to a member of $\mathfrak C(\widetilde S)$ (see \eqref{tildees}) with $g_i(0)=f_i'(0), i \in \kad$. In terms of $g$, our relation becomes
\[ \pol f''(\zero) =\grato \left[  t^{-1} \sui \int_{[0,\infty]}g_i(x) (1-\e^{-x}) \mathsf n_{t,i} (\mud x) - \alpha_t \, f(\zero)\right ], \]
where $\mathsf n_{t,i}$ is the restriction of $\mathsf n_t$ to the $i$th edge, as identified with $[0,\infty]$; note that all integrands in the display above vanish at $x=0$.

Next, we turn our attention to the (non-zero) positive measures on $[0,\infty]$, defined by
\[ \nu_{t,i} (\mud x) \coloneqq t^{-1} (1-\e^{-x}) \mathsf n_{t,i}(\mud x), \qquad t >0, i\in \kad,\]
and write 
\begin{equation}\label{caod:2} \pol f''(\zero) =\grato \left[ \sui \|\nu_{t,i}\| \int_{[0,\infty]}g(x) \|\nu_{t,i}\|^{-1} \nu_{t,i} (\mud x) - a_t f(\zero)\right ], \end{equation}
where $\|\nu_{t,i}\|= \nu_{t,i}([0,\infty]), t >0, i\in \kad$.
By Helly's principle, there is a sequence $\jcg{t_n}$ of positive numbers converging to $0$ such that, for each $i\in \kad$,  the associated probability measures
\[ \|\nu_{t_n,i}\|^{-1} \nu_{t_n,i} \]
 converge weakly to a probability measure on $[0,\infty]$, say, $\nu_i$. 
 
By \eqref{caod:2}, it is amply clear that further analysis depends on convergence of sequences $\jcg{a_{t_n}}$ and $\jcg{\|\nu_{t_n,i}\|}, i \in \kad$. However, they are composed of non-negative numbers, and thus passing to subsequences if necessary, we obtain the existence of the possibly infinite limits 
 \[ a \coloneqq \gra a_{t_n}, \quad   b_i \coloneqq  \gra \|\nu_{t_n,i}\|,i\in \kad  \mqquad {and} c_i   \coloneqq  \gra \frac{a_{t_n}}{\|\nu_{t_n,i}\|}, i \in 
\kad,\]
and are naturally led to the following two cases. 

\textbf {Case 1. $a$ and $b_i$s are finite.} In this scenario, \eqref{caod:2} yields
\[ \pol f''(\zero) = \sui b_i \int_{[0,\infty]}g_i \ud \nu_i -   a f(\zero), \] 
that is
\[ \pol f''(\zero) = \sui b_i \nu_i (\{0\}) f'_i(0) +  \sui \int_{(0,\infty]} [f_i(x) - f_i(0)] \mum_i (\mud x) -    a f(\zero),\]
where $\mum_i(\mud x) \coloneqq  \frac{b_i \nu_i (\mud x)}{1-\e^{-x}}, i \in \kad$, so that $ \int_{(0,\infty]} [1 - \e^{-x}]\mum_i(\mud x)\le   b_i< \infty$. This is \eqref{loo:0} with 
\[ \alpha \coloneqq 1, \quad  \beta_i \coloneqq b_i \nu_i (\{0\}), i \in \kad, \quad  \gamma \coloneqq  a, \mquad{and} \mum \coloneqq \textstyle \sui \mum_i,\]
where $\mum_i$s are now seen as measures on $S$, that is, each $\mum_i$, by definition a measure on $[0,\infty]$, is identified with its image via the map $x \mapsto (i,x)\in S$; the latter is a measure on $S$ supported on the $i$th edge.

\textbf {Case 2. At least one of $a$ and $b_i$s  is infinite.} In this case, by passing to further subsequences if necessary, we can find an index $i_0\in \kad$ such that $d_i \coloneqq \gra \frac{\|\nu_{t_n,i}\|} {\|\nu_{t_n,i_0}\|}$ exists and is finite for all $i \in \kad$. Dividing \eqref{caod:2} (with $t$ replaced by $t_n$) by $a_{t_n} + \|\nu_{t_n,i_0}\|$, and letting $n\to \infty$, yields
\begin{equation}\label{caod:3} \sui \frac {d_i}{1+c_{i_0}} \int_{[0,\infty]}g_i \ud \nu_i = \frac {c_{i_0}}{1+c_{i_0}} f(\zero), \qquad f    \in \dom{\gen}, \end{equation}
provided that $c_{i_0} \not = \infty$. This again can be rewritten as 
\begin{equation*}
\sui  \frac {d_i  \nu_i (\{0\}) }{1+c_{i_0}} f'_i(0)+ \sui \int_{(0,\infty]} [f_i(x) - f_i(0)] \mum_i (\mud x) =   \frac {c_{i_0}}{1+c_{i_0}}  f(\zero),\end{equation*}
where this time  $\mum_i(\mud x) \coloneqq  \frac {d_i}{1+c_{i_0}}  \frac{ \nu_i (\mud x)}{1-\e^{-x}}, i \in \kad$. 
In other words, we obtain \eqref{loo:0} with 
\[\alpha \coloneqq 0, \quad \beta_i \coloneqq   \frac {d_i  \nu_i (\{0\}) }{1+c_{i_0}}, i \in \kad, \quad \gamma \coloneqq    \frac {c_{i_0}}{1+c_{i_0}},   \mquad{and}  \mum \coloneqq \textstyle \sui \mum_i. \]
Finally, we note that $c_{i_0}$ cannot be infinite. Indeed, otherwise \eqref{caod:3} would turn into 
$f(\zero)=0, f\in \dom{\gen}$.
This is clearly impossible, because $\gen$, being a generator, is densely defined.

We are hence left with explaining why  at least one of the conditions $\alpha + \sui \beta_i >0$ and $\mum (S) =\infty$ must hold.  To this end, assume that $\alpha + \sui \beta_i =0$ and yet $\mum (S) <\infty$. Then \eqref{loo:0} turns into 
\[ (\gamma + \delta) f(\zero) = \int_{S_0} f\ud \mum, \qquad f \in \dom{\gen}\]
where $\delta \coloneqq \mum (S_0)$. Now, if $\gamma +\delta >0$, this relation shows that $\dom{\gen}$ is contained in the kernel of the non-zero bounded linear functional $f \mapsto (\gamma +\delta) f(\zero) - \int_{S_0} f\ud \mum$, and thus $\gen$ cannot be a generator, since it is not densely defined; a~contradiction. Moreover, if $\gamma +\delta =0$, \eqref{loo:0} becomes $0=0$. This means that $\gen$ coincides with $\Delta$, and here also we have the desired contradiction because $\Delta$ cannot be a generator, its resolvent equation having too many solutions.

\subsection{The positive-maximum principle for $\Delta$ with boundary condition \eqref{loo:0}}\label{maks}

In this section, we prove that for any constants $\alpha,\beta_i,\gamma$ and a measure $\mum$ that satisfy the properties listed in Section \ref{sec:bcft}, the operator $\Delta$, as restricted to the domain where \eqref{loo:0} holds, satisfies the positive-maximum principle. 

First of all, if a positive maximum of an $f\in \dom{\gen}$ is attained at an $x\not = \zero$, the claim is obvious by the classical theorem of analysis. To treat the case where the maximum is attained at $\zero$,  in turn, we consider three sub-cases. 

{\bf (a)} Assume first that $\alpha \not =0$. Then \[ \gen f(\zero) = \pol f''(\zero) = \tfrac 1\alpha  (\sui \beta_i f'_i(0) - \gamma f(\zero) + \int_{S_\zero} [f(x) - f(\zero)] \ud \mum )\le 0 \] 
because  $f'_i(0) \le 0 $ for all $i\in \kad$, the coefficients are nonnegative, and $f(\zero)$ is both nonnegative and no smaller than any other value of $f$. 

{\bf (b)} Next, assume that $\alpha =0$, but at least one of $\beta_i$s is positive. Then, on the one hand, 
\[  \sum_{i\in \kad} \beta_i f'_i(0)  = \gamma f(\zero) + \int_{S_{\zero}} [f(\zero)-f(x)]\, \mum (\mud x)\ge 0, \]	
and, on the other, as before, $f'_i(0) \le 0 $ for all $i\in \kad$. It follows that for all $i$ such that $\beta_i>0$ we have $f_i'(0)=0$, and by assumption at least one such $i$ exists. 
But then the even extension of $f_i$ to the whole of $\R$ is twice continuously differentiable and, since its maximum is attained at $x=0$, this yields $f''_i(0)\le 0$. Moreover, by the definition of $\Delta$, 
$ f''_i(0)  $ does not depend on $i$ and coincides with $2 \gen f(\zero)$. This completes the proof in this case. 

{\bf (c)} Finally, assume that $\alpha + \sui\beta_i =0$ so that $\mum$ is necessarily infinite. In such a scenario, \eqref{loo:0} reduces to 
\[\gamma f(\zero) = \int_{S_{\zero}} [f(x) - f(\zero)]\, \mum (\mud x).\]
Since by assumption the left-hand side is nonnegative whereas the right-hand side is nonpositive, this relation shows that both expressions are zero, and in particular $f(x)$ equals $f(\zero)$ for all $x$ except on a set of  $\mum$ measure $0$.

Next, let $\mathcal C$ be the support of $\mum$, that is, let \[ \mathcal C \coloneqq \{x\in S_{\zero} \colon \text{for any ball $B$ containing $x$, $\mum (B)>0$}\}.\]  
We claim that 
\[ \updelta \coloneqq \inf_{x \in \mathcal C} d(x,\zero), \]
where $d$ is the metric in $S$ discussed in Section \ref{lumping2}, is zero.  Indeed, otherwise the open ball, say, $B_\updelta$, with center at $\zero$ and radius $\updelta$, is of $\mum$ measure $0$, and, moreover, since $\1_{B_\updelta^\complement} \coloneqq \1_{\{d(x,\zero)\ge \updelta\}}\le \frac 1{\min (\updelta,1)} (1\wedge d(x,\zero))$, 
\[ \mum (B_\updelta^\complement) \le \tfrac 1{\min (\updelta,1)}  \int_{S_\zero} 1\wedge d(x,\zero) \, \mum (\mud x) < \infty. \]
Since this contradicts our assumption that $\mum$ is infinite, the claim is established. 

As a result, there is a sequence $\jcg{x_n}$ of elements of $\mc C$ such that $\gra d(x_n,\zero) =0$. Choosing a subsequence if necessary, we can also assume that all $x_n$s belong to one edge, say the $i$th. But, for each $x\in \mathcal C$, $f(x) =f(\zero)$ as established above, and this implies that $f_i'(0) =0$. As in the case (b) we can thus argue that $f''_i(0) \le 0$ and so $\gen f(\zero) \le 0$, completing the proof. 

\newcommand{\oldzik}{
 we are thus in the scenario of Case 2 described above, for in Case 1, $\alpha =1$.  Let $\kal$ be the set of indices $i\in \kad$ such that $d_i >0$. By definition of $m_i$, $\mum =\sum_{i\in \kal} \mum_i$. Moreover, since all $\beta_i$s vanish, 
\eqref{caod:4}  takes the form 
\begin{equation}\label{caod:5} \sum_{i\in \kal} \int_{(0,\infty]} f_i \ud \mum_i =   \big (\sum_{i\in \kal}  \mum_i ((0,\infty]) + \frac {c_{i_0}}{1+c_{i_0}}\big ) f(\zero).\end{equation}}
   
\vspace{1cm} 

\bf Acknowledgment. \rm The authors would like to thank the Isaac Newton Institute for Mathematical Sciences, Cambridge, for support and hospitality during the programme \emph{Stochastic systems for anomalous diffusion}, where work on this paper was undertaken. This work was supported by EPSRC grant EP/Z000580/1. 

Furthermore, A. Pilipenko  thanks  the Swiss National Science
Foundation for partial support  of the paper (grants IZRIZ0\_226875, 200020\_214819, 200020\_200400, and 200020\_192129).
Finally, the authors are grateful to T.~Ichiba and  M. Portenko for fruitful discussions.

\def\cprime{$'$}\def\cprime{$'$}\def\cprime{$'$}\def\polhk#1{\setbox0=\hbox{#1}{\ooalign{\hidewidth \lower1.5ex\hbox{`}\hidewidth\crcr\unhbox0}}}\ifx \cedla \undefined \let \cedla = \c \fi\ifx \cyr \undefined \let \cyr = \relax \fi\ifx \cprime \undefined \def \cprime {$\mathsurround=0pt '$}\fi\ifx \prime \undefined \def \prime {'} \fi\def\polhk#1{\setbox0=\hbox{#1}{\ooalign{\hidewidth \lower1.5ex\hbox{`}\hidewidth\crcr\unhbox0}}}
\begin{bibdiv}
\begin{biblist}

\bib{abhn}{book}{
      author={Arendt, W.},
      author={Batty, C. J.~K.},
      author={Hieber, M.},
      author={Neubrander, F.},
       title={Vector-{V}alued {L}aplace {T}ransforms and {C}auchy {P}roblems},
   publisher={Birkh{\"a}user},
     address={Basel},
        date={2001},
}

\bib{asmussen}{book}{
      author={Asmussen, S.},
       title={Applied {P}robability and {Q}ueues},
     edition={{S}econd},
      series={Applications of Mathematics (New York)},
   publisher={Springer-Verlag, New York},
        date={2003},
      volume={51},
        ISBN={0-387-00211-1},
        note={Stochastic Modelling and Applied Probability},
}

\bib{barlow2000variably}{article}{
      author={Barlow, M.},
      author={Burdzy, K.},
      author={Kaspi, H.},
      author={Mandelbaum, A.},
       title={Variably skewed {B}rownian motion},
        date={2000},
}

\bib{barlow}{incollection}{
      author={Barlow, M.},
      author={Pitman, J.},
      author={Yor, M.},
       title={On {W}alsh's {B}rownian motions},
        date={1989},
   booktitle={S\'{e}minaire de {P}robabilit\'{e}s, {XXIII}},
      series={Lecture Notes in Math.},
      volume={1372},
   publisher={Springer, Berlin},
       pages={275\ndash 293},
         url={https://doi.org/10.1007/BFb0083979},
}

\bib{bass}{book}{
      author={Bass, R.~F.},
       title={Stochastic processes},
      series={Cambridge Series in Statistical and Probabilistic Mathematics},
   publisher={Cambridge University Press, Cambridge},
        date={2011},
      volume={33},
        ISBN={978-1-107-00800-7},
         url={https://doi.org/10.1017/CBO9780511997044},
}

\bib{bayraktar}{misc}{
      author={Bayraktar, E.},
      author={Zhang, J.},
      author={Zhang, X.},
       title={Walsh diffusions as time changed multi-parameter processes},
        date={2024},
         url={https://arxiv.org/abs/2204.07101},
}

\bib{bertoin}{book}{
      author={Bertoin, J.},
       title={L\'evy processes},
      series={Cambridge Tracts in Mathematics},
   publisher={Cambridge University Press, Cambridge},
        date={1996},
      volume={121},
        ISBN={0-521-56243-0},
}

\bib{bill}{book}{
      author={Billingsley, P.},
       title={Probability and measure},
     edition={Third},
      series={Wiley Series in Probability and Mathematical Statistics},
   publisher={John Wiley \& Sons, Inc., New York},
        date={1995},
        ISBN={0-471-00710-2},
        note={A Wiley-Interscience Publication},
}

\bib{bill2}{book}{
      author={Billingsley, P.},
       title={Convergence of probability measures},
     edition={Second},
      series={Wiley Series in Probability and Statistics: Probability and Statistics},
   publisher={John Wiley \& Sons, Inc., New York},
        date={1999},
        note={A Wiley-Interscience Publication},
}

\bib{blum}{book}{
      author={Blumenthal, R.M.},
       title={Excursions of {M}arkov processes},
      series={Probability and its Applications},
   publisher={Birkh\"{a}user Boston, Inc., Boston, MA},
        date={1992},
        ISBN={0-8176-3575-0},
         url={https://doi.org/10.1007/978-1-4684-9412-9},
}

\bib{bandg}{book}{
      author={Blumenthal, R.M.},
      author={Getoor, R.K.},
       title={Markov processes and potential theory},
   publisher={Academic Press},
        date={1968},
}

\bib{kniga}{book}{
      author={Bobrowski, A.},
       title={Functional {A}nalysis for {P}robability and {S}tochastic {P}rocesses. {A}n {I}ntroduction},
   publisher={Cambridge University Press, Cambridge},
        date={2005},
        ISBN={978-0-521-83166-6; 978-0-521-53937-1; 0-521-53937-4},
         url={http://dx.doi.org/10.1017/CBO9780511614583},
}

\bib{kosinusy}{article}{
      author={Bobrowski, A.},
       title={Generation of cosine families via {L}ord {K}elvin's method of images},
        date={2010},
     journal={J. Evol. Equ.},
      volume={10},
      number={3},
       pages={663\ndash 675},
}

\bib{knigazcup}{book}{
      author={Bobrowski, A.},
       title={Convergence of {O}ne-{P}arameter {O}perator {S}emigroups. {I}n {M}odels of {M}athematical {B}iology and {E}lsewhere},
   publisher={Cambridge University Press, Cambridge},
        date={2016},
}

\bib{konkusSIMA}{article}{
      author={Bobrowski, A.},
       title={Concatenation of nonhonest {F}eller processes, exit laws, and limit theorems on graphs},
        date={2023},
     journal={SIAM Journal on Mathematical Analysis},
      volume={55},
      number={4},
       pages={3457\ndash 3508},
      eprint={https://doi.org/10.1137/22M1487552},
         url={https://doi.org/10.1137/22M1487552},
}

\bib{newfromold}{article}{
      author={Bobrowski, A.},
       title={New semigroups from old: {A}n approach to {F}eller boundary conditions},
        date={2024},
        ISSN={1937-1632},
     journal={Discrete and Continuous Dynamical Systems--S},
      volume={17},
      number={5\&6},
       pages={2108\ndash 2140},
         url={https://www.aimsciences.org/article/id/643e705642c9ce375f3848bd},
}

\bib{abtk}{article}{
      author={Bobrowski, A.},
      author={Komorowski, T.},
       title={Diffusion approximation for a simple kinetic model with asymmetric interface},
        date={2022},
     journal={J. Evol. Equ.},
      volume={22:42},
}

\bib{bobpil1}{misc}{
      author={Bobrowski, A.},
      author={Pilipenko, A.},
       title={A limit theorem for certain {F}eller semigroups, and the distribution of the local time for {B}rownian motion when it exits an interval},
        date={2025},
         url={https://arxiv.org/abs/2509.13076},
}

\bib{skosny2}{article}{
      author={Bobrowski, A.},
      author={Ratajczyk, E.},
       title={A kinetic model approximation of {W}alsh's spider process on the infinite star-like graph},
        date={2024},
     journal={to appear},
       pages={1\ndash 23},
}

\bib{PavlyukevichPilipenko2022}{article}{
      author={Bogdanskii, V.},
      author={Pavlyukevich, I.},
      author={Pilipenko, A.},
       title={Limit behaviour of random walks on {$\mathbb{Z}^m$} with two-sided membrane},
        date={2022},
        ISSN={1292-8100},
     journal={ESAIM Probab. Stat.},
      volume={26},
       pages={352\ndash 377},
         url={https://doi.org/10.1051/ps/2022009},
      review={\MR{4481124}},
}

\bib{borodin}{book}{
      author={Borodin, A.~N.},
      author={Salminen, P.},
       title={Handbook of {B}rownian motion---facts and formulae},
     edition={Second Edition},
      series={Probability and its Applications},
   publisher={Birkh\"auser Verlag, Basel},
        date={2002},
        ISBN={3-7643-6705-9},
         url={https://doi.org/10.1007/978-3-0348-8163-0},
}

\bib{refle}{article}{
      author={Chaleyat-Maurel, M.},
      author={El~Karoui, N.},
      author={Marchal, B.},
       title={R\'eflexion discontinue et syst\`emes stochastiques},
        date={1980},
        ISSN={0091-1798,2168-894X},
     journal={Ann. Probab.},
      volume={8},
      number={6},
       pages={1049\ndash 1067},
         url={http://links.jstor.org/sici?sici=0091-1798(198012)8:6<1049:RDESS>2.0.CO;2-I&origin=MSN},
}

\bib{dynkinmp}{book}{
      author={Dynkin, E.~B.},
       title={Markov {P}rocesses. {V}ols. {I}, {II}},
      series={Translated with the authorization and assistance of the author by J. Fabius, V. Greenberg, A. Maitra, G. Majone. Die Grundlehren der Mathematischen Wissenschaften, B\"ande 121},
   publisher={Academic Press Inc., Publishers, New York; Springer-Verlag, Berlin-G\"ottingen-Heidelberg},
        date={1965},
      volume={122},
}

\bib{Engelbert}{article}{
      author={Engelbert, H.-J.},
      author={Peskir, G.},
       title={Stochastic differential equations for sticky {B}rownian motion},
        date={2014},
     journal={Stochastics},
      volume={86},
      number={6},
       pages={993\ndash 1021},
      eprint={https://doi.org/10.1080/17442508.2014.899600},
         url={https://doi.org/10.1080/17442508.2014.899600},
}

\bib{ethier}{book}{
      author={Ethier, S.~N.},
      author={Kurtz, T.~G.},
       title={{M}arkov {P}rocesses. {C}haracterization and {C}onvergence},
   publisher={Wiley},
     address={New York},
        date={1986},
}

\bib{fellera3}{article}{
      author={Feller, W.},
       title={The parabolic differential equations and the associated semi-groups of transformations},
        date={1952},
     journal={Ann. Math.},
      volume={55},
       pages={468\ndash 519},
}

\bib{fellera4}{article}{
      author={Feller, W.},
       title={Diffusion processes in one dimension},
        date={1954},
     journal={Trans. Amer. Math. Soc.},
      volume={77},
      number={1},
       pages={1\ndash 31},
}

\bib{fellerek}{article}{
      author={Feller, W.},
       title={Generalized second order differential operators and their lateral conditions},
        date={1957},
        ISSN={0019-2082},
     journal={Illinois J. Math.},
      volume={1},
       pages={459\ndash 504},
         url={http://projecteuclid.org/euclid.ijm/1255380673},
}

\bib{feller}{book}{
      author={Feller, W.},
       title={An {I}ntroduction to {P}robability {T}heory and {I}ts {A}pplications},
   publisher={Wiley},
     address={New York},
        date={1966},
      volume={2},
        note={Second edition, 1971},
}

\bib{gands}{book}{
      author={Gikhman, I.I.},
      author={Skorokhod, A.V.},
       title={The theory of stochastic processes. {II}},
      series={Classics in Mathematics},
   publisher={Springer-Verlag, Berlin},
        date={2004},
        ISBN={3-540-20285-4},
         url={https://doi.org/10.1007/978-3-642-61921-2},
        note={Translated from the Russian by S. Kotz, Reprint of the 1975 edition},
}

\bib{goldstein}{book}{
      author={Goldstein, J.~A.},
       title={Semigroups of {L}inear {O}perators and {A}pplications},
   publisher={Oxford University Press},
     address={New York},
        date={1985},
}

\bib{adasma}{article}{
      author={Gregosiewicz, A.},
       title={Resolvent decomposition with applications to semigroups and cosine functions},
        date={2025},
     journal={Mathematische Annalen},
      volume={391},
       pages={4011\ndash 4035},
         url={https://doi.org/10.1007/s00208-024-03016-2},
}

\bib{gut}{book}{
      author={Gut, A.},
       title={Probability: a graduate course},
      series={Springer Texts in Statistics},
   publisher={Springer, New York},
        date={2005},
        ISBN={0-387-22833-0},
}

\bib{ikedacon}{article}{
      author={Ikeda, N.},
      author={Nagasawa, M.},
      author={Watanabe, S.},
       title={A construction of {M}arkov processes by piecing out},
        date={1966},
     journal={Proceedings of the Japan Academy},
      volume={42},
      number={4},
       pages={370\ndash 375},
}

\bib{ikeda}{book}{
      author={Ikeda, N.},
      author={Watanabe, S.},
       title={Stochastic differential equations and diffusion processes},
      series={North-Holland Mathematical Library},
   publisher={North-Holland Publishing Co., Amsterdam-New York; Kodansha, Ltd., Tokyo},
        date={1981},
      volume={24},
        ISBN={0-444-86172-6},
}

\bib{bookandrey}{book}{
      author={Iksanov, A.},
      author={Marynych, A.},
      author={Pilipenko, A.},
      author={Samoilenko, I.},
       title={Locally {P}erturbed {R}andom {W}alks},
   publisher={Springer},
        date={2025},
}

\bib{IksanovPilipenkoPrykhodko2021}{article}{
      author={Iksanov, A.},
      author={Pilipenko, A.},
      author={Povar, B.},
       title={Functional limit theorems for random walks perturbed by positive alpha-stable jumps},
        date={2023},
        ISSN={1350-7265},
     journal={Bernoulli},
      volume={29},
      number={2},
       pages={1638\ndash 1662},
         url={https://doi.org/10.3150/22-bej1515},
      review={\MR{4550239}},
}

\bib{itop}{article}{
      author={It\^{o}, K.},
      author={McKean, H.~P., Jr.},
       title={Brownian motions on a half line},
        date={1963},
        ISSN={0019-2082},
     journal={Illinois J. Math.},
      volume={7},
       pages={181\ndash 231},
         url={http://projecteuclid.org/euclid.ijm/1255644633},
}

\bib{ito}{book}{
      author={It{\^o}, K.},
      author={P., {McKean, Jr.}~H.},
       title={Diffusion {P}rocesses and {T}heir {S}ample {P}aths},
   publisher={Springer},
     address={Berlin},
        date={1996},
        note={Repr. of the 1974 ed.},
}

\bib{kallenbergnew}{book}{
      author={Kallenberg, O.},
       title={Foundations of {M}odern {P}robability},
     edition={2},
   publisher={Springer},
        date={2002},
}

\bib{karatzas}{book}{
      author={Karatzas, I.},
      author={Shreve, S.~E.},
       title={Brownian {M}otion and {S}tochastic {C}alculus},
   publisher={Springer},
     address={New York},
        date={1991},
        ISBN={0-387-97655-8},
}

\bib{kaspi1995levy}{article}{
      author={Kaspi, H.},
      author={Mandelbaum, A.},
       title={L{\'e}vy bandits: multi-armed bandits driven by {L}{\'e}vy processes},
        date={1995},
     journal={The Annals of Applied Probability},
       pages={541\ndash 565},
}

\bib{kingmanpoisson}{book}{
      author={Kingman, J. F.~C.},
       title={Poisson {P}rocesses},
   publisher={The Clarendon Press, Oxford University Press, New York},
        date={1993},
        ISBN={0-19-853693-3},
}

\bib{knight}{book}{
      author={Knight, F.B.},
       title={Essentials of {B}rownian {M}otion and {D}iffusion},
      series={Mathematical {S}urveys and {M}onographs},
   publisher={American Mathematical Society},
        date={1981},
        ISBN={9780821815182},
         url={https://books.google.pl/books?id=fcS8AwAAQBAJ},
}

\bib{kostrykin2012}{article}{
      author={Kostrykin, V.},
      author={Potthoff, J.},
      author={Schrader, R.},
       title={Brownian motions on metric graphs},
        date={2012Sep},
        ISSN={1089-7658},
     journal={Journal of Mathematical Physics},
      volume={53},
      number={9},
       pages={095206},
         url={http://dx.doi.org/10.1063/1.4714661},
}

\bib{kostrykin2}{article}{
      author={Kostrykin, V.},
      author={Potthoff, J.},
      author={Schrader, R.},
       title={Construction of the paths of {B}rownian motions on star graphs {II}},
        date={2012},
        ISSN={2688-6669},
     journal={Commun. Stoch. Anal.},
      volume={6},
      number={2},
       pages={247\ndash 261},
}

\bib{kostrykin2010brownian}{misc}{
      author={Kostrykin, V.},
      author={Potthoff, J.},
      author={Schrader, R.},
       title={Brownian motions on metric graphs: {F}eller {B}rownian motions on intervals revisited},
         how={arXiv:1008.3761, 2022, math.PR},
        date={arXiv:1008.3761, 2010},
}

\bib{lasota}{book}{
      author={Lasota, A.},
      author={Mackey, M.~C.},
       title={Chaos, {F}ractals, and {N}oise. {S}tochastic {A}spects of {D}ynamics},
   publisher={Springer},
        date={1994},
}

\bib{levybook}{book}{
      author={L\'{e}vy, P.},
       title={Processus {S}tochastiques et {M}ouvement {B}rownien. {S}uivi d'une note de {M}. {L}o\`eve},
   publisher={Gauthier-Villars, Paris},
        date={1948},
}

\bib{liggett}{book}{
      author={Liggett, T.~M.},
       title={Continuous {T}ime {M}arkov {P}rocesses. {A}n {I}ntroduction},
   publisher={Amer. Math. Soc.},
        date={2010},
}

\bib{mandelbaum1987continuous}{article}{
      author={Mandelbaum, A.},
       title={Continuous multi-armed bandits and multiparameter processes},
        date={1987},
     journal={The Annals of Probability},
      volume={15},
      number={4},
       pages={1527\ndash 1556},
}

\bib{mandl}{book}{
      author={Mandl, P.},
       title={Analytical {T}reatment of {O}ne-{D}imensional {M}arkov {P}rocesses},
   publisher={Springer},
        date={1968},
}

\bib{rosen}{book}{
      author={Marcus, M.~B.},
      author={Rosen, J.},
       title={Markov {P}rocesses, {G}aussian {P}rocesses, and {L}ocal {T}imes},
      series={Cambridge Studies in Advanced Mathematics},
   publisher={Cambridge University Press},
        date={2006},
}

\bib{pazy}{book}{
      author={Pazy, A.},
       title={Semigroups of {L}inear {O}perators and {A}pplications to {P}artial {D}ifferential {E}quations},
   publisher={Springer},
        date={1983},
}

\bib{Pilipenko+Sarantsev:2024}{article}{
      author={Pilipenko, A.},
      author={Sarantsev, A.},
       title={Boundary approximation for sticky jump-reflected processes on the half-line},
        date={2024},
     journal={Electron. J. Probab.},
      volume={29, Paper no. 32},
       pages={21 pp.},
         url={https://doi.org/10.1214/24-ejp1097},
      review={\MR{4718446}},
}

\bib{ela}{misc}{
      author={Ratajczyk, E.},
       title={Cosine families, invariant subspaces, and boundary conditions for a class of diffusions on star graphs},
         how={To appear},
        date={2025},
}

\bib{revuz}{book}{
      author={Revuz, D.},
      author={Yor, M.},
       title={Continuous {M}artingales and {B}rownian {M}otion},
   publisher={Springer},
        date={1999},
        note={Third edition},
}

\bib{rogersik}{article}{
      author={Rogers, L. C.~G.},
       title={It\^{o} excursion theory via resolvents},
        date={1983},
        ISSN={0044-3719},
     journal={Z. Wahrsch. Verw. Gebiete},
      volume={63},
      number={2},
       pages={237\ndash 255},
         url={https://doi.org/10.1007/BF00538964},
}

\bib{rogers}{book}{
      author={Rogers, L. C.~G.},
      author={Williams, D.},
       title={Diffusions, {M}arkov {P}rocesses and {M}artingales, {V}ol. 1, {F}oundations},
   publisher={Cambridge University Press},
     address={Cambridge},
        date={2000},
}

\bib{rogers2}{book}{
      author={Rogers, L. C.~G.},
      author={Williams, D.},
       title={Diffusions, {M}arkov {P}rocesses and {M}artingales, {V}ol. 2, {I}t\^o calculus},
   publisher={Cambridge University Press},
     address={Cambridge},
        date={2000},
}

\bib{rudnickityran}{book}{
      author={Rudnicki, R.},
      author={Tyran-Kami\'nska, M.},
       title={Piecewise {D}eterministic {P}rocesses in {B}iological {M}odels},
      series={Springer Briefs in Applied Sciences and Technology},
   publisher={Springer, Cham},
        date={2017},
        ISBN={978-3-319-61293-5; 978-3-319-61295-9},
         url={https://doi.org/10.1007/978-3-319-61295-9},
        note={Springer Briefs in Mathematical Methods},
}

\bib{Straka+Henry:2011}{article}{
      author={Straka, P.},
      author={Henry, B.~I.},
       title={Lagging and leading coupled continuous time random walks, renewal times and their joint limits},
        date={2011},
        ISSN={0304-4149},
     journal={Stochastic Process. Appl.},
      volume={121},
      number={2},
       pages={324\ndash 336},
         url={https://doi.org/10.1016/j.spa.2010.10.003},
      review={\MR{2746178}},
}

\bib{tsirelson1997triple}{article}{
      author={Tsirelson, B.},
       title={Triple points: from non-{B}rownian filtrations to harmonic measures},
        date={1997},
     journal={Geometric \& Functional Analysis GAFA},
      volume={7},
       pages={1096\ndash 1142},
}

\bib{wentzellboundary}{article}{
      author={Wentzell, A.~D.},
       title={Semigroups of operators that correspond to a generalized differential operator of second order},
        date={1956},
     journal={Dokl. Akad. Nauk SSSR (N.S.)},
      volume={111},
       pages={269\ndash 272},
}

\bib{Whitt:1980}{article}{
      author={Whitt, W.},
       title={Some useful functions for functional limit theorems},
        date={1980},
        ISSN={0364-765X},
     journal={Math. Oper. Res.},
      volume={5},
      number={1},
       pages={67\ndash 85},
         url={https://doi.org/10.1287/moor.5.1.67},
      review={\MR{561155}},
}

\bib{Whitt}{book}{
      author={Whitt, W.},
       title={Stochastic-process limits: An introduction to stochastic-process limits and their application to queues},
   publisher={Springer-Verlag, New York},
        date={2002},
        ISBN={0-387-95358-2},
      review={\MR{1876437}},
}

\end{biblist}
\end{bibdiv}

 \end{document}